\DeclareMathAlphabet{\mathbbb}{U}{bbold}{m}{n} % not used in the main part (only supplementary)
\newcommand*\sref[1]{S\ref{#1}}
\newcommand{\eqcolon}{\mathrel{\resizebox{\widthof{$\mathord{=}$}}{\height}{ $\!\!=\!\!\resizebox{1.2\width}{0.8\height}{\raisebox{0.23ex}{$\mathop{:}$}}\!\!$ }}}
\newcommand{\coloneq}{\mathrel{\resizebox{\widthof{$\mathord{=}$}}{\height}{ $\!\!\resizebox{1.2\width}{0.8\height}{\raisebox{0.23ex}{$\mathop{:}$}}\!\!=\!\!$ }}}
\newtheorem{theorem}{Theorem}
\newtheorem{definition}{Definition}
\newtheorem{lemma}[theorem]{Lemma}
\newtheorem{proposition}[theorem]{Proposition}
\newtheorem*{remark}{Remark}
\title{Towards a Geodesic Flow Bundle Formalism of General Relativity: Riemannian case}
\author{Adrian Bo\^itier and Shubhanshu Tiwari}
\date{\today}
\begin{document}

\maketitle

\begin{abstract}
Gravity is a phenomenon which arises due to the space-time geometry. The main equations that describe gravity are the Einstein equations. To understand the consequences of these field equations we need to calculate the free particle worldlines to the geometries, which solve these field equations e.g. the Schwarzschild metric solves the Einstein equations and we would need to solve the geodesic equations to this metric. If we were to describe the space-time geometry in terms of geodesics instead of the metric, we could skip the step of solving for the metric and solve for the geodesics directly. In this work we have developed a formalism doing that: we use the bundle of the arclength parametrized geodesics (geodesic flow bundle GFB) from all points in the manifold to describe a Riemannian geometry.\\
Our formalism uses infinitesimal spherical triangles as generating elements, to solve geometric problems. We relate the geodesic flow bundle to Gaussian curvature and develop a method to calculate the geodesics geometrically starting from a Gaussian curvature field. The result amounts to a generalization of the cosine- and sine-laws for constant curvature to varying curvature fields.\\
In this work we restrict ourselves to the Riemannian case and positive curvature fields. We expand the triangulation problem in power series and calculate the main result up to second order. The method itself could be extended to treat more generic cases in particular pseudo-Riemannian geometries which relate directly to Einsteins equations.\\

We test our results against the sphere and perform consistency checks for some examples of varying curvature fields.\\
As a by-product we generalize the notion of integration to products and derive a relation analogue to the main theorem of calculus, for product integrals.
\end{abstract}

\tableofcontents

\section{Introduction}
The question, whether general relativity predicts gravitational waves, is an old one and is rigorously answered for the case of the cosmological constant being 0. However, the current observations suggest that the universe is in an accelerated expansion which is best described by the standard model of cosmology ($\Lambda$-CDM) where the cosmological constant ($\Lambda$) is positive \cite{Planck:2018vyg}. Moreover, we now have multiple observations of gravitational wave (GW) events \cite{LIGOScientific:2021djp} experimentally proving their existence. These observational results motivate strongly the theoretical exploration of gravitational waves with a positive $\Lambda$. In this work, we lay the basic framework which might be helpful in proving the existence of GW with a positive $\Lambda$.\\

Before such a proof could be attempted one should find an invariant definition of a gravitational wave, since coordinate artifacts can be (and were) confused with GW as the discussion in~\cite{GWMathematics} shows. An invariant definition was achieved by~\cite{InvarGWdef}. The Bondi-Sachs formalism~\cite{Bondi_1962,Sachs_1962} uses outgoing null-rays to create a coordinate system on an asymptotically flat space-time. Using this formalism Robinson and Trautman~\cite{Trautman} managed to prove the existence of gravitational waves in GR for $\Lambda=0$. Unfortunately, this proof is not straight forwardly generalizable to arbitrary backgrounds, since there the asymptotic symmetry group is $\text{Diff}(\mathcal{I})$ which makes it difficult to find an invariant notion of energy-momentum carried by gravitational waves (GW). Ashtekar and collaborators in a series of papers pointed out the difficulties of extending the Bondi-Sachs formalism to $\Lambda\neq0$ and achieved a description of GW on a de-Sitter background in the weak field limit~\cite{AshtekarPosLam,Ashtekar_2014,Ashtekar_2015_Aug,Ashtekar_2015_Nov,Ashtekar_2019}. A description of GW on arbitrary backgrounds in full GR, however is not yet achieved.\\

The Bondi-Sachs formalism works with a $\frac{1}{r}$-expansion. This had to be replaced with a late-time expansion in the case of the background de-Sitter metric, and it is reasonable to assume that it has to be specifically adapted to whatever background metric one is working on, which for a generic background cannot be done.\\
We took inspiration from the Bondi-Sachs formalism in another way: There, a special chart is created using outgoing null-rays which achieves that we have some understanding of what these coordinates represent and are thus not confused about whether the metric describes a wave or not.\\
We tried to go a step further and use the geodesics themselves to describe a geometry instead of using some of them to set up a coordinate system and then use the metric in that chart for this purpose. Consequently, we relate the geodesics directly to curvature i.e., we construct the analogue relation to:\\
\begin{equation}\label{eq: R_curvature}
    R^\mu_{\nu\rho\sigma} = -\Gamma^\mu_{\nu\rho,\sigma} + \Gamma^\mu_{\nu\sigma,\rho}
    - \Gamma^\eta_{\nu\rho}\Gamma^\mu_{\eta\sigma} + \Gamma^\eta_{\nu\sigma}\Gamma^\mu_{\eta\rho}, \quad 
    \Gamma^\mu_{\rho\sigma} = \frac{1}{2}g^{\mu\nu}\left( g_{\nu\rho,\sigma} + g_{\nu\sigma,\rho} - g_{\rho\sigma,\nu} \right),
\end{equation}
where the metric is related to curvature via second order coupled partial differential equations.\\
If one manages to find a metric to a given curvature tensor one in general faces the problem we pointed out above. Namely that we have to understand the coordinates in which the metric is expressed to understand if it describes a GW or not. One way to achieve this is to solve the geodesic equations
\begin{equation}\label{eq: geod.eq.}
    \frac{D\gamma^\mu}{dt} = \ddot{\gamma}^\mu + \Gamma^\mu_{\rho\sigma}(\gamma)\dot{\gamma}^\rho\dot{\gamma}^\sigma = 0, \qquad
    \gamma: \ I\subset\mathbb{R} \ \to \ \mathcal{M}
\end{equation} 
and then use these geodesics to construct a new coordinate system as it is done in the Bondi-Sachs formalism or for the Edington-Finkelstein coordinates.\\
Proper time distances between two spacetime events can then be calculated by integrating the tangent vector field along such a geodesic, since distances on a differential manifold are defined as the infimum of the arclengths of connecting curves.
\begin{equation}\label{eq: distance}
    d(x,y) = \inf\,\{l[\gamma]|\gamma: [a,b]\to\mathcal{M}, \gamma(a)=x, \gamma(b)=y\}, \qquad
    l[\gamma] = \int_a^b \sqrt{g_{\gamma(t)}(\dot{\gamma}(t),\dot{\gamma}(t))}\,dt
\end{equation}

In our formalism the quantity describing geometry is related to the one describing curvature by a transcendental equation.\\

In this work we restrict ourselves to the Riemannian case which describes the spatial sub-sheets of a space-time. Thus there is no time evolution in the problem, but we are currently working on an extension of the formalism to the pseudo-Riemannian case, for which the Riemannian case serves as a foundation.\\

\subsection{Outline and resulting Procedure}
Since we want to describe a geometry using only geodesics, we start by defining and rederiving many fundamental concepts.\\
A geometry is essentially a collection of distances between all points in a space and angles between all direction at any point. Distances in a manifold are inherently related to geodesics. If the infimum in Eq.~\eqref{eq: distance} exists, there is a connecting curve with minimal arclength, which is by definition called a geodesic. We thus use arclength parametrized geodesics $\gamma$ together with a map to the $(n-1)$-sphere $\mathcal{S}^{n-1}$, representing the directions at any given point, to get as close to (fundamental) geometrical properties as possible. Interestingly, this ended up excluding the null-geodesics and we thus work on the complement of geodesics at the origin to the ones used in the Bondi-Sachs formalism. We bundle the geodesics of flows from all points together, analogue to the tangent bundle and relate this geodesic flow bundle to curvature via the sine- and cosine-laws on any geodesic triangle.\\
This way we manage to skip the step from the Riemann-tensor~\eqref{eq: R_curvature} and the Einstein equations to the geodesic equations~\ref{eq: geod.eq.} (red arrows in Fig.~\ref{fig: Overview}) and reduce the coupled non-linear differential equations to an analytic operation (green-gray arrow in Fig.~\ref{fig: Overview}). We achieve this by design: if the metric is the main quantity. One relates the metric to curvature and then first solves for the metric before one can calculate any other quantity one might want from there. So, if we make geodesics to the main quantity, we calculate them directly from curvature. For the same reason we restrict the parametrization of the geodesics to arclength to get the distances directly ingrained in our main quantity. We develop a spherical triangulation method in the follow-up paper, which allows us to calculate the geodesic flow bundle from a curvature field. The distances and angles can then be then read out from the arguments of the geodesic flow bundle and thus the integral in Eq.~\eqref{eq: distance} reduces to a transcendental equation.\\

The following diagram Fig.~\ref{fig: Overview} summarizes, how the geodesic flow bundle is related to the metric one. The arrows show, from which quantities we can calculate others. The analogue to the Riemann curvature tensor in our formalism is the source curvature $K$ (Sec.~\ref{sec: Curvature}). We are not certain yet, whether an analogue of the energy-momentum tensor is needed.
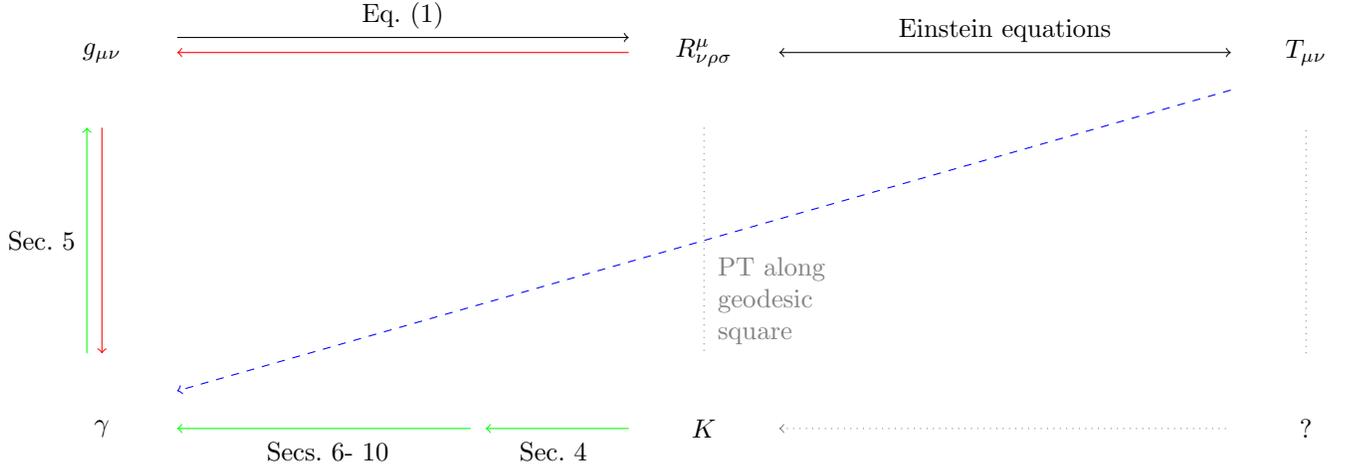
\begin{figure}[h!]
    \begin{tikzpicture}
        \node at (0,0) {$g_{\mu\nu}$};
        \draw[<-,red] (1,0) -- (7,0);
        \draw[->] (1,0.2) -- (7,0.2);
        \node at (4,0.5) {Eq.~\eqref{eq: R_curvature}};
        \node at (8,0) {$R^\mu_{\nu\rho\sigma}$};
        \draw[<->] (9,0) -- (15,0);
        \node at (12,0.3) {Einstein equations};
        \node at (16,0) {$T_{\mu\nu}$};
        
        \draw[->,red] (0,-1) -- (0,-4);
        \draw[->,green] (-0.2,-4) -- (-0.2,-1);
        \node at (-0.8,-2.5) {Sec.~\ref{sec: metric}};
        
        \draw[gray,dotted] (8,-1) -- (8,-4);
        \node[gray,align=left] at (8.9,-3.3) {PT along\\geodesic\\square};
        
        \node at (0,-5) {$\gamma$};
        \draw[<-,green] (1,-5) -- (4.9,-5);
        \node at (3,-5.3) {Secs.~\ref{sec: Triangulation}-~\ref{sec: Limit of the second order triangulation}};
        \draw[<-,green] (5.1,-5) -- (7,-5);
        \node at (6,-5.3) {Sec.~\ref{sec: Curvature}};
        \node at (8,-5) {$K$};
        \draw[<-,gray,dotted] (9,-5) -- (15,-5);
        \node at (16,-5) {?};
        \draw[gray,dotted] (16,-4) -- (16,-1);
        
        \draw[->,blue,dashed] (15,-0.5) -- (1,-4.5);
    \end{tikzpicture}
    \caption{Our ultimate goal is represented by the blue dashed arrow. The black arrows stand for calculations which are straight forward, whereas the red ones mark in general unsolvable equations. Green stands for the steps which are derived in this work and the gray dotted relations are yet unknown or not properly worked out (Parallel Transport).}
    \label{fig: Overview}
\end{figure}

\newpage

The presentation is not strictly structured according to the resulting methodology, since we would otherwise have to use concepts which were not introduced jet. We define the geodesic flow bundle (GFB) in Sec.~\ref{sec: GFB} and investigate its properties. Then we construct a special case of normal charts, which we call faithful normal chart in Sec.~\ref{sec: Faithful normal chart}. Ultimately, we want to use this formalism on the Einstein equations. The first step in that direction, is to connect the geodesic flow bundle to curvature, which we do in Sec.~\ref{sec: Curvature}. We find that it is more appropriate in this formalism to treat curvature as a source of geometry rather then an effect of it and thus introduce the notion of a source curvature field. This field can be related to the Riemann curvature tensor using parallel transport. We discuss how a natural notion of parallel transport arises from the GFB in Appendix~\ref{sec: Parallel transport} but discussing its relation to Riemann curvature in full detail is  postponed to a later publication. \\
We relate our formalism to the metric in Sec.~\ref{sec: metric}, to lay a ground for comparison and check, whether our results so far are consistent, before we start calculating the geodesic flow bundle from a source curvature field. This calculation is done via the triangulation scheme described in Sec.~\ref{sec: Triangulation}, which consists of cutting a geodesic triangle into increasing numbers of smaller triangles and applying the sine- and cosine-laws for constant curvature on the approximately spherical small triangles. We start the triangulation, by cutting the geodesic triangle into slices (thin triangles) and then cut the slices into smaller triangles. In Sec.~\ref{sec: A Slice of the Solution} we summarize the triangulation of a slice and how we calculate the desired quantities from the arguments. To complete the triangulation, we need to link all slices together which leads to a recursion across the slices. We present the solution to this recursion in Sec.~\ref{sec: Recursion across slices}. To arrive at a precise analytic result, we take the limit of cutting the triangle into infinitely many infinitesimal spherical triangles. To calculate this limit we need to find the limits of infinite products. We introduce a notion of product integrals in Sec.~\ref{sec: Product integrals} and present an analogue to the main theorem of calculus for product integrals. The limits of the triangulation are presented in the following Sec.~\ref{sec: Limit of the second order triangulation}. We put our results into context in Sec.~\ref{sec: sine- & cosine-laws} and conclude that we derived a generalization of the sine- and cosine-laws to varying curvature fields.\\

We provide some examples in the Supplemental Material to visualize the geodesic flow bundle and the faithful normal charts. We also demonstrate, how one can calculate the metric from the GFB on the example of the $2$-sphere, $3$-sphere, flat $n$-dimensional space and give an example of a geometry, which cannot be described using a metric. The supplement also contains the detailed calculations to the expansion of the sine- and cosine-laws, the triangulation, mathematical methods to calculate the limits and then finally the limit calculations. We check or results on some examples and provide these tests in its last section.\\

Finally, a method emerges from this discussion, through which we can calculate the geodesic flow bundle up to second order in the arclength parameter from a source curvature field. The resulting procedure works as follows:
\begin{enumerate}
\setcounter{enumi}{-1}
    \item We treat curvature as a source field for geometry and consider it our input from which we calculate the geodesics.
    \item Then we systematically embed two-dimensional hypersurfaces in an n-dimensional manifold which splits the problem into a one of tilt angles and a two-dimensional one. Since we restrict ourselves to Riemannian plane GFBs this angular problem is relatively simple and consists of solving transcendental equations obtained from rotations.
    \item The remaining two-dimensional problem contains the largest part of the complexity and amounts to an extension of the sine- and cosine-laws to varying curvature fields. We solve it up to second order via the spherical triangulation of a geodesic triangle.
\end{enumerate}
$ $\\
The same procedure can be used in the case of a pseudo-Riemannian space-time.
\color{black}

\subsection{Comparison to Regge Calculus}
Regge calculus (RC) was introduced by T. Regge~\cite{Regge} to establish a coordinate free treatment of general relativity. He proposed a discretisation of space-time by triangulating it with flat triangles. With increasing finesse of the triangulation the piece-wise linear space converges to the curved manifold. RC found applications in discrete quantum gravity~\cite{Williams-DiscreteQG} and numerical relativity~\cite{Khavari-PhD}, where its discrete nature is a good match for numerical methods.\\

Since the main part of this work involves a triangulation, we compare our formalism to Regge Calculus to avoid confusion of the two.\\
Regge Calculus is nowadays used as a finite element method, to solve the Einstein equations numerically.
The idea behind the geodesic flow bundle formalism is to describe a geometry via arclength parametrized geodesics instead of using a scalar product field.\\

In Regge Calculus the space time is approximated by a piecewise flat construction of triangles. The entire information about curvature is concentrated in (n-2)-dimensional subsimplices called “hinges” or “bones” in the form of the deficit angle.\\
In our formalism we relate the geodesics to curvature via sine- and cosine-laws, which we generalize to varying curvature fields. Whilst the main result are the second order terms in the power series of these generalized sine- and cosine-laws, the formalism itself is not an approximation and we strive to capture all degrees of freedom of smooth curvature fields on differential manifolds.\\

In Regge Calculus one evolves a tessellated $3$-dimensional hypersurface in time like direction by evolving vertex after vertex and connecting the new vertices with their original ones and their neighbouring vertices which achieves a triangulation of the 4-dimensional spacetime. It is thus called a (3+1) evolutionary scheme.\\
Given a curvature field we calculate a geodesic from an arbitrary point in an arbitrary direction in terms of the flow from an origin point (which i.g. can be any other point), by first embedding the resulting geodesic triangle in the $n$-dimensional manifold (n-dim. problem) and then we use the generalized sine- and cosine-laws to solve the $2$-dimensional triangle problem. The triangulation is used to derive the derive the generalization of the sine- and cosine-laws to varying curvature by approximate the curvature field with a step function and then taking the limit to an infinitely fine spherical triangulation of the curved geodesic triangle.\\

The outcome of the flat triangulation in Regge Calculus is the Connection- or Incidence Matrix containing all edge lengths and the information on how they are connected.\\
The result of applying the generalized (curved) sine- and cosine-laws to an arbitrary curvature field is the geodesic flow bundle in terms of the geodesic flow from another point w.l.o.g. the origin point. This contains all geodesics in the manifold in arclength parametrized form.\\

Both methods can be applied to general curvature distributions and do not require any symmetry in the problem.\\
Regge Calculus will run into trouble, if the space- and the time-like coordinates would change their role, since one could not setup a sensible evolution of the initial tessellated hypersurface.\\
The GFB could in general be applied to pretty much all circumstances, with the caveat that recursions may not be solvable and some quantities may not be possible to be written in explicit form and thus the limits could not be calculated. We also may have to restrict ourselves to a finite cutoff of the power series.
In this work we restrict ourselves to the simplest cases, but the methods could in general be used on any other cases with the aforementioned potential problems.\\

A finite volume (subset or equal) of the manifold is triangulated with flat triangles in RC. The triangles fit together and form a consistent piece-wise flat manifold, where the curvature is captured in the discontinuities. The angles around a vertex do not add up to $2\pi$ and this deficiency is used to represent the curvature.
It is a bottom-up method where one approximates a curved space-time increasingly well with increasing finess.\\
Only a geodesic triangle, which is automatically $2$-dimensional, is triangulated by spherical triangles when we derive the fundamental solution for the GFB. While the shared sides of neighbouring triangles have the same length they do not fit together, if they were embedded in a $3$-dimensional ambient space, since they are spherical triangles of different curvature. The angles around each point where geodesics cross add up to $2\pi$.
It is a top-down method, where we start from the actual geodesics on the full non-linear manifold and then construct an expansion to solve the problem.\\

In Regge Calculus one can find the analogue action and one gets a discretised version of the Bianchi identities. It looks like a discretised version of the metric formalism.\\
The geodesic flow bundle shows a general incompatibility with the metric formalism. As we can see in Appendix~\ref{sec: Parallel transport} and Sec.~\ref{sec: Curvature}, it is not straight forward to compare the parallel transport arising from the GFB with the connection of the metric formalism or the $n$-dimensional Gaussian curvature field with the Riemann tensor. The easiest way is to calculate the metric in faithful normal coordinates (FNC), Secs.~\ref{sec: Faithful normal chart},~\ref{sec: metric}, from the GFB and then work with that metric. The other direction is in general not possible since it would require solving the geodesic equations for all cases.\\

\section{Definition of the Geodesic Flow Bundle}\label{sec: GFB}
Usually, one defines a metric which is a scalar product field, to impose a geometry on a manifold. The geometric quantities i.e. distances and angles can then be calculated from the metric.\\
But not all notions of distances can be expressed as scalar products, like for example the Manhattan metric \footnote{In the context of differential manifold we interpret this as imposing the Manhattan metric consistently on an $\varepsilon$-Ball around each point in the manifold. Constructing such a metric using a scalar product field would require us to represent the metric via a scalar product on every tangent space. This notion of distance on a vector space is however not representable by a matrix.}. So, by using the scalar product we are restricting our studies to geometries which can be expressed in form of a scalar product.\\

Despite the fact, that we can define a scalar product coordinate independently we still need to choose coordinates when we actually want to use it. And we are trying to use a linear structure on an otherwise in general nonlinear entity. It is no surprise that it works since we can approximate everything linearly. We might however loose or obscure some of the non-linear structure when we do this.\\

If we are given a metric expressed in an arbitrary chart, it in general is very difficult do discern, what geometry we are dealing with, since the components are all coordinate dependent and thus cannot be interpreted as long as we do not understand what coordinates we are working with. To get a grasp on the problem we need the geodesics in a parametrization affine to their arclenghts, since we then understand the parametrization and we know that curves are 1-dimensional submanifolds and thus coordinate independent. Additionally, we know that geodesics trace out the shortest paths between two sufficiently close points. To find these geodesics however, we would have to solve the geodesic equations and then integrate to compute a distance.
We propose to instead describe the geometry on a manifold in a more fundamental and less encrypted form by using the geodesics themselves instead of a scalar product.

\subsection{Geodesic flow from a point}
Instead of choosing a scalar product field or imposing a metric, a function which maps two points to a positive real number, we choose a family of curves $\{\gamma_{\hat{\Omega}}\}_{\hat{\Omega}\in\mathbb{S}^{n-1}}$ starting from a origin point $o\in\mathcal{M}$ on the manifold $\mathcal{M}^n$ and heading out in all directions $\Omega\in\mathcal{D}_o$. We consider this family of curves as geodesics by definition and define the distance from $o$ to any other point $p$ to be the parameter value of the curve at that point. We thereby consider the parameter $\lambda$ to be the arclength by definition as well.\\
Let $\mathcal{M}^n$ be an $n$ dimensional differential manifold and $x\in\mathcal{M}$.
\begin{align}
    &\begin{matrix}
    \gamma_{o,\Omega}: & (-\varepsilon,\varepsilon) & \to & \mathcal{M} \\
    & \lambda & \mapsto & \gamma_{o,\Omega}(\lambda)
    \end{matrix}, \quad 
    \varepsilon>0 \quad \wedge \quad \gamma_{\hat{\Omega}}(0) = o; \\
    &d(o,x) \coloneq \lambda, \quad \text{with } \lambda\in(0,\varepsilon) \text{ and } \Omega\in\mathcal{D}_o \text{ s.t.} \quad \gamma_{o,\Omega}(\lambda) = x \label{eq: d(o,x)}
\end{align}
As a definition for the set of directions at a point $p\in\mathcal{M}$ we can use the following equivalence class of curves through $p$.
\begin{equation}
    \mathcal{D}_p \coloneq \{[\gamma_{p}]_\sim\,|\,\gamma_p:(-\varepsilon,\varepsilon)\to\mathcal{M},\, \text{smooth},\ \gamma_p(0)=p\}, \qquad \gamma_p\sim\gamma_p'\ :\Leftrightarrow\ \dot{\gamma}_p(0) \propto \dot{\gamma}_p'(0).
\end{equation}
That way we do not need to impose any additional structure, other than the manifold $\mathcal{M}$ itself. Directions should already be a property of a manifold and thus it should not be necessary to introduce additional structure to describe them.\\
The family of curves $\{\gamma_{o,\Omega}\}_{\Omega\in\mathcal{D}_o}$ defines a flow in the neighbourhood around $o$ without the origin point $o$ itself.
\begin{definition}[Geodesic Flow from a Point]
The geodesic flow from $x\in\mathcal{M}$ is the map:\\
\begin{minipage}{.5\linewidth}
    \begin{align}
        &\qquad \begin{matrix}
        \phi: & \mathcal{D}_x & \to & C^\infty(I;\mathcal{M}) \\
        & \Omega & \mapsto & \left.\gamma_{\Omega}\right|_I
        \end{matrix}, \quad I = (0,\varepsilon), \quad \text{s.t.}
    \end{align}
\end{minipage}
\begin{minipage}{.5\linewidth}
    \begin{align}
        &\bullet\quad \gamma_{\Omega} \text{ are injective} \\
        &\bullet\quad \mathcal{D}_x\times I \ \to \ \mathcal{M} \ \text{ is smooth} \label{eq: Vfield} \\
        &\qquad \Rightarrow \quad X(\gamma_{\Omega}(\lambda)) = \dot{\gamma}_{\Omega}(\lambda)
        \in \Gamma(T\mathcal{M}\backslash\{x\}) \notag \\
        &\bullet\quad \gamma_{\Omega}(\lambda) = \gamma_{\Omega'}(\lambda') \quad
        \Rightarrow \quad \lambda = \lambda'
    \end{align}
\end{minipage}
\end{definition}
As concluded from condition~\eqref{eq: Vfield} the tangent vecors of the flow from $x$ form a smooth vecor field on $\mathcal{M}\backslash\{x\}$, which by definition $X$ is a central field of unit vectors. We can write the flow in a more conventional way, to recap:\\

A local flow of $X\in\Gamma(T\mathcal{M})$ is a family $\{\phi^t\}_{t\in I}$, with $\phi^t:\ U\subset\mathcal{M}\ \to\ \phi^t(U)\subset\mathcal{M}$, where $0\in I\subset\mathcal{R}$ is an interval. The function $\phi^t(x) \coloneq \phi(x,t) = \gamma_x(t)$ is given by the integral curves i.e. they satisfy:
\begin{align}
    \frac{\partial\phi(x,t)}{\partial t} = X(\phi(x,t)) \ \wedge \ \phi(x,0) = x, \quad
    \forall x\in U\subset\mathcal{M},\ \forall t\in I
\end{align}
With a bit of tweaking on the arguments the conditions are satisfied trivially, since we start from the integral curve.
\begin{align}
    \phi^t(x) = \phi(x,t) = \gamma_{\Omega}(\lambda+t) = \phi(\Omega,\lambda+t), \quad \text{with } \Omega \text{ and } \lambda \text{ s.t. } \gamma_{\Omega}(\lambda) = x
\end{align}

\subsubsection{Angular structure at a point}\label{sec: Angular structure}
With the geodesic flow from a point $p\in\mathcal{M}$ we have so far defined the distances between $p$ and any other point in $\mathcal{M}$ using the map $d(p,\cdot)$ from~\ref{eq: d(o,x)}. We now study the choice of angles at a point.\\

An angle is measured by the circle arc $L$, which two directions enclose, divided by the radius of the circle $r$ or more simply by the unit circle arc, that two directions enclose. Thereby a special case of a metric is imposed / defined on the set of directions $\mathcal{D}_p$ at a point $p\in\mathcal{M}$.
\begin{equation}
    \measuredangle:\ \mathcal{D}_p\times\mathcal{D}_p \ \to \ [0,2\pi)
\end{equation}

The unit sphere $\mathcal{S}^{(n-1)}$ is the union of all unit circles around a point $p\in\mathcal{M}^n$. It thus provides a convenient way of defining the metric on $\mathcal{D}_p$.\\
\begin{proposition}
The set of directions $\mathcal{D}_p$ at a point $p\in\mathcal{M}^n$ forms a manifold, which is diffeomorphic to the sphere $\mathcal{S}^{n-1}$.
\begin{equation}
    \Phi_\mathcal{D}: \ \mathcal{D}_p \ \to \ \mathcal{S}^{n-1}, \quad
    \Omega = [\gamma_p]_\sim \ \mapsto \ \hat{\Omega}
\end{equation}
\end{proposition}

\begin{proof}
The maps
\begin{align}
    \Phi_\mathcal{D}:\ \gamma\in\Omega, \quad \gamma \ \to \ \hat{\Omega} = \frac{\dot{\gamma}}{\Vert\dot{\gamma}\Vert}\in\mathcal{S}^{n-1} \qquad
    \Phi_\mathcal{D}^{-1}:\ \hat{\Omega} \ \mapsto \ \{\gamma\in C^\infty((-\varepsilon,\varepsilon);\mathcal{M})|\gamma(0)=p,\ \dot{\gamma}\propto\hat{\Omega}\} = \Omega
\end{align}
are independent of the representant and smooth inverse of each other.
\end{proof}

The unit sphere is defined as a subset of $\mathbb{R}^n$ endowed with the standard scalar product, which induces the embedding $\imath$.
\begin{equation}\label{eq: Def. Sphere}
    \mathcal{S}^{n-1} \coloneq \{x\in\mathbb{R}^n|\Vert x\Vert = 1\}, \quad \Rightarrow \quad
    \imath: \ \mathcal{S}^{n-1} \hookrightarrow \mathbb{R}^n
\end{equation}
This embedding together with the standard scalar product $\cdot$ on the ambient space $\mathbb{R}^n$ defines an angular metric on $\mathcal{S}^{(n-1)}$. Combined with a choice of diffeomorphism $\Phi_\mathcal{D}$ this induces a metric on $\mathcal{D}_p$.
\begin{equation}
    \begin{matrix}
    \measuredangle: & \mathcal{D}_p\times\mathcal{D}_p & \overset{\Phi_\mathcal{D}}{\to} & \mathcal{S}^{n-1}\times\mathcal{S}^{n-1} & \to & [0,2\pi) \\
    & (\Omega,\Omega') & \mapsto & (\hat{\Omega},\hat{\Omega}') & \mapsto & d(\hat{\Omega},\hat{\Omega})
    \end{matrix}
\end{equation}
\begin{equation}
    d(\hat{\Omega},\hat{\Omega}') \coloneq \inf_\gamma\{ l(\gamma) | \gamma: [0,1]\to\mathcal{S}^{n-1}, \gamma(0) = \hat{\Omega}, \gamma(1) = \hat{\Omega}' \}, \qquad
    l[\gamma] = \int_0^1 \Vert\dot{\gamma}(t)\Vert\,dt
\end{equation}
In other words, angles are the distances on the unit sphere between two directions.\\

In the case of a space-time this concept generalizes to generalized angles being distances on the unit hyperboloid under the Minkowski metric. Upon further investigation we found that some direction pairs are related to angles and others to arclengths of the unit hyperbola which are known as rapidities.\\
In the case of a space-time this concept generalizes to some direction pairs being related by arclengths of unit hyperbolas rather than great-circle arcs.

\begin{definition}[Direction Sphere at a Point]
The direction sphere is the sphere $\mathcal{S}^{n-1}$ endowed with the metric $\measuredangle$, representing the directions $\mathcal{D}_p$ at a point $p\in\mathcal{M}$, using the map $\Phi_\mathcal{D}$.
\end{definition}

\begin{remark}
The degree of freedom (d.o.f.) of the arclength is thereby replaced by the auxiliary d.o.f. and thus the ambient space is well suited to model the direction sphere at a point.
\end{remark}

\begin{proposition}
The construction of the direction sphere allows us to calculate the angle between two directions $\Omega,\Omega'\in\mathcal{D}_p$ via the embedding $\imath$:
\begin{equation}\label{eq: angle from skp}
    \measuredangle(\Omega,\Omega') = \cos^{-1} \imath\circ\Phi_\mathcal{D}(\Omega) \cdot \imath\circ\Phi_\mathcal{D}(\Omega').
\end{equation}
\end{proposition}

\begin{proof}
Due to the rotation symmetry of the sphere, we can without loss of generality assume that the two directions are mapped to the $1,2$-plane and specifically $\imath\circ\Phi_\mathcal{D}(\Omega) = \hat{\Omega} = \hat{e}_1$. The direction vectors on $\mathcal{S}^{n-1}$ and the connecting geodesic are then given by:
\begin{equation}
    \hat{\Omega} = \imath\circ\Phi_\mathcal{D}(\Omega) = \begin{pmatrix} 1 \\ 0 \\ \vec{0} \end{pmatrix}, \quad
    \hat{\Omega}' = \imath\circ\Phi_\mathcal{D}(\Omega') = \begin{pmatrix} \cos\theta \\ \sin\theta \\ \vec{0} \end{pmatrix}, \qquad
    \gamma(t) = \begin{pmatrix} \cos(\theta t) \\ \sin(\theta t) \\ \vec{0} \end{pmatrix}\in\mathcal{S}^{n-1}\subset\mathbb{R}^n
\end{equation}
We see, that the arclength of the geodesic agrees with the scalar product of the direction vectors:
\begin{equation}
    l[\gamma] = \int_0^1\sqrt{(-\theta\sin(\theta t))^2 + (\theta\cos(\theta t))^2}\,dt = \theta
    = \cos^{-1}\cos\theta = \cos^{-1}\hat{\Omega}\cdot\hat{\Omega}'
\end{equation}
\end{proof}

To use the geodesic flow in concrete examples we need to systematically parametrize its curves. Since the curves are labelled by the direction in which they head out from a point $o$ we can use a parametrization of the unit sphere. We still want to have as much geometrical information i.e. maintain the angular structure in our geodesic flow and we want to note what the arbitrary choices of such a parametrization are.\\

In two dimensions we can pick a direction and label it with $0$ i.e. define it as our origin or initial direction. We will refer to points lying on the geodesic heading out in this direction as, with $q(c) = \gamma_{o,0}(c)$.
If we now demand that the angular parameter represents actual angles i.e.
\begin{equation}\label{eq: 2d angle cond}
    \Delta\varphi \coloneq \measuredangle(\gamma_{o,\varphi},\gamma_{o,\varphi'}) = |\varphi - \varphi'|
\end{equation}
then we only have two choices left: in which direction do we want to increase the parameter and what domain do we want to use. For example $(-\pi,\pi]$ or $[0,2\pi)$ as domains and then we could also invert the angular parameter $\varphi\mapsto-\varphi$.\\
The system is constrained because the angles are inherently $2$-dimensional and thus to a large degree dictate what coordinates we need to pick if they are supposed to reflect the angular structure.\\

In this work we will use the following parametrization map:
\begin{equation}
    \hat{\Omega}^{(1)}: \ [0,2\pi) \ \to \ \mathcal{S}^1, \qquad \varphi \ \mapsto \ \hat{\Omega}^{(1)}(\varphi) = \binom{\cos\varphi}{\sin\varphi}
\end{equation}
It should be noted that we used the arclength parameter of the geodesic flow from a point we name $0$ on $\mathcal{S}^1$.\\

Because angles are $2$-dimensional concepts, the angular structure becomes much more complex and we have much more choices for our angular parameters, when we introduce another dimension:\\
We begin in the same way as before by picking a random direction and calling it original direction $\Omega_0$ and mapping it to $\Phi_\mathcal{D}(\Omega_0) = (1,0,0) \eqcolon \hat{0}$ on $\mathcal{S}^2$. We then consider a geodesic $\gamma_{q,\hat{\beta}_1}$ heading out form a point $q(c)$ on $\gamma_{o,\hat{0}}$ in a direction $\hat{\beta}_1$. We now have two angular variables for every geodesic and we start labelling the ones heading out from $o$, which cross $\gamma_{q,\hat{\beta}_1}$ with $\varphi$. We choose to increase the first angular parameter $\varphi$ in direction of $\gamma_{q,\hat{\beta}_1}$. The directions of these geodesics from $o$ may in general be mapped to a generic curve on $\mathcal{S}^2$ from $\hat{0}$ i.e. the second angular parameter of these geodesics could vary: $\vartheta = \vartheta(\varphi)$. We can however choose the second angular variable to be $\vartheta = 0$ for all these geodesics, which would map the directions to the great arc $\gamma_{\hat{0},0}^{\mathcal{S}^2}$ on the $2$-sphere.
\begin{equation}
    \hat{\Omega}^{(2)}(\varphi,\vartheta) \quad \text{s.t.} \quad \gamma_{o,\hat{\Omega}^{(2)}(\varphi,0)}(l) = \gamma_{q,\hat{\beta}_1}(\lambda) \quad \Rightarrow \quad \bigcup_\varphi [\gamma_{o,\hat{\Omega}^{(2)}(\varphi,0)}]_\sim \
    \overset{\Phi_\mathcal{D}}{\mapsto} \ \gamma_{\hat{0},0}^{\mathcal{S}^2}.
\end{equation}

Now we see that we can again take the arclength of the $\mathcal{S}^1$-flow as the $\varphi$ parameter, embedding the $2$-dimensional case in the $3$-dimensional one:
\begin{equation}
    \hat{\Omega}^{(1)}(\varphi) \ \hookrightarrow \ \hat{\Omega}^{(2)}(\varphi,0) = \begin{pmatrix}
    \cos\varphi \\ \sin\varphi \\ 0
    \end{pmatrix}
\end{equation}

We continue this procedure, by considering a geodesic $\gamma_{q,\hat{\Omega}_\beta}$ from $q$, which does not lie in the initial plane. As before, we can choose to label the geodesics $\gamma_{o,\hat{\Omega}^{(2)}(\varphi,\vartheta)}$ which cross $\gamma_{q,\hat{\Omega}_\beta}$ in a way such that their directions are mapped to a great arc $\gamma_{\hat{0},\vartheta}^{\mathcal{S}^2}$ on $\mathcal{S}^2$.
\begin{equation}
    \bigcup_\varphi [\gamma_{o,\hat{\Omega}^{(2)}(\varphi,\theta)}]_\sim \
    \overset{\Phi_\mathcal{D}}{\mapsto} \ \gamma_{\hat{0},\vartheta}^{\mathcal{S}^2}
\end{equation}

\begin{figure}
\centering\includegraphics[width=.5\linewidth]{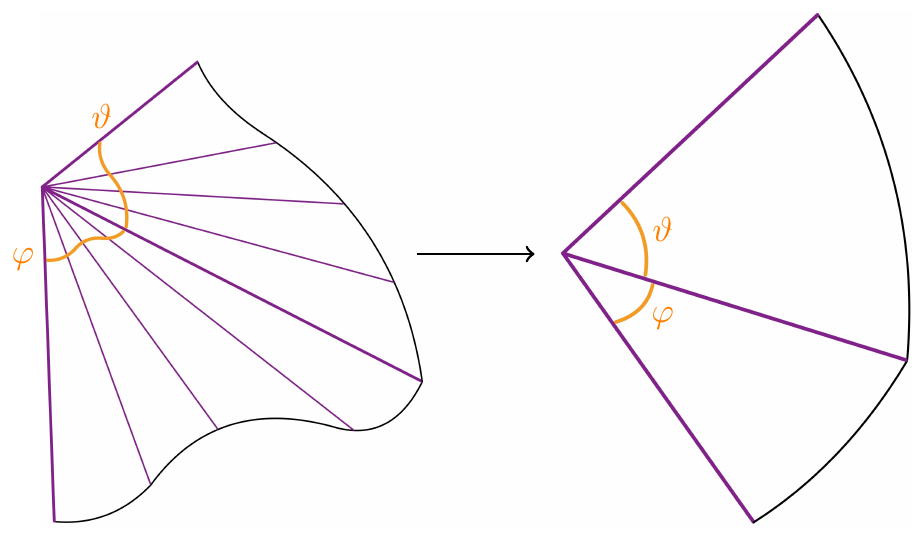}
    \caption{The sketch shows, how we adjust the parametrization of the directions, to flatten the geodesic surfaces. This way the angular parameters are more representative of the angular structure at that point.}
    \label{fig: Flatten geodesic surfaces}
\end{figure}

We end up with the parametrization:
\begin{equation}\label{eq: Omega2}
    \hat{\Omega}^{(2)}: [0,2\pi)\times\left(-\frac{\pi}{2},\frac{\pi}{2}\right) \ \to \ \mathcal{S}^2, \qquad
    (\varphi,\vartheta) \ \mapsto \ \hat{\Omega}^{(2)}(\varphi,\vartheta) = \begin{pmatrix}
    \cos\varphi \\ \cos\vartheta\sin\varphi \\ \sin\vartheta\sin\varphi
    \end{pmatrix}
\end{equation}
By comparing \eqref{eq: Omega2} with the 2-sphere in the Supplemental Material we see, that the angular structure at $o$ is just the flow from $o_{\mathcal{S}^2} = \hat{0}$ on the $2$-sphere. The first angular parameter $\varphi$ coincides with the arclength $\lambda_{\mathcal{S}^2}$ and the second angular parameter $\vartheta$ is the same as the first and only angular parameter $\varphi_{\mathcal{S}^2}$ of the geodesic flow on $\mathcal{S}^2$: $\varphi = \lambda_{\mathcal{S}^2}, \ \vartheta = \varphi_{\mathcal{S}^2}$.
So, we connected the geodesic flow on a $3$-dimensional manifold, which has $2$ angular degrees of freedom, with the one on the $2$-dimensional sphere, which has $1$ angular degree of freedom.\\
Furthermore, we can parametrize the flow from $o$ on the $3$-sphere, using the flow from $o$ on the $2$-sphere and then precede to parametrizing the flow on the $4$-sphere with the one on the $3$-sphere and so on, see Fig.~\ref{fig: sphere flow chain}.
\begin{equation}\label{eq: angular structure sequence}
    \gamma_{o,\hat{\Omega}^{(0)}}^{\mathcal{S}^1}(\lambda) \ \overset{\begin{matrix}
    \lambda\mapsto\varphi
    \end{matrix}}{\longrightarrow} \ \gamma_{o,\hat{\Omega}^{(1)}(\varphi)}^{\mathcal{S}^2}(\lambda) \ \overset{\begin{matrix}
    \lambda\mapsto\varphi \\ \varphi\mapsto\vartheta
    \end{matrix}}{\longrightarrow} \gamma_{o,\hat{\Omega}^{(2)}(\varphi,\vartheta)}^{\mathcal{S}^3}(\lambda) \
    \overset{\begin{matrix}
    \lambda\mapsto\varphi \\ \varphi\mapsto\vartheta_1 \\ \vartheta\mapsto\vartheta_2
    \end{matrix}}{\longrightarrow} \gamma_{o,\hat{\Omega}^{(3)}(\varphi,\vartheta_1,\vartheta_2)}(\lambda) \ 
    \overset{\begin{matrix}
    \lambda\mapsto\varphi \\ \varphi\mapsto\vartheta_1 \\ \vartheta_1\mapsto\vartheta_2 \\ \vartheta_2\mapsto\vartheta_3
    \end{matrix}}{\longrightarrow} \ \ldots
\end{equation}

That way we can construct a meaningful parametrization in the sense, that all parameters correspond to angles i.e. arc-lengths on the unit sphere and not some unknown function of them.
\begin{equation}\label{eq: direction vector}
    \hat{\Omega}^{(n)}: \ [0,2\pi)\times\left(-\frac{\pi}{2},\frac{\pi}{2}\right)^{n-2} \ \to \ \mathcal{S}^{n-1}, \qquad \hat{\Omega}^{(n-1)}(\varphi,\vartheta_1,\ldots\vartheta_{n-2})
    = \begin{pmatrix} \cos\varphi \\
    \cos\vartheta_{n-2}\cos\vartheta_{n-3}\ldots\cos\vartheta_1\sin\varphi \\
    \sin\vartheta_1\sin\varphi \\ \sin\vartheta_2\cos\vartheta_1\sin\varphi \\
    \sin\vartheta_3\cos\vartheta_2\cos\vartheta_1\sin\varphi \\ \vdots \\
    \sin\vartheta_{n-2}\cos\vartheta_{n-3}\ldots\cos\vartheta_1\sin\varphi
    \end{pmatrix}
\end{equation}

\begin{figure}[h!]
    \centering\includegraphics[width=.8\linewidth]{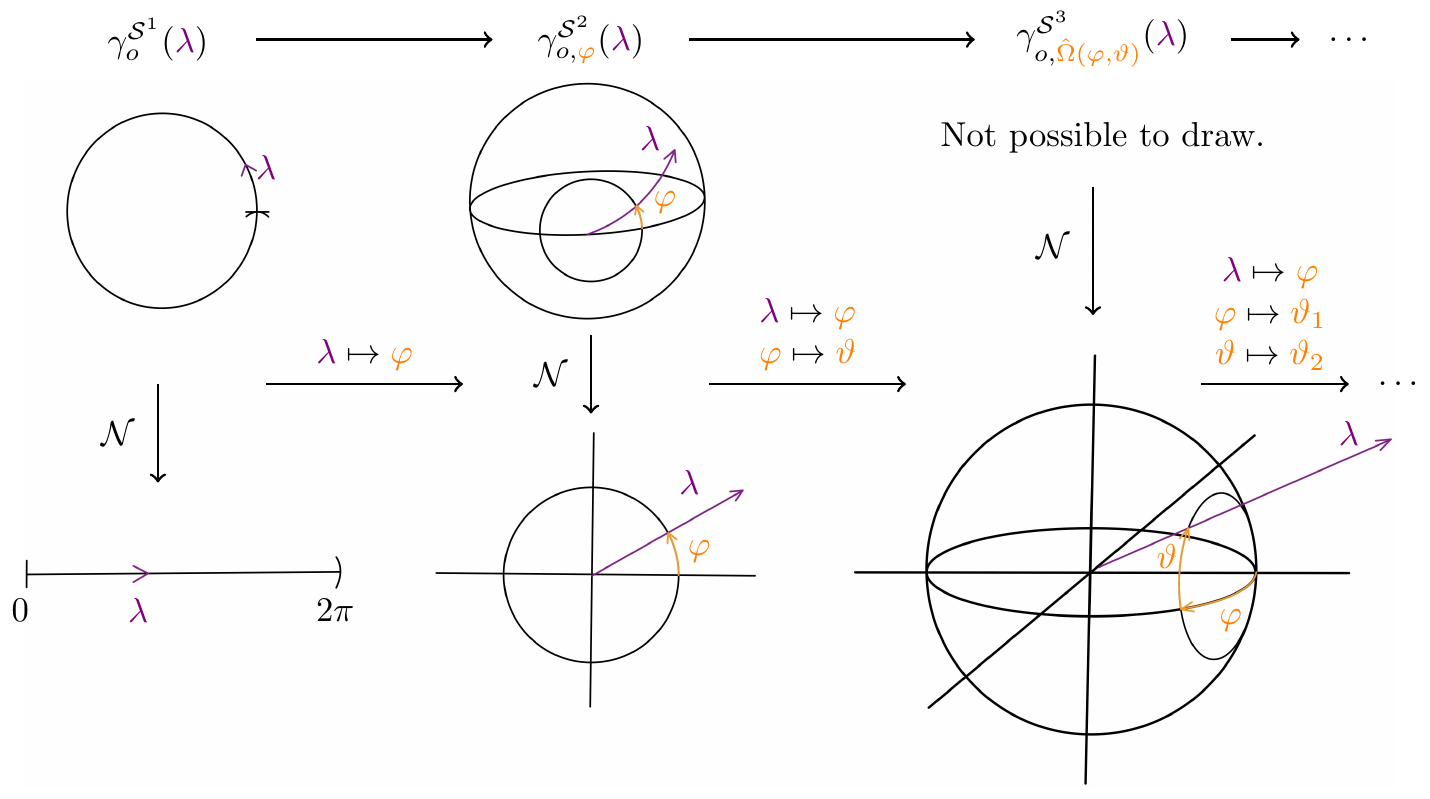}
    \caption{The sketch shows, how we use the flow from a point $o$ on the lower dimensional sphere to parametrize the directions on the next higher dimensional sphere. We draw the spheres which can be drawn, that is $\mathcal{S}^1$ and $\mathcal{S}^2$. We also draw the flow from $o$ in faithful normal coordinates which we introduce in Sec.~\ref{sec: Faithful normal chart}, since we can draw this up to $\mathcal{S}^3$.}
    \label{fig: sphere flow chain}
\end{figure}

Using the map $\Phi_\mathcal{D}$ we can parametrize the directions:
\begin{equation}
    [0,2\pi)\times\left(-\frac{\pi}{2},\frac{\pi}{2}\right)^{n-2} \ \overset{\hat{\Omega}^{(n-1)}}{\to} \
    \mathcal{S}^{n-1} \ \overset{\Phi_\mathcal{D}^{-1}}{\to} \ \mathcal{D}_p, \qquad
    (\varphi,\vartheta_1,\ldots,\vartheta_{n-1}) \ \mapsto \ \hat{\Omega} \ \mapsto \ \Omega
\end{equation}
and the angle between two parametrized directions is calculated via:
\begin{equation}
    \measuredangle(\Omega,\Omega') = \cos^{-1}\hat{\Omega}\cdot\hat{\Omega}', \quad
    \hat{\Omega} = \imath\circ\Phi_\mathcal{D}(\Omega)
    = \imath\circ\Phi_\mathcal{D}\circ\Phi_\mathcal{D}^{-1}\circ\hat{\Omega}^{(n-1)}(\varphi,\vartheta_1,\ldots,\vartheta_{n-1})
    = \imath\circ\hat{\Omega}^{(n-1)}(\varphi,\vartheta_1,\ldots,\vartheta_{n-1})
\end{equation}

We will from now on identify the directions with the positions on the sphere and the images of the map $\imath\circ\hat{\Omega}^{n-1}$ and abbreviate it with $\hat{\Omega}$.\\

The angular labelling of the flow from a point $o$ may seem very similar to the exponential map. We explain the difference between the two concepts in Appendix~\ref{sec: Exp. map}.\\

\subsection{Geodesic Flow Bundle}
So far we have defined the distances from a point $o$ to any other point $p$ in $U\subset\mathcal{M}$ and the angles at $o$. To complete the geometric information we need a geodesic flow at every point. Just as we organize the tangent spaces at all points into a bundle, we now organize all geodesic flows into the geodesic flow bundle.

\begin{definition}[Geodesic Flow Bundle]
We define the geodesic flow bundle by:
\begin{equation}
    \Phi\mathcal{M} \coloneq \bigcup_{x\in\mathcal{M}} \{x\}\times\phi_x = \bigcup_{x\in\mathcal{M}} \{x\}\times\{ \gamma_{x,\hat{\Omega}} \}_{\hat{\Omega}\in\mathbb{S}^{n-1}}
\end{equation}
It is often handy, to collect the geodesic flows in a function. Thus, we introduce the flow bundle function:
\begin{equation}
    \Phi: \ \mathcal{M}\times\mathbb{S}^{n-1}\times I \ \to \ \mathcal{M}, \qquad
    (x,\hat{\Omega},\lambda) \ \mapsto \ \Phi(x;\hat{\Omega},\lambda)
\end{equation}
which we will for most cases consider to be smooth and has to satisfy the following consistency conditions:
\begin{align}
    &\bullet\quad \gamma_{x,\Omega}(\lambda) = y \quad \Rightarrow \quad
    \gamma_{y,\Omega}(\tau) = \gamma_{x,\Omega}(\lambda+\tau),\ \tau\in(0,\varepsilon) \ \wedge \
    \gamma_{y,-\Omega}(\tau) = \gamma_{x,\Omega}(\lambda-\tau),\ \tau\in(0,\lambda) \label{eq: consistency} \\
    &\bullet\quad \forall\,x,y,z\in\mathcal{M}, \text{ with } \gamma_{x,\Omega}(\lambda) = z, \
    \gamma_{x,\Omega'}(\tau) = y \text{ and } \gamma_{y,\omega}(t) = z : \quad
    \lambda \leqslant \tau + t \quad \text{(triangle inequality)} \label{eq: triangle ineq.}\\
    &\bullet\quad \lambda = \tau + t \quad \Rightarrow \quad \Omega = \Omega', \quad
    \wedge \quad \gamma_{y,\omega}(t) = \gamma_{x,\Omega}(\tau+t), \label{eq: aligned case}
\end{align}
\end{definition}
\begin{remark}
Here $\Omega$ denotes the respective directions at the point where the geodesic heads out. More explicitly: when the first index is the point $x$, then the second index $\Omega$ is a direction at $x$.
\begin{equation}
    \gamma_{x,\Omega} \quad \Rightarrow \quad \Omega = \Omega(x) = [\gamma_{x,\Omega}]_\sim \in \mathcal{D}_x
\end{equation}
\end{remark}

The directions are supposed to be interpreted in the following way:
\begin{align}
    \Omega(y) = [\tau\mapsto\gamma_{x,\Omega}(\lambda+\tau)]_\sim \in \mathcal{D}_y, \quad
    -\Omega(y) = [\tau\mapsto\gamma_{x,\Omega}(\lambda-\tau)]_\sim \in \mathcal{D}_y
\end{align}
Note however, that we will see in the next subsection, that in a curved manifold it is not possible to label all geodesics in a way that the direction labels remain constant along the geodesics. We can do this for the geodesics from one point, we chose the ones from the origin $o$, but will have changing direction labels for geodesics which do not cross $o$.\\

Intuitively we think of a geodesic as a path which locally always goes straight on i.e. never changes it's direction. The consistency condition~\eqref{eq: consistency} incorporates that, by demanding, that if we head out from a point $y$ on a geodesic in the direction or opposite direction of that geodesic, then that geodesic from $y$ overlaps with the previous one.\\
The third condition~\eqref{eq: aligned case} asserts, that in the case where all three points are aligned on the same geodesic the triangle inequality equality~\eqref{eq: triangle ineq.} reduces to the consistency condition~\eqref{eq: consistency}.
\begin{equation}
    \text{The distances are now defined by:} \qquad
    d(x,y) = \lambda, \quad \gamma_{x,\hat{\Omega}}(\lambda) = y \ \wedge \ \gamma_{y,-\hat{\Omega}}(\lambda) = x
\end{equation}

The definition of the geodesic flow bundle is valid for arbitrary cases including the case of a space-time. What requires adapting is the angular structure, where the Riemannian case forms a part of the pseudo-Riemannian angular structure. The other parts work analogously with hyperbolas replacing the circles.

\subsubsection{Angular structure on the Geodesic Flow Bundle}\label{sec: global angular structure}
Since we are dealing with the directions at every point $p\in\mathcal{M}$ we need to promote our parametrization function to a field on $\mathcal{M}$:
\begin{equation}
    \hat{\Omega}^{(n-1)}_p: \ \mathcal{M} \ \to \ \text{Diff}(P,\mathcal{S}^{n-1}), \quad
    p \ \mapsto \ \hat{\Omega}^{(n-1)}_p, \qquad
    P = [0,2\pi)\times\left(-\frac{\pi}{2},\frac{\pi}{2}\right)^{n-2}
\end{equation}
The vector space $P$ (or subset of a vector space) in general denotes any viable angular parameter space but we stick to our choice above to assure consistency throughout this work.\\

We continue our angular labelling scheme from $o$ to any other point $p\in\mathcal{M}$, while satisfying the consistency condition~\eqref{eq: consistency} and restricting ourselves to angles as parameters. We can incorporate~\eqref{eq: consistency} more directly into the angular labelling of the flow from $o$ by keeping the angular label constant i.e. assigning the same label to $\Omega(p)$ and $\Omega(o)$.\\

We start with the $2$-dimensional case, where we had the parametrization function $\hat{\Omega}^{(1)}_o(\varphi)$ at $o$ and thus the geodesics from $o$ where labelled with $\gamma_{o,\varphi}$. Let $p\in\mathcal{M}$ be the point, which can be reached by heading out in direction $\alpha$ from $o$ and covering the distance $l$, then the label of two directions at $p$ are already determined by~\eqref{eq: consistency}:\\
\begin{align}
    &p = \gamma_{o,\alpha}(l); \quad \gamma_{o,\alpha}(l+\lambda) \overset{!}{=} \gamma_{p,\alpha}(\lambda) \
    \wedge \ \gamma_{o,\alpha}(l-\lambda) \overset{!}{=} \gamma_{p,\alpha+\pi}(\lambda) \\
    &\Rightarrow \quad
    \alpha \overset{\hat{\Omega}^{(1)}_p}{\mapsto} \Omega_\alpha \coloneq [\gamma_{o,\alpha}(l+\lambda)]_\sim \ \wedge \ \alpha+\pi \overset{\hat{\Omega}^{(1)}_p}{\mapsto} \Omega_{\alpha+\pi} \coloneq [\gamma_{o,\alpha}(l-\lambda)]_\sim, \quad \Omega_\alpha,\,\Omega_{\alpha+\pi}\in\mathcal{D}_p,
\end{align}
If we now demand, that the angular parametrization is smooth, especially at $o$ and that $P$ is the same and corresponds to angles everywhere, then the parametrization field $\hat{\Omega}^{1}_p$ is uniquely determined on $\mathcal{M}$.
\begin{equation}
     \lim_{l\to0}\hat{\Omega}^{(1)}_p = \hat{\Omega}^{(1)}_o \quad \Rightarrow \quad
     \measuredangle_p([\gamma_{o,\alpha}(l+\lambda)]_\sim,[\gamma_{p,0}]_\sim) \overset{!}{=} \alpha \ \wedge \
    \gamma_{p,0} = \gamma_{p,\alpha-\alpha}
\end{equation}
Consequently the geodesic with angular label $0$ at $p$ is the one which encloses the angle $\alpha$ with $\gamma_{o,\alpha}$ at $p$ and lies in direction of decreasing $\varphi$. It is as expected uniquely determined but we may be a bit surprised when we look at the example of $\mathcal{S}^2$ plotted in Fig.~\ref{fig: geodesic flow bundle on S^2}, since $\gamma_{p,0}$ intuitively seems to point in a different direction than $\gamma_{o,0}$. If we move $p$ closer to the original geodesic $\gamma_{o,0}$ the angle $\alpha$ tends to $0$ and in that limit the two geodesics coincide.
\begin{equation}
    \lim_{\alpha\to0}\gamma_{p,\alpha} = \gamma_{p,0} \ \wedge \ \lim_{\alpha\to0}\gamma_{o,\alpha} = \gamma_{o,0} \quad \Rightarrow \quad \lim_{\alpha\to0} \gamma_{p,0}(\lambda) = \gamma_{o,0}(l+\lambda)
\end{equation}
We also note, that geodesics from $q$ do not enclose the same angle with the $0$-geodesic from another point $p$ and thus their angular label changes although we continue in the same direction.\\
\begin{figure}[h!]
    \centering
    \includegraphics[width=\linewidth]{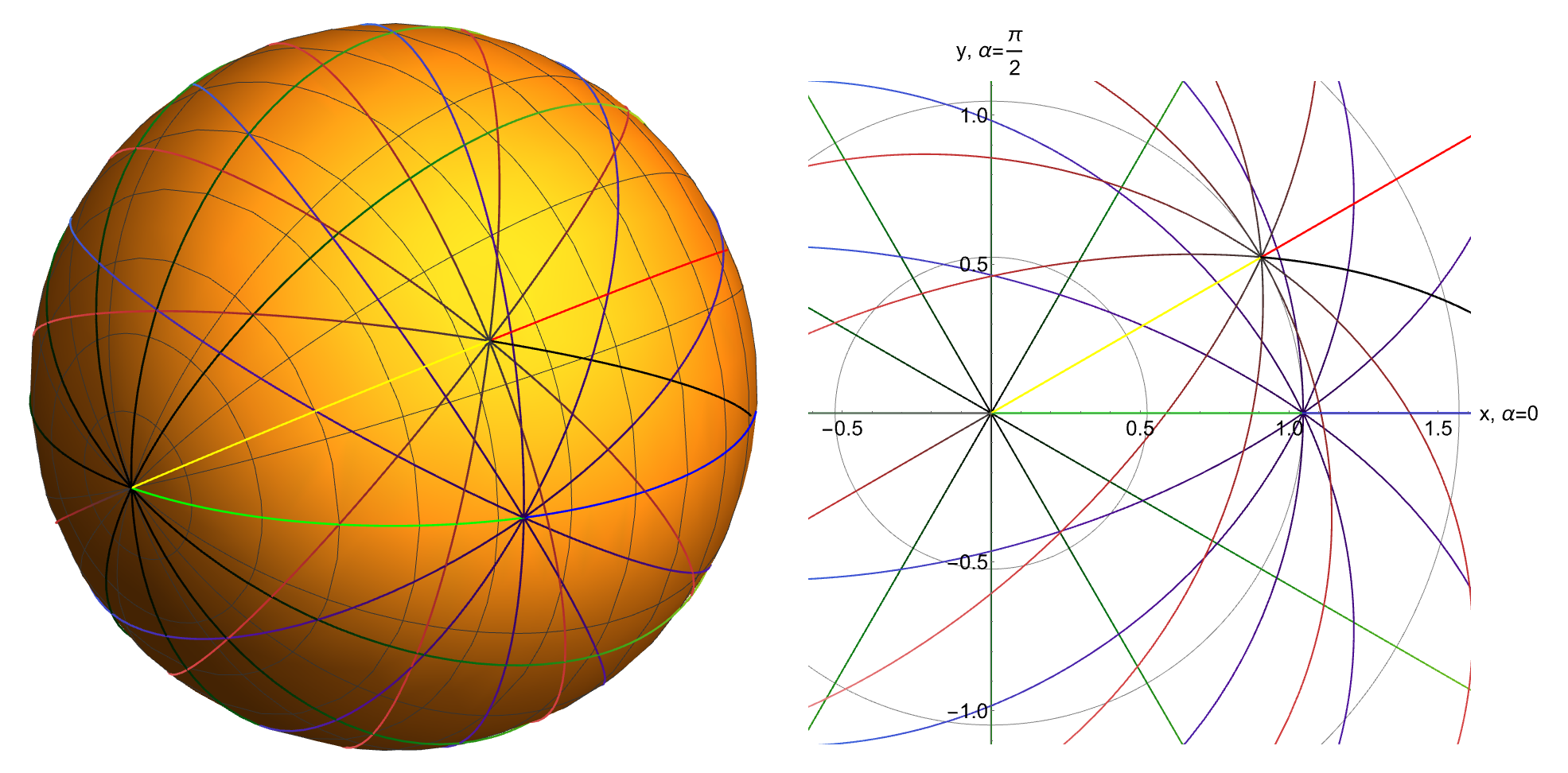}
    \caption{We plot a collection of curves from the geodesic flow bundle on $\mathcal{S}^2$ embedded in its ambient space $\mathbb{R}^3$ on the left and in the faithful normal chart at $o$, introduced in Sec.~\ref{sec: Faithful normal chart}, on the right. The green thick line is the original geodesic $\gamma_{o,0}$ in this example and is continued by the blue thick one $\gamma_{q,0}$ which is thus also labelled with $0$. The angle between the original geodesic and the yellow thick line at $o$ is $\alpha = \frac{\pi}{6}$ and thus the yellow thick line $\gamma_{o,\frac{\pi}{6}}$ and its continuation, the red thick line $\gamma_{o,\frac{\pi}{6}}$ have that angular label. The black thick line $\gamma_{p,0}$ is the one with the label $0$ and thus the angle between it and the red thick one is $\frac{\pi}{6}$ as well.}
    \label{fig: geodesic flow bundle on S^2}
\end{figure}

We are now ready to investigate the $3$-dimensional case. We again consider a point $p\in\mathcal{M}$ in an arbitrary direction $\hat{\Omega}_\alpha \coloneq \hat{\Omega}^{(2)}(\alpha_1,\alpha_2)$ at a distance $l$ from $o$. Then again the labels of the forward and backwards direction from $o$ are already determined.
\begin{align}
    &p = \gamma_{o,\hat{\Omega}_\alpha}(l); \quad \gamma_{o,\hat{\Omega}_\alpha}(l+\lambda) \overset{!}{=} \gamma_{p,\hat{\Omega}_\alpha}(\lambda) \
    \wedge \ \gamma_{o,\hat{\Omega}_\alpha}(l-\lambda) \overset{!}{=} \gamma_{p,-\hat{\Omega}_\alpha}(\lambda) \\
    &\Rightarrow \quad
    (\alpha_1,\alpha_2) \overset{\hat{\Omega}^{(2)}_p}{\mapsto} \Omega_\alpha \coloneq [\gamma_{o,\hat{\Omega}_\alpha}(l+\lambda)]_\sim \ \wedge \ (\alpha_1+\pi,\alpha_2) \overset{\hat{\Omega}^{(1)}_p}{\mapsto} -\Omega_\alpha \coloneq [\gamma_{o,-\hat{\Omega}_\alpha}(l-\lambda)]_\sim, \quad \Omega_\alpha,\,-\Omega_\alpha\in\mathcal{D}_p,
\end{align}

\begin{definition}[Geodesic Surfaces] Under the condition that geodesics form consistent surfaces a geodesic surface is defined as the image set of a one-parameter group of geodesics:\\
\begin{align}
    U_{\vec{\vartheta}} \coloneq \phi_{\vec{\vartheta}}^\varphi([0,2\pi)), \qquad
    \begin{matrix}
    \phi_{\vec{\vartheta}}^\varphi: & [0,2\pi) & \to & \Phi\mathcal{M} \\
    & \varphi & \mapsto & \gamma_{x,\hat{\Omega}^{(n-1)}(\varphi.\vec{\vartheta})}
    \end{matrix},
\end{align}
for $x$ and $\vec{\vartheta} = (\vartheta_1,\ldots,\vartheta_{n-2})$ fixed.
\end{definition}
\begin{remark}
These surfaces are naturally parametrized by $\lambda$ and $\varphi$ and are sub-flows of the flow from $x$.
\end{remark}

In $3$-dimensions the geodesic surfaces can be labelled with the $\alpha_2$-parameter, which represents the angle between the two geodesic surfaces $U_{\alpha_2}$ and $U_0$ at every point $p\in\mathcal{M}$, and can be written as sets:

\begin{equation}
    U_{\alpha_2} \coloneq \bigcup_\varphi\gamma_{o,\hat{\Omega}^{(2)}(\varphi,\alpha_2)}
\end{equation}

\begin{definition}[Plane Geodesic Flow Bundles]
When all geodesics in a geodesic surface $U_{\vec{\vartheta}}$ can be labelled consistently with $\vec{\vartheta}$, we call it a plane geometry and we say that it has a plane geodesic flow bundle.
\end{definition}
\begin{remark}
In a faithful normal chart (see Sec.~\ref{sec: Faithful normal chart}) geodesic surfaces appear as planes.
\end{remark}

We see on the example of the $3$-sphere that such geometries exists and we thus restrict ourselves to plane geometries in this work. We use this example to visualize the labeling scheme we describe in the following in Fig.~\ref{fig: Angular Structure}.\\
The first angular parameter of a geodesic heading out from a point $q = \gamma_{o,\hat{0}}(c)$ on the original geodesic will change if we look at the coinciding geodesic from a point $p = \gamma_{q,\hat{\Omega}^{(2)}(\beta_1,\beta_2)}(a)$ further up i.e.:
\begin{equation}
    \gamma_{q,\hat{\Omega}^{(2)}(\beta_1,\beta_2)}(a+\lambda) = \gamma_{p,\hat{\Omega}^{(2)}(\beta_1',\beta_2)} \quad \text{where in general:} \quad \beta_1 \neq \beta_1',
\end{equation}
as we can see from the discussion above of the $2$-dimensional case. The second angular parameter does not change however by our assumption (plane GFB). We will see in Sec.~\ref{sec: metric} that this assumption assures that the metric can be diagonalized.
\begin{figure}[h!]
    \centering\includegraphics[width=\linewidth]{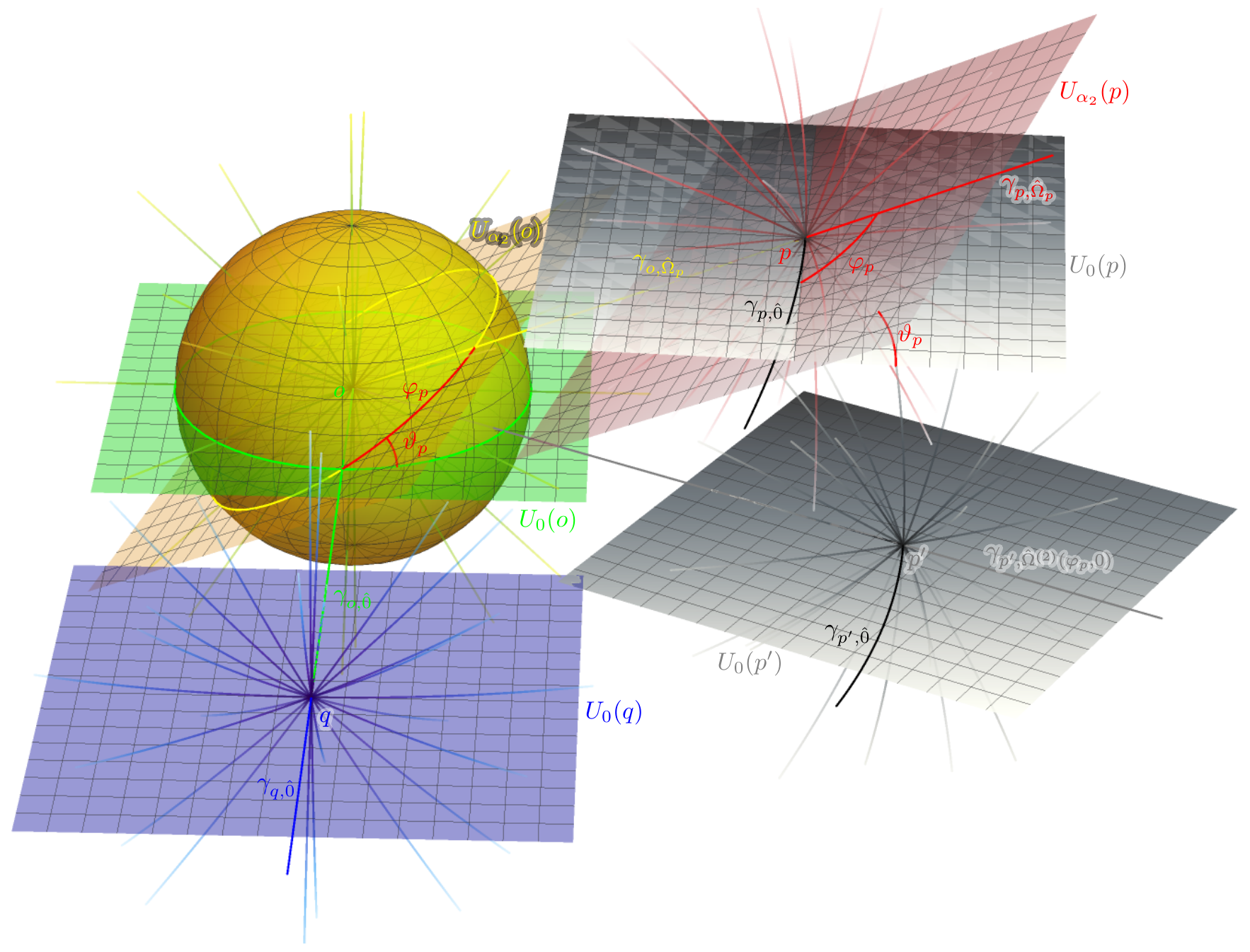}
    \caption{We sketch, how we rotate by $-\varphi_p$ in the plane $U_{\alpha_2}$, to find the geodesic $\gamma_{p,\hat{0}}$ in direction $\hat{0}$ from $p$ and then tilt the plane $U_{\alpha_2}$ by $-\alpha_2$, using $\hat{0}(p)$ as axis.}
    \label{fig: Angular Structure}
\end{figure}

To complete the labelling scheme let us investigate the $3$-dimensional analogue of the geodesic in original direction at $p$ and the consistency with the limiting cases:\\
Finding the geodesic with label $\hat{0}$ at $p$ is less straight forward in this case. In continuation of $\hat{\Omega}^{(2)}$ from $o$ to other points $p$ inherit the angular structure of the two dimensional case in the surfaces $U_{\alpha_2}$. So, we know from the previous case, that we just have to rotate $\hat{\Omega}_\alpha$ back by $\alpha_1$ in that surface $U_{\alpha_2}$ at $p$, to arrive at $\hat{0}_p$. In other words the direction labelled with $\hat{\Omega}^{(2)}_p(\alpha_1,\alpha_2)$ means, that it encloses the angle $\alpha_1$ with $\hat{0}_p = \hat{\Omega}^{(2)}(0,0)$ and is tangential to the surface $U_{\alpha_2}$. We interpret the angular labels at $p$ in the same way as we did at $o$, which guaranties consistency in the limiting case $l\to0$.\\
The original surface $U_0$ at $p$ is the one, whose intersection with $U_{\alpha_2}$ is $\gamma_{p,\hat{0}}$ and encloses the angle $\alpha_2$ with $U_{\alpha_2}$. In the plot in Fig.~\ref{fig: Angular Structure}, which is the $3$-dimensional analogue to Fig.~\ref{fig: geodesic flow bundle on S^2}, the tangent plane of $U_0$ at $p$ is not parallel to the ones at $o$ and $q$ but will agree with those in the limits of $a\to0$ or $l\to0$.\\

The demand, that we incorporate the consistency condition~\eqref{eq: consistency} into the angular labelling of the flow from $o$ and that we only use angles i.e. arclengths of the unit sphere to parametrize directions determines the angular parametrization of the entire geodesic flow bundle uniquely, given the angular parametrization at $o$.\\

The arguments given for the $3$-dimensional case can be generalized to $n$-dimensions since it is the lowest dimensional generic case. Surfaces then have multiple parameters $\alpha_2,\ldots,\alpha_{n-1}$, which are specific choices of the parameters $\vartheta_1,\ldots,\vartheta_{n-1}$. And we know now, that the angular parametrization at $o$ is carried along the geodesics from $o$ in a unique fashion.

\section{Faithful Normal Chart}\label{sec: Faithful normal chart}
The geodesic flow bundle induces a special case of normal coordinates at every point $o\in\mathcal{M}^n$. We can construct a chart on a neighborhood $U$ around $o$ by labelling every point $p\in U$ with the arclength parameter $\lambda$ and the angular parameters $\varphi,\vartheta_2,\ldots,\vartheta_{n-2}$ of the geodesic flow from $o$ to $p$.
\begin{definition}[Faithful Normal Chart]
The faithful normal chart (FNC) at a point $o\in\mathcal{M}$ is the tuple $(\mathcal{U},\mathcal{N}_o)$, where the map is defined as:
\begin{equation}
    \mathcal{N}_o: \ \mathcal{U}\subset \mathcal{M} \ \to \ \mathbb{R}^n, \quad
    p \ \mapsto \ (l,\varphi,\vartheta_1,\ldots,\vartheta_{n-2}), \qquad
    \hat{\Omega}\in\mathcal{S}^{n-1}, \ l\in\mathbb{R}_+ \quad \text{s.t.} \quad 
    p = \gamma_{o,\hat{\Omega}(\varphi,\vartheta_1,\ldots,\vartheta_{n-2})}(l).
\end{equation}
\end{definition}
We get a special normal chart, which correctly represents the angles at $o$ and the distances from $o$ to any other point $p$, which can be reached via geodesic from $o$.\\
The chart tells us, that we are supposed to head out in the direction $\hat{\Omega}(\varphi,\vartheta_1,\ldots,\vartheta_{n-2})$ and walk straight on for a distance $l$ to arrive at the point $p$. So, we are essentially completing the chart on $\mathcal{S}^{n-1}$, given by the inverse of the parametrization function $\hat{\Omega}^{-1}\circ\Phi_\mathcal{D}: \ \mathcal{D}_o \ \to \mathcal{S}^{n-1} \ \to \ P$, with the geodesic parametrization by arclength.\\

From here we can get a Cartesian version of the faithful normal coordinates by using the embedding $\imath$ of $\mathbb{S}^{n-1}$ at $o$ into $\mathbb{R}^n$, which we specified in~\eqref{eq: direction vector}:
\begin{equation}
    \mathcal{C}_o: \ \mathcal{U} \ \to \ \mathbb{R}^n, \quad p \ \mapsto \ \Vec{x} = (x_1,\ldots,x_n), \qquad
    \Vec{x} = l\,(\imath\circ\hat{\Omega}^{(n-1)})(\varphi,\vartheta_1,\ldots,\vartheta_{n-2})
\end{equation}
We can construct this embedding via group actions on $o$. An arbitrary point $p$ can be reached, by a translation along $\gamma_{o,\hat{0}}$ by c, a rotation in the $x_1,x_2$-plane with angle $\varphi$ at $o$ and subsequent rotations in the $x_2,x_i$-planes at $o$ with angles $\vartheta_i$, with $i\in\{3,\ldots,n\}$.
\begin{equation}
    \hat{\Omega}^{(n-1)}(\varphi,\vartheta_1,\ldots,\vartheta_{n-2}) = R_{x_2,x_n}(\vartheta_{n-2})\circ\ldots\circ R_{x_2,x_3}(\vartheta_1)\circ R_{x_1,x_2}(\varphi)\cdot\hat{0}
\end{equation}
where $\hat{0} = (1,0,\ldots,0)$.\\

It is more natural to work with the polar form, since this does not require an additional construction, but we use this mapping to generate plots.\\

In this chart the geodesic surfaces $U_{\vartheta_1,\ldots,\vartheta_{n-1}}$ are planes and thus all geodesics lie in these planes through $o$. The parameters $\vartheta_1,\ldots,\vartheta_{n-1}$ describe the angles by which these planes are tilted from the original plane $U_{0,\ldots,0}$. So, we basically accomplished to embed the $2$-dimensional case in the $n$-dimensional one and we dealt with the relations between these planes. This will allows us to reduce an $n$-dimensional problem into a $2$-dimensional one.\\
We will see in Sec.~\ref{sec: Curvature} that this plane structure is exactly what we need, to describe curvature in our formalism and relate it tho the geodesic flow bundle.\\

The faithful normal coordinates would precisely represent how an observer at $o$ would think about a Riemannian manifold. The straight lines are paths which the observer would follow, if it would decide to walk straight on. If the observer would after a distance $c$ decide to change direction and continue walking straight on, then that path would also appear as a straight line from $o$, since it lies in the same geodesic plane.\\

The geodesic flow bundle $\Phi\mathcal{M}$ does not require charts but provides an atlas $\mathcal{A}_{FNC} \coloneq \bigcup_{p\in\mathcal{M}}\mathcal{N}_p$ which is tightly related to it and carries it's entire geometric information. In other words we can choose charts, but we don't have to, if we don't want to.\\

We demonstrate the the geodesic flow bundle in faithful normal charts on the Manhattan metric and explain the calculations to the plot in Fig.~\ref{fig: geodesic flow bundle on S^2} in detail in the Supplemental Material.\\
From the view point of the metric formalism we would expect, that we would now introduce a notion of connection and parallel transport, to define the Riemann curvature tensor. We find in Appendix~\ref{sec: Parallel transport} that a natural notion of parallel transport is already ingrained in the GFB formalism. We do however not need it to relate the GFB to curvature as we will see in the following section. The parallel transport will become useful, when one wants to relate the GFB formalism to the Riemann tensor and can provide a consistency check of our results, which we outline in Appendix~\ref{sec: geod. eq. test recipe}.

\section{Curvature}\label{sec: Curvature}
In this section we define a notion of curvature, which is suited to our approach. We work from the assumption, that curvature is a property of any point in the manifold, which may be described by some set of numbers. A property which can in a sense be seen as the local source or generator of the manifolds global geometry.
\begin{equation}
    K: \ \mathcal{M} \ \to \ A, \quad p \ \mapsto \ K(p), \qquad A \text{ a set.}
\end{equation}
Since we view the curvature as the source of the geometry we are looking for a way to calculate the geodesic flow bundle from the values of the field.\\
In the metric formalism we work from the geometry and define a quantity which "measures" an effect of the non-flatness of that geometry. Thus the defined curvature quantities describe an effect of some source and not the source itself.

\subsection{Relating geodesics to curvature}\label{sec: relating geod. to curvature}
We saw, that in faithful normal coordinates all geodesics in plane GFB's lie in planes through $o$ and all geodesics from $o$ are straight lines. Thus the entire geometric information lies in the relation between two flows. More specifically in the flow from a point $p$ expressed in terms of of the flow from another point $o$. This is exactly describing the flow from $p = \gamma_{o,\hat{\Omega}_p}(c)$, with $\hat{\Omega}_p = \hat{\Omega}^{(n-1)}(\varphi_p,\vartheta_{1,p},\ldots,\vartheta_{n-2,p})$ in faithful normal coordinates at $o$: $\mathcal{N}_o(\gamma_{p,\hat{\Omega}}(\lambda))$. But we already know, that $\mathcal{N}_o(\gamma_{p,\hat{\Omega}}(\lambda))$ lies in a plane through $o$ which turns this into a $2$-dimensional problem as sketched in Fig.~\ref{fig: dimensional reduction}. We can without loss of generality consider the geodesic triangle spanned by $o$ and $\gamma_{q,\hat{\beta}}(\lambda)$, with $\hat{\beta} = \hat{\Omega}^{(n-1)}(\beta,0,\ldots,0)$, lying on the original geodesic $q = \gamma_{o,\hat{0}}(c)$, since a generic triangle spanned by a geodesic $\gamma_{p,\hat{\Omega}_\beta}$, with $\hat{\Omega}_\beta = \hat{\Omega}^{(n-1)}(\beta_1,\ldots,\beta_{n-1})$ from a point $p = \gamma_{o,\hat{\Omega}_p}(l)$ is tilted by the angles $\beta_2,\ldots,\beta_{n-2}$, then rotated in the $x_1,x_2$-plane with angle $\varphi_p$ and then again everything is tilted with the angles $\vartheta_{1,p},\ldots,\vartheta_{2,p}$.
\begin{figure}[h!]
    \centering\includegraphics[width=\linewidth]{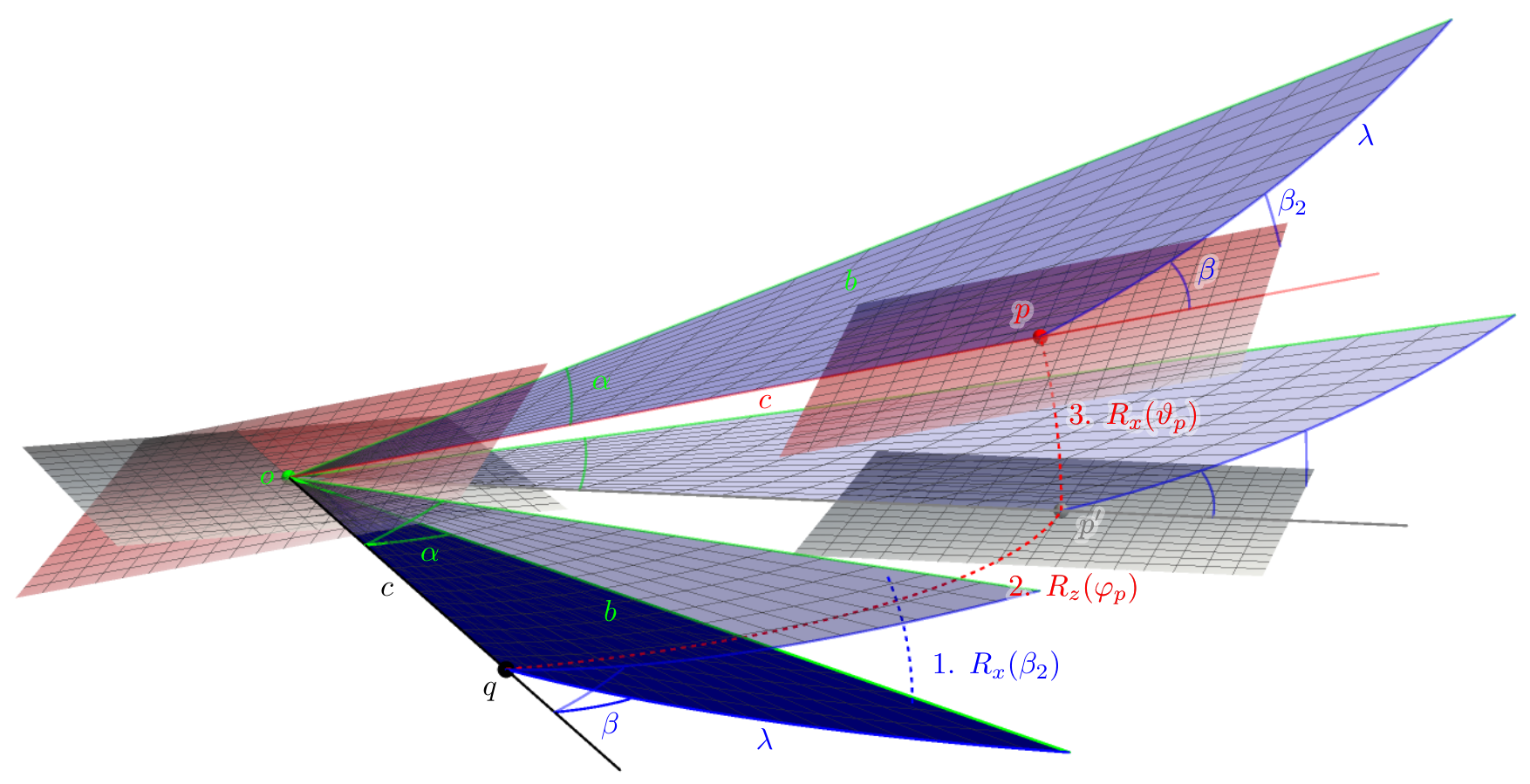}
    \caption{We sketch, how we tilt and rotate the geodesic triangle from the original plane to the point $p$ with tilt $\beta_2$ with respect to the rotated original plane (red).}
    \label{fig: dimensional reduction}
\end{figure}

In other words our generic problem is expressing the geodesic $\gamma_{q,\beta}(\lambda)$ from $q = \gamma_{o,0}(c)$ in faithful normal coordinates in $2$ dimensions at $o$. This automatically leads us to a triangle, since we need to find the geodesic $\gamma_{o,\alpha}(l)$ connecting $o$ with $p = \gamma_{q,\beta}(\lambda)$. More precisely we need to find the arclength of the top-line $b$ and the opening angle $\alpha$, given the distance to $q$ i.e. arc-length of the base line $c$, direction angle $\beta$, arclength $\lambda$ and a curvature field $K$.
\begin{equation}\label{eq: l and varphi}
    \left.\mathcal{N}_o(\gamma_{q,\hat{\beta}}(\lambda))\right|_{U_0} = \binom{b(\lambda)}{\alpha(\lambda)}, \qquad
    b(K;\beta,c,\lambda), \quad \alpha(K;\beta,c,\lambda), \qquad \hat{\beta} = \hat{\Omega}^{(n-1)}(\beta,0,\ldots,0)
\end{equation}
We call $b$ and $\alpha$ the fundamental solution to the curved triangle problem and we describe in Sec.~\ref{sec: Triangulation} and the following ones, how we calculate these two quantities from a curvature field on the geodesic triangle.\\

\subsection{Infinitesimal triangles as geometry generators}\label{sec: geometry generators}
In this section we investigate the curvature at a single point $o$ and since the entire geometric information lies in the relation between two flows (the flow from the origin point $o$ in FNC looks the same for every geometry), we consider the flow from a neighbouring point $q = \gamma_{o,0}(\varepsilon)$ at a fixed but arbitrarily small distance $\varepsilon$ from $o$. We then trace out the first infinitesimal part of the flow from $q$ i.e. only until an arbitrarily small parameter value $\delta\lambda>0$.\\
The important question is now, whether a single value determines the infinitesimal start of all the geodesics of the flow from $q$ i.e. the distribution $\delta l(\beta)$ only has one degree of freedom or whether one can do more to it. We aim to answer the following question \textit{Is it possible to choose $\delta l(\beta)$ differently for every $\beta$ or are there restrictions to that?}\\

\begin{lemma}
We find that the flat space limit restricts $\delta l(\beta)$ to one degree of freedom. Thus, there is a single curvature value per infinitesimal triangle.
\end{lemma}

\begin{proof}
If we assume, that an infinitesimal triangle in an arbitrary manifold can be sufficiently described by a spherical, flat, or pseudo-spherical one, then the first infinitesimal part of the geodesic is uniquely defined by the two infinitesimals $\delta\lambda$, $\delta l$ and the angle $\beta$, since with these a point can uniquely be determined. The distance coordinate can be calculated from a Gaussian curvature value $K$, using the cosine law for constant curvature
\begin{equation}
    \delta l(K,\beta) = \cos_K^{-1}\left( \cos_K\varepsilon\cos_K\delta\lambda + K\sin_K\varepsilon\sin_K\delta\lambda\cos(\pi-\beta)
    \vphantom{\sqrt{2}}\right), \ \cos_Kx \coloneq \cos(\sqrt{K}x), \ \sin_Kx \coloneq \frac{\sin(\sqrt{K}x)}{\sqrt{K}}
\end{equation}
and from there we can calculate the angle coordinate, using the sine law for constant curvature and thus obtain the first infinitesimal segment of a geodesic from $q$ in faithful normal coordinates:
\begin{figure}[h!]
    \begin{minipage}{0.5\linewidth}
        \begin{align}
            &\varphi(K,\beta) = \sin^{-1}\left( \frac{\sin_K\delta\lambda}{\sin_K\delta l(K)}\sin(\pi-\beta) \right), \\
            &\mathcal{N}_o(\gamma_{q,\beta}(\delta\lambda)) = \binom{\delta l}{\varphi}(K,\beta)
        \end{align}
        \begin{align*}
            &\forall\varepsilon, \delta\lambda > 0, \quad \forall\beta\in[0,2\pi) \quad \exists K\in\mathbb{R} \\
            &\text{to generate any consistent distribution} \quad \delta l(\beta)
        \end{align*}
    \end{minipage}
    \begin{minipage}{.5\linewidth}
        \centering\includegraphics[width=\linewidth]{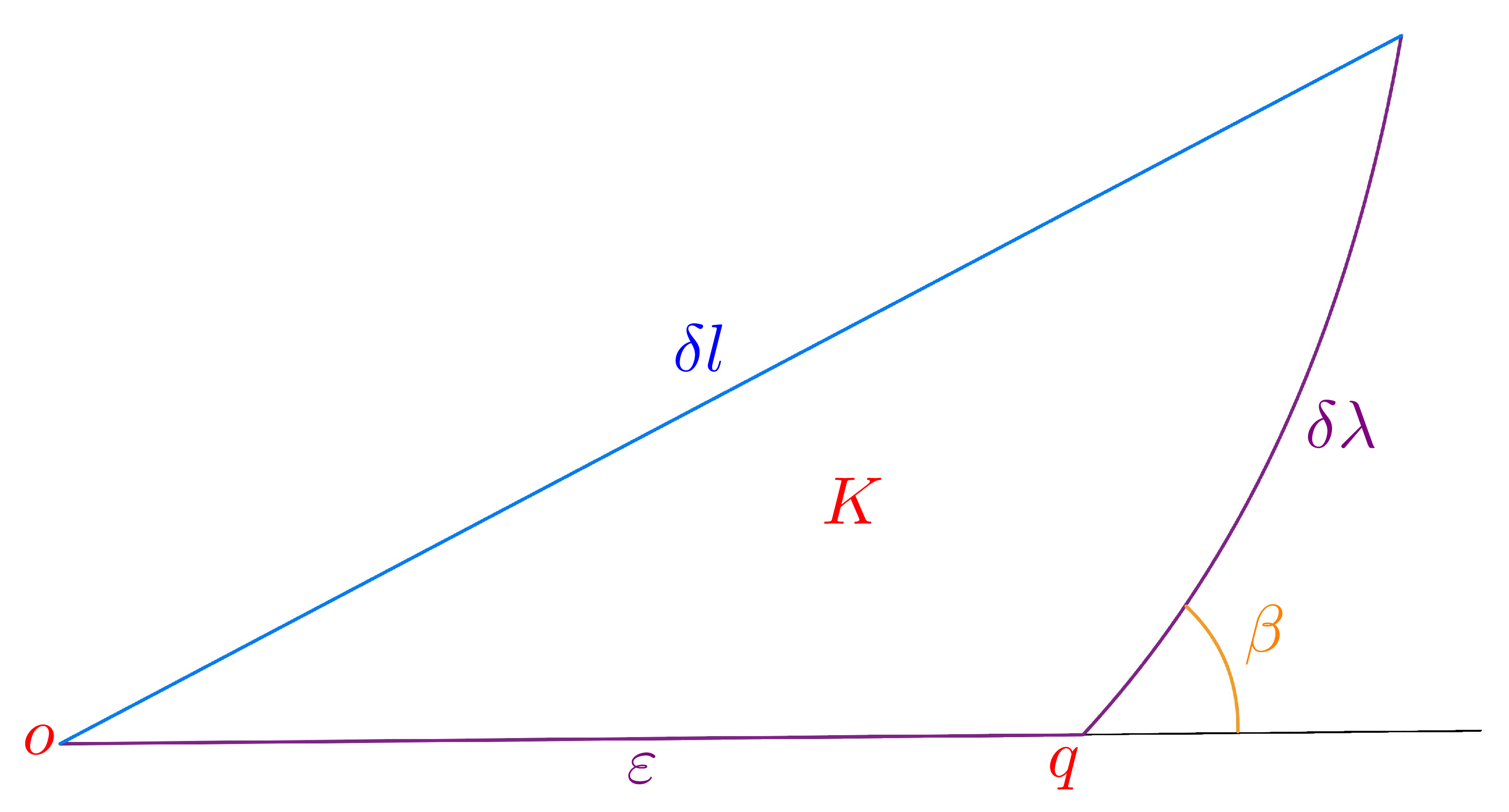}
    \end{minipage}
    \caption{A sketch of the infinitesimal triangle.}
    \label{fig:my_label}
\end{figure}

The question is though, whether this allows us to generate all consistent distributions (geodesic stomps) $\delta l(\beta)$. Every Riemannian manifold is locally flat, which means that in the limit of $\varepsilon,\delta\lambda\to0$ the relation between the $4$ quantities has to converge to the cosine law in a plane:
\begin{equation}\label{eq: flat limit}
    \delta l^2(\beta) \overset{\varepsilon,\delta\lambda\to0}{\longrightarrow}
    \varepsilon^2 + \delta\lambda^2 - \varepsilon \delta\lambda\cos\beta
\end{equation}
We cannot modify the relation for a flat space, by adding a term of second order or lower, since we would violate the limit above~\eqref{eq: flat limit}. On the other hand, only the leading order matters, since we are considering an infinitesimal triangle. Thus, the only change we can make whilst still satisfying~\eqref{eq: flat limit} is multiplying both sides with a constant factor $K$ which is exactly what the constant curvature cosine law does in this limit.
\begin{align}
    &\delta l^2 = \varepsilon^2 + \delta\lambda^2 - \varepsilon \delta\lambda\cos(\pi-\beta) \quad \overset{K\in\mathbb{R}}{\mapsto} \quad \cos_K\delta l = \cos_K\varepsilon\cos_K\delta\lambda + K\sin_K\varepsilon\sin_K\delta\lambda\cos(\pi-\beta) \\
    &\overset{\varepsilon,\delta\lambda\to0}{\longrightarrow} \quad
    K\delta l^2 \sim K\varepsilon^2 + K\delta\lambda^2 - K\varepsilon \delta\lambda\cos(\pi-\beta)
\end{align}
\end{proof}

So, in general the geodesics from $q$ are generated by a sequence of triangles spanned by the side lengts $\delta\lambda$ with curvature values $K_1, K_2, ..$ as sketched in Fig.~\ref{fig: generating a geodesic}. In the limit of $\delta\lambda\to0$ we take infinite curve segments and a direction dependent curvature field $K(\varphi)$ at $o$ and we get a flow with finite arclength.
\begin{figure}[h!]
    \centering\includegraphics[width=.5\linewidth]{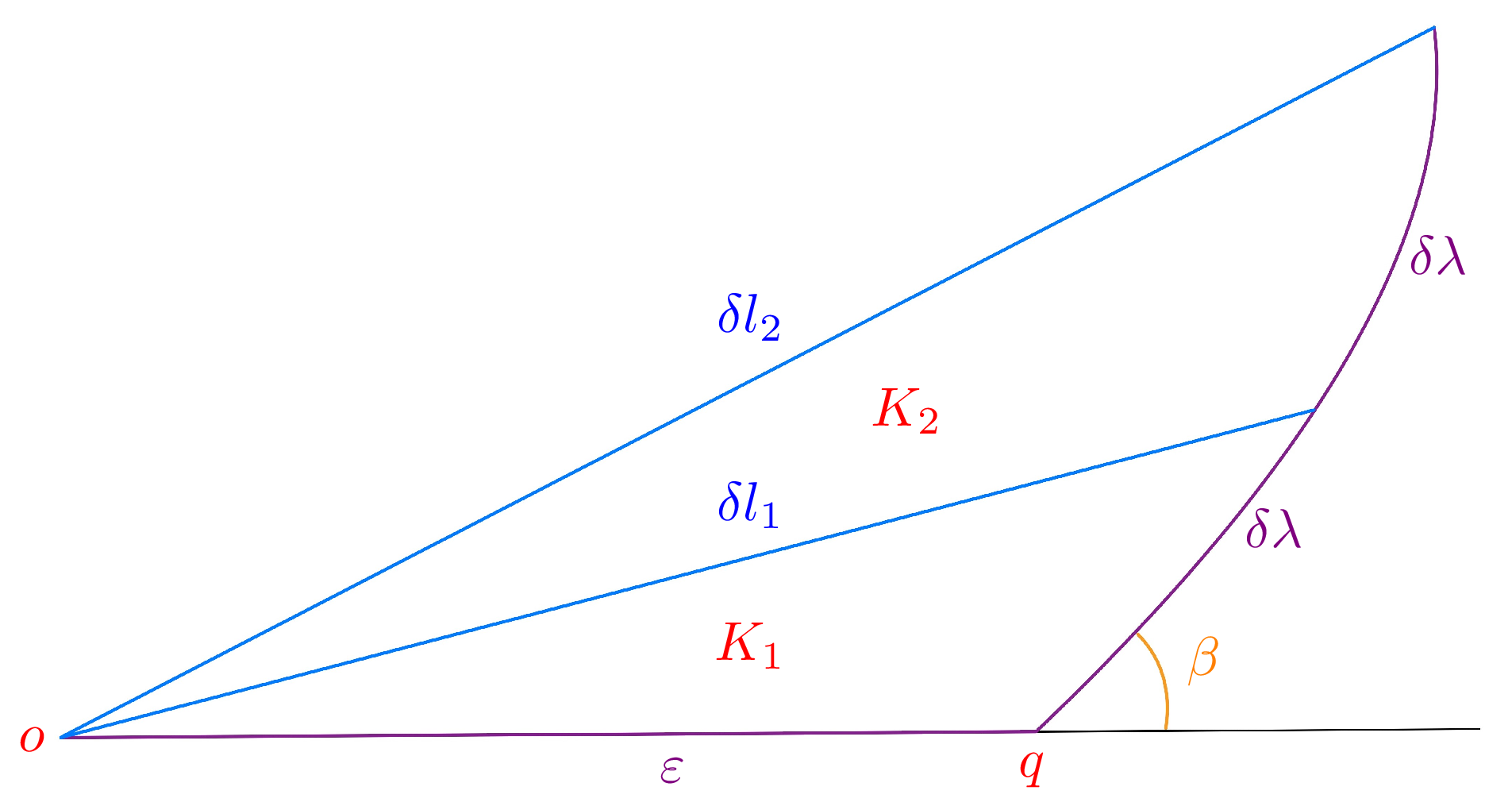}
    \caption{A sketch that visualizes, how infinitesimal triangles generate a geodesic.}
    \label{fig: generating a geodesic}
\end{figure}

We conclude that any geodesic flow bundle and thus any geometry can be generated by infinitesimal triangles with constant curvature. More precisely, if we divide a manifold into an increasingly fine sampling of constant curvature triangles, we can describe any geometry.\\
This means that the curvature degrees of freedom of a $2$-dimensional manifold are one curvature value per infinitesimal triangle which in terms of this construction here means, that every point can at most have a direction dependent curvature field $K(\varphi)$, because we can make the triangles as slim as we like. So, measuring the curvature at a point $o$ could be achieved by pointing an arbitrarily slim triangle attached to $o$ in different directions and thus scanning the curvature at that point like a radar.\\
The same argument can be made for a space-time using the cosine- and sine-laws for triangles on a pseudo-Riemannian hyperboloid.\\

\subsection{Curvature in 2 dimensions}
\begin{lemma}
The curvature at any point in a $2$-dimensional real Riemannian manifold can be sufficiently described by a single real number $K\in\mathbb{R}$.
\end{lemma}
\begin{proof}
To show that the curvature at a point of a $2$-dimensional real Riemannian manifold can be sufficiently described by a single real number we consider two isosceles triangles with legs of length $\varepsilon>0$ and vertex angles $\delta$. Motivated by the findings in the previous section we consider these generator triangles to have a priori different curvatures $K_1$ and $K_2$ and thus side lines with different lengths $\delta\lambda_1$ and $\delta\lambda_2$. This means that the geodesics which form the sides of the two triangles are completely determined by the curvature values in these triangles: $\delta\lambda_1(K_1)$ and $\delta\lambda_2(K_2)$.\\
But the isosceles triangle with vertex angle $2\delta$ is also a generator triangle and thus its side length is completely determined by it's curvature value $\delta\lambda(K)$. And we know from the previous section, that it only has one. In the limit of $\varepsilon,\delta\to0$ the union of the two smaller triangles coincide with the larger one and $\delta\lambda_1(K_1) + \delta\lambda_2(K_2) = d\lambda(K)$. Since we can only have one curvature value in a generator triangle, we conclude that the three otherwise conflicting values must agree $K_1 = K_2 = K$.
\begin{figure}[h!]
    \begin{minipage}{.5\linewidth}
        \begin{align*}
            &\exists K_1,K_2\in\mathbb{R}: &\
            \cos_{K_1}\delta\lambda_1 &= \cos_{K_1}^2\varepsilon + \sin_{K_1}^2\varepsilon\cos\delta \\
            & & \cos_{K_2}\delta\lambda_2 &= \cos_{K_2}^2\varepsilon + \sin_{K_2}^2\varepsilon\cos\delta \\
            &\text{but we also have:} &\ \delta\lambda_1 + \delta\lambda_2 &\overset{\delta\to0}{\longrightarrow} \delta\lambda \quad \text{and} \\
            &\exists K\in\mathbb{R}: &\
            \cos_K\delta\lambda &= \cos_K^2\varepsilon + \sin_K^2\varepsilon\cos2\delta \\
            \vphantom{a}\\
            &\Rightarrow \ K_1 = K_2 = K
        \end{align*}
    \end{minipage}
    \begin{minipage}{.5\linewidth}
        \centering\includegraphics[width=\linewidth]{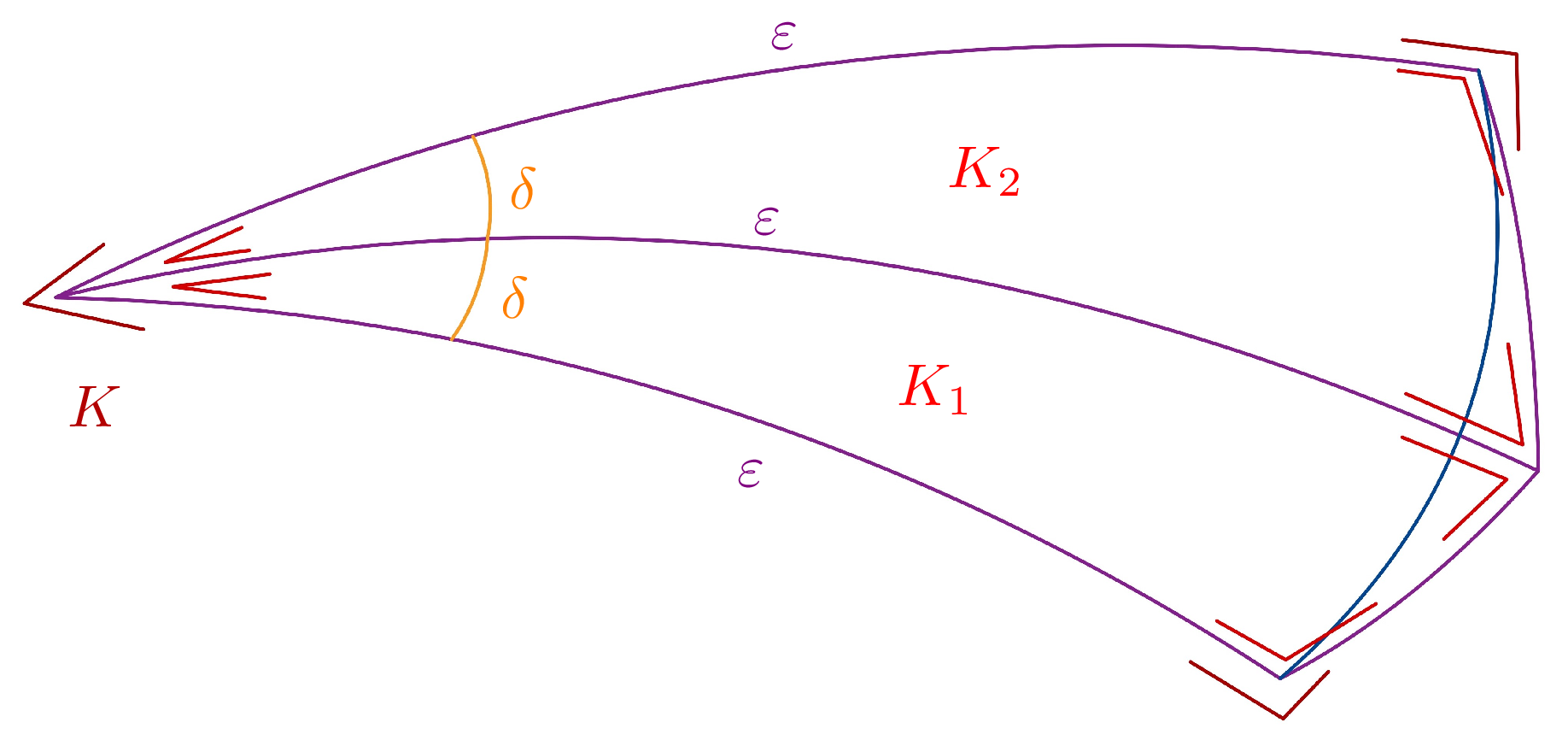}
    \end{minipage}
    \caption{Two joined infinitesimal triangles form another larger, but still infinitesimal and thus generating triangle, when the side line is adjusted to be a geodesic.}
    \label{fig:my_label}
\end{figure}

So, we find that the curvature in a $2$-dimensional manifold can be described by a scalar field. This means, that sectional curvature describes all degrees of freedom to create a geometry on a manifold.
\end{proof}

\subsection{The $n$-dimensional case}\label{sec: n-dim}
\begin{definition}[Great circle function]
We call a function on the great circles of the $(n-1)$-sphere a great circle function:
\begin{equation}
    o: \ \mathrm{O}^n \ \to \ \mathbb{R}, \quad \mathrm{O}^n \coloneq \{\mathcal{S}^1\subset\mathcal{S}^{n-1}\ |\ \mathcal{S}^1\ \text{a geodesic on}\ \mathcal{S}^{n-1}\}, \qquad n\geqslant 2.
\end{equation}
\end{definition}
\begin{remark}
A smooth great arc function on the directions $\mathcal{D}_p$ around $p\in\mathcal{M}$ is an element in $C^\infty(\Phi_\mathcal{D}^{-1}(\mathrm{O}^n);\mathbb{R})$.
\end{remark}
\begin{theorem}
Using the mapping $\Phi_\mathcal{D}$ of directions $\mathcal{D}_p$ at $p$ to the sphere $\mathcal{S}^{n-1}$ we can describe all degrees of freedom of curvature at this point by a great circle function.
\end{theorem}

\begin{proof}
If the isosceles triangles from the previous argument are tilted to one-another in a higher dimensional manifold i.e., do not lie in the same geodesic surface. Then that argument does not work anymore since we cannot form a covering triangle by taking the union. Thus, we retain the degrees of freedom of curvature in different planes at a point.
\begin{figure}[h!]
    \centering\includegraphics[width=.5\linewidth]{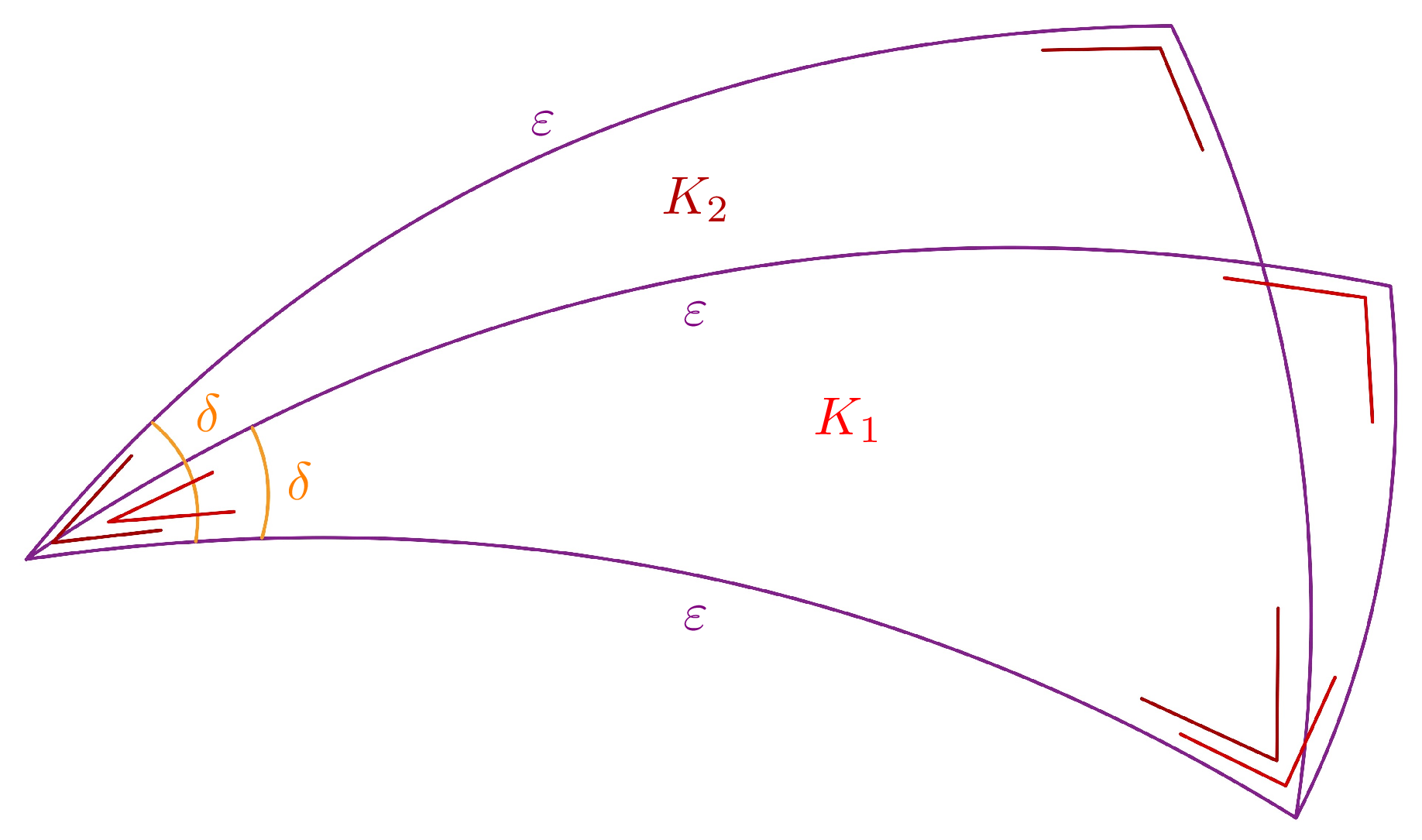}
    \caption{If the two infinitesimal geodesic triangles do not lie in the same geodesic surface, then they cannot be joined to a larger triangle.}
    \label{fig:my_label}
\end{figure}

If we imagine an arbitrarily small sphere around the origin point $o$ in FNC (faithful normal coordinates), then the geodesic planes on which we can have one curvature value intersect with that sphere on a great circle. So, we do have a sort of direction dependent curvature at $o$, where all directions which are mapped to the same great circle on the sphere must have the same curvature value. Thus, the curvature at $o$ can be described by a smooth function on the great circles of a sphere.\\
\end{proof}

\begin{definition}[Source Curvature]
We define a smooth source curvature field as a great arc field that determines the geometry over application of the cosine law to infinitesimal triangles:
\begin{equation}
    K: \ \mathcal{M} \ \to C^\infty(\Phi_\mathcal{D}^{-1}(\mathrm{O}^n);\mathbb{R})
\end{equation}
\begin{align}
    &K_0 = K(o)(\mathcal{C}); \qquad \lim_{\varepsilon\to0}\cos_{K_0}(\varepsilon c) = \lim_{\varepsilon\to0}\cos_{K_0}(\varepsilon a)\cos_{K_0}(\varepsilon b) - K_0\sin_{K_0}(\varepsilon a)\sin_{K_0}(\varepsilon b)\cos\theta, \\
    &o,q,p\in\mathcal{M},\ \text{with}\ q = \gamma_{o,\Omega}(\varepsilon c), \
    p = \gamma_{q,\tilde{\Omega}}(\varepsilon a) = \gamma_{o,\Omega'}(\varepsilon b), \qquad
    \Omega,\Omega'\in \mathcal{C} = \Phi_\mathcal{D}^{-1}(C)\subset\mathcal{D}_o, \quad C\in\mathrm{O}^n, \qquad \tilde{\Omega}\in\mathcal{D}_q, \notag \\
    &\theta = \measuredangle_p([\gamma_{q,\tilde{\Omega}}(\varepsilon a+\lambda)]_\sim,[\gamma_{o,\Omega'}(\varepsilon b+\lambda)]_\sim) \notag
\end{align}
\end{definition}

We can use the geodesic flow bundle on the sphere, to parameterize the great circles on the sphere and thus a great circle function at a point. Looking back at Eq.~\eqref{eq: angular structure sequence} and Fig.~\ref{fig: sphere flow chain} in Sec.~\ref{sec: Angular structure} we remind ourselves, that the flow bundle parameters $\lambda_{\mathcal{S}^2},\varphi_{\mathcal{S}^2}$ on the sphere $\mathcal{S}^2$ correspond to the angular parameters $\varphi,\vartheta$ of the flow from $o$ in the manifold $\mathcal{M}^3$. So, the curvature at $o$ can be parametrized with the direction parametrization function $\hat{\Omega}$ restricted to the parameter domain $P' = [0,\pi)\times\left(-\frac{\pi}{2},\frac{\pi}{2}\right]$. We will mark the angular parameters of curvature with a subscript $K$ to differentiate them from the direction parameters of a geodesic:
\begin{equation}
    \lambda_{\mathcal{S}^2} = \varphi_K \ \wedge \ \varphi_{\mathcal{S}^2} = \vartheta_K \quad \Rightarrow \quad
    K_o: \ [0,\pi)\times\left(-\frac{\pi}{2},\frac{\pi}{2}\right] \ \to \ \mathbb{R}, \quad
    (\varphi_K,\vartheta_K) \ \mapsto \ K_o(\varphi_K,\vartheta_K)
\end{equation}

Using the sequence from Eq.~\eqref{eq: angular structure sequence} again we can straight forwardly extend the curvature parametrization to $n$-dimensions:
\begin{align}
    &\lambda^{\mathcal{S}^{n-1}} = \varphi, \quad \varphi^{\mathcal{S}^{n-1}} = \vartheta_1, \quad
    \vartheta^{\mathcal{S}^{n-1}}_1 = \vartheta_2 \quad \ldots \quad \vartheta^{\mathcal{S}^{n-1}}_{n-3} = \vartheta_{n-2} \\ &\Rightarrow \quad
    \begin{matrix}
    K_o: & P'=[0,\pi)\times\left(-\frac{\pi}{2},\frac{\pi}{2}\right]^{n-2} & \to & \mathbb{R} \\
    & (\varphi,\vartheta_1,\ldots,\vartheta_{n-2}) & \mapsto & K_{\hat{\Omega}^{(n-1)}|_{P'}}(o)
    \end{matrix}
\end{align}

And to parametrize the curvature on the entire manifold $\mathcal{M}^n$ we promote it to a field
\begin{equation}
    K: \ \mathcal{M} \ \to \ C^\infty(P';\mathbb{R}), \quad p \ \mapsto \ K_{\hat{\Omega}^{(n-1)}|_{P'}}(p)
\end{equation}

\subsection{Parametrization of a $3$-dimensional Gaussian curvature field}\label{subsec: Curvature parametrization}
We saw, that the flow from the initial geodesic $\gamma^{\mathcal{S}^2}_{q(\varphi),\vartheta}$ parametrizes all great-circles on the $2$-sphere. Thus we can provide a cleaner definition of the set of great-circles and great-circle functions on the set of directions $\mathcal{D}_p\simeq\mathcal{S}^2$ at a point $p\in\mathcal{M}^3$:
\begin{equation}\label{eq: Great-arcs}
    o: \ \mathrm{O}^3 = \left\{\gamma^{\mathcal{S}^2}_{q(\varphi_K),\vartheta_K}|\,\varphi_K\in[0,\pi),\vartheta_K\in\left(-\frac{\pi}{2},\frac{\pi}{2}\right]\right\} \ \to \ \mathbb{R}
\end{equation}

We generated the geodesics of a flow from an arbitrary point $p\in\mathcal{M}$ from the curvature value of an infinitesimal generator triangle. We want to know now, how to read out the correct value from the curvature function $K_p(\varphi_K,\vartheta_K)$ at $p\in\mathcal{M}^3$ to such a geodesic heading out from $p$ in direction $\hat{\Omega}_\beta = \hat{\Omega}(\beta_1,\beta_2)$.\\
Since geodesic surfaces become planes in faithful normal coordinates and these planes are directly related to the great-circles, we need the relation between a geodesic $\gamma_{p,\hat{\Omega}_\beta}$ from a point $p$ and the geodesic $\gamma^{\mathcal{S}^2}_{q(\varphi_K),\vartheta_K}$ on the $2$-sphere $\mathcal{S}^2\simeq\mathcal{D}_o$ at $o$, which corresponds to the plane $U$, in which $\gamma_{p,\hat{\Omega}_\beta}$ lies (see Fig.~\ref{fig: Ring-Function evaluation}). The plane $U$ is obtained by rotating the original plane $U_0$ in the following way:
\begin{equation}
    U = R_x(\vartheta_p)\circ R_z(\varphi_p)\circ R_x(\alpha_2-\vartheta_p)\cdot U_0
\end{equation}
We want to have the parameters $\varphi_K$ and $\vartheta_K$ we used in~\eqref{eq: Great-arcs}. This amounts to translating to the simpler rotation:
\begin{equation}
    U = R_z(\varphi_K)\circ R_x(\vartheta_K)\cdot U_0
\end{equation}
\begin{figure}[h!]
    \centering\includegraphics[width=.5\linewidth]{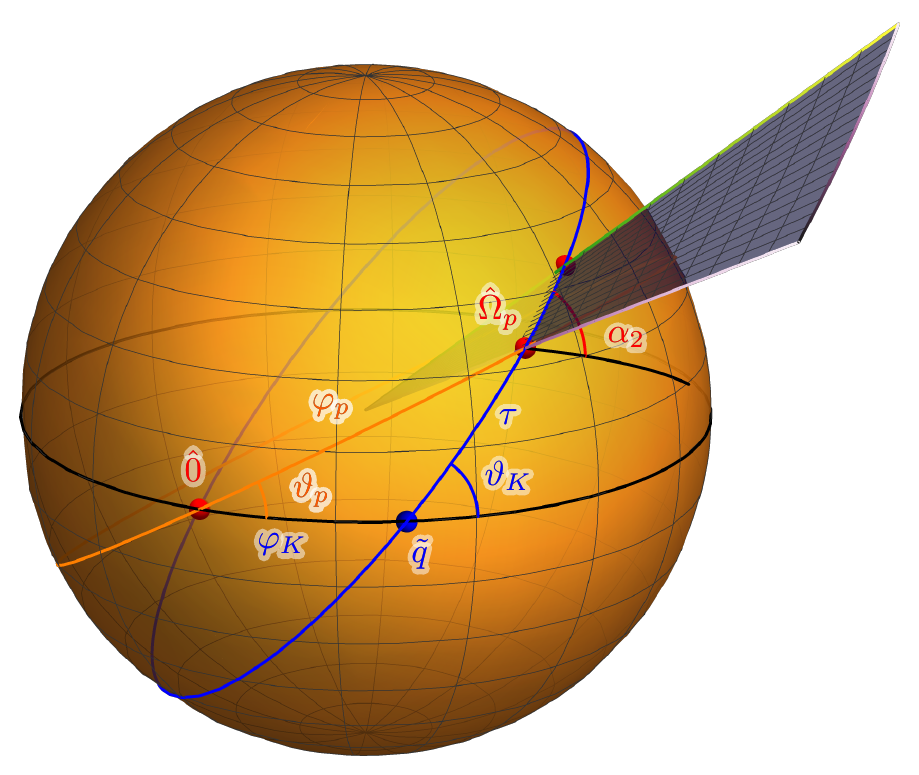}
    \caption{The black great-arc is the intersection of the original plane with the unit sphere in yellow, which represents the directions of the point $o$ in the center of the sphere. The great-arc in blue is associated to the plane in which the geodesic in question $\gamma_{p,\hat{\Omega}_\beta}$ lies. The tilt parameter $\alpha_2=\beta_2$ in red is the angle between that great-arc and the geodesic in black with angular label $0$ on the sphere at that point $\hat{\Omega}_p\in\mathcal{S}^2\simeq\mathcal{D}_o$.}
    \label{fig: Ring-Function evaluation}
\end{figure}

We use the sine and cosine laws on $\mathcal{S}^2$ (i.e. $K=1$) to relate the great-circle function parameters $\varphi_K,\vartheta_K\in P'$ to the direction parameters $\varphi_p,\vartheta_p$ of the point $p = \gamma_{o,\hat{\Omega}_p}(l)$ and the tilt $\alpha_2 = \beta_2$ of the plane, in which the geodesic in question $\gamma_{p,\hat{\Omega}_\beta}$ lies. In other words we solve the equation:  $\gamma^{\mathcal{S}^2}_{\hat{\Omega}_p,\alpha_2}(\tau+\lambda) \overset{!}{=} \gamma^{\mathcal{S}^2}_{\hat{\Omega}(\varphi_K,0),\vartheta_K}(\lambda)$.\\

We are describing a geodesic from an arbitrary point $\hat{\Omega}_p$ on $\mathcal{S}^2$ in terms of one originating from a point $\hat{\Omega}(\varphi_K,0) = \gamma^{\mathcal{S}^2}_{\hat{0},0}(\varphi_K)$ on the initial geodesic $\gamma^{\mathcal{S}^2}_{\hat{0},0}$. This is essentially applying a special case of the consistency condition~\eqref{eq: consistency}.\\

The problem splits into different cases which we discuss more in Appendix~\ref{sec: Branches}. For simplicity we restrict ourselves to one case.
\begin{align}
    \varphi_K = \tan^{-1}\left( \frac{\sin\varphi_p}{\left| \cot(\alpha_2 - \vartheta_p)\sin\vartheta_p\right|
    + \cos\varphi_p\cos\vartheta_p}\right), \quad
   \vartheta_K = \sin^{-1}\left(\frac{\sin\varphi_p\sin(\alpha_2-\vartheta_p)}{\sin\varphi_K(\varphi_p,\vartheta_p,\alpha_2)}\right)
\end{align}
Extending it to more cases is not difficult, but a tedious task to adjust the branches of the inverse trigonometric functions.\\

Ultimately, we parametrize the curvature field $K$ with the location $p\in\mathcal{M}$ in faithful normal coordinates $\mathcal{N}(p) = (l,\varphi_p,\vartheta_p)$ and the orientation parameters $\varphi_K,\vartheta_K$ of the geodesic plane, in which the geodesic in question lies: $K_p(\varphi_K,\vartheta_K)$.

\subsection{Restrictions to the curvature field}
We parametrized the curvature field and found, that the degrees of freedom at every point correspond to the great-circles on the $(n-1)$-sphere at this point. But there might be dependencies which reduce the degrees of freedom. To investigate, whether this is the case we start with choosing a scalar curvature field on every plane through $o$ in a smooth manner: $K_{\hat{\Omega}^{(n-1)}|_{P'}}(l,\varphi)$. Now all geodesics on these planes in FNC are determined, including the ones, which form the curved surface depicted in Fig.~\ref{fig: d.o.f of curvature}.\\
\begin{figure}[h!]
    \centering\includegraphics[width=0.6\linewidth]{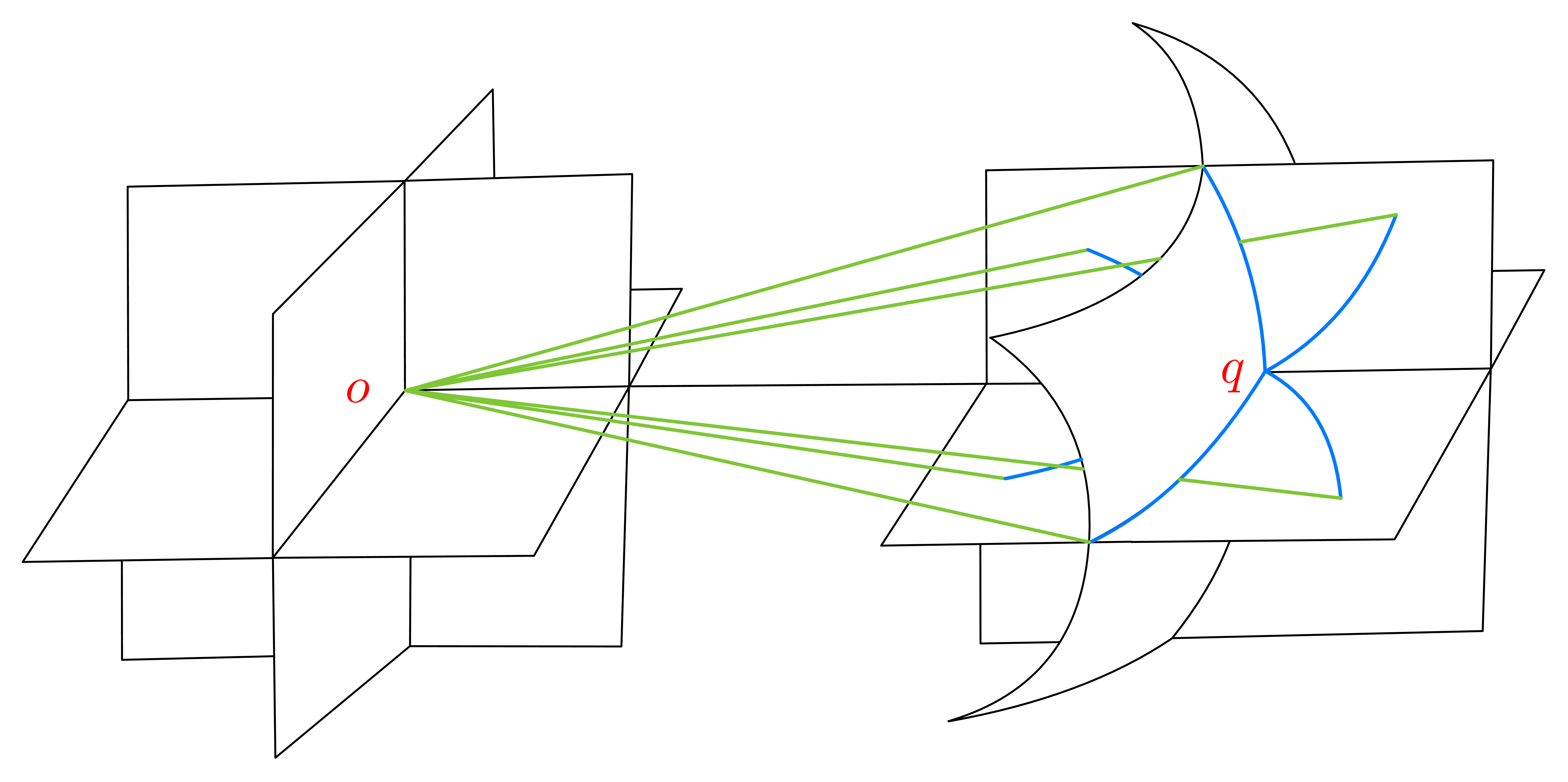}
    \caption{A sketch of the geodesic surfaces, geodesics of the flow from $o$ in green and from another point $q$ in blue.}
    \label{fig: d.o.f of curvature}
\end{figure}
But we know, that all geodesics lie on geodesic planes through $o$. So, the entire geometry of a manifold is determined by the curvature field on the geodesic surfaces through a single point w.l.o.g. $o$.\\

It should be noted that the remaining degrees of freedom are much more than the Riemann tensor can incorporate. We describe an example of a geometry in the Supplemental Material~\sref{subsec: NotASphere}, where the metric formalism fails to properly describe it.

\section{Relation to the metric}\label{sec: metric}
We constructed a collection of charts in Sec.~\ref{sec: Faithful normal chart}, which are tightly related to the geodesic flow bundle. These charts allow us, to relate the geodesic flow bundle to the metric. In this section we derive the procedure to calculate the components of the metric in a faithful normal chart.\\

For this we need the tangent vector fields of the coordinate lines of $l$ and $\hat{\Omega}$. With coordinate lines we mean the curves traced out by increasing only one coordinate of the faithful normal chart $\mathcal{N}$. This creates the flows:
\begin{equation}
    \phi_\mathcal{N}^l(p) = (l_p+l,\vec{\vartheta}_p), \quad \mathcal{N}(p) = (l_p,\vec{\vartheta}_p), \qquad \phi_\mathcal{N}^\varphi, \qquad \phi_\mathcal{N}^{\vartheta_1}, \quad \ldots \quad \phi_\mathcal{N}^{\vartheta_{n-2}},
\end{equation}
we collect the angular parameters in the parameter vectors $\vec{\vartheta} = (\varphi,\vartheta_1,\ldots,\vartheta_{n-2}),\ \vec{\vartheta}_p = \vartheta^i_p\in P$.\\

In the case of the coordinate $l$ the coordinate lines coincide with the geodesic flow from $o$ and thus the tangent vector field is just it's parameter derivative:
\begin{equation}
    \phi_\mathcal{N}^l = \gamma_{o,\hat{\Omega}_p} \quad \Rightarrow \quad
    \partial_l(p(\hat{\Omega}_p,l_p)) = \dot{\gamma}_{o,\hat{\Omega}_p}(l_p) = \left.\frac{d}{d\lambda}\gamma_{o,\hat{\Omega}_p}(\lambda)\right|_{\lambda=l_p}
\end{equation}

The geodesic $\gamma_{o,\hat{\Omega}_p}$ is by definition parametrized by arclength $\lambda$ and thus: $\Vert\gamma_{o,\hat{\Omega}_p}(\lambda)\Vert = 1$ $\forall\lambda\in I$.
\begin{equation}
    g_{ll}(p) = g_p(\partial_l(p),\partial_l(p)) = g_p(\dot{\gamma}_{o,\hat{\Omega}_p}(l_p),\dot{\gamma}_{o,\hat{\Omega}_p}(l_p))
    = \Vert\gamma_{o,\hat{\Omega}_p}(\lambda)\Vert^2 = 1 \quad \forall p\in U
\end{equation}

The coordinate lines of the angular parameters $\vartheta^i$ lie on $(n-1)$-spheres and their tangent vector fields are thus orthogonal to the $l$-line rays.\\
\begin{align}
    &\partial_{\vartheta^i}(p) = \dot{\phi}_\mathcal{N}^{\vartheta^i}(p) = \left.\frac{d}{d\vartheta^i}\phi_\mathcal{N}^{\vartheta^i}(p)\right|_{\vartheta^i=\vartheta^i_p}; \qquad
    \dot{\phi}_\mathcal{N}^{\vartheta^i}(p) \perp \dot{\phi}_\mathcal{N}^l(p) = \dot{\gamma}_{o,\hat{\Omega}_p}(l_p) \quad
    \Leftrightarrow \quad \beta \coloneq \measuredangle_p([\phi_\mathcal{N}^{\vartheta^i}(p)]_\sim,[\gamma_{p,\hat{\Omega}_p}]_\sim) = \frac{\pi}{2} \\
    &\Rightarrow \quad g_{\vartheta^il}(p) = g_p(\dot{\phi}_\mathcal{N}^{\vartheta^i}(p),\dot{\gamma}_{o,\hat{\Omega}_p}(l_p))
    = \Vert\dot{\phi}_\mathcal{N}^{\vartheta^i}(p)\Vert \Vert\dot{\gamma}_{o,\hat{\Omega}_p}(l_p)\Vert \cos\beta = 0 \quad
    \forall p\in U
\end{align}

The vector fields of our angular parameters form an orthogonal basis on the $(n-1)$-sphere, wherever it is not degenerate.
We can calculate the vector fields of the angular coordinate lines, by taking derivatives of the angular parametrization function $\hat{\Omega}$, since the $l$-coordinate held constant anyways. Since the angular parametrization we use is generated by subsequently acting rotations in orthogonal planes\footnote{The concept of rotations around an axis only works in the special case of $3$-dimenstions. In higher dimensions the ases would generate to $n-2$-dimensional submanifolds. In the $2$-dimensional case we can see however, that rotations are an action in a surface. There is no additional direction in which we could have an axis.\\
In this work we consider rotations as the action of $O(n-1)$ on the $(n-1)$-spere, representing directions at a point $p$, embedded in $\mathbb{R}^n$: $\imath(\mathcal{S}^{n-1})$\\
Thus it is a local operation which happens in a plane in the ambient space which translates to an action which happens on a ring on the manifold of directions $\mathcal{D}_p$ at $p$.} (considering the embedding $\imath$ of $\mathcal{S}^{n-1}$ in $\mathbb{R}^n$) on the initial direction $\hat{0}$, the orbits of each group action are automatically orthogonal to the subspace in which the orbits of the previous actions lie. And thus all angular coordinate lines are orthogonal to each other.\\
\begin{align}
    \dot{\phi}_\mathcal{N}^{\vartheta^i}(p) = l_p\hat{\Omega}_{,\vartheta^i}(\vec{\vartheta}_p), \qquad
    \hat{\Omega}(\vec{\vartheta}) = R_{x_n,x_2}(\vartheta_{n-2})\circ\ldots\circ R_{x_3,x_2}(\vartheta_1)\circ R_{x_1,x_2}(\varphi)\cdot\hat{0}, \quad \Rightarrow \quad \dot{\phi}_\mathcal{N}^{\vartheta^i} \perp \dot{\phi}_\mathcal{N}^{\vartheta^j}, \ i\neq j
\end{align}
Thus, the metric in faithful normal coordinates is always diagonalized: $g_{\vartheta^i\vartheta^j}(p) = 0, \ i\neq j, \ \forall p\in\mathcal{U}$.
\footnote{We are not talking about the angular structure at a point $p$ here, since this is in general not correctly represented in the faithful normal chart. We are using the angular structure at $o$, which is projected outwards, using geodesics.}\\

The coordinate lines $\phi_\mathcal{N}^{\vartheta^i}(p)$ are in general no geodesics (consider for example the constant $\theta$ lines $\phi_\mathcal{N}^\varphi(p)$ on the $2$-sphere) and thus their tangent vectors are in general no unit vectors. To find the diagonal entry of $\vartheta^i$ in the metric we use the tangent vector of a geodesic heading out in direction of increasing $\vartheta^i$ from a point $p$ and the already determined components of the metric.\\
\begin{align}
    \hat{\vartheta}^i \coloneq N\frac{d\hat{\Omega}}{d\vartheta^i}(p)\in\mathcal{S}^{n-1}, \quad
    \dot{\gamma}_{p,\hat{\vartheta}^i}(0) =& \left.\frac{d}{d\lambda}\mathcal{N}_o(\gamma_{p,\hat{\vartheta}^i})^j\right|_{\lambda=0} \partial_j(p) = \dot{l}(p,\hat{\vartheta}^i;0)\partial_l(p) +
    \dot{\vartheta}^j(p,\hat{\vartheta}^i;0)\partial_{\vartheta^j}(p)  \notag \\
    =& \dot{l}(p,\hat{\vartheta}^i;0)\partial_l(p) +
    \dot{\vartheta}^i(p,\hat{\vartheta}^i;0)\partial_{\vartheta^i}(p), \quad \text{using} \quad \dot{\vartheta}^j(p,\hat{\vartheta}^i;0) = 0, \ j\neq i
\end{align}
where $N$ is a normalization constant to make it a unit vector.\\
Since this is a geodesic (and in general not a coordinate line) it has length $1$. Writing this in components gives us an equation with which we can determine the missing metric component:\\
\begin{minipage}{.6\linewidth}
    \begin{align}\label{eq: metric}
        \Vert\dot{\gamma}_{p,\hat{\vartheta}^i}(0)\Vert^2
        &= \mathcal{N}(\dot{\gamma}_{p,\hat{\vartheta}_i}(0))^i\ \mathcal{N}(\dot{\gamma}_{p,\hat{\vartheta}_i}(0))^j\
         g_p(\partial_i(p),\partial_j(p)) \notag \\
        &= \dot{l}(p,\hat{\vartheta}^i;0)^2 \underbrace{g_{ll}(p)}_{=1} + \dot{\vartheta}^i(p,\hat{\vartheta}^i;0)^2 g_{\vartheta^i\vartheta^i}(p) = 1 \notag
    \end{align}
\end{minipage}
\begin{minipage}{.4\linewidth}
    \begin{equation}
        \Rightarrow \qquad g_{\vartheta^i\vartheta^i}(p)
        = \frac{1 - \dot{l}(p,\hat{\vartheta}^i;0)^2}{\dot{\vartheta}^i(p,\hat{\vartheta}^i;0)^2}
    \end{equation}
\end{minipage}

We saw in Sec.~\ref{sec: relating geod. to curvature} that calculating the geodesic flow bundle from the curvature field boils down to calculating the length of the top-line $b$ and opening angle $\alpha$ of a triangle from a curvature field $K$ and is thus essentially a $2$-dimensional problem. So, we want to express this tangent vector in terms of the solution to this problem~\eqref{eq: l and varphi} (fundamental solution), which we will derive in the following sections.\\
The geodesics $\gamma_{q,\hat{\Omega}_\beta}$ from a point $q = \gamma_{o,\hat{0}}(c)$ on the original geodesic can be directly written in terms of the fundamental solution $b(K)$, $\alpha(K)$ and the tilts $\beta_2,\ldots,\beta_{n-1}$ at $q$ of the plane in which the triangle lies. In a plane GFB the tilt parameters $\alpha_2,\ldots,\alpha_{n-1}$ at $o$ are the same as the ones at $q$ and we have:\\
\begin{equation}
    l = b(K;c,\beta_1;\lambda), \quad \alpha_1 = \alpha(K;c,\beta_1;\lambda), \quad \alpha_2 = \beta_2, \quad
    \ldots, \quad \alpha_{n-1} = \beta_{n-1};
\end{equation}
\begin{equation}\label{eq: FNC equation}
    \gamma_{q,\hat{\Omega}_\beta}(\lambda) \overset{!}{=} l\,\hat{\Omega}(\vec{\alpha}) = \gamma_{o,\hat{\Omega}_\alpha}(l) \quad \Rightarrow \quad
    \mathcal{N}_o(\gamma_{q,\hat{\Omega}_\beta}) = \left( b(K;c,\beta_1;\lambda), \alpha(K;c,\beta_1;\lambda),
    \beta_2, \ldots, \beta_{n-1} \vphantom{\sqrt{2}}\right)^T
\end{equation}
To calculate the flow from a different point $p$ using the fundamental solution $(b,\alpha)$ we just rotate the tilted triangle to $p$ according to our parametrization function $\hat{\Omega}$, see Fig.~\ref{fig: Triangle embedding}\footnote{Note that we are using $\beta_2$ as a coordinate in Fig.~\ref{fig: Triangle embedding} rather than the angle between the red plane and the blue triangle as it is used in Eq.~\eqref{eq: FNC equation}. The angle between the red plane $U_{\vartheta_p}$ and the blue triangle $U_{\beta_2}$ in the plot is given by $\beta_2-\vartheta_p$. So, the two conventions are related by: $\beta_2\leftrightarrow\beta_2-\vartheta_p$.\\
The $\beta_i$ are the angular coordinates at $q$, but not anymore, when we rotate them to $p$. So, they are the angular labels the geodesic $\gamma_{p,\hat{\Omega}_\beta}$ would have, if it were heading out from $q$.}. Then we have to solve Eq.~\eqref{eq: FNC equation} again to obtain the components $(l,\varphi,\vartheta_2,\ldots,\vartheta_{n-2})$ in FNC.
\begin{align}
    \gamma_{p,\hat{\Omega}_\beta}(\lambda) &= R_{x_n,x_2}(\vartheta_{n-2,p})\circ\ldots\circ R_{x_3,x_2}(\vartheta_{1,p})\circ R_{x_1,x_2}(\varphi_p)\cdot\gamma_{q,\hat{\Omega}_\beta}(l) \notag \\
    &= l(K)\,R_{x_n,x_2}(\vartheta_{n-2,p})\circ\ldots\circ R_{x_3,x_2}(\vartheta_{1,p})\circ R_{x_1,x_2}(\varphi_p)\cdot\hat{\Omega}(\varphi(K),\beta_2,\ldots,\beta_{n-1})
    \overset{!}{=} l\,\hat{\Omega}(\vec{\vartheta}) = \gamma_{o,\hat{\Omega}}(l)
\end{align}
Note, that $l$ drops out and this equation is only about the angular part, which is the same for every geometry. One gets a solution of the type: $\vartheta^i(\hat{\Omega}_p,\hat{\Omega}_\alpha)$.\\
The $2$-dimensional case is trivial and we present the solutions for the $3$-dimensional case in the Supplemental Material, but in general this is a nontrivial transcendental equation and attempting to derive the general $n$-dimensional solution is not within the scope of this work.\\
\begin{figure}[h!]
    \centering\includegraphics[width=1\linewidth]{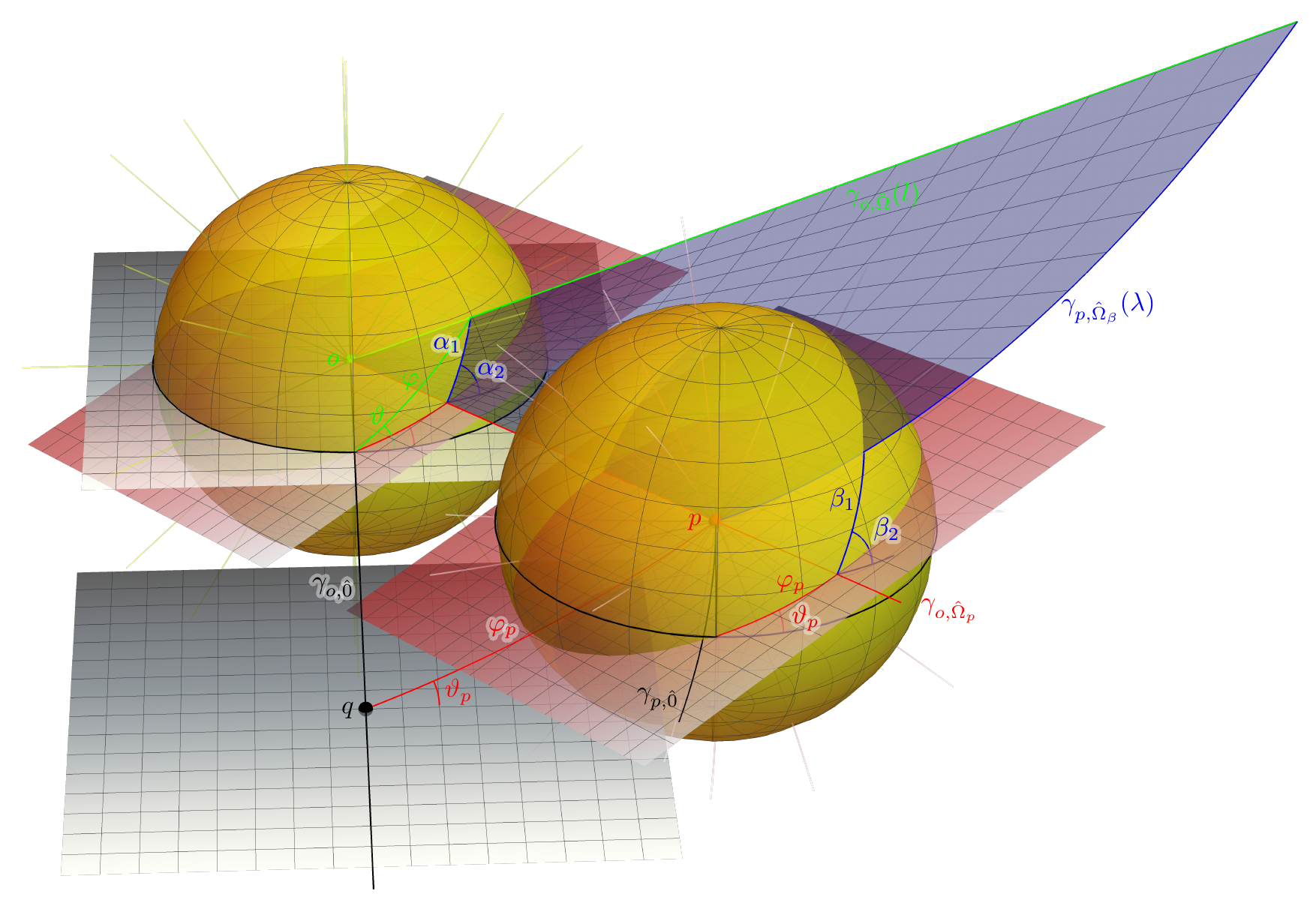}
    \caption{The figure shows, how a geodesic triangle is embedded in an $n$-dimensional manifold and how we rotate and tilt the flow from $q$ to get the flow from $p$.}
    \label{fig: Triangle embedding}
\end{figure}

Now we are ready to do this with the geodesics $\gamma_{p,\hat{\vartheta}^i}$, which we need to calculate the metric components. We calculated the direction vectors $\hat{\vartheta}^i$ at an arbitrary point $p$, since we are solving the problem at a point $q$ on the original geodesic we have to rotate that direction vectors back to $q$, to find the parameters $\vec{\beta}$:
\begin{equation}
    \hat{\Omega}_\beta = \hat{\Omega}(\vec{\beta}) \overset{!}{=} R_{x_1,x_2}^{-1}(\varphi_p)\circ R_{x_3,x_2}^{-1}(\vartheta_{1,p})\circ\ldots\circ R_{x_n,x_2}^{-1}(\vartheta_{n-2,p})\cdot\hat{\vartheta}^i(p)
    \quad \Rightarrow \quad \vec{\beta}^i
\end{equation}
Finally, the FNC component functions of the geodesics $\gamma_{p,\hat{\vartheta}^i}$ to the coordinate direction vectors $\hat{\vartheta}^i$ are determined by the direction vectors $\hat{\Omega}_p$  of $p$ (containing the information to rotate the triangle from $q$ to $p$) and $\hat{\Omega}_\alpha$\footnote{The triangle vector $\hat{\Omega}_\alpha$ can be interpreted as a $n$-dimensional generalization of the opening angle, additionally describing the orientation of the opening angle via the tilt parameters.}, describing the triangle as it would be at q. The triangle vector $\hat{\Omega}_\alpha$ is composed of the opening angle $\alpha_1$, which is the angular component of the fundamental solution $\alpha(K)$, and the tilts $\vec{\beta}^i$.
\begin{equation}
    \vartheta^i(p,\hat{\vartheta}^i;\lambda) = \vartheta^i\left( \hat{\Omega}_p , \hat{\Omega}\left( \alpha(K;c,\beta_1^i;\lambda),\beta_2^i,\ldots,\beta_{n-1}^i \vphantom{\sqrt{2}}\right) \right)
    = \mathcal{N}_o(\gamma_{p,\hat{\vartheta}^i}(\lambda))^{i+1}
\end{equation}
The $+1$ in the index is because $l$ is the first coordinate which is just the distance component of the fundamental solution evaluated for the correct angular coordinate:
\begin{equation}
    l(p,\hat{\vartheta}^i;\lambda) = b(K;c,\beta_1^i;\lambda)
\end{equation}
We demonstrate this procedure on the examples in the Supplemental Material~\sref{sec: Examples} for which we can calculate the functions $l(K)$ and $\varphi(K)$ with the constant curvature cosine- and sine-laws. The counter example in~\sref{subsec: NotASphere} demonstrates, that the metric cannot pick up all the curvature degrees of freedom, that a GFB can describe.

\newpage

\section{Spherical triangulation}\label{sec: Triangulation}
We want to write all geodesics in faithful normal coordinates, which amounts to describing geodesics from an arbitrary point $p$ in terms of the flow from $o$. So, this is essentially describing the relations between the flows from different points. This is, where the entire information about the geometry lies. The flow from $o$ always appears as straight lines with varying lengths. So, there is not much one can learn about a geometry from the flow of a single point.\\
We have seen in the Secs.~\ref{sec: n-dim} and~\ref{sec: metric} how one can extend the solution for the $2$-dimensional problem at a point $q$ on the initial line to $n$-dimensions and arbitrary points $p\in\mathcal{M}$. The bulk of the complexity of determining geodesics from curvature lies in this $2$-dimensional problem. We are going to dedicate the main part of the paper to calculating the leading order terms of the fundamental solution i.e. distance $b$ (top line) from $o$ to $p$ and the angle $\alpha$ (opening angle) between the original geodesic $\gamma_{o,\hat{0}}$ and the geodesic leading to $p$, given the distance $c$ (base line) to a point $q$ on $\gamma_{o,\hat{0}}$, the direction $\beta$ (direction angle) in which the geodesic in question $\gamma_{q,\beta}$ heads out from $q$ and the distance $a$ (side line) which we follow along this geodesic, to arrive at $p$.\\
\begin{equation}
    b(K;\beta,a,c), \qquad \alpha(K;\beta,a,c), \qquad K\in C^\infty(\mathcal{M};\mathbb{R})
\end{equation}
This spans the triangle $o$, $q$, $p$ (Fig.~\ref{fig: scheme-main}) where we know two side-lengths $c$ and $a$ and the an angle $\pi-\beta$.\\
\begin{figure}[h!]
    \centering\includegraphics[width=\linewidth]{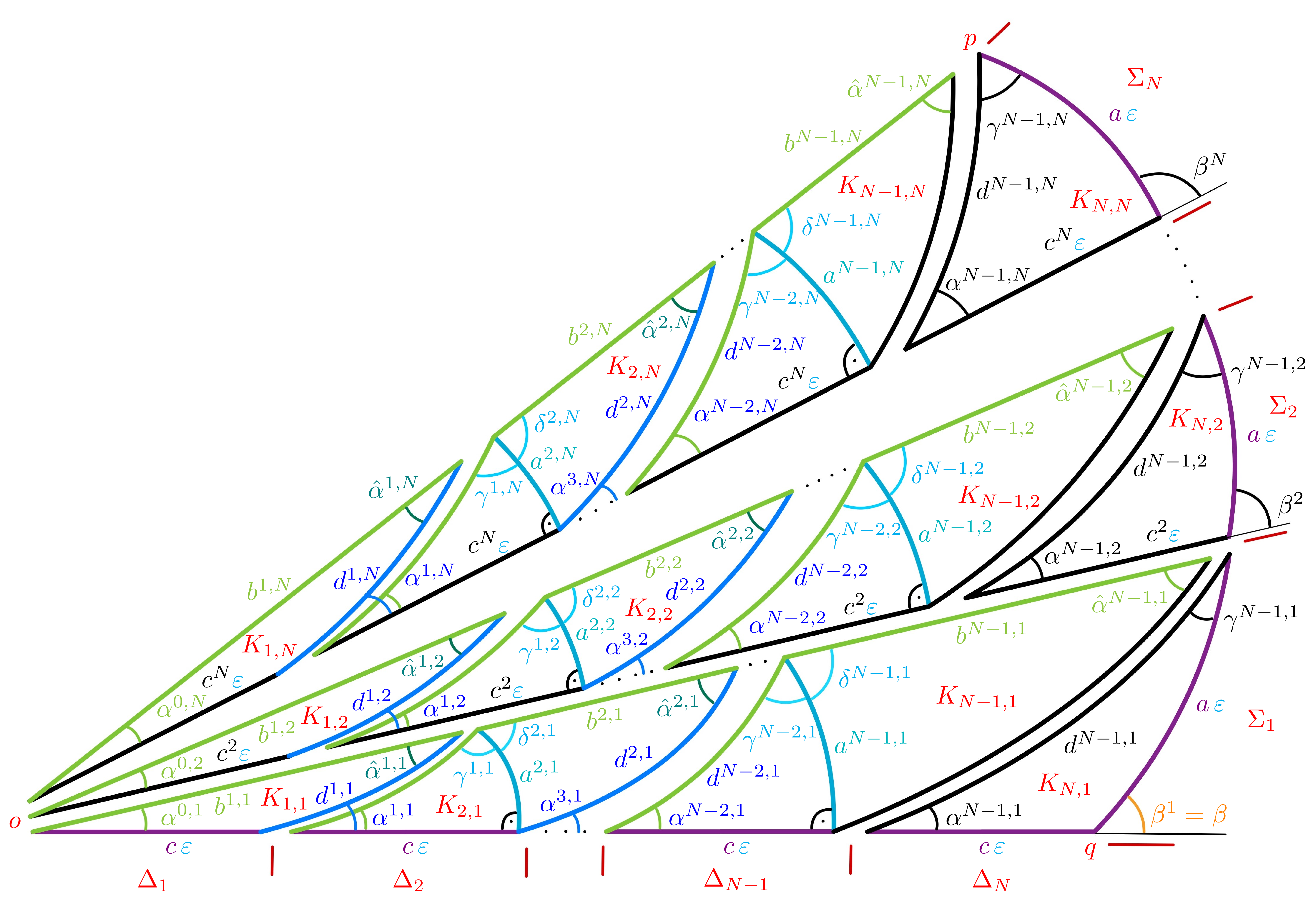}
    \caption{This is a depiction of the scheme we use to approximate the curved triangle. The curvature takes the constant values $K_{ij}$ on each segment $\Delta_{ij}$ in each slice $\Sigma_j$, which are depicted detached from the other segments.}
    \label{fig: scheme-main}
\end{figure}

If this was a triangle on a constant curvature surface we could immediately solve it by applying cosine- and sine-laws. Since this is not a constant curvature surface however, we are going to cut it into so many pieces, such that each piece has approximately constant curvature and then we apply cosine- and sine-laws to these pieces.\\
We first cut the triangle into $N$ slices $\Sigma_j$ by dividing the geodesic $\gamma_{q,\beta}$ into equally long sections. The points
\begin{equation}
    p_j \coloneq \gamma_{q,\beta}(ja\varepsilon), \quad j\in\{1,\ldots,N\}, \quad \varepsilon=\frac{1}{N}
\end{equation}
are connected with geodesics $\gamma_{o,\bar{\alpha}^{0,j}}$ from $o$ to complete the triangles.
\begin{equation}
    \bar{\alpha}^j \coloneq \sum_{l=1}^j\alpha^{0,l}, \qquad \gamma_{o,\bar{\alpha}^j}(c^j) = p_j, \qquad
    j\in\{1,\ldots,N\}, \qquad \alpha = \bar{\alpha}^N
\end{equation}
where $\alpha^{0,j}$ are the opening angles of the slices $\Sigma_j$ and $c^j$ the lengths of the slices baselines.\\
We write $\alpha^{ij}$, when both indices are letters and separate the two by a coma, when one of them is a number, to avoid confusions like $\alpha^{2l} = \alpha^{2,l}$ or $\alpha^{2l,1}$, since we drop the second index in the next section, where it is always $j=1$. We do not use comas for partial derivatives in this work.\\

Then we cut each slice into further pieces, by dividing the baselines $\gamma_{o,\bar{\alpha}^j}$ of each slice into equally long parts. We connect the points
\begin{equation}
    q_{ij} \coloneq \gamma_{o,\bar{\alpha}^{j-1}}(ic^j\varepsilon), \quad i,j\in\{1,\ldots,N\}, \qquad \bar{\alpha}^0 = 0
\end{equation}
on the baselines with the top-line of each slice via geodesics, which leave the points $q_i$ in directions $\beta^{i,j} = \frac{\pi}{2}$. This choice of angles simplifies the problem greatly but restricts the validity of the solution to $a < c$. Since this construction might break otherwise. The angles $\beta^{i,j}$ may have to be tilted sharper to still create triangles in this way. The notation is chosen to be consistent with the directions, in which the sections $\gamma_{p_j,\beta^{j+1}}$ of the side geodesic $\gamma_{q,\beta}$ head out from the points $p_j$:
\begin{equation}
    \beta^j = \beta^{N,j}, \quad \text{and} \quad \beta^1 = \beta, \quad \text{s.t.} \quad \gamma_{p_{j-1},\beta^j}(a\varepsilon) = p_j, \quad j\in\{1,\ldots,N\} \quad \text{with} \quad p_0 = q \ \text{and} \ p_N = p
\end{equation}
With exception of the pieces at $o$ these pieces are curved rectangles instead of triangles. So, we divide them into two triangles via diagonal geodesics $\gamma_{q_{i,j},\alpha^{i,j}}$, whose lengths we denote with $d^{i,j}$. We label the angles between these diagonal geodesics and the baseline geodesics with $\alpha^{i,j}$. This choice is consistent with the opening angles of the slices for $i = 0$.\\
Instead of choosing constant curvature on the square peaces we had initially we pair the triangles between two diagonal geodesics to segments $\Delta_{ij}$ and approximate the curvature field with a step function, which has the constant values $K_{ij}$ on these segments. We choose to pick the values at the lower left edges of the segments $q_{i-1,j}$ and write the points in FNC.\\
\begin{equation}
    K_{ij} = K(q_{i-1,j}) = K((i-1)c^j\varepsilon,\bar{\alpha}^{j-1}), \qquad q_{0,j} = o
\end{equation}

We will then start with calculating the opening angle $\alpha^{0,1}$ and the arclength of the top-line $\bar{b}^1$ of the first slice.
\begin{equation}
    \bar{b}^j \coloneq \sum_{i=1}^{N-1}b^{ij}, \quad j\in\{1,\ldots,N\} \qquad \bar{b}^j = c^{j+1}, \quad j\in\{1,\ldots,N-1\} \quad \text{and} \quad b = \bar{b}^N
\end{equation}

The first step, which we will refer to as "Problem 1" is straight forward. The segment $\Delta_N$ is a triangle, of which we have the side-lengths of the baseline $c^1\varepsilon = c\,\varepsilon$ and the side-line $a\varepsilon$ and the angle between them $\pi-\beta^1 = \beta$. We can apply the cosine-law to get length of the diagonal $d^{N-1,1}$ and then the sine-law to obtain the two missing angles $\alpha^{N-1,1}$ and $\gamma^{N-1,1}$.\\

But now there are no other triangles in the first slice $\Sigma_1$ of which we have enough known quantities to calculate anything. What we can do instead, is assuming the side-length $d^{1,1}$ and angle $\alpha^{1,1}$ in $\Delta_1$ as given for the moment and determine $b^{1,1}$ and $\alpha^{0,1}$ as functions of $d^{1,1}$ and $\alpha^{1,1}$. We call this step "Problem 2" of "Type 1".\\

The idea is now to repeat this process and determine $b^{2,1}$, $d^{1,1}$ and $\alpha^{1,1}$ in terms of $d^{2,1}$ and $\alpha^{2,1}$ and so on and thus move the problem to the right until we reach $d^{N-1,1}$ and $\alpha^{N-1,1}$, which we already know from "Problem 1". It seems however, that we are cheating. By backwards inserting we would get all quantities of all triangles, although we did not have enough information to determine these in the first place. And indeed this procedure fails, since the segments $\Delta_i$ with $i\in\{2,\ldots,N-1\}$ are no triangles.\\
The segment $\Delta_2$ does consists of $2$ triangles with the same curvature, but we have the same amount of information for $2$ triangles as we had in "Problem 2, Type 1". That is $c$, $d^{2,1}$ and $\alpha^2,1$. The missing piece of information comes from the fact that the top-lines $b^{i,1}$ for $i\in\{1,\ldots,N-1\}$ form a geodesic i.e. there are no kinks, and thus the angles $\hat{\alpha}^{i,1}$, $\gamma^{i,1}$ and $\delta^{2,1}$ add up to $\pi$. This relates the segments to each other and allows us to push the problem all the way to the right.\\
We would now first do the same as in "Problem 2, Type 1" for the lower left triangle of $\Delta_2$ and calculate $d^{1,1}$ and $\alpha^{1,1}$ in terms of $a^{2,1}$. The angle $\hat{\alpha}^{1,1}$ is a function of $d^{1,1}$ and $\alpha^{1,1}$ which in turn are now functions of $a^{2,1}$. We can now obtain an equation by calculating $\delta^{2,1}$ from both the upper right and lower left triangle, using that $\delta^{2,1} = \pi-\hat{\alpha}^{1,1}-\gamma^{1,1}$. We can solve this equation ($\delta$-equation) for $a^{2,1}$ to express all quantities in $\Delta_2$ in terms of $d^{2,1}$ and $\alpha^{2,1}$. We call this procedure "Problem 2" of "Type 2" and this is the one which we repeat until we get to $\Delta_{N-1}$.\\

What changes in $\Delta_{N-1}$ is that we already have the expressions for $d^{N-1,1}$ and $\alpha^{N-1,1}$. This leads to some boundary effects in formulas for this problem, which we name "Problem 2" of "Type 3".\\

Once we calculated $\bar{b}^1$, $\alpha^{0,1}$, $\hat{\alpha}^{N-1,1}$ and $\gamma^{N-1,1}$ from the first slice $\Sigma_1$ we can reinterpret $\bar{b}^1 = c^2$ and $\hat{\alpha}^{N-1,1} + \gamma^{N-1,1} = \beta^2$ and apply the solution of $\Sigma_1$ to $\Sigma_2$ with the new set of input variables $a$, $c^2$ and $\beta^2$. We can repeat this procedure successively until we have $\bar{b}^N = b$. At this point we can calculate $\alpha$ by summing over all $\alpha^{0,i}$ i.e. $\bar{\alpha}^{0,N} = \alpha$.

\subsection{Curvature values}\label{subsec: Curvature values}
To use the results of the two dimensional triangle on the original geodesic from above for a generic case, we can just embed it in an $n$-dimensional manifold, where $q$ is replaced with a generic point $p'$, by adjusting the curvature values.\\
The first thing we need to do, is to move and tilt the triangle and the points $q_{ij}$ from a triangle attached to $q$ to $p$. In Sec.~\ref{sec: metric} we derived, how to write a geodesic heading out from $p$ in direction $\hat{\Omega}_\beta$ in faithful normal coordinates. The points $q_{ij}$ are such a case:
\begin{equation}
    \mathcal{N}_o(\gamma_{p,\hat{\Omega}_\beta}(\lambda))
    = \binom{l(K;\lambda)}{\vec{\vartheta}(\hat{\Omega}_p,\hat{\Omega}_\alpha)}, \quad \hat{\Omega}_\alpha(\varphi(K;\lambda),\beta_2,\ldots,\beta_{n-1})
\end{equation}
where $\hat{\Omega}_{p'}$ takes care of moving the edge point $q$ to $p'$ and $\hat{\Omega}_\alpha$ are the opening angle of the triangle and the tilts, as they would be, if it were at $q$.\\

To calculate $l(K)$ and $\varphi(K)$ we need to pick the curvature field on the plane in which the geodesic lies. We discussed in Sec.~\ref{subsec: Curvature parametrization} how we do this in three dimensions:
\begin{equation}
    K(l',\varphi') = K(p'(l',\varphi'))(\vec{\vartheta}_K), \quad
    \vec{\vartheta}_K(\hat{\Omega}_p;\beta_2,\ldots,\beta_{n-2}), \qquad \varphi_K(\varphi_p,\vartheta_p,\alpha_2), \quad \vartheta_K(\varphi_p,\vartheta_p,\alpha_2),\ \text{for} \ n=3
\end{equation}

From the parametrization of the $3$ dimensional case in Sec.~\ref{subsec: Curvature parametrization} we see, that we need to rotate the triangle by $\tau$ to move $q$ to $p'$. Thus the curvature values in faithful normal coordinates on that geodesic plane, with label $\vec{\vartheta}_K$, are given by:
\begin{equation}
    K_{ij} = K(R(\tau)\cdot q_{ij})(\vec{\vartheta}_K) = K_{\vec{\vartheta}_K}(ic^j\varepsilon,\tau + \bar{\alpha}^{j-1})
\end{equation}
We use Eqs.~\eqref{eq: alpha>=theta},~\eqref{eq: alpha<theta & <0},~\eqref{eq: alpha<theta & >0} and~\eqref{eq: varphi_K} to calculate the angle and side line of the triangle on the direction sphere at the origin $\tau$:
\begin{align}
    \tau = \text{sign}(\vartheta_p\alpha_2)\sin^{-1}\left( \frac{\sin\vartheta_p\sin\varphi_K(\varphi_p,\vartheta_p,\alpha_2)}{\sin(\alpha_2-\vartheta_p)} \right)
\end{align}
The sign of $\alpha_2$ follows from the three sine-laws and we need to change the sign when $\vartheta_p$ becomes negative, since we are use the side length $\tau$ as a coordinate, which is negative in this case.\\

Finally, the input arguments from which we calculate the coordinate functions $l(\lambda)$ and $\varphi(\lambda)$ are related to this geodesic by:
\begin{equation}
    c = l, \quad \beta = \beta_1, \quad a = \lambda
\end{equation}

From this we know, how to apply the triangulation in the special case presented above to a more generic case. We will now for simplicity restrict ourselves to the special case and drop the curvature parameters, when we carry out the triangulation in the following sections.
\begin{figure}[h!]
    \centering
    \includegraphics[width=0.5\linewidth]{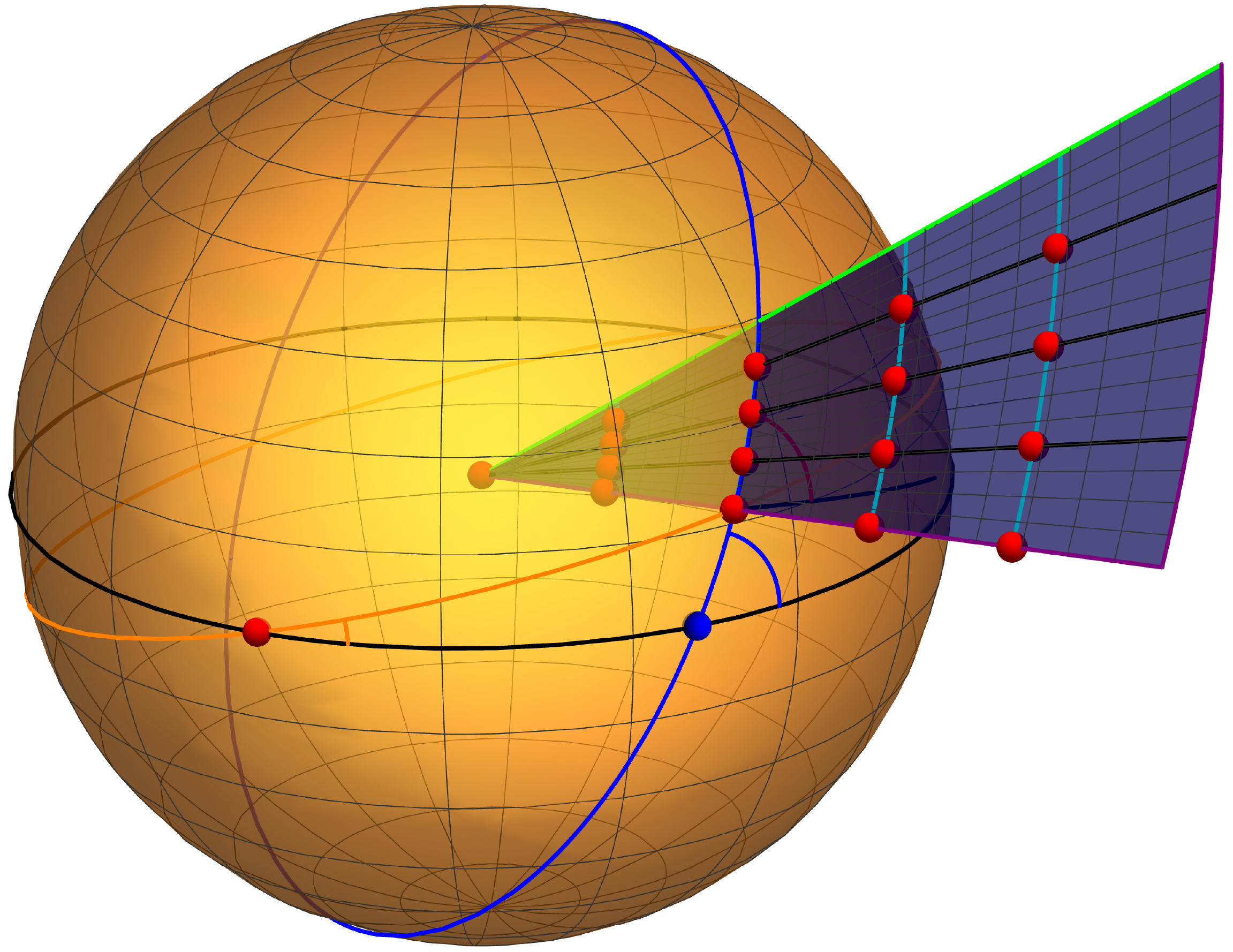}
    \caption{We display the triangulation of a geodesic in a similar sketch as in Fig.~\ref{fig: Ring-Function evaluation} (the labels are still the same and we thus omit them here). We mark the points on the blue triangle from which the curvature values $K_{ij}$ are taken with red spheres. We leave the diagonal lines away and chose the trivial special case of $3$-dimensional flat space to simplify the plotting.
    The difference to a generic case is, that the rib-lines in turquoise would not be connected from slice to slice.}
    \label{fig:my_label}
\end{figure}

\section{A Slice of the Solution}\label{sec: A Slice of the Solution}
Most of the calculations discussed in the previous section (Sec.~\ref{sec: Triangulation}) are repeated applications of the cosine- and sine-law for a surface of constant curvature $K$.
\begin{equation}
    \cos_Kc = \cos_Ka\cos_Kb + K\sin_Ka\sin_Kb\cos\gamma, \qquad \frac{\sin_Ka}{\sin\alpha} = \frac{\sin_Kb}{\sin\beta} = \frac{\sin_Kc}{\sin\gamma}, \quad
    \alpha,\beta,\gamma\notin\pi\mathbb{Z}
\end{equation}
The vast majority of the functions can be expressed in terms of the $C$ "inverse cosine-law" and $SC$ "inverse sine- on the inverse cosine-law" functions, which we define as:
\begin{align}
    C(K;\gamma,a,b) \coloneq& \cos_K^{-1}\left( \cos_Ka\cos_Kb + K\sin_Ka\sin_Kb\,\cos\gamma \vphantom{\sqrt{K}}\right) \notag \\
    =& \frac{1}{\sqrt{K}}\cos^{-1}\left( \cos\sqrt{K}a\cos\sqrt{K}b + \sin\sqrt{K}a\sin\sqrt{K}b\,\cos\gamma \right)\\
    SC(K;\gamma,a,b) \coloneq& \sin^{-1}\left( \frac{\sin_Ka\,\sin\gamma}{\sin_KC(K;\gamma,a,b)} \right)
    = \sin^{-1}\left( \frac{\sin\sqrt{K}a\,\sin\gamma}{\sqrt{1-\left( \cos\sqrt{K}a\cos\sqrt{K}b + \sin\sqrt{K}a\sin\sqrt{K}b\,\cos\gamma  \right)^2}} \right). \notag
\end{align}
We restrict ourselves to the case of positive curvature $K>0$ and discuss the negative curvature case as well as the cosine laws on the sphere and their power series in more detail in the Supplemental Material.\\

We use the two functions to carry out the triangulation on a slice $\Sigma_j$. The $\delta$-equation is a transcendental equation, and we could not solve it exactly. So, we instead expand the $C$ and $SC$ functions in $\varepsilon=\frac{1}{N}$ and then solve the problem up to second order. The systematic expansion is a non-trivial and cumbersome calculation, thus we provide the solution in the Supplemental Material.\\
Since the second order problem is linearized after the expansion we can use a linear ansatz for the top angles $\hat{\alpha}_2^{ij}$ in the upper triangles $\hat{\Delta}_i$ of the segments, which allows us to extract a recursion from the repeated application of "Problem 2, Type 2". The recursion can be solved by a continued fraction, which cancels to a simple one.\\
After solving the recursion we get expressions for the rib lines $a^{ij}$, which depend only on the diagonal line $d^{ij}$ and opening angle $\alpha^{ij}$ of their segment $\Delta_i$, and they in turn depend only on the rib line $a^{i+1,j}$ of the next segment $\Delta_{i+1}$. Thus, we can start from the last segment $\Delta_N$, which we can solve directly and subsequently backwards insert the calculated quantities. Since we want to do this for an arbitrary number of segments $N$ we need a generic expression for the backwards insertion in the $i$-th segment. By solving the resulting recursion for the rib lines we obtain explicit expressions for the top lines, top angles and opening angles of any slice for all $N>3$.\\
This calculation is carried out in detail for the first slice $\Sigma_1$ and then generalized to an arbitrary slice $\Sigma_j$ in the Supplemental Material. In the following two subsections we summarize and discuss the results of the triangulation up to second order of a generic slice $\Sigma_j$.\\

\subsection{$0$-th order solution}
The slices are congruent to each other and are all calculated in the same way with different base lines and direction angles as arguments. The side lines are $a\varepsilon$ for every slice $\Sigma_j$. The base line of the slice $\Sigma_j$ is the top line of the previous slice $\Sigma_{j-1}$ and the direction angle of $\Sigma_j$ is the opposite angle to the top angle of $\Sigma_{j-1}$. Thus, the two arguments have higher order correction terms for $j\geqslant2$, whereas the ones of the first slice $c^1$ and $\beta^1$ do not, since these are arguments of the entire triangulation. We expand these relations up to second order in $\varepsilon$:
\begin{equation}\label{eq: recursive relation}
    c_0^j + c_2^j\varepsilon^2 = \bar{b}_0^{j-1} + \bar{b}_2^{j-1}\varepsilon^2, \qquad \beta_0^j + \beta_2^j\varepsilon^2 = \bar{\gamma}_0^{j-1} + \bar{\gamma}_2^{j-1}\varepsilon^2, \qquad c^1 = c, \qquad \beta^1 = \beta.
\end{equation}
In our calculation in the Supplemental Material~\sref{subsec: Solving Problems of different Types} we show, that all first order terms vanish.\\

The zeroth order triangulation can be done using the cosine- and sine-laws for the Euclidean plane ($K=0$):
\begin{align}\label{eq: c_0^j -> K_ij}
    &c_0^j = \bar{b}_0^{j-1} = \sqrt{ (j-1)^2a^2 + N^2c^2 + 2N(j-1)\,a\,c\cos\beta}\,\varepsilon, \qquad
    \bar{\alpha}_0^j = \sin^{-1}\frac{j\,a\sin\beta}{\sqrt{ j^2a^2 + N^2c^2 + 2Nj\,a\,c\cos\beta}}, \\
    &\beta_0^j = \bar{\gamma}_0^{j-1} = \sin^{-1}\left( \frac{Nc\sin\beta}{\sqrt{ (j-1)^2a^2 + N^2c^2 + 2N(j-1)\,a\,c\cos\beta}} \right).
\end{align}

Since all the slices are congruent it seems to be reasonable at first to triangulate the first one and then Taylor expand the two functions $\bar{b}^1$ and $\bar{\gamma}^1$ in $\varepsilon$ up to second order. We drop the $j$-index for functions in the first slice $j=1$:
\begin{align}
    c^j \approx&\, \bar{b}_0(a,c_0^{j-1} + c_2^{j-1}\varepsilon^2,\beta _0^{j-1}
    + \beta _2^{j-1} \varepsilon^2)
    + \bar{b}_2(K_{i j};a,c_0^{j-1}+c_2^{j-1}\varepsilon^2,\beta_0^{j-1}
    + \beta_2^{j-1}\varepsilon^2)\varepsilon^2 + \mathcal{O}(\varepsilon^3) \label{eq: b expansion} \\
    \approx&\, \bar{b}_0(a,c_0^{j-1},\beta_0^{j-1})
    + \left\{ \left[ \partial_c\,\bar{b}_0(a,c_0^{j-1},\beta_0^{j-1}) \right]c_2^{j-1}
    + \left[ \partial_\beta\,\bar{b}_0(a,c_0^{j-1},\beta_0^{j-1}) \right]\beta_2^{j-1} + \bar{b}_2(K_{ij};a,c_0^{j-1},\beta_0^{j-1}) \right\}\varepsilon^2 + \mathcal{O}(\varepsilon^3) \notag \\
    \vphantom{a}\notag\\
    \beta^j \approx&\, \bar{\gamma}_0(a,c_0^{j-1}+c_2^{j-1}\varepsilon^2,\beta_0^{j-1}+\beta_2^{j-1}\varepsilon^2)
    + \bar{\gamma}_2(K_{ij};a,c_0^{j-1}+c_2^{j-1}\varepsilon^2,\beta_0^{j-1}+\beta_2^{j-1}\varepsilon^2)
    + \mathcal{O}(\varepsilon^3) \label{eq: beta expansion} \\
    \approx&\, \bar{\gamma}_0(a,c_0^{j-1},\beta_0^{j-1})
    + \left\{ \left[ \partial_c\,\bar{\gamma}_0(a,c_0^{j-1},\beta_0^{j-1}) \right]c_2^{j-1}
    + \left[ \partial_\beta\,\bar{\gamma}_0(a,c_0^{j-1},\beta_0^{j-1}) \right]\beta_2^{j-1}
    + \bar{\gamma}_2(K_{ij};a,c_0^{j-1},\beta_0^{j-1}) \right\}\varepsilon^2 + \mathcal{O}(\varepsilon^3) \notag
\end{align}
Due to the non-linearity of this triangulation the Taylor expansion on the entire slice does not provide the correct result and we have to expand each segment on its own and generalize each problem of each type to adapt them to second order correction terms in the arguments. We collect all the correction terms to the functions of the first slice in the argument correction coefficients $f^{j,c}$ and $f^{j,\beta}$, so that we can write the functions in the $j$-th slice in a similar form:
\begin{align}\label{eq: separated form}
    &f^1(c,\beta) \quad \mapsto \quad f^j(c^j,\beta^j) \approx f^1(c_0^j + c_2^j\varepsilon^2, \beta_0^j + \beta_2^j\varepsilon^2) \approx \overset{\circ}{f}{}^j + \overset{\circ}{f}{}^{j,c}c_2^j\varepsilon^2 + \overset{\circ}{f}{}^{j,\beta}\beta_2^j\varepsilon^2 \\
    &\text{interior values:} \quad \overset{\circ}{f}{}^j \coloneq f^1(c_0^j,\beta_0^j), \qquad \text{argument correction coefficients:} \quad \overset{\circ}{f}{}^{j,c} \neq \partial_c f^1(c,\beta)|_{c\mapsto c_0^j, \beta\mapsto \beta_0^j}
\end{align}

\subsection{$2$-nd order solution to $\Sigma_j$}\label{subsubsec: 2-nd order of Sigma_j}
Since there are often reoccurring terms we can define the following substitutes, to simplify our results.
\begin{align}\label{eq: substitutes}
    &X_N \coloneq Nc + a\cos\beta, \quad e^i \coloneq i\,a\sin\beta, \quad Z_N = a + Nc\cos\beta\\
    &Y_N \coloneq \sqrt{a^2 + N^2c^2 + 2Nac\cos\beta}, \quad k_N^i = \sqrt{X_N^2 + (e^i)^2}
    \quad \Rightarrow \quad Y_N = k_N^1 \label{eq: substitute relation}
\end{align}

When treat the $j$-th slice we replace all $c\to c_0^j$ and $\beta\to\beta_0^j$. We can simplify the generalized substitutes, when we express them in terms of the original arguments $c$ and $\beta$:
\begin{equation}
    Y_N^j \coloneq \left.Y_N\right|_{c\mapsto c_0^j, \beta\mapsto\beta_0^j} = \sqrt{j^2a^2 + N^2c^2 + 2jNac\cos\beta}, \qquad Z_N^j \coloneq \left.Z_N\right|_{c\mapsto c_0^j, \beta\mapsto\beta_0^j}
    = ja + Nc\cos\beta.
\end{equation}
We can describe the generalized substitutes as well as the zeroth order arguments in terms of $Y_N^j$ and $Z_N^j$.
\begin{equation}
    X_N^j = \frac{(Y_N^j)^2 - aZ_N^j}{Y_N^{j-1}}, \quad e^{ij} = \frac{iNa\,c\sin\beta}{Y_N^{j-1}}, \quad
    k_N^{ij} = \sqrt{(X_N^j)^2 + (e^{ij})^2}, \qquad
    c_0^j = Y_N^j\varepsilon, \quad \beta_0^j = \sin^{-1}\frac{Nc\sin\beta}{Y_N^{j-1}}.
\end{equation}

We list the results of the tirangulation of the $j$-th slice i.e. top-line, top-angle, opening angle and all quantities needed to calculate them. They are presented in the form as described in \eqref{eq: separated form}, separating the contributions of the corrections to the arguments from the interior values and collecting them in the argument correction coefficients:\\

We start with the second order terms to the top line of the $j$-th slice. The second order term of the top line and its argument coefficients are calculated by adding the lengths of the top lines of all segments $\Delta_1$ - $\Delta_{N-1}$\footnote{We abbreviate the segments $\Delta_{ij}$ with $\Delta_i$ since we always use different segments in the same slice $\Sigma_j$. They are also sketched in that way in Fig.~\ref{fig: scheme-main}, where for example $\Sigma_1$ stands for the first segments in all the slices, representing $\Delta_{1j}$, $\forall j\in\{1,\ldots,N\}$.} and then extracting the correction terms due to the second order terms in the arguments. The last segment $\Delta_N$ has no top line.
\begin{align}
    \bar{b}_2^j =&\ \overset{\circ}{b}{}_2^j
    + \overset{\circ}{b}{}_2^{j,c}c_2^j + \overset{\circ}{b}{}_2^{j,\beta}\beta_2^j, \quad
    \overset{\circ}{b}{}_2^j = \overset{\circ}{b}{}_3^{1,j}\varepsilon + \sum_{i=2}^{N-2}\overset{\circ}{b}{}_3^{ij}\varepsilon
    + \overset{\circ}{b}{}_3^{N-1,j}\varepsilon, \qquad j\in\{1,\ldots,N\} \\
    \overset{\circ}{b}{}_2^{j,c} \coloneq& \frac{1}{Y_N^j}\left[ X_N^j + \frac{I_N(e^{1,j})^2}{NX_N^j} \right], \quad
    \overset{\circ}{b}{}_2^{j,\beta} \coloneq -\frac{c_0^je^{1,j}}{Y_N^j}, \quad
    I_N \coloneq \sum_{i=2}^{N-2}\frac{1}{i+1} = \psi(N) + \gamma - \frac{3}{2}, \quad
    \psi(x) = \frac{d}{dx}\ln\Gamma(x), \label{eq: bo}
\end{align}
where $\psi$ denotes the digamma function and $\gamma$ here is the Euler constant.\\

 The first and the second last segments, $\Delta_1$ and $\Delta_{N-1}$ are special cases, since they are boundary problems. All top line pieces depend on the lengths of the rib-lines $a_2^{ij}$ in their segments $\Delta_i$. The terms containing curvature values in the regular cases are collected in $\mathcal{K}_b^{ij}$. It consists of a term proportional to the curvature parameter $C^{i-1,j}$, representing the constant term in the ansatz for $\hat{\alpha}_2^{ij}$, and two terms proportional to the curvature of the current $\Delta_i$ and the next slice $\Delta_{i+1}$. We pack their complicated proportionality factors into $\mathrm{X}_b^{ij}$ and $\mathrm{Z}_b^{ij}$ respectively.
\begin{align}
    \overset{\circ}{b}{}_3^{1,j} =& -\frac{e^{1,j}}{Y_N^j}\left( \frac{(c_0^j)^3e^{1,j}}{X_N^j}\left[ \frac{K_{1,j}}{3} + K_{2,j} \right] - \overset{\circ}{a}{}_3^{2,j} \right), \qquad
    \overset{\circ}{b}{}_3^{ij} = -\frac{e^{1,j}}{(i+1)Y_N^j}\left( \frac{(c_0^j)^3e^{1,j}}{6X_N^j}i\overset{\circ}{\mathcal{K}}{}_b^{ij}
    - \overset{\circ}{a}{}_3^{i+1,j} \right), \\
    \overset{\circ}{b}{}_3^{N-1,j} =& -\frac{(c_0^j)^2 \left(e^{1,j}\right)^2}{6 Y_N^j}\left( (N-1)^2K_{N-1,j}\frac{X_1^j}{X_N^j}
    + (2N-1)K_{N,j} + \frac{6}{(c_0^j)^2e^{1,j}}\overset{\circ}{a}{}_3^{N-1,j} \right), \notag \\
    \overset{\circ}{\mathcal{K}}{}_b^{ij} =&\ \frac{\overset{\circ}{C}{}^{i-1,j}}{(X_N^j)^2}
    + iK_{ij}\mathrm{X}_b^{ij} + (i+1)K_{i+1,j}\mathrm{Z}_b^{ij}, \qquad
    \mathrm{X}_b^{ij} \coloneq i + 5 + \frac{i-1}{i} + (i+3)\frac{(e^{1,j})^2}{(X_N^j)^2}, \notag \\
    \mathrm{Z}_b^{ij} \coloneq& \left( i + 2 + \frac{e^{1,j}e^{i+1,j}}{(X_N^j)^2} \right)
   \left( 1 + \frac{(e^{i+1,j})^2}{(k_N^{i+1,j})^2} \right) + \frac{(i+1)^2(X_N^j)^2 + (2i+1)e^{1,j}e^{i+1,j}}{i(k_N^{i+1,j})^2}, \qquad i\in\{2,\ldots,N-2\}. \notag
\end{align}

The opening angle of the slice is given by the opening angle of the first segment in the slice. It directly depends on the curvature value in it's segment $\Delta_1$ and the next one $\Delta_2$, and indirectly on all other curvature values in the slice via the rib-line of the second segment. We drop the $0$-index in the argument coefficients here, since we do not need them for other index values.
\begin{align}\label{eq: alphao}
    \alpha_2^{0,j} =& \overset{\circ}{\alpha}{}_2^{0,j}
    + \overset{\circ}{\alpha}{}_2^{j,c} c_2^j + \overset{\circ}{\alpha}{}_2^{j,\beta} \beta_2^j, \\
    \overset{\circ}{\alpha}{}_2^{0,j} =& \frac{(X_N^j)^2}{2(Y_N^j)^2}\left( \frac{(c_0^j)^2e^{1,j}}{3X_N^j}\left[ K_{1,j}\left( 3 + 4\frac{(e^{1,j})^2}{(X_N^j)^2} \right)
    + K_{2,j}\frac{(k_N^{2,j})^2}{(X_N^j)^2} \right] + \frac{\overset{\circ}{a}{}_3^{2,j}}{c_0^j} \right), \quad
    \overset{\circ}{\alpha}{}_2^{j,c} = \frac{a\cos\beta_0^j - X_N^j}{(Y_N^j)^2c_0^j}e^{1,j}, \quad
    \overset{\circ}{\alpha}{}_2^{j,\beta} = \frac{aZ_N^j}{(Y_N^j)^2} \notag
\end{align}

The top angle of a slice is the opposite angle to the direction angle of the next slice: $\bar{\gamma}^j = \beta^{j+1}$. It's second order term is the sum of the top angle in the last segment $\Delta_N$ and the angle opposite to the rib-line in the upper triangle of the second last segment $\hat{\Delta}_{N-1}$.\\
We note that none of the argument coefficients contain curvature values. The dependence them is given by the interior values alone and thus each slice depends on it's curvature values in the same way.
\begin{align}
  \bar{\gamma}_2^j =& \overset{\circ}{\gamma}{}_2^j + \overset{\circ}{\gamma}{}_2^{j,c}c_2^j
    + \overset{\circ}{\gamma}{}_2^{j,\beta}\beta_2^j, \qquad
    \overset{\circ}{\gamma}{}_2^j = \overset{\circ}{\hat{\alpha}}{}_2^{N-1,j} + \overset{\circ}{\gamma}{}_2^{N-1,j}, \quad
    \overset{\circ}{\gamma}{}_2^{j,c} = \frac{e^{N j}}{(Y_N^j)^2}, \quad
    \overset{\circ}{\gamma}{}_2^{j,\beta} = Nc_0^j\frac{X_N^j}{(Y_N^j)^2}, \label{eq: gammao}
\end{align}
Also $\hat{\alpha}_2^{N-1,j}$ depends on the curvature values in it's segment and the next one and on the rib-line in it's segment. Since there is no next segment to $\Delta_N$ and it contains no rib-line, the top angle of the last segment only depends on the last curvature value.
\begin{align}
    \overset{\circ}{\hat{\alpha}}{}_2^{N-1,j} =& \frac{1}{X_1^j(Y_N^j)^2}\left( \frac{c_0^j\,e^{N-1,j}}{6}\left[
    K_{N-1,j}\frac{X_1^j}{X_N^j}(X_1^j(Y_N^j)^2 - c_0^j\,e^{N-1,j}e^{1,j}) + K_{Nj}c_0^j(e^{1,j})^2\left( \frac{N(c_0^j)^2 - a^2}{(Y_1^j)^2} - 1 \right) \right] \right. \notag \\
    &\left.+ (X_N^j)^2\overset{\circ}{a}{}_3^{N-1,j} \vphantom{\frac{X_1^j}{X_N^j}}\right), \qquad
    \overset{\circ}{\gamma}{}_2^{N-1,j} = \frac{1}{(Y_1^j)^2}\left( \frac{K_{Nj}}{6}c_0^je^{1,j}\left[ (Y_1^j)^2
    + c_0^jX_1^j \right] \right).
\end{align}

The contribution of the argument coefficients of the recursion parameters were already included into in the ones of the rib-lines and those into the argument coefficients presented above. Since they are not required on their own, we leave them away here. We list the interior values though, since they are required to carry out the triangulation for finite $N$.\\
We saw, that all quantities we want to calculate ultimately depend on the length's of the rib-lines, which is why we call them that way. We again collect all terms containing curvature values in $\mathcal{K}_a^{ij}$, which has the same structure as $\mathcal{K}_b^{ij}$. In difference to the top lines $b_2^{ij}$ the rib-lines contain all $\mathcal{K}_a^{nj}$ of the following segments and with that all their curvature values: $K_{nj}$, $n\in\{i,\ldots,N-1\}$. We also observe, that the special case of the last rib-line $a^{N-1,j}$ appears in all other rib-lines.
\begin{align}
    \overset{\circ}{a}{}_3^{ij} =& \frac{(c_0^j)^3e^{1,j}}{6X_N^j}\sum_{n=i}^{N-2}\frac{i}{n+1}\overset{\circ}{\mathcal{K}}{}_a^{nj}
    + \frac{i}{N-1}\overset{\circ}{a}{}_3^{N-1,j}, \qquad
    \overset{\circ}{\mathcal{K}}{}_a^{nj} = \frac{\overset{\circ}{C}\,^{n-1,j}}{(X_N^j)^2}
    + n\,K_{nj}\mathrm{X}^{nj} + (n+1)K_{n+1,j}\mathrm{Z}^{nj}, \\
    \mathrm{X}^{ij} \coloneq& \frac{1}{(k_N^{ij})^2}\left[ \left(5-\frac{1}{i}\right)(X_N^j)^2
    + i^2(i+3)\frac{(e^{1,j})^4}{(X_N^j)^2} + (5i^2 + 3)(e^{1,j})^2 \right], \qquad
    \mathrm{Z}^{ij} \coloneq 1 + 2(i+1)\frac{(e^{1,j})^2}{(X_N^j)^2}, \\
    i\in&\{2,\ldots,N-2\}, \qquad j\in\{1,\ldots,N\} \notag
\end{align}
The last rib-lines are proportional to a term with the structure of a $\mathcal{K}_a^{ij}$ consistent with the expectation that they should only have one, since there are no next ones and they should not depend on themselves. Since they are special cases as quantities on the boundary, the factors $\chi^j$ and $\zeta^j$, which are the analogues to $\mathrm{X}^{ij}$ and $\mathrm{Z}^{ij}$, are not consistent with the regular case.
\begin{align}
    \overset{\circ}{a}{}_3^{N-1,j} =& \frac{(N-1)(c_0^j)^2X_1^j}{6X_N^j\left[ (N-1)c_0^j + X_1^j \right]}\left( \frac{c_0^j\,e^{1,j}}{(X_N^j)^2}\overset{\circ}{C}{}^{N-2,j}
    + K_{N-1,j}\chi^j + K_{N,j}\zeta^j \right), \\
    \chi^j =& (N-1)\frac{e^{1,j}}{X_N^j}\left( a^2 + N(c_0^j)^2 + (N+1)a\,c_0^j\cos\beta_0^j + \frac{X_1^j}{X_N^j}(Y_N^j)^2 \right)
    \notag \\
    &+ \left( (Y_N^j)^2\frac{(k_N^{N-1,j})^2 + (X_N^j)^2}{(k_N^{N-1,j})^2(X_N^j)^2} + \frac{N-2}{(k_N^{N-1,j})^2}\left[
    \frac{(X_N^j)^2}{N-1} + (2N-1)(e^{1,j})^2 \right] \right)c_0^j\,e^{N-1,j} \\
    \zeta^j =& \frac{e^{1,j}}{X_1^j}\left( 2a^2 + N(c_0^j)^2 + (2N+1)a\,c_0^j\cos\beta_0^j \right), \qquad j\in\{1,\ldots,N\}
\end{align}

All quantities we discussed before depend in some way on the recursion parameter $C^{ij}$. The contributions of the other two recursion parameters $A^{ij}$ and $D^{ij}$ are contained in $\mathcal{K}^{ij}$ and do not appear explicitly anymore. In the $C^{ij}$ the curvature values are added in the opposite direction, containing all $K_{nj}$ from the first one up to the $i$-th one. This way all quantities in the end depend on all curvature values in their slice $\Sigma_j$.
\begin{align}
    \overset{\circ}{C}{}^{1,j} =&\, K_{1,j}\left[ (X_N^j)^2 + 2(Y_N^j)^2 \right], \qquad
    \overset{\circ}{C}{}^{ij} = \frac{2}{i+1}\overset{\circ}{C}{}^{1,j} + \sum_{k=2}^i \frac{k}{i+1}\mathcal{K}^{kj},
    &\qquad i\in&\{2,\ldots,N-2\}, \notag \\
    \mathcal{K}^{ij} =&\, K_{ij}\left[ 2i(X_N^j)^2 + i\left( 3 + \frac{(X_N^j)^2}{(k_N^{ij})^2} \right)(Y_N^j)^2
    + \left( 1 + i(i^2-1)\frac{(X_N^j)^2}{(k_N^{ij})^2} \right)(e^{1,j})^2 \right], &\qquad j\in&\{1,\ldots,N\}.
\end{align}

\section{Recursion across slices}\label{sec: Recursion across slices}
We see from~\eqref{eq: recursive relation} that if we write out $\bar{b}^j$ and $\bar{\gamma}^j$ as functions of $c^{j-1}$ and $\beta^{j-1}$ we get a recursion across the slices.
\begin{align}
    c_2^j = \bar{b}_2^{j-1} = \overset{\circ}{b}{}_2^{j-1}
    + \overset{\circ}{b}{}_2^{j-1,c}c_2^{j-1} + \overset{\circ}{b}{}_2^{j-1,\beta}\beta_2^{j-1}, \quad
    \beta_2^{j} = \bar{\gamma}_2^{j-1} = \overset{\circ}{\gamma}{}_2^{j-1}
    + \overset{\circ}{\gamma}{}_2^{j-1,c}c_2^{j-1} + \overset{\circ}{\gamma}{}_2^{j-1,\beta}\beta_2^{j-1}
\end{align}
We iteratively apply the recursion rule until we arive at $j=1$ and collect the terms proportional to $\overset{\circ}{b}{}_2^n$ and $\overset{\circ}{\gamma}{}_2^n$ in the recursion substitutes $O_n^j$, $Q_n^j$, $V_n^j$ and $W_n^j$. That way we can write $\bar{b}_2^j$ and $\bar{\gamma}_2^j$ explicitly, when we know the recursion substitutes.
\begin{equation}\label{eq: b_2^j}
    \bar{b}_2^j = c_2^{j+1} = \overset{\circ}{b}\,_2^{j}
    + \sum_{n=1}^{j-1}\left( O_n^j\overset{\circ}{b}\,_2^{j-n} + Q_n^j\overset{\circ}{\gamma}\,_2^{j-n} \right),
    \qquad \bar{\gamma}_2^j = \beta_2^{j+1} = \overset{\circ}{\gamma}{}_2^j
    + \sum_{n=1}^{j-1}\left( V_n^j\overset{\circ}{b}{}_2^{j-n} + W_n^j\overset{\circ}{\gamma}{}_2^{j-n} \right).
\end{equation}
Thus the recursion is now packed in these substitutes and we solve it in the Supplemental Material, where we find:
\begin{equation}\label{eq: Onj&Qnj_explicit}
    O_n^j = \sum_{\underset{\textit{\small even}}{\imath=0,}}^n \prod_{l=1}^\frac{\imath}{2} S_l
    \cdot \frac{Y_N^j}{Y_N^{j-m_\imath}} \hat{P}_0^{m_\imath-1}, \quad m_0 \coloneq n, \qquad
    Q_n^j = -c\,e^1 \sum_{m_1=0}^{n-1} \frac{P_{m_1+1}^{n-1}}{Y_N^{j-m_1}} \sum_{\underset{\textit{\small odd}}{\imath=0,}}^n
    \prod_{l=1}^\frac{\imath-1}{2} S_{l+\frac{1}{2}} \cdot \frac{Y_N^j}{Y_N^{j-m_\imath}} \hat{P}_0^{m_\imath-1},
\end{equation}
and the recursion substitutes of $\hat{\gamma}_2^j$ show a similar structure, if we swap their role:
\begin{equation}\label{eq: Vnj&Wnj_explicit}
    V_n^j = \frac{Nf_N}{Y_N^{j-n}} \sum_{m_1=0}^{n-1} \frac{\hat{P}_{m_1+1}^{n-1}}{(Y_N^{j-m_1})^2}
    \sum_{\underset{\textit{\small odd}}{\imath=0}}^n \prod_{l=1}^\frac{\imath-1}{2} T_{l+\frac{1}{2}} \cdot P_0^{m_\imath-1}, \qquad
    W_n^j = \sum_{\underset{\textit{\small even}}{\imath=0,}}^n \prod_{l=1}^\frac{\imath}{2} T_l \cdot P_0^{m_\imath-1}, \quad m_0=n.
\end{equation}
To simplify the expressions we introduced the following quantities.
\begin{align}
    f_N \coloneq& Nce^1, \quad \tilde{Q}_n^j \coloneq -\frac{1}{ce^1}Q_n^j \quad
    \Rightarrow \quad Q_n^j = -ce^1\tilde{Q}_n^j \\
    P_m^n \coloneq& \prod_{l=m}^n\overset{\circ}{\gamma}{}_2^{j-l,\beta} = \prod_{l=m}^n\left( 1 - \frac{aZ_N^{j-l}}{(Y_N^{j-l})^2} \right),
    \qquad \hat{P}_m^n \coloneq \prod_{l=m}^n\left( 1 - \frac{aZ_N^{j-l}}{(Y_N^{j-l})^2} + \frac{\Lambda_N}{(Y_N^{j-l})^2L_N^{j-l}} \right)
\end{align}
And the nested sum operators are given by:
\begin{equation}\label{eq: S_l}
    S_l \coloneq -\frac{f_N^2}{Y_N^{j-m_{2(l-1)}}}
    \sum_{m_{2l-1}=1}^{m_{2(l-1)}-1} \frac{\hat{P}_{m_{2l-1}+1}^{m_{2(l-1)}-1}}{(Y_N^{j-m_{2l-1}})^2}
    \sum_{m_{2l}=0}^{m_{2l-1}-1} \frac{P_{m_{2l}+1}^{m_{2l-1}-1}}{Y_N^{j-m_{2l}}}
\end{equation}
\begin{equation}\label{eq: T_l}
    T_l \coloneq -c\,e^1 \sum_{m_{2l-1}=1}^{m_{2(l-1)}-1} \frac{P_{m_{2l-1}+1}^{m_{2(l-1)}-1}}{Y_N^{j-m_{2l-1}}}
    \frac{Nf_N}{Y_N^{j-m_{2l-1}}} \sum_{m_{2l}=0}^{m_{2l-1}-1} \frac{\hat{P}_{m_{2l+1}}^{m_{2l-1}-1}}{(Y_N^{j-m_{2l}})^2}
\end{equation}
With that we achieved our first goal of calculating the triangulation explicitly. For the top line and top angle we can just pick the ones of the last slice $\bar{b}_2^N$ and $\bar{\gamma}_2^N$ and we can use these recursions to express the opening angle of the $j$-th slice:
\begin{align}\label{eq: alpha_2^j}
    \alpha_2^{0,j} =& \overset{\circ}{\alpha}{}_2^{0,j} + \overset{\circ}{\alpha}{}_2^{j,c}c_2^j
    + \overset{\circ}{\alpha}{}_2^{j,\beta}\beta_2^j \\
    =& \overset{\circ}{\alpha}{}_2^{0,j} + \overset{\circ}{\alpha}{}_2^{j,c}\left\{ \overset{\circ}{b}\,_2^{j-1}
    + \sum_{n=1}^{j-2}\left( O_n^{j-1}\overset{\circ}{b}\,_2^{j-1-n} + Q_n^{j-1}\overset{\circ}{\gamma}\,_2^{j-1-n} \right) \right\}
    + \overset{\circ}{\alpha}{}_2^{j,\beta}\left\{ \overset{\circ}{\gamma}{}_2^{j-1}
    + \sum_{n=1}^{j-2}\left( V_n^{j-1}\overset{\circ}{b}{}_2^{j-1-n} + W_n^{j-1}\overset{\circ}{\gamma}{}_2^{j-1-n} \right) \right\},  \notag
\end{align}
from which we can calculate the opening angle of the triangle, by summing over all slices.\\

\section{Product integrals}\label{sec: Product integrals}
If we take the limit of $N\to\infty$ then the procucts $P_m^n$ and $\hat{P}_m^n$ become infinite products i.e., include arbitrarily many multiplications, since $m$ can be as small as $1$, while $n$ can be as big as $N$. We find a way to calculate their limits, using their analogy to series, which converge to integrals.\\

In the case of a series, we sum over a null sequence. The series converges to a finite value if the sequence converges to $0$ fast enough.
\begin{equation}
    \sum_{n=0}^\infty a_n \to a = \lim_{N\to\infty}S_N\in (-\infty,\infty), \quad S_N = \sum_{n=0}^N a_n, \qquad
    \text{if} \ a_n \overset{n\to\infty}{\longrightarrow} 0 \ \text{sufficiently fast} 
\end{equation}
As we saw in the previous section the integral works a bit differently: it is governed by an underlying function instead of a series i.e., the series changes according to the function in each step.
\begin{equation}
    \sum_{n=0}^N\underbrace{\frac{a_n(N)}{N}}_{\to0} \overset{N\to\infty}{\longrightarrow} I\in (-\infty,\infty),
    \qquad a_n(N) = f\left(\frac{n}{N}\right)
\end{equation}
In this case the increasing number of values we sum over is balanced by the $\frac{1}{N}$-factor, which makes the values smaller as $N$ increases.\\

In the case of the products $P_m^n$, which we encountered in the previous section, we see an analogy to this concept. These are products of the form:
\begin{equation}
    \lim_{N\to\infty}\prod_{n=0}^{N-1}\left(\vphantom{\frac{a_n(N)}{N}}\right. 1 + \underbrace{\frac{a_n(N)}{N}}_{\to0} \left.\vphantom{\frac{a_n(N)}{N}}\right),
    \quad a_n(N) = f\left(\frac{n}{N}\right)
\end{equation}
We wrote the product in a way such that we multiply $1$'s with small corrections instead of adding $0$'s with small corrections. Thus, we conclude, that while a Riemann integral is the limit of adding an increasing number of values increasingly close to $0$, the neutral element to addition, we multiply an increasing number of values increasingly close to $1$, the neural element to multiplication, in the case of these products. It thus makes sense to consider them as product integrals. We discuss their convergence in the Supplemental Material.
\begin{definition}[Product Integral] The product integral of a function $f$ on the interval $[a,b]$ is given by the limit
\begin{equation}
    \mathcal{P} f dx \coloneq \lim_{N\to\infty}\prod_{n=0}^N\left( 1 + \frac{f(x_n)}{N} \right),
\end{equation}
with $x_0 = a$ and $x_N = b$.
\end{definition}

What allows us to calculate some integrals analytically is the fundamental theorem of calculus. It relates a function describing the result (primitive function) to the integrand via an operation (derivative for ex.). This allows us to act this operation on functions we know from which we can learn what integral of the result will be.\\

In the Supplemental Material we derive an analogue to the fundamental theorem of calculus for product integrals and find, that the product integral can be calculated by:
\begin{equation}
    \underset{a}{\overset{b}{\mathcal{P}}} f dx = \frac{F(b)}{F(a)},
\end{equation}
using the primitive function $F$, which is defined as follows:
\begin{definition}[Product Primitive Function]
A product primitive function $F$ to a real smooth integrand $f$ is a function that satisfies:
\begin{equation}
    F = \mathcal{P}f \quad :\Leftrightarrow \quad \frac{F'(x)}{F(x)} = f(x).
\end{equation}
\end{definition}

We get a very similar relation as in the case of sums, with the difference, that we divide the derivative with the function. This unsurprisingly leads to a multiplicative behaviour of the integration constant.
\begin{equation}
    F = \mathcal{P}f \quad \Rightarrow \quad c\,F = \mathcal{P}f, \quad \forall\,c\in\mathbb{R}
\end{equation}

\section{Limit of the second order triangulation}\label{sec: Limit of the second order triangulation}
After calculating the limits of the product via product integration we can successively convert the nested sums into integrals and arrive at $n$-layered nested integrals, which repeatedly integrate over the same function. We find a general expression for this type of nested integrals in the Supplemental Material.
\begin{equation}
    J_\imath = \int_0^{x_\imath} f(x_{\imath-1})\int_0^{x_{\imath-1}} f(x_{\imath-2}) \ldots \int_0^{x_1} f(x_0)\,dx_0 \ldots dx_{\imath-2}\,dx_{\imath-1} = \sum_{s=1}^\imath\frac{(-1)^{\imath-s}}{(\imath-s)!}F^{\imath-s}(0)[I_s]_0^{x_\imath}
\end{equation}
We can see, that the expression is very similar to the exponential function. When we simplify the asymptotic expressions for $\bar{b}_2^N$, $\bar{\gamma}_2^N$ and $\bar{\alpha}_2^N$ we obtained with these methods, we can indeed identify the power series of sines and cosines. This allows us to finally calculate the limit of the second order terms of the fundamental solution explicitly. In this limit the approximation with a curvature step function becomes precise and the asymptotic expressions of the fundamental solution converge to the exact result.\\
On the sphere we know, that the excess angle is related to the surface of the spherical triangle. So, one could expect a surface integral over the curvature field to be involved int these quantities. For the top line and the top angle it almost looks that way, but there are additional functions in the integrand and we integrate over index variables instead of the side lengths.
\begin{align}\label{eq: b_2}
    b_2\varepsilon^2 =&\ \bar{b}_2^N\varepsilon^2
    \overset{N\to\infty}{\longrightarrow} -\frac{(c\,e^1)^2}{y(1)}
    \int_0^1 \int_0^1 k^2\, K[k,1-n] \,dk\, n\, dn
\end{align}
\begin{align}\label{eq: gamma_2}
    \gamma_2\varepsilon^2 = \bar{\gamma}_2^N\varepsilon^2
    \overset{N\to\infty}{\longrightarrow} c\,e^1 \int_0^1 \left( 1 - n\frac{ a\,z(1) }{ y^2(1) } \right) \int_0^1 k^2\, K[k,1-n] \,dk\,dn
\end{align}
For the opening angle we can clearly see, that this relation does not hold in general as we get triple nested integrals.
\begin{align}\label{eq: alpha_2}
    \alpha_2\varepsilon^2 =& \bar{\alpha}_2^N\varepsilon^2 = \sum_{j=1}^N\alpha_2^{0,j}\varepsilon^2 \\
    \overset{N\to\infty}{\longrightarrow}& c\,e^1\int_0^1 \int_0^1 \frac{1}{n^2}\int_0^n k^2\,K[k,j] dk\,dn
    + \int_0^j \left[ \frac{a\,z(j)}{y^2(j)} + n\frac{ (c\,e^1)^2 - a^2z^2(j) }{y^4(j)} \right]
    \int_0^1 k^2\,K[k,j-n] dk\,dn\, dj \notag
\end{align}

The curvature field in terms of the index variables is related to the Gaussian curvature field expressed in FNC by:
\begin{equation}
    K[i,j] \coloneq K\left( i\,y(j), \,\sin^{-1}\frac{j\,a\sin\beta}{y(j)} \right)
    \in C^\infty([0,1]^2;\mathbb{R}).
\end{equation}

\section{Generalization of the cosine- and sine-laws to varying curvature fields}\label{sec: sine- & cosine-laws}
Our method allows to generalize the cosine- and sine-laws to varying Gaussian curvature fields. In this work we calculated the second order term of their power series expansion for strictly positive curvature fields.\\
We compare our results to the established cosine- and sine-laws and the metric, to put them into context.\\

The metric is often seen as the natural generalization of Pythagoras theorem. We apply it to infinitesimal distances and allow factors to vary from $\delta_{ij} \to g_{ij}$:
\begin{equation}
    c^2 = a^2 + b^2 \quad \longrightarrow \quad ds^2 = g_{ij}dx^idx^j
\end{equation}

Another generalization is to allow for non-orthogonal triangles i.e. $\beta \overset{i.g.}{\neq} \frac{\pi}{2}$. This leads us to the cosine law:
\begin{equation}
    c^2 = a^2 + b^2 - 2ab\cos\gamma
\end{equation}
This can be achieved by a metric, when choosing a skew basis. Each basis treats a different infinitesimal triangle, as it gives us and instant of the cosine law for one specific angle $\beta$, so that the metric has to be translated into a different basis, when we change the angle $\beta$.
\begin{equation}
    ds^2 = dx^2 + dy'^2 - 2\underbrace{\cos\beta}_{g_{xy}}\,dx\,dy', \quad e_y' = \cos\beta\,e_x + \sin\beta\,e_y, \quad
    e_x = \binom{1}{0}, \quad e_y = \binom{0}{1}
\end{equation}

On the one hand the cosine- and sine-laws only cover flat space and the spherical version can only deal with constant curvature. Thus the metric version is more general. On the other hand just having the metric version without an analogue to the sine-law can lead into trouble, since one is stuck with the cosine. We encountered such a case when we calculated the redshift contribution from the variation of the pulsar rotation in \cite{PTA1}.\\

The addition of the sine laws gives us more flexibility, since it involves all angles and side-lengths in a triangle:
\begin{equation}
    \frac{a}{\sin\alpha} = \frac{b}{\sin\beta} = \frac{c}{\sin\gamma}
\end{equation}

The cosine- and sine-laws can be extended to the case of constant curvature, which amounts to triangles on the sphere and pseudo-sphere.
\begin{equation}
    \cos_Kc = \cos_Ka\cos_Kb + K\sin_Ka\sin_Kb\cos\gamma, \qquad \frac{\sin_Ka}{\sin\alpha} = \frac{\sin_Kb}{\sin\beta} = \frac{\sin_Kc}{\sin\gamma}
\end{equation}
Where the generalization of the cosine and sine to surfaces of constant curvature is used.\\

Here we can see, that the cosine- and sine-laws head into a different direction. They describe relations in a large triangle and are thus fully non-linear and do not require a bases as opposed to the metric formalism, where our choice of basis at each point forms infinitesimal triangles on which we use the cosine law for a flat space.\\

Our results go further in the direction of generalizing the non-linear relations in a large triangle by relating the top-line, top-angle and opening angle to the base-line, side-line and base-angle for an arbitrary curvature field.\\
To better compare our results to the original cosine- and sine-laws we swap the role of $b$ and $c$, $\beta$ and $\gamma$:
\begin{align}
    c =& C(K(p);\gamma,a,b) \approx C^{(2)}(K(l,\varphi);\gamma,a,b)
    \approx y - \frac{f^2}{y}\int_0^1 n \int_0^1 k^2\, K[k,1-n] \,dk\,dn \label{eq: C^(2)(a,b)} \\
    \beta =& SC(K(p);\gamma,b,a) \approx SC^{(2)}(K(l,\varphi);\gamma,b,a)
    = \sin^{-1}\left( \frac{b}{y}\sin\gamma \right)
    + f\int_0^1 \left( 1 - n\frac{ a\,z}{ y^2 } \right) \int_0^1 k^2\,
    K[k,1-n] \,dk\,dn \label{eq: SC^(2)(b,a)} \\
    \alpha =&\ SC(K(p);\gamma,a,b) \approx SC^{(2)}(K(l,\varphi);\gamma,a,b) \label{eq: SC^(2)(a,b)} \\
    \approx&\ \sin^{-1}\left( \frac{a}{y}\sin\gamma \right)
    + f\int_0^1 \int_0^1 \frac{1}{n^2}\int_0^n k^2\,K[k,j] dk\,dn
    + \int_0^j \left[ \frac{a\,z(j)}{y^2(j)} + n\frac{ f^2 - a^2z^2(j) }{y^4(j)} \right]
    \int_0^1 k^2\,K[k,j-n] dk\,dn\, dj, \notag
\end{align}
with
\begin{equation}
    f = ab\sin\gamma, \quad y = \sqrt{a^2 + b^2 - 2ab\cos\gamma}, \quad z = a - b\cos\gamma.
\end{equation}
\begin{figure}[h!]
    \centering\includegraphics[width=.4\linewidth]{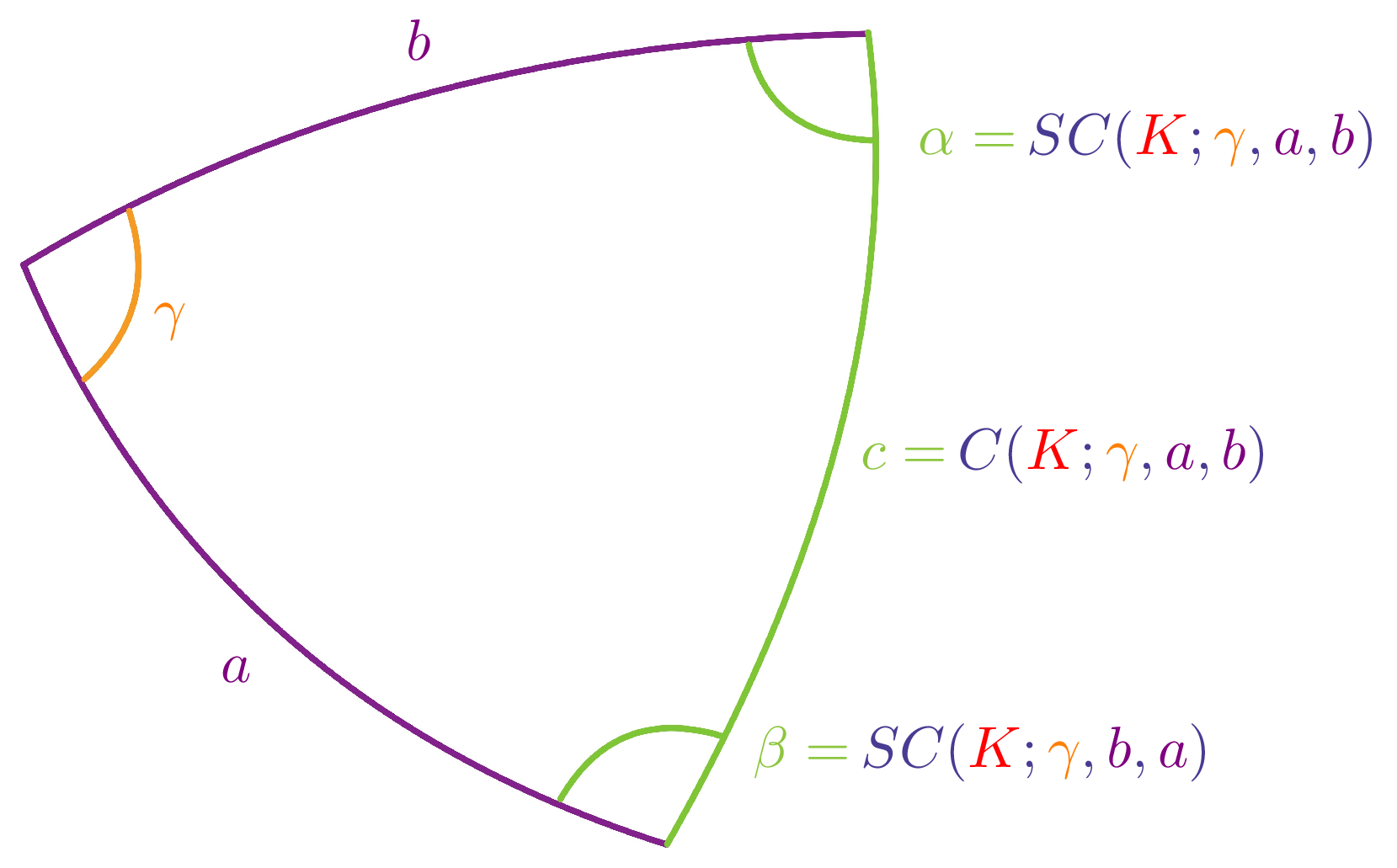}
    \caption{A sketch of the spherical triangle with the labelling choice used int this section.}
    \label{fig: Spherical_Triangle_2}
\end{figure}

We note an asymmetry arising from the varying curvature fields, which a priori can be completely asymmetric.\\
These expressions are valid for a strictly positive Gaussian curvature field $K\in C^\infty(\mathcal{M}^2;\mathbb{R}_+)$ on a $2$-dimensional Riemannian (sub)-manifold and approximate the quantities up so second order.\\
From the discussion in Sec.~\ref{subsec: Curvature parametrization} we know that this suffices to deal with $n$-dimensional manifold, when we systematically divide the problem into $2$-dimensional geodesic sub-manifolds.\\

We restrict our calculations to $\beta\in\left[0,\frac{\pi}{2}\right)$ and thus the results are shown to hold in this parameter range. The test-plots in the previous section and the comparison with $\mathcal{S}^2$ in the Supplemental Material indicate however, that the results are valid on the full parameter range $[0,2\pi)$. This has to be proven properly, but that is beyond the scope of this work.\\
An attempt at that could be made using the procedure outlined in Appendix~\ref{sec: geod. eq. test recipe}.\\

\section{Conclusions}
We introduced the geodesic flow bundle to describe a geometry in a more direct way, then the metric does. We found, that the structure of the directions at a point can be described via the geodesic flow bundle on the unit-sphere, and that there is a natural way to define the angular labels, by subsequently embedding lower dimensional spheres into the next higher dimensional ones until we reach $\mathcal{S}^{n-1}$, where $n$ is the dimension of the manifold.\\

We introduced the special case of normal chart, which preserves distances from the centre and angles at the centre and consists purely of geometrically meaningful quantities.\\

When trying to understand the relation of the geodesic flow bundle to the curvature of a geometry we found that infinitesimal spherical triangles can serve as generators of a geometry. From this we concluded that the sectional curvature, a value on an infinitesimal disc around a point, properly characterizes the degrees of freedom a curvature distribution has. This led us to introduce the great circle functions (i.e. functions on the great circles of the $n$-sphere) to get to a parametrization of a generic curvature field, which is compatible to our formalism and includes all degrees of freedom. Since the geodesic flow bundle contains more information, then the metric i.e., it includes all geodesics of the manifold in arclength parametrized form, we can calculate the metric from the geodesic flow bundle. Doing the converse is only possible, if one manages to solve the geodesic equations for a generic case. We calculated the metric from the GFB for the example in the Supplemental Material, to compare our parametrization of curvature fields to the Riemann tensor. From this example we conclude that there are geometries, which the metric cannot describe and thus curvature degrees of freedom, which are not included in the Riemann tensor.\\

We restricted ourselves to the special case of plane geodesic flow bundles, but the spherical triangulation, with which we calculate the geodesic flow bundle from a curvature field, only requires the unique existence of the geodesics that form the triangles. To treat more generic geometries one would have to rework the angular structure.\\

Since we restrict ourselves to positive curvature the only non-trivial exactly known example, we can test against is the $2$-sphere. It has turned out to be a very effective test since it has so far failed whenever we made a mistake in the calculation. Another effective test, which frequently showed us when something was wrong was plotting the triangles. When the triangles on the plot match up, we know, that the method is consistent up to the precision of the plot. To make the error better visible one can always push the limits by plotting extreme cases.\\

We completed the first step in our attempt to solve the Einstein equations. We related the curvature distributions of a subclass of Riemannian geometries to the geodesic flow bundle and derived an explicit formula to calculate the second order term to every geodesic in the manifold.\\
Higher order terms could be derived using the same methodology. It is not clear however, whether the recursions will be solvable. One could also get to terms on which our methods to calculate the limits do not apply.\\
The extension to negative curvature is a safer bet, we expect some signs to change. Though changes in the signs could lift or cause cancellations which in a mistake has proven to lead to a very different problem in the limit calculation.\\
To apply this method to general relativity we need to extend this formalism to the pseudo-Riemannian case, which we are currently working on.\\
Another problem which we did not have time to look into jet, is how to connect the energy momentum tensor to a curvature field given in the parametrization scheme we derived in this work. The Ricci tensor itself is not compatible with the geodesic flow bundle formalism and translation from the metric to geodesic flow bundle formalism is in general not possible.\\

Finally we present our main result, the second order terms of the top line, top angle and opening angle of a geodesic triangle. They determine the geodesics up to second order from Gaussian curvature in a geometric way and are essentially the second order term of the generalization of the cosine and sine laws to a positive varying curvature field.\\

To calculate this result it was necessary to introduce a notion of product integral and derive a main theorem of calculus for them. This allows us to calculate asymptotic limits of infinite products in closed (analytic) form.

\section*{Acknowledgements}
\addcontentsline{toc}{section}{Acknowledgements}
A.B. is supported by the Forschungskredit of the University of Zurich Grant No. FK-21-083 S.T. is supported by Swiss National Science Foundation Grant No. 200020 182047. Symbolic manipulation as well as some of the plots have been done using Mathematica \cite{Mathematica}.

\appendix

\section{Comparison of the geodesic flow from a point with the exponential map}\label{sec: Exp. map}
Our construction is similar to the exponential map, whose definition we give here briefly for comparison. We use the definition 2.51 given in the lecture script by M. Burger and S. Tornier~\cite{Burger-Skript} in section 2.3 for this purpose.\\

Let $(\mathcal{M},g)$ be a Riemannian manifold, with connection $\nabla$. Then 
\begin{align}
    \begin{matrix}
    \forall x\in\mathcal{M} \quad \exists \text{ a neighbourhood } U\subset\mathcal{M} \ \wedge \
    \varepsilon, \delta > 0: \qquad
    C: & \{(p,v)\in TU | \Vert v\Vert<\varepsilon\}\times(-\delta,\delta) & \to & \mathcal{M} \\
    & ((p,v),t) & \mapsto & c_{(p,v)}(t)
    \end{matrix}
\end{align}
is a well defined smooth map, where $c_{(p,v)}$ is the unique geodesic satisfying $c_{(p,v)}(0) = p$ and $\dot{c}_{(p,v)}(0) = v$.\\
When $\Vert v\Vert$ is chosen small enough, then $\delta$ can be bigger than $1$, so we can define a subset of the tangent bundle,
\begin{equation}
    \Omega \coloneq \{(p,v)\in T\mathcal{M}|c_{(p,v)} \text{ is defined on } (-\delta,\delta),\ \delta>1 \}
\end{equation}
on which we can define the exponential map:\\

\textbf{Definition:}
\begin{equation}
    \begin{matrix}
    \exp: & \Omega & \to & \mathcal{M}, \quad \exp((p,v)) = c_{(p,v)}(1)
    \end{matrix}
\end{equation}

We do not want a map involving tangent spaces, since tangent spaces are abstract constructs which a priory have no geometrical meaning unless one imposes a metric on them. Our goal is to only use geometrically meaningful quantities as far as possible. Moreover, we would mix up the directions with the distances, just so we would have to separate them again. Also, there are in general not many restrictions on the parametrization of the geodesics, when defining the exponential map. We however, need it to be parametrized by arclength.\\
The intention here is to construct a local isomorphism between the tangent bundle and the manifold constructed by geodesics. We however, want to collect a family of curves which we consider as geodesics by definition to give our manifold a geometric structure instead of using the metric.

\section{Branches of the curvature Parametrization}\label{sec: Branches}
There are three cases of triangles which all yield equivalent equations:
\begin{align}
    &\alpha_2 \geqslant \vartheta_p \quad \Rightarrow \quad \alpha_2 \geqslant 0: &\quad
    &\frac{\sin\varphi_K}{\sin(\alpha_2-\vartheta_p)} = \frac{\sin\varphi_p}{\sin(\pi-\vartheta_K)} = \frac{\sin\tau}{\sin\vartheta_p} \label{eq: alpha>=theta} \\
    &\alpha_2 < \vartheta_p, \quad \alpha_2 < 0: &\quad
    &\frac{\sin\varphi_K}{\sin(\pi+\alpha_2-\vartheta_p)} = \frac{\sin\varphi_p}{\sin(-\vartheta_K)} = \frac{\sin\tau}{\sin\vartheta_p} \label{eq: alpha<theta & <0} \\
    &\alpha_2 < \vartheta_p, \quad \alpha_2 > 0: &\quad
    &\frac{\sin(2\pi-\varphi_K)}{\sin(\vartheta_p-\alpha_2)} = \frac{\sin\varphi_p}{\sin\vartheta_K}
    = \frac{\sin\tau}{\sin(\pi-\vartheta_p)} \label{eq: alpha<theta & >0}
\end{align}
The signs in the first equations all cancel, only the second equation in~\eqref{eq: alpha<theta & <0} has a different sign. We square this equation, and thus get rid of the sign change, and add it to the squared cosine law, to get an equation from which we can get $\varphi_K$.
\begin{align}
    &1 = \sin^2\tau + \cos^2\tau = \frac{\sin^2\vartheta_p\sin^2\varphi_K}{\sin^2(\alpha_2-\vartheta_p)}
    + \left( \cos\varphi_p\cos\varphi_K + \sin\varphi_p\sin\varphi_K\cos\vartheta_p \vphantom{\sqrt{2}}\right)^2 \\
    &\Leftrightarrow \quad A\sin^2\varphi_K + B\sin(2\varphi_K) = C \\
    &A = \frac{\sin^2\varphi_p}{\sin^2(\alpha_2-\vartheta_p)} + \sin^2\varphi_p\left( \cos^2\vartheta_p + 1 \right) - 1, \qquad
    B = \frac{1}{2}\sin(2\varphi_p)\cos\vartheta_p, \qquad C = \sin^2\varphi_p
\end{align}
This equations has two solutions which are both defined piece-wise.
\begin{align}
    \varphi_K = \begin{cases}
        \tan^{-1}\left( \frac{C}{ B + \sqrt{B^2 + (A-C)C} } \right), & B \geqslant 0 \\
        \tan^{-1}\left( \frac{C}{ B - \sqrt{B^2 + (A-C)C} } \right) + \pi, & B < 0
    \end{cases}, \label{eq: varphi_K} \qquad
    \varphi_K = \begin{cases}
        \tan^{-1}\left( \frac{C}{ B - \sqrt{B^2 + (A-C)C} } \right), & B \geqslant 0 \\
        \tan^{-1}\left( \frac{C}{ B + \sqrt{B^2 + (A-C)C} } \right), & B < 0
    \end{cases}
\end{align}
The one on the left represents our choice of domain $\varphi_K\in[0,\pi)$, the one on the left returns negative angles.\\
Since $\vartheta_p\in\left(-\frac{\pi}{2},\frac{\pi}{2}\right)$ we can reduce the condition on $B$ into one on $\varphi_p$:
\begin{align}
    B \geqslant 0 \quad \Leftrightarrow \quad \varphi_p\in\left[0,\frac{\pi}{2}\right)\bigcup\left[\pi,\frac{3\pi}{2}\right)
    \qquad \wedge \qquad
    B \leqslant 0 \quad \Leftrightarrow \quad \varphi_p\in\left[\frac{\pi}{2},\pi\right)\bigcup\left[\frac{3\pi}{2},2\pi\right)
\end{align}
From the plots in Figure~\ref{fig: Cases} we see, that the denominator $B+\sqrt{B^2+(A-C)C}$ of the solution is positive, wherever $B\geqslant0$ and $B-\sqrt{B^2+(A-C)C}$ is negative wherever $B\leqslant0$. We plot the denominators on the domains of the angular position coordinates $\varphi_p$ and $\vartheta_p$, for a sample of tilt parameters $\alpha_2$.
\begin{figure}[h!]
    \begin{minipage}{\linewidth}
        \centering\includegraphics[width=\linewidth]{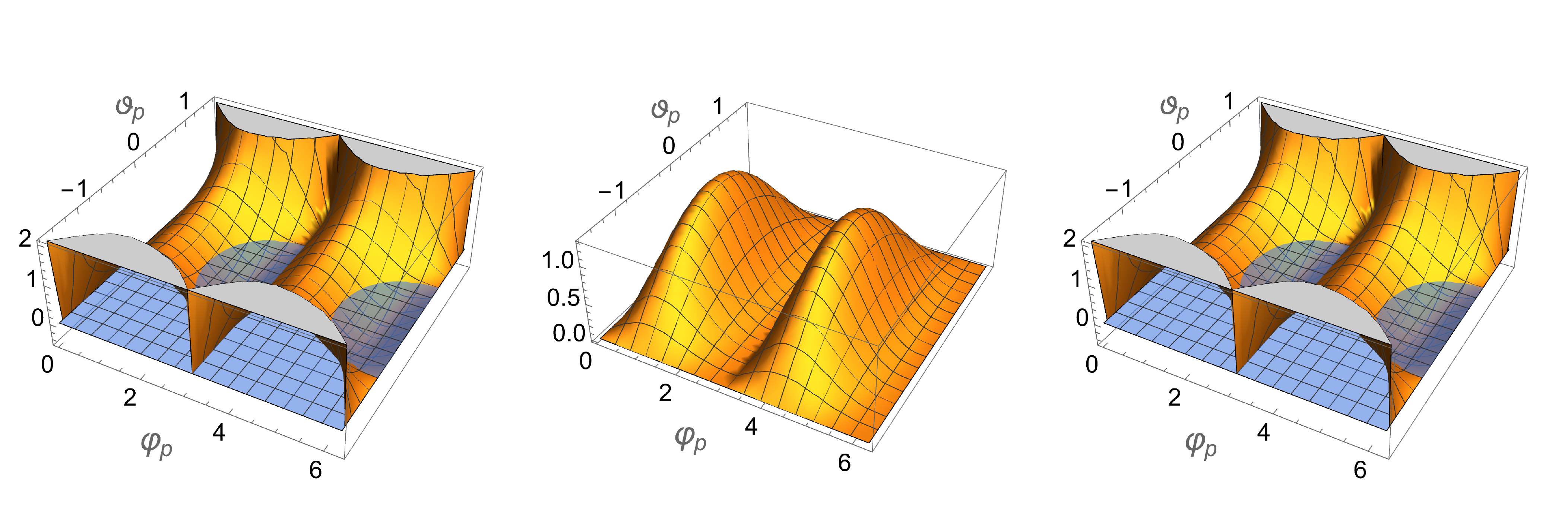}
    \end{minipage}
    \begin{minipage}{\linewidth}
        \centering\includegraphics[width=\linewidth]{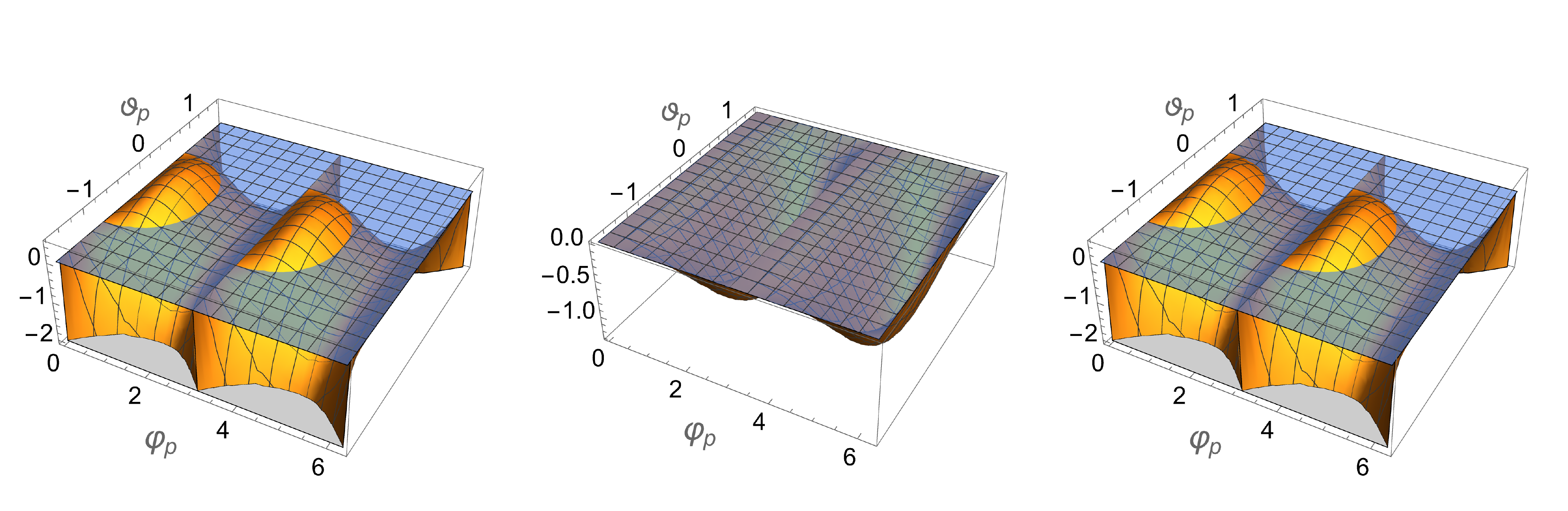}
    \end{minipage}
    \caption{We plot the denominator $B+\sqrt{B^2+(A-C)C}$ on the top and $B-\sqrt{B^2+(A-C)C}$ on the bottom as a function of $\varphi_p$ and $\vartheta_p$ in orange, for $\alpha_2=-\frac{\pi}{2},\,0,\,\frac{\pi}{2}$ from left to right. The zero plane is colored in blue.}
    \label{fig: Cases}
\end{figure}
Since the denominator in the case of $B\geqslant0$ is positive we use the principle branch of arctan and the side branch $\tan^{-1}(x)+\pi$ for the case of $B\leqslant0$, where the denominator is negative. This results in a map:
\begin{equation}
    \begin{matrix}
        \varphi_K: & [0,2\pi)\times\left(-\frac{\pi}{2}\right]\times\left(-\frac{\pi}{2}\right] & \to & [0,\pi) \\
        & (\varphi_p,\vartheta_p,\alpha_2) & \mapsto & \varphi_K = \begin{cases} \varphi_K^+, & B \geqslant 0 \\
        \varphi_K^-, & B <0 \end{cases}
    \end{matrix},
\end{equation}
Where the $\varphi_p$ domain is folded:
\begin{align}
    \begin{matrix}
        \varphi_K^+: & \left[0,\frac{\pi}{2}\right)\bigcup\left[\pi,\frac{3\pi}{2}\right) & \to & \left[0,\frac{\pi}{2}\right) \\
        & \varphi_p & \mapsto & \varphi_K^+
    \end{matrix}, \qquad
    \begin{matrix}
        \varphi_K^-: & \left[\frac{\pi}{2},\pi\right)\bigcup\left[\frac{3\pi}{2},2\pi\right) & \to & \left[\frac{\pi}{2},\pi\right) \\
        & \varphi_p & \mapsto & \varphi_K^-
    \end{matrix}
\end{align}
This is consistent with the fact, that a plane rotated by $\pi$ coincides with itself.\\

The other curvature plane parameter $\vartheta_K$ can now be calculated from the sine-law and $\varphi_K$:
\begin{equation}
    \vartheta_K = \sin^{-1}\left(\frac{\sin\varphi_p\sin(\alpha_2-\vartheta_p)}{\sin\varphi_K(\varphi_p,\vartheta_p,\alpha_2)}\right)
\end{equation}
The principle branch of the arcsine nicely coincides with the domain of $\vartheta_K$ in this case.\\

So, we ultimately parametrize the curvature field $K$ with the location $p\in\mathcal{M}$ in faithful normal coordinates $\mathcal{N}(p) = (l,\varphi_p,\vartheta_p)$ and the orientation parameters $\varphi_K,\vartheta_K$ of the geodesic plane, in which the geodesic in question lies.
\begin{equation}
    K_p(\varphi_K,\vartheta_K)
\end{equation}

\section{Parallel transport}\label{sec: Parallel transport}
It seems to be a priory not clear, how we would compare directions at two different points on a manifold. Especially if we use tangent spaces to represent them. To deal with this we would usually introduce the notion of covariant derivative or equivalently a parallel transport. This introduces an additional structure and thus additional degrees of freedom. If we work with the geodesic flow bundle however we see that there is no choice to be made. The parallel transport is already built into the flow bundle structure.\\

In classical Euclidean geometry we understand parallel as two straight lines which have the same distance between each other everywhere. Parallelly transporting a straight line along another straight line would mean, that we do not change the angle between the two as we do the transport. When we use straight lines as representants of directions we can conclude that we transport a direction parallelly along a straight line, if and only if we leave the angle between the direction and the straight line invariant.\\
We translate this concept into curved geometries, by replacing the straight lines with geodesics, which are locally straight lines. To make use of our angular structure we start w.l.o.g. form the origin $o$ and preserve the direction $\hat{\Omega}(o)$ along the geodesic to $p = \gamma_{o,\hat{\Omega}_\alpha}(l)$, by keeping the angular labels constant.
\begin{equation}
    P_{o,p,\gamma_{o,\hat{\Omega}_\alpha}}(\hat{\Omega}_\beta) = \hat{\Omega}_\beta
\end{equation}
For example in Fig.~\ref{fig: geodesic flow bundle on S^2} the black line is the geodesic from $p$ in direction $\hat{0}$, which is the direction of the original geodesic from $o$ transported along $\gamma_{o,\alpha}$ and thus encloses the angle $\alpha$ with this geodesic at $o$ as well as at $p$.\\

If we would try to define a parallel transport along non-geodesics in this formalism, we would need another flow bundle, since we need a curve between any two points on the manifold to parallelly transport between this pair of points. This would mean however that we just introduced two conflicting flow bundles and in that case the way we would identify directions at different points would be inconsistent with the geometry we imposed on the manifold.\\

To parallelly transport a direction along a non-geodesic we would just transport along geodesics between increasingly many points on the curve i.e. approximate the curve with geodesics.
\begin{align}
    \{x_i\}_{i=0}^N\subset c(I)\in C^\infty(I;M), \quad I\subset\mathbb{R}, \quad x_0 = a, \quad x_N = b
\end{align}
At each of these points we get a kink, and we pick up a change in the angle, since we continue with a geodesic that does not continue in the same direction. If we take the limit to infinitely many sampling points, forming an infinitely dense sampling of the curve everywhere, these changes become infinitely small, but we have infinitely many of them, the kinks vanish, and the approximation coincides with the smooth non-geodesic.\\
\begin{align}
    \lim_{N\to\infty}\bigcup_{i=0}^N\gamma_{x_i,\Omega_i}(\lambda) = c(\lambda), \quad
    \gamma_{x_i,\Omega_i}(\lambda_i) = x_{i+1} \quad \forall i<N \ \wedge \ \gamma_{x_N,\Omega_N}(\lambda_N) = b
\end{align}

\begin{figure}[h!]
    \centering\includegraphics[width=0.6\linewidth]{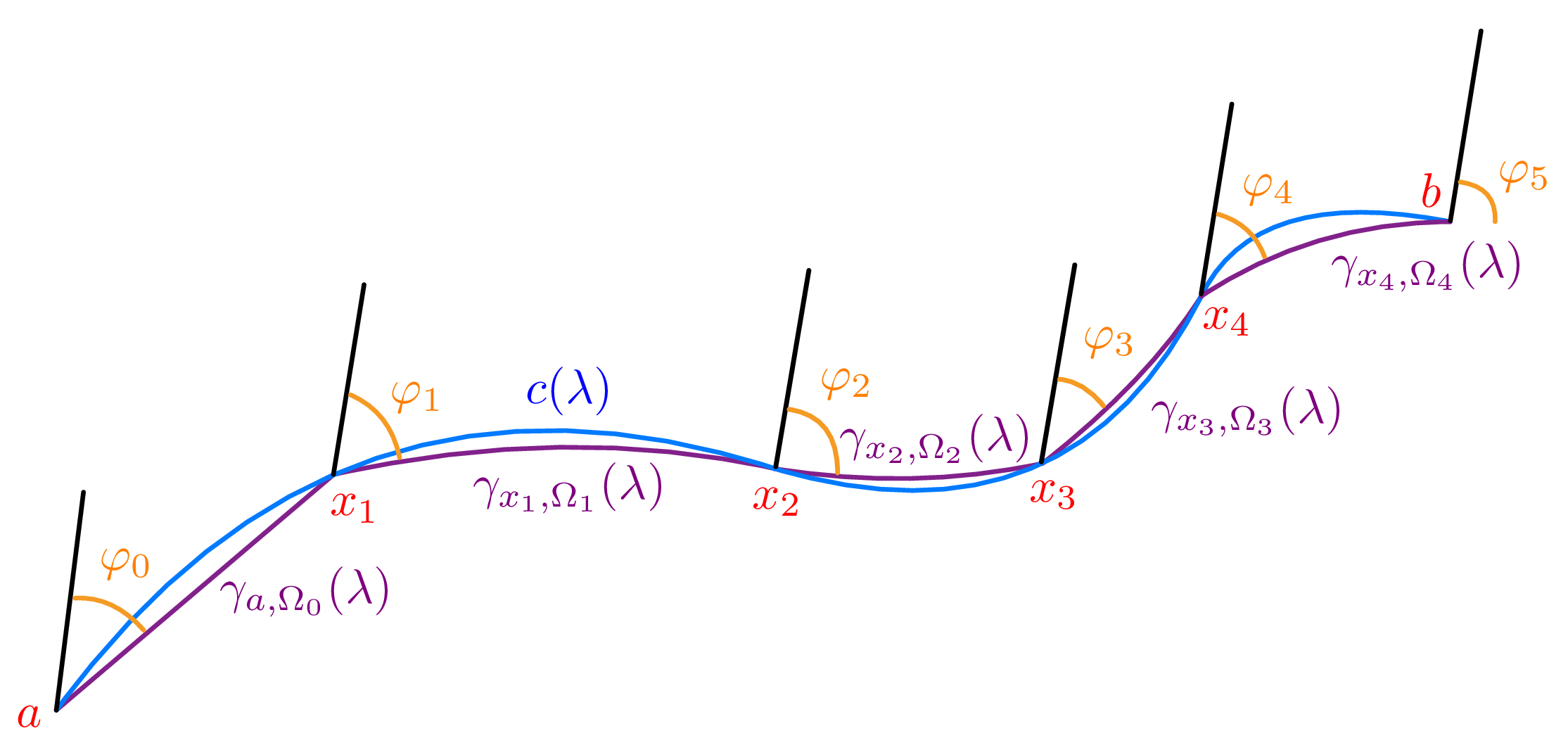}
    \caption{A sketch of how we connect sample points $x_i$ (red) of a in general non-geodesic curve $c(\lambda)$ (blue) with geodesics $\gamma_{x_i,\Omega_i}$ (purple). We transport a direction (black) piece-wise along the geodesics and observe, that the angles $\varphi_i$ (orange) change at the sample points.}
    \label{fig:my_label}
\end{figure}

We can use the inverse of the exponential map to connect the parallel transport to the classical formalism using tangent spaces:
\begin{equation}
    \forall X\in T_{x}\mathcal{M} \quad \exists\ \Omega\in\mathbb{S}^{n-1} \ \wedge \ \alpha\in\mathbb{R}: X =  \left.d_\lambda \gamma_{x,\Omega}(\alpha\lambda)\right|_{\lambda=0}
\end{equation}
\begin{equation}
    P_{c,a,b}(X) \coloneq P_{c,a,b}(\left.d_\lambda \gamma_{x,\Omega}(\alpha\lambda)\right|_{\lambda=0})
    \coloneq \lim_{N\to\infty} \left( \overset{N}{\underset{i=0}{\circ}} P_{\gamma_{x_i,\Omega_i},0,\lambda_i} \right)(\left.d_\lambda \gamma_{x,\Omega}(\alpha\lambda)\right|_{\lambda=0})
\end{equation}
Extending the angle $\Omega$ and scale factor $\alpha$ to fields, the covariant derivative can then be defined as:
\begin{equation}
    \frac{DX}{d\lambda}(\lambda) \coloneq \lim_{\varepsilon\to0} \frac{P_{c,\lambda,\lambda+\varepsilon}(X(c(\lambda))) - X(c(\lambda))}{\varepsilon}, \quad
    \forall X\in\Gamma(T\mathcal{M}), \quad \Omega(x) \ \wedge \ \alpha(x): X(x) = \left. d_\lambda\gamma_{x,\Omega(x)}(\alpha(x)\lambda) \right|_{\lambda=0}
\end{equation}
Then we can define the connection over the integral curves:
\begin{equation}
    \nabla_Y X \coloneq \frac{DX}{d\lambda}, \text{ with } c\in C^\infty(I;\mathcal{M}) \text{ s.t. }
    \dot{c}(\lambda) = Y(c(\lambda)) \quad \forall X,Y\in\Gamma(T\mathcal{M})
\end{equation}

Since the Levi-Civita connection can be directly calculated from the metric and is thus entirely determined by the geometry, it is reasonable to assume, that it coincides with our notion of parallel transport.
Under the assumption, that this is the case, we conclude that all other connections introduce additional structure on to the tangent spaces, which is unrelated to geometry. Also note that in our formalism the tangent spaces are not necessarily present. We can introduce them, but we do not need them.\\

We note a difference to the metric formalism in that here we already have a natural notion of parallel transport form the geodesic flow bundle and we then have to derive the corresponding covariant derivative from that. In the metric formalism it is natural, to start with the connection since we can calculate the connection coefficients of the Levi-Civita connection (Christoffel symbols) directly from the metric. And then we derive the parallel transport from there.\\

From Sec.~\ref{sec: Curvature} we know, that the connection is not needed, to relate the geodesic flow bundle to curvature and is thus of limited importance in this formalism. Though, it can be used to relate the notion of curvature in the GFB formalism to the Riemann tensor. However, calculating the connection from the parallel transport in the flow bundle formalism is a non-trivial task and goes beyond the scope of this work. \\
A more practical way, to compare the flow bundle formalism to the metric one, is to calculate the metric from the geodesic flow bundle in FNC, which we do in Sec.~\ref{sec: metric}. One can then calculate the connection coefficients in FNC. We give a recipe in Appendix~\ref{sec: geod. eq. test recipe} for an analytic consistency check of our main result with the geodesic equations in FNC, using the Levi Civita connection. Carrying this out in full detail however is a combersome and difficult task and is not presented here.\\

\section{A recipe for a consistency check with the geodesic equations in faithful normal coordinates.}\label{sec: geod. eq. test recipe}
Without loss of generality we can restrict ourselves to the $l,\varphi$-plane, where the metric in FNC is given by:
\begin{equation}
    g_{ij}(p) = \begin{pmatrix} 1 & 0 \\ 0 & g_{\varphi\varphi}(p) \end{pmatrix}, \qquad
    g_{\varphi\varphi}(p) = \frac{1 - \dot{l}^2(K,p,\frac{\pi}{2};0)}{\dot{\varphi}(K,p,\frac{\pi}{2};0)}, \qquad
    \mathcal{N}(p) = (l_p,\varphi_p)
\end{equation}

We spend the main part of this paper calculating the fundamental solution and conclude in section~\ref{sec: sine- & cosine-laws}, equations~\eqref{eq: C^(2)(a,b)} and~\eqref{eq: SC^(2)(a,b)} that it is given by:
\begin{align}
    l&(K,p,\beta;\lambda) \approx C^{(2)}(K(l',\varphi'+\varphi_p);\pi-\beta,\lambda,l_p)
    \approx y - \frac{f^2}{y}\int_0^1 n \int_0^1 k^2\, K[k,1-n] \,dk\,dn \\
    \varphi&(K,p,\beta;\lambda) \approx SC^{(2)}(K(l',\varphi'+\varphi_p);\pi-\beta,\lambda,l_p) \\
    &\approx \sin^{-1}\left( \frac{\lambda}{y}\sin\beta \right)
    + f\int_0^1 \int_0^1 \frac{1}{n^2}\int_0^n k^2\,K[k,j] dk\,dn
    + \int_0^j \left[ \frac{\lambda\,z(j)}{y^2(j)} + n\frac{ f^2 - \lambda^2z^2(j) }{y^4(j)} \right]
    \int_0^1 k^2\,K[k,j-n] dk\,dn\, dj, \notag
\end{align}
with
\begin{equation}
    f = \lambda\,l_p\sin\beta, \quad y = \sqrt{\lambda^2 + l_p^2 + 2\lambda\,l_p\cos\beta}, \quad
    z = \lambda + l_p\cos\beta.
\end{equation}
The shift in the angle argument in the curvature field by $\varphi_p$ comes from the fact, that we use the results for the flow on the original geodesic for an arbitrary point $p$ and we thus have to rotate the triangle by $\varphi_p$.

We can then calculate the Christoffel symbols up to second order. Taking the symmetries into account we find three independent non-zero connection coefficients for the Levi-Civita connection:
\begin{align}
    \Gamma^\varphi_{\varphi l} = \frac{g_{\varphi\varphi,l}}{2g_{\varphi\varphi}}, \quad
    \Gamma^\varphi_{\varphi\varphi} = \frac{g_{\varphi\varphi,\varphi}}{2g_{\varphi\varphi}}, \quad
    \Gamma^l_{\varphi\varphi} = -\frac{1}{2}g_{\varphi\varphi,l}, \qquad
    g_{\varphi\varphi,l} = \frac{\partial}{\partial l_p}g_{\varphi\varphi}(l_p,\varphi_p), \quad
    g_{\varphi\varphi,\varphi} = \frac{\partial}{\partial\varphi}g_{\varphi\varphi}(l_p,\varphi_p).
\end{align}

Abbreviating $l(K,p,\beta;\lambda)$ and $\varphi(K,p,\beta;\lambda)$ with $l(\lambda)$ and $\varphi(\lambda)$ the geodesic equation then read:
\begin{align}
    &\ddot{\gamma}^i + \Gamma^i_{jk}(\gamma)\dot{\gamma}^j\dot{\gamma}^k = 0; \qquad
    \gamma^i(\lambda) = \mathcal{N}(\gamma_{p,\beta}(\lambda))^i = \binom{l(\lambda)}{\varphi(\lambda)} \\
    &\ddot{l}(\lambda) - \frac{1}{2}\Gamma^l_{\varphi\varphi}(l(\lambda),\varphi(\lambda))\,\dot{l}^2(\lambda) = 0, \qquad
    \ddot{\varphi}(\lambda) + 2\Gamma^\varphi_{\varphi l}(l(\lambda),\varphi(\lambda))\,
    \dot{\varphi}(\lambda)\dot{l}(\lambda) + \Gamma^\varphi_{\varphi\varphi}(l(\lambda),\varphi(\lambda))\,
    \dot{\varphi}^2(\lambda) = 0
\end{align}
The integral in $\varphi(K,p,\beta;\lambda)$ is in general not so easy to calculate. One would have to investigate, if there is a good expansion of the curvature field $K(l',\varphi')$ for which the integral is solvable. Working this out is beyond the scope of this work.

\bibliographystyle{apsrev4-1}
\bibliography{GeodFBRef}

\end{document}

% --- supplement: Supplementary.tex ---

\maketitle

\tableofcontents

\section{Examples of Geodesic Flow Bundles and their Faithful Normal Charts}\label{sec: Examples}
We demonstrate the concept of the geodesic flow bundle on two examples, the Manhattan metric and the $2$-sphere. We use the faithful normal chart to visualize the geodesics. Since we can plot the geodesic flow bundle onto the $2$-sphere embedded in $\mathbb{R}^3$ and since it is a non-trivial example, it is the most helpful example to understand how this structure works.

\subsection{The Geodesic Flow Bundle of the Manhattan metric}
The geodesic flow bundle on the euclidean plane $E\simeq\mathbb{R}^2$, in Cartesian coordinates is given by:
\begin{equation}
    \Phi_E(q;\alpha,\lambda) = \binom{c}{0} + \lambda\binom{\cos\beta}{\sin\beta},
\end{equation}
restricting to the $x$-axis $q=(c,0)$ for simplicity.

Now we can construct a flow bundle on $\mathbb{R}^2$ which induces the Manhattan / maximum metric:
\begin{equation}
    d(\Vec{x},\Vec{y}) = \max\{|x_1-y_1|,|x_2-y_2|\}
\end{equation}
We leave the angles invariant $\varphi\mapsto\varphi$ but change the distances $r=\sqrt{x^2+y^2}\mapsto l=\max\{|x|,|y|\}$.
\begin{align}
    &\Phi_M((c,0);\beta,\lambda') = l\binom{\cos\varphi}{\sin\varphi}, &\,
    &l(c,\beta,\lambda') = \max\{|c+\lambda'\cos\beta|,|\lambda'\sin\beta|\}, \\
    &\varphi(c,\beta,\lambda') = \sin^{-1}\frac{ac\sin\beta}{\sqrt{(\lambda')^2+c^2+2\lambda' c\cos\beta}}, &\,
    &\lambda' = \lambda\max\{|\cos\beta|,|\sin\beta|\}
\end{align}
We need to transform the parameter to make sure, that if $\Phi_E(q;\beta,\lambda) = p$ then $\Phi_M(q;\beta,\lambda') = p$ ends at the same point in the new geometry.
\begin{figure}[h!]
    \centering
    \includegraphics[width=\linewidth]{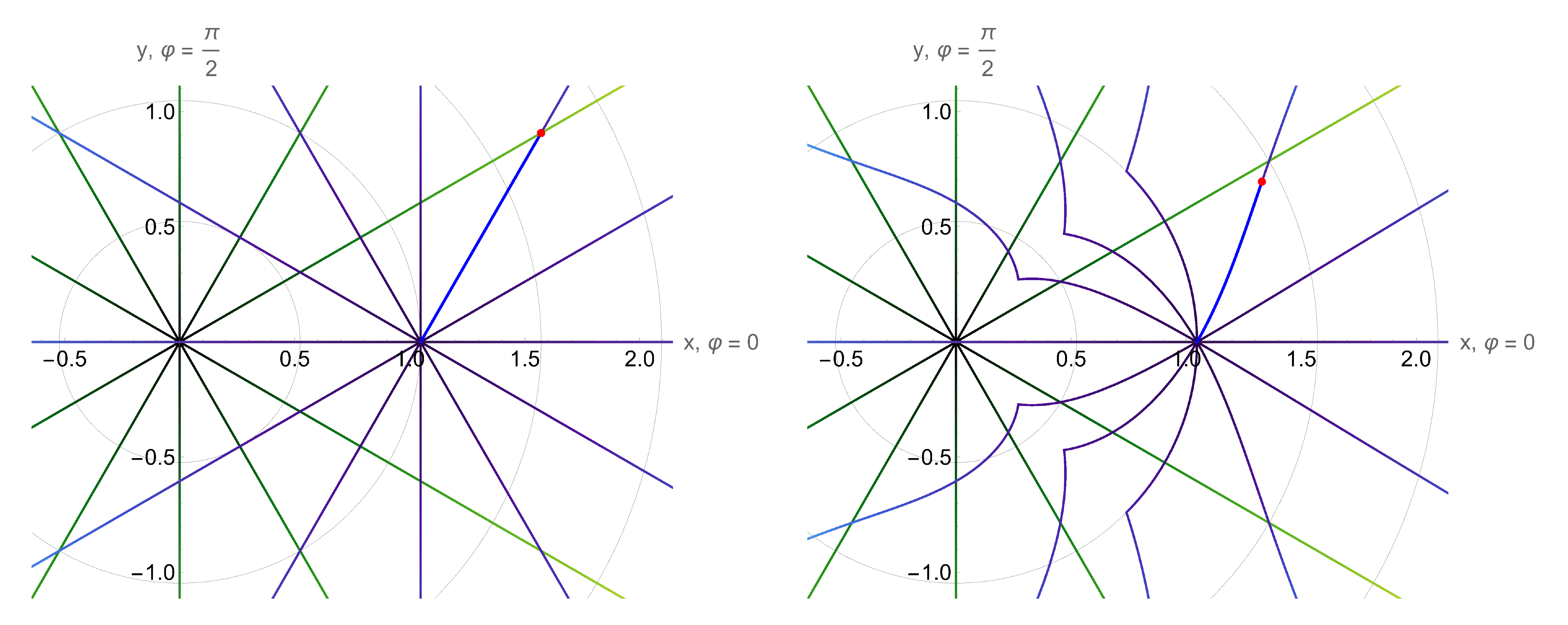}
    \caption{We plot the geodesic flow from $o = (0,0)$ in green and from $q = (c,0)$ in blue of the Euclidean plane on the left and for the Manhattan metric on the right, where $c = \frac{\pi}{3}$. For comparison we highlight the geodesic $\gamma_{q,\beta}(\lambda)$ (blue, thick), with $\beta = \frac{\pi}{3}$ until $p$ (in red) at $\lambda = \frac{\pi}{3}$. We can see in this example, that we can also describe non-smooth metrics in this formalism.}
    \label{fig:my_label}
\end{figure}

\begin{align}
    &\Phi_M(o,\alpha,0) = o \ \wedge \ \Phi_M(o,\alpha,b) = p \quad \Rightarrow \quad
    d(o,p) = l = \max\{|c+\lambda'\cos\beta|,|\lambda'\sin\beta|\} \quad \checkmark \\
    &\Phi_M(q;\beta,0) = q \ \wedge \ \Phi_M(q;\beta,\lambda') = p \quad \Rightarrow \quad
    d(q,p) = \lambda' = \max\{|\lambda\cos\beta|,|\lambda\sin\beta|\} \quad \checkmark
\end{align}

\subsection{The Geodesic Flow Bundle and Faithful Normal Coordinates of the 2-sphere $\mathcal{S}^2$}\label{sec: 2-sphere}
We choose polar coordinates on the sphere to compare the flow bundle of $\mathbb{S}^2$ embedded in $\mathbb{R}^3$ with the faithful normal coordinates.
\begin{equation}
    p(\theta,\phi) = \begin{pmatrix} x \\ y \\ z \end{pmatrix} = \begin{pmatrix} \cos\theta\cos\phi \\ \cos\theta\sin\phi \\ \sin\theta
    \end{pmatrix}, \quad \theta\in\left(-\frac{\pi}{2},\frac{\pi}{2}\right), \quad \phi\in[0,2\pi)
\end{equation}

The sphere $S^{n-1}$ is the subset $\mathbb{S}^{n-1}\subset\mathbb{R}^n$, which is invariant under the group action of $O(n)$ on $\mathbb{R}^n$ i.e. every point $p$ in $S^{n-1}$ is mapped to another point on the sphere under the action of the rotation group.
\begin{equation}
    \begin{matrix}
    \cdot: & O(n)\times\mathbb{R}^3 & \to & \mathbb{R}^3 \\
    & (R,x) & \mapsto & R\cdot x
    \end{matrix}, \quad R\cdot x\in\mathbb{S}^{n-1} \quad \forall R\in O(n) \ \forall x\in\mathbb{S}^{n-1},
    \quad \text{i.e.} \quad O(n)\cdot\mathbb{S}^{n-1}=\mathbb{S}^{n-1}
\end{equation}
Thus we can generate a geodesic by acting a rotation $R\in O(n)$ onto a origin point $o$. We choose our origin at $o = (1,0,0)$ and pick rotations around the $z$-axis to trace out our first geodesic. This geodesic coincides with the half equator (the green line continued by the blue one in Fig.~\ref{Supp-fig: geodesic flow bundle on S^2}).
\begin{equation}
    R_z(\phi) = \begin{pmatrix} \cos\phi & -\sin\phi & 0 \\ \sin\phi & \cos\phi & 0 \\
    0 & 0 & 1 \end{pmatrix} \in O(n); \qquad \gamma_{o,0}(\lambda) = R_z(\lambda)\cdot o
    = \begin{pmatrix} \cos\lambda \\ \sin\lambda \\ 0 \end{pmatrix} = p(0,\lambda), \quad \lambda\in(0,\pi)
\end{equation}

We then use the $O(n)$-action again to generate the flow from $o$ (green gradient lines in Fig.~\ref{Supp-fig: geodesic flow bundle on S^2}), acting rotations around the $x$-axis on the original geodesic.
\begin{align}
    R_x(\varphi) = \begin{pmatrix} 1 & 0 & 0 \\ 0 & \cos\varphi & -\sin\varphi \\
    0 & \sin\varphi & \cos\varphi \end{pmatrix}\in O(n); \quad
    \gamma_{o,\varphi}(\lambda) = R_x(\varphi)\cdot\gamma_{o,0}(\lambda) = \begin{pmatrix}
    \cos\lambda \\ \cos\varphi\sin\lambda \\ \sin\varphi\sin\lambda \end{pmatrix},
\end{align}
with $\lambda\in(0,\pi)$ and $\alpha\in[0,2\pi)$.\\

The geodesic flow from $o$ defines the faithful normal coordinates at $o$. A point $p\in\mathcal{M}$ which can be reach by a geodesic from $o$ is labelled with the direction and arclength of that geodesic:
$p = \gamma_{o,\varphi}(l) \ \mapsto \ p(\varphi,l)$
\begin{equation}
    \begin{matrix}
    \mathcal{N}: & \mathbb{S}^2\subset\mathbb{R}^3 & \to & \mathbb{R}^2 \\
    & p & \mapsto & (l,\varphi)(p)
    \end{matrix}, \quad  l\in\mathbb{R}_+, \ \varphi\in[0,2\pi) \ : \quad \gamma_{o,\varphi}(l) = p
\end{equation}

The geodesic flow from any point appears as straight lines which display distances correctly in the faithful normal coordinates at that point.

Next we rotate the flow from $o$ around the $z$-axis to generate the flow from the original geodesic (blue gradient lines in Fig.~\ref{Supp-fig: geodesic flow bundle on S^2}).
\begin{equation}
    \gamma_{q(c),\beta}(\lambda) = R_z(c)\cdot\gamma_{o,\beta}(\lambda)
    = \begin{pmatrix}
    \cos{c}\cos\lambda - \cos\beta \sin{c}\sin\lambda \\
    \cos\beta\cos{c}\sin\lambda + \sin{c}\cos\lambda \\
    \sin\beta\sin\lambda
    \end{pmatrix}, \quad q(c) = \gamma_{o,0}(c)
\end{equation}
To find the expression of $\gamma_{q,\beta}$ in the $o$-chart $\mathcal{N}_o$ (faithful normal coordinates at $o$) we need to solve the equation:
\begin{equation}
    \gamma_{q(c),\beta}(\lambda) = p(l,\varphi) = \gamma_{o,\varphi}(l),
\end{equation}
for $\varphi$ and $l$ in terms of $c,\beta$ and $\lambda$.
\begin{align}\label{eq: fundamental solution K=1}
    &\begin{pmatrix}
    \cos{c}\,\cos\lambda - \cos\beta \sin{c}\,\sin\lambda \\
    \cos\beta\cos{c}\,\sin\lambda + \sin{c}\,\cos\lambda \\
    \sin\beta\sin\lambda
    \end{pmatrix} \overset{!}{=}
    \begin{pmatrix} \cos\lambda \\ \cos\varphi\sin\lambda \\ \sin\varphi\sin\lambda \end{pmatrix}
    \quad \Rightarrow \quad \mathcal{N}_o(\gamma_{q,\beta}(\lambda))
    = \binom{l(c,\beta,\lambda)}{\varphi(c,\beta,\lambda)}, \\
    &l(c,\beta,\lambda) = \cos^{-1}\left( \cos\lambda\cos{c} + \sin\lambda\sin{c}\,\cos(\pi-\beta) 
    \vphantom{\sqrt{2}}\right), \\
    &\varphi(c,\beta,\lambda) = \sin^{-1}\left( \frac{\sin{\lambda}\,\sin\beta}{\sqrt{1-\left( \cos{\lambda}\cos{c} + \sin{\lambda}\sin{c}\,\cos(\pi-\beta)  \right)^2}} \right)
\end{align}

\begin{figure}[h!]
    \centering
    \includegraphics[width=\linewidth]{geod_flow_on_S2.png}
    \caption{
    We plot a collection of curves from the geodesic flow bundle on $\mathcal{S}^2$ embedded in its ambient space $\mathbb{R}^3$ on the left and in the faithful normal chart at $o$ on the right. The green thick line is the original geodesic $\gamma_{o,0}$ in this example and is continued by the blue thick one $\gamma_{q,0}$ which is thus also labelled with $0$. The angle between the original geodesic and the yellow thick line at $o$ is $\alpha = \frac{\pi}{6}$ and thus the yellow thick line $\gamma_{o,\frac{\pi}{6}}$ and its continuation, the red thick line $\gamma_{o,\frac{\pi}{6}}$ have that angular label. The black thick line $\gamma_{p,0}$ is the one with the label $0$ and thus the angle between it and the red thick one is $\frac{\pi}{6}$ as well.
    }
    \label{Supp-fig: geodesic flow bundle on S^2}
\end{figure}

Due to the symmetry of the sphere it is straight forward to generalize the flow from a point $q$ on the original geodesic to the one from a generic point $p\in\mathcal{M}$:
\begin{equation}
    p = \gamma_{o,\varphi_p}(l_p); \quad \mathcal{N}_o(\gamma_{p,\beta}(\lambda)) = \binom{l(p,\beta,\lambda)}{\varphi(p,\beta,\lambda)}
    = \binom{b(l_p,\beta,\lambda)}{\varphi_p + \alpha(l_p,\beta,\lambda)}
\end{equation}

This geodesic flow describes a flow from a generic point $p$ which is represented, by the red gradient lines in Fig.~\ref{Supp-fig: geodesic flow bundle on S^2}. The functions $b$ and $\alpha$ coincide with $l$ and $\varphi$ in Eq.~\eqref{eq: fundamental solution K=1}. We see, that in $2$ dimensions the only change in the coordinate functions $l$ and $\varphi$ when we move to a generic point is the constant shift $\varphi_p$, which rotates the flow from $q$ to $p$.\\

We can then calculate the metric from the derivatives of $l$ and $\varphi$ with respect to arclength $\lambda$ at $\lambda=0$ of the geodesic heading out in increasing $\varphi$-direction at $p$, i.e. $\beta=\frac{\pi}{2}$.
\begin{equation}
    \dot{l}\left(l_p,\frac{\pi}{2},0\right) = 0, \quad
    \dot{\varphi}\left(l_p,\frac{\pi }{2},0\right) = \frac{1}{\sin{b}} \quad
    \Rightarrow \quad g_{\varphi\varphi} = \frac{1-\dot{l}^2(l_p,\frac{\pi}{2};0)}{\dot{\varphi}^2(l_p,\frac{\pi}{2};0)} = \sin^2l_p
\end{equation}
\begin{equation}
    \Rightarrow \quad g(p) = \begin{pmatrix} 1 & 0 \\ 0 & \sin^2l_p \end{pmatrix}
\end{equation}

\subsection{Example on the 3-sphere}\label{subsec: Ex. 3-sphere}
To get the flow from $q$ in faithful normal coordinates we use the $2$-dimensional case and tilt the triangle as described in the main text.
\begin{equation}
    \gamma_{q,\hat{\Omega}_\beta}(\lambda) = l(l_p,\beta_1;\lambda)\hat{\Omega}(\varphi(l_p,\beta_1;\lambda),\vartheta)
    = \gamma_{o,\hat{\Omega}}(l),
    \quad \vartheta = \beta_2, \text{ for } K(p) = 1, \quad \mathcal{N}_o(\gamma_{q,\hat{\Omega}_\beta}(\lambda)) = \begin{pmatrix}
    b(l_p,\beta_1;\lambda) \\ \alpha(l_p,\beta_1;\lambda) \\ \beta_2
    \end{pmatrix}
\end{equation}
We now rotate the entire triangle to another point $p = (l_p,\varphi_p,\vartheta_p)_\mathcal{N}$ in a way, which is consistent with the angular coordinates. To find the normal coordinates of $\gamma_{p,\hat{\Omega}_\beta}(\lambda)$ we need to solve the following equation.
\begin{equation}
    \gamma_{p,\hat{\Omega}_\beta}(\lambda) = R_{x_2,x_3}(\vartheta_p)\circ R_{x_1,x_2}(\varphi_p)\cdot\gamma_{q,\hat{\Omega}_\beta}(\lambda) \overset{!}{=} l\,\hat{\Omega}(\varphi,\vartheta)
    = \gamma_{o,\hat{\Omega}}(l)
\end{equation}
We see, that this is only a problems of angles at $o$ and thus will be generally valid, independent of the curvature involved. The dependence on the curvature field lies in the functions of the fundamental solution $b$  and $\alpha$. The solution to the angular problem is:
\begin{align}
    &\varphi(\hat{\Omega}_p,\hat{\Omega}_\alpha) = \tan^{-1}\left( \frac{\sqrt{(1-\text{Cosa}_2)(1+\text{Cosa}_2)}}{\text{Cosa}_2} \right), \quad
    \vartheta(\hat{\Omega}_p,\hat{\Omega}_\alpha)= \tan^{-1}\left( 
    \frac{ \sin\vartheta_p\widetilde{\text{Cosa}}_2 + \cos\vartheta_p\sin\alpha_1\sin\alpha_2 }
    { \cos\vartheta_p\widetilde{\text{Cosa}}_2 - \sin\vartheta_p\sin\alpha_1\sin\alpha_2 } \right), \notag \\
    &\hat{\Omega}_p = \hat{\Omega}(\varphi_p,\vartheta_p), \quad \hat{\Omega}_\alpha = \hat{\Omega}(\alpha_1,\alpha_2), \qquad \alpha_1 = \alpha(c,\beta_1,\lambda), \quad \alpha_2 = \beta_2 \\
    &\text{Cosa}_2 \coloneq \cos\alpha_1\cos\varphi_p - \sin\alpha_1\sin\varphi_p\cos\alpha_2, \quad
    \widetilde{\text{Cosa}}_2 \coloneq \cos\alpha_1\sin\varphi_p + \sin\alpha_1\cos\varphi_p\cos\alpha_2
\end{align}

To calculate the metric we need the geodesics $\gamma_{p,\hat{\varphi}}(\lambda)$ and $\gamma_{p,\hat{\vartheta}}(\lambda)$, which head out in $\varphi$-, and $\vartheta$-direction respectively from $p$ at every point $p\in\mathcal{M}$. To keep the map from directions to $\mathbb{S}^2$ consistent across the manifold i.e. satisfy condition~\eqref{eq: consistency}, we need to rotate directions at $p$ back to $q$, to determine the parameters $\beta_1$ and $\beta_2$:
\begin{equation}
    \hat{\Omega}(\beta_1,\beta_2) = R_{x_1,x_2}^{-1}(\varphi_p)
    \circ R_{x_2,x_3}^{-1}(\vartheta_p)\cdot\hat{\Omega}_\beta
\end{equation}
The parameter $\beta_1$ is directly related to the angle of the triangle at $q$ via $\beta = \pi-\beta_1$ and $\beta_2$ describes the tilt of the triangle with respect to the rotated original plane, rotated from $q$ to $p$.

To calculate the metric we need geodesics, which head out in $\varphi$- and $\vartheta$-direction from any point $p\in\mathcal{M}$. Increasing one angular variable at a point in the faithful normal chart creates a flow of coordinate lines:
\begin{equation}
    \mathcal{N}_o(p) = \mathcal{N}(\gamma_{o,\hat{\Omega}}(l)) = \mathcal{N}(l\,\hat{\Omega}(\varphi,\vartheta)) = (l,\varphi,\vartheta)_\mathcal{N}
\end{equation}
\begin{align}
    \phi_\mathcal{N}^\varphi(p) = l_p\,\hat{\Omega}(\varphi_p+\varphi,\vartheta_p), \quad
    \phi_\mathcal{N}^\vartheta(p) = l_p\,\hat{\Omega}(\varphi_p,\vartheta_p+\vartheta)
\end{align}
We calculate the tangent vector field of coordinate line flows:
\begin{align}
    \partial_\varphi(p) &= \dot{\phi}_\mathcal{N}^\varphi(p) = l_p\frac{\partial\hat{\Omega}}{\partial\varphi}(\varphi_p,\vartheta_p) = l_p\underbrace{\begin{pmatrix}
    -\sin\varphi_p \\ \cos\vartheta_p\cos\varphi_p \\ \sin\vartheta_p\cos\varphi_p
    \end{pmatrix}}_{\coloneq\,\hat{\varphi}(p)}, \\
    \partial_\vartheta(p) &= \dot{\phi}_\mathcal{N}^\vartheta(p) = l_p\frac{\partial\hat{\Omega}}{\partial\vartheta}(\varphi_p,\vartheta_p) = l_p\underbrace{\begin{pmatrix}
    0 \\ -\sin\vartheta_p\sin\varphi_p \\ \cos\vartheta_p\sin\varphi_p
    \end{pmatrix}}_{\coloneq\,\vec{\vartheta}(p)}
\end{align}
The two tangent vector fields are orthogonal to each other at every point $\hat{\varphi}(p)\cdot\vec{\vartheta}(p) = 0,$ $\forall p\in\mathcal{M}$. The vector field $\hat{\varphi}$ is a unit vector field, but $\vec{\vartheta}$ is not. To obtain an element of $\mathbb{S}^2$, which we use to parametrize the geodesic flow with, we need to normalize $\vec{\vartheta}$. Especially because it also vanishes at $q$ where we apply our solution.
\begin{equation}
    \hat{\vartheta}(p) = \frac{1}{\sin\varphi_p}\vec{\vartheta}(p)
\end{equation}
We calculate the $\beta_1$ and $\beta_2$ parameters of $\hat{\varphi}$, by solving:
\begin{align}
    \hat{\Omega}(\beta_1,\beta_2) \overset{!}{=} R_{x_1,x_2}^{-1}(\varphi_p)\circ R_{x_2,x_3}^{-1}(\vartheta_p)\cdot\hat{\varphi}(\varphi_p,\vartheta_p) \quad
    \Rightarrow \quad \beta_1 = \frac{\pi}{2}, \quad \beta_2 = 0
\end{align}
Then the components of $\gamma_{p,\hat{\varphi}}(\lambda)$ are given by:
\begin{align}
    \varphi\left( \hat{\Omega}_p,\hat{\Omega}\left( \alpha\left(c,\frac{\pi}{2};\lambda\right),0 \right)\right)
    &= \tan^{-1}\left( \frac{\sqrt{1-\cos^c\cos^2\lambda}}{\sqrt{1+\sin^2c\cot^2\lambda}} \frac{ \sqrt{\cos^2\varphi_0 + \sin{c}\cot{\lambda}(\sin(2\varphi_p) + \sin{c}\sin^2\varphi_p\cot{\lambda})} }{\cos\varphi_p\cos\lambda\sin{c} - \sin\varphi_p\sin\lambda} \right) \notag \\
    \vartheta\left( \hat{\Omega}_p,\hat{\Omega}\left( \alpha\left(c,\frac{\pi}{2};\lambda\right),0 \right)\right)
    &= \vartheta_p, \quad l\left(c,\frac{\pi}{2};\lambda\right) = \tan^{-1}\left( \frac{\sqrt{1-\cos^2c\cos^2\lambda}}{\cos{c}\cos{\lambda}} \right)
\end{align}
To calculate the metric component we need the tangent vector at $p$ in faithful normal coordinates, i.e. the derivatives of the components. It is trivial to confirm, that this tangent vector indeed has a vanishing $\theta$-component, since it is constant.
\begin{align}
    \dot{l}(p,\hat{\varphi};0) = \left.d_{\lambda} l\left(c,\frac{\pi}{2};\lambda\right)\right|_{\lambda=0} = 0, \quad
    \dot{\varphi}(p,\hat{\varphi};\lambda)
    = \left.d_{\lambda} \varphi\left( \hat{\Omega}_p,\hat{\Omega}\left( \alpha\left(c,\frac{\pi}{2};\lambda\right),0 \right)\right) \right|_{\lambda=0} = \frac{1}{\sin{c}}
\end{align}
And thus using our formula for the metric components Eq.~\eqref{eq: metric} we get:
\begin{equation}
    g_{\varphi\varphi}(p) = \frac{1-\dot{l}^2(p,\hat{\varphi};0)}{\dot{\varphi}^2(p,\hat{\varphi};0)} = \sin^2c
\end{equation}

Repeating the process for $\gamma_{p,\hat{\vartheta}}(\lambda)$ we get:
\begin{align}
    &\hat{\Omega}(\beta_1,\beta_2) \overset{!}{=} R_{x_1,x_2}^{-1}(\varphi_p)\circ R_{x_2,x_3}^{-1}(\vartheta_p)\cdot\hat{\vartheta} \quad
    \Rightarrow \quad \beta_1 = \frac{\pi}{2}, \quad \beta_2 = \frac{\pi}{2}, \\
    &\varphi\left( \hat{\Omega}_p,\hat{\Omega}\left( \alpha\left(c,\frac{\pi}{2};\lambda\right), \frac{\pi}{2} \right) \right) = \tan^{-1}\left( \frac{\sqrt{1-\cos^2c\cos^2\lambda}}{\sin{c}\cos\varphi_p\cos\lambda} \sqrt{\frac{1+\sin^2c\sin^2\varphi_p\cot^2\lambda}{1+\sin^2c\cot^2\lambda}} \right), \\
    &\vartheta\left( \hat{\Omega}_p,\hat{\Omega}\left( \alpha\left(c,\frac{\pi}{2};\lambda\right), \frac{\pi}{2} \right) \right) = \tan^{-1}\left( \frac{\sin\vartheta_p\sin{c}\sin\varphi_p\cot\lambda + \cos\vartheta_p}{\cos\vartheta_p\sin{c}\sin\varphi_p\cot\lambda - \sin\vartheta_p} \right)
\end{align}
And finally the components of the tangent vector $\dot{\gamma}_{p,\hat{\vartheta}}(0)$ at $p$ are:
\begin{align}
    \dot{l}(p,\hat{\vartheta};0) = 0, \quad \dot{\varphi}(p,\hat{\vartheta};0) = 0, \quad \dot{\vartheta}(p,\hat{\vartheta};0) = \frac{1}{\sin{c}\sin\varphi_p}
\end{align}
which leads to the metric component:
\begin{equation}
    g_{\vartheta\vartheta}(p) = \frac{1-\dot{l}^2(p,\hat{\vartheta};0)}{\dot{\vartheta}^2(p,\hat{\vartheta};0)} = \sin^2c\sin^2\varphi_p
\end{equation}

Now, that we have all components we can write down the metric in faithful normal coordinates and calculate the curvature tensors, to check whether our result for this example makes sense.
\begin{equation}
    g(c,\varphi,\vartheta) = \begin{pmatrix}
    1 & 0 & 0 \\ 0 & \sin^2c & 0 \\ 0 & 0 & \sin^2c\,\sin^2\varphi_p
    \end{pmatrix}
\end{equation}

The non-vanishing Christoffel symbols are:
\begin{align}
    \Gamma^i_{kl} = \frac{1}{2}g^{ij}( g_{jk,l} + g_{jl,k} - g_{kl,j} ); \qquad
    \Gamma^c_{\varphi\varphi} &= -\cos{c}\sin{c}, &\quad \Gamma^c_{\vartheta\vartheta} &= -\cos{c}\sin{c}\sin^2\varphi, &\quad
    \Gamma^\varphi_{\vartheta\vartheta} &= -\cos\varphi\sin\varphi \notag \\
    \Gamma^\varphi_{c\varphi} &= \cot{c}, &\quad \Gamma^\vartheta_{c\vartheta} &= \cot{c}, &\quad \Gamma^\vartheta_{\varphi\vartheta} &= \cot{\varphi}
\end{align}

Using these we can calculate the components of the Riemann tensor. Again, we list the non-vanishing ones:
\begin{align}
& & R^i_{jkl} &= -\Gamma^i{ik,l} + \Gamma^i_{jl,k} - \Gamma^a_{jk}\Gamma^i_{al} + \Gamma^a_{jl}\Gamma^i_{ak}; \\
R^c_{\varphi c\varphi} &= -R^c_{\varphi\varphi c} = \sin^2c, &\quad R^c_{\vartheta c\vartheta} &= -R^c{\vartheta\vartheta c} = \sin^2c\sin^2\varphi, &\quad R^\varphi_{c\varphi c} &= -R^\varphi_{cc\varphi} = 1, \\
R^\varphi_{\vartheta\varphi\vartheta} &= -R^\varphi_{\vartheta\vartheta\varphi} = \sin^2c\sin^2\varphi &\quad 
R^\vartheta_{c\vartheta c} &= -R^\vartheta_{cc\vartheta} = 1, &\quad
R^\vartheta_{\varphi\vartheta\varphi} &= -R^\vartheta_{\varphi\varphi\vartheta} = \sin^2c
\end{align}

We contract the Riemann tensor to obtain the Ricci tensor and then contract again to get the Riemann curvature scalar:
\begin{align}
    R_{jl} = R^i_{jil} = 2\begin{pmatrix}
    1 & 0 & 0 \\ 0 & \sin^2c & 0 \\ 0 & 0 & \sin^2c\sin^2\varphi
    \end{pmatrix}, \qquad R = R^i_i = g^{ij}R_{ij} = 6
\end{align}

And we get the expected result for the unit $3$-sphere. For comparison the general formula for the $n$-sphere with radius $r$ is:
\begin{equation}
    R\left(\mathcal{S}^n(r)\right) = \frac{n(n-1)}{r^2}
\end{equation}
\begin{figure}[h!]
\begin{minipage}{.5\linewidth}
    \centering\includegraphics[width=\linewidth]{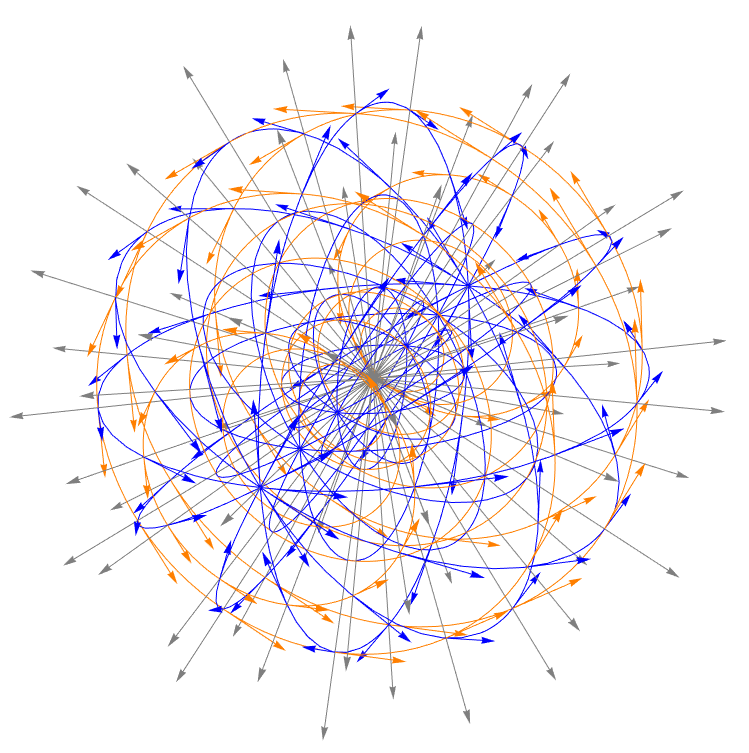}
\end{minipage}
\begin{minipage}{.5\linewidth}
    \centering\includegraphics[width=\linewidth]{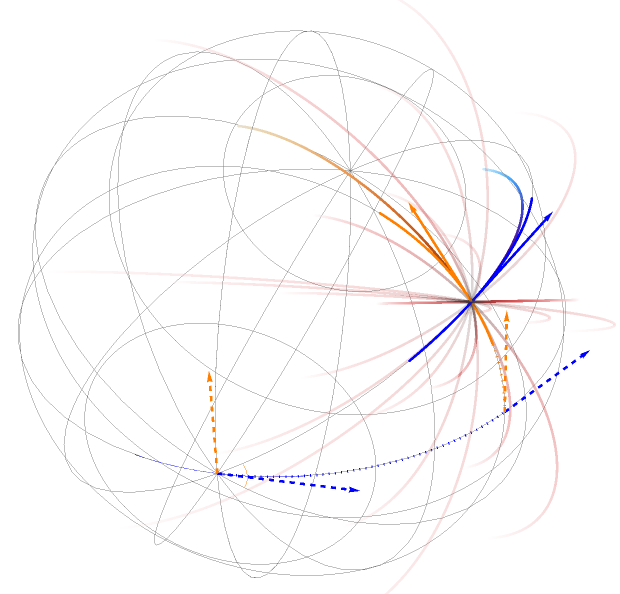}
\end{minipage}
    \caption{We show a sample of the basis vectors we use to calculate the metric on the left and visuallize how we rotate the vectors at $p$ to $q$, where the angular parameters $\beta_1$ and $\beta_2$ are defined.
    The $\varphi$-coordinate line at $p$ is drawn in orange and the $\vartheta$ one in blue. The geodesics heading out in the same direction as those two are drawn in orange and blue gradients respectively. Their tangent vectors at $p$ are the solid orange and blue arrows. A sample of rest of the flow from $p$ is plotted in translucent red gradients. The rotated vectors are dashed.}
    \label{fig: FNC basis vector fields}
\end{figure}

\subsection{Constant curvature on each central plane}\label{subsec: NotASphere}
From the main text we know, that if we want to pick a curvature field we can pick an arbitrary point w.l.o.g. $o$ and then choose a curvature field on every geodesic surface through $o$ in a smooth way. We again restrict ourselves to geometries where the geodesic surfaces appear as planes in faithful normal coordinates. We choose the following curvature field, where the curvature on each such geodesic plane is constant:
\begin{equation}
    K_p(\varphi_K,\vartheta_K) = \cos^2\varphi_K\cos^2(2\vartheta_K), \quad \forall p\in\mathcal{M}
\end{equation}
Although these geodesic surfaces, labelled by $\varphi_K$ and $\vartheta_K$, appear as planes in the faithful normal coordinates at $o$, they are $2$-spheres of different radii.\\
In this choice of curvature field, the curvature varies in different directions in the same way at each point and is thus not position but purely direction dependent.\\
To compare these curvature degrees of freedom with the Riemann tensor we try to calculate the metric. Since the curvature field is the same at each point it suffices to look at the point $q(c) = \gamma_{o,\hat{0}}(c)$. In the main text we saw that we need the geodesics heading out in the directions of increasing $\varphi$ and $\vartheta$ at each point. We determined already in Sec.~\ref{subsec: Ex. 3-sphere} that the direction labels for $\hat{\varphi}$ are $\beta_1 = \frac{\pi}{2},\, \beta_2 = 0$ and the ones for $\hat{\vartheta}$ are $\beta_1 = \frac{\pi}{2},\, \beta_2 = \frac{\pi}{2}$. But the geodesics heading out in these directions agree with the unit sphere since the curvature is given by:
\begin{align}
    &\varphi_p = \vartheta_p = 0, \quad \alpha_2 = \beta_2 \\
    \Rightarrow \quad &\varphi_K(0,0,0) = 0, &\quad &\vartheta_K(0,0,0) = 0, &\quad &K_p\left(0,0\right) = 1, &\text{in} \ \hat{\varphi} \text{-direction}, \\
    \Rightarrow \quad &\varphi_K\left(0,0,\frac{\pi}{2}\right) = 0, &\quad &\vartheta_K\left(0,0,\frac{\pi}{2}\right) = \frac{\pi}{2}, &\quad &K_p\left(0,\frac{\pi}{2}\right) = 1, &\text{in} \ \hat{\vartheta} \text{-direction}.
\end{align}
using the functions $\varphi_K(\varphi_p,\vartheta_p,\alpha_2)$ and $\vartheta_K(\varphi_p,\vartheta_p,\alpha_2)$ we derived in the main text.\\

Thus we find, that the geodesic planes labeled by $(0,0)$ and $\left(0,\frac{\pi}{2}\right)$ have constant curvature $1$, thus the geodesics in these surfaces are the ones of the unit $2$-sphere and when we use their derivatives to calculate the metric components we get the metric of the unit sphere.\\

But if we rotate the basis by $\frac{\pi}{4}$ we get instead:
\begin{align}
    &\varphi_K\left(0,0,\frac{\pi}{4}\right) = 0, \quad \vartheta_K\left(0,0,\frac{\pi}{4}\right) = \frac{\pi}{4}, \quad K_p\left(0,\frac{\pi}{4}\right) = 0, &\text{in} \ \hat{\varphi} \text{-direction}, \\
    &\varphi_K\left(0,0,-\frac{\pi}{4}\right) = 0, \quad \vartheta_K\left(0,0,-\frac{\pi}{4}\right) = -\frac{\pi}{4}, \quad K_p\left(0,-\frac{\pi}{4}\right) = 0, &\text{in} \ \hat{\vartheta} \text{-direction},
\end{align}
keeping in mind, that the domain of $\alpha_2$ is between $-\frac{\pi}{2}$ and $\frac{\pi}{2}$ and thus we have $\alpha_2=-\frac{\pi}{4}$ for the $\hat{\vartheta}$-direction instead of $\frac{3\pi}{4}$, which both label tha same plane.\\

With a choice of basis that lies in these two planes we get the flat metric, because these planes are indeed the only real planes through $o$ in this geometry.\\
We conclude, that the metric and thus also the Riemann tensor is not able to pick up this degree of freedom.\\

In Fig.~\ref{fig: Not a sphere} we see that the flow of this geometry agrees with the sphere in the directions $0,\,\frac{\pi}{2},\,\pi,\,\frac{3\pi}{2}$ but in the other directions the blue geodesics do not cover the grey ones of the sphere since the curvature in those directions is lower and thus the geodesics are bent less.
\begin{figure}[h!]
    \begin{minipage}{.5\linewidth}
        \centering\includegraphics[width=\linewidth]{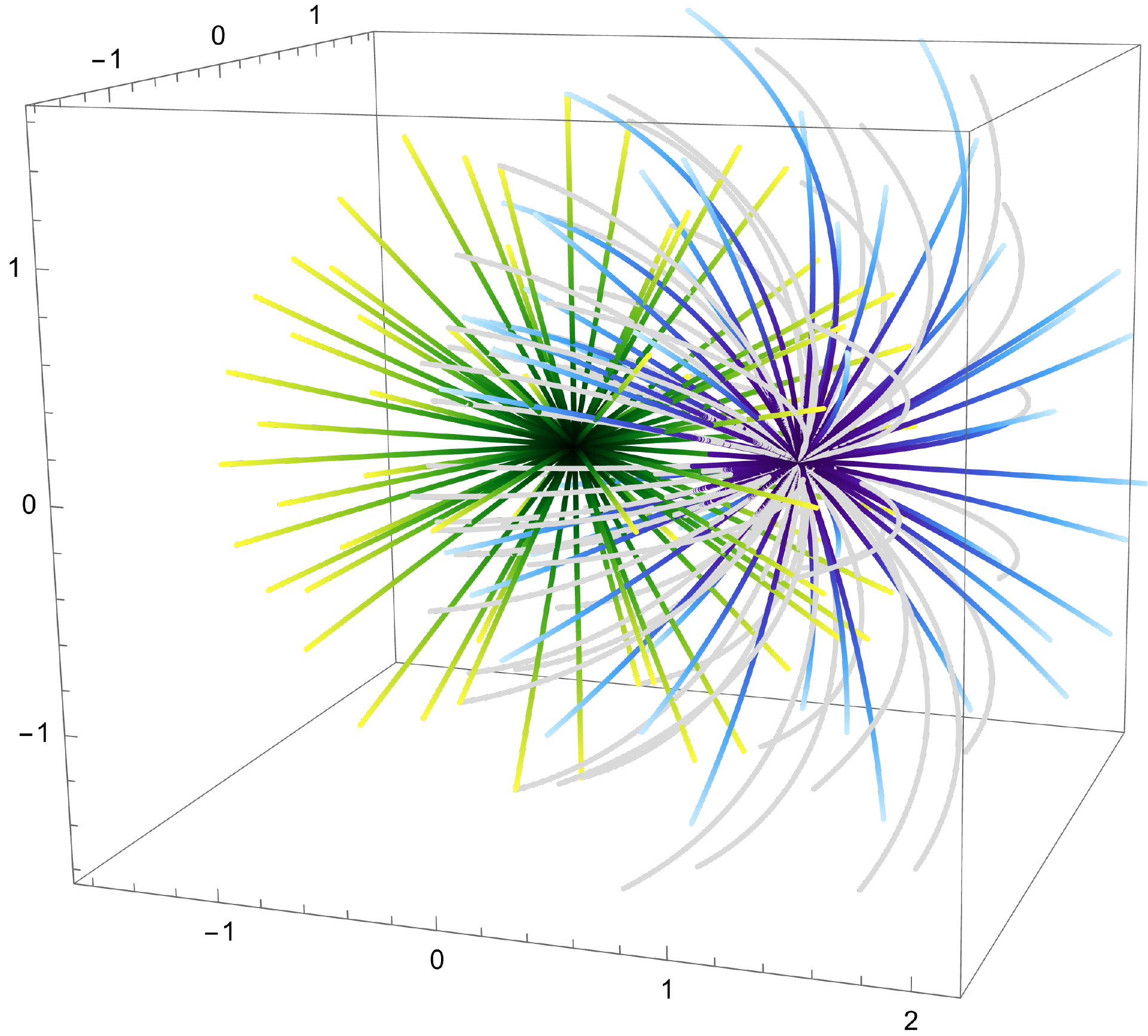}
    \end{minipage}
    \begin{minipage}{.5\linewidth}
        \centering\includegraphics[width=0.9\linewidth]{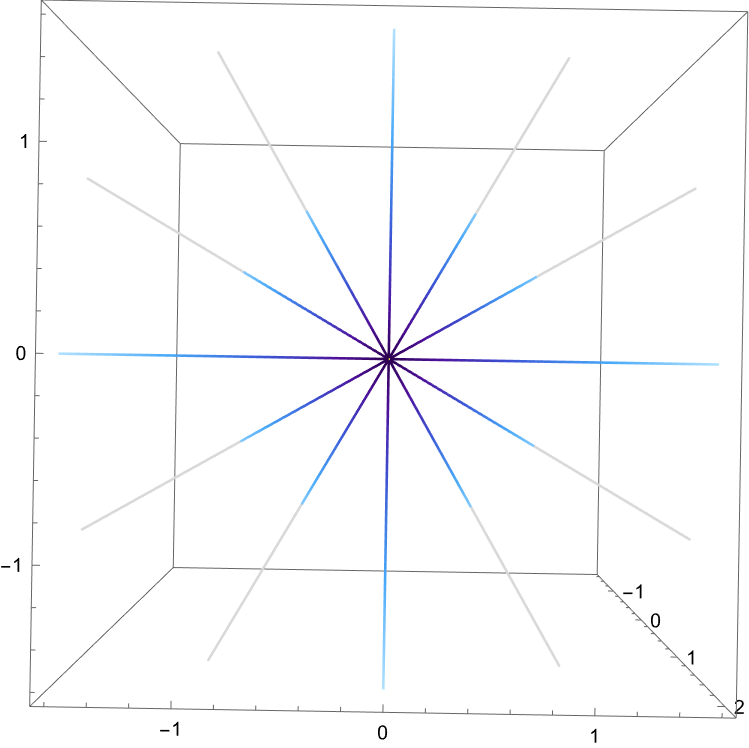}
    \end{minipage}
    \caption{A 3D plot of the geodesic flow from the origin point in green and from the point $q = (\frac{\pi}{3},0,0)$ in blue, with the flow of the $3$-sphere in gray for comparison. A side view from the right hand side is provided on the right.}
    \label{fig: Not a sphere}
\end{figure}

\subsection{$n$-dimensional flat space}
To deal with the $n$-dimensional case we first need the tangent vector fields $\hat{\vartheta}_i$, which we gain by taking derivatives of $\hat{\Omega}^(n-1)$:
\begin{equation}
    \hat{\Omega}^{(n-1)} = \begin{pmatrix} \cos\varphi \\
    \cos\vartheta_{n-2}\cos\vartheta_{n-3}\ldots\cos\vartheta_1\sin\varphi \\
    \sin\vartheta_1\sin\varphi \\ \sin\vartheta_2\cos\vartheta_1\sin\varphi \\
    \sin\vartheta_3\cos\vartheta_2\cos\vartheta_1\sin\varphi \\ \vdots \\
    \sin\vartheta_{n-2}\cos\vartheta_{n-3}\ldots\cos\vartheta_1\sin\varphi
    \end{pmatrix}, \quad
    \hat{\varphi}(p) = \hat{\Omega}^{(n-1)}_{,\varphi} = \begin{pmatrix} -\sin\varphi \\
    \cos\vartheta_{n-2}\cos\vartheta_{n-3}\ldots\cos\vartheta_1\cos\varphi \\
    \sin\vartheta_1\cos\varphi \\ \sin\vartheta_2\cos\vartheta_1\cos\varphi \\
    \sin\vartheta_3\cos\vartheta_2\cos\vartheta_1\cos\varphi \\ \vdots \\
    \sin\vartheta_{n-2}\cos\vartheta_{n-3}\ldots\cos\vartheta_1\cos\varphi
    \end{pmatrix}
\end{equation}
\begin{align}
    \vec{\vartheta}_i(p) = \hat{\Omega}^{(n-1)}_{,\vartheta_1} = \begin{pmatrix} 0 \\
    -\cos\vartheta_{n-2}\cos\vartheta_{n-3}\ldots\cos\vartheta_2\sin\vartheta_1\sin\varphi \\
    \cos\vartheta_1\sin\varphi \\ -\sin\vartheta_2\sin\vartheta_1\sin\varphi \\
    -\sin\vartheta_3\cos\vartheta_2\sin\vartheta_1\sin\varphi \\ \vdots \\
    -\sin\vartheta_{n-2}\cos\vartheta_{n-3}\ldots\cos\vartheta_2\sin\vartheta_1\sin\varphi
    \end{pmatrix}, \quad
    \vec{\vartheta}_i(p) = \hat{\Omega}^{(n-1)}_{,\vartheta_i} = \begin{pmatrix} 0 \\
    -\cos\vartheta_{n-2}\ldots\sin\vartheta_i\ldots\cos\vartheta_1\sin\varphi \\
    \sin\vartheta_1\sin\varphi \\ \vdots \\
    \cos\vartheta_i\ldots\cos\vartheta_1\sin\varphi \\ \vdots \\
    -\sin\vartheta_{n-2}\ldots\sin\vartheta_i\ldots\cos\vartheta_1\sin\varphi
    \end{pmatrix}
\end{align}
and then normalize them:
\begin{align}
    \hat{\varphi}\cdot\hat{\varphi} =& \sin^2\varphi + \cos^2\vartheta_{n-2}\cos^2\vartheta_{n-3}\ldots\cos^2\vartheta_1\cos^2\varphi + \sin^2\vartheta_1\cos^2\varphi
    + \sin^2\vartheta_2\cos^2\vartheta_1\cos\varphi \notag \\
    &+ \sin^2\vartheta_3\cos^2\vartheta_2\cos^2\vartheta_1\cos^2\varphi + \ldots
    +\sin^2\vartheta_{n-2}\cos\vartheta_{n-3}\ldots\cos^2\vartheta_1\cos^2\varphi \notag \\
    =& \sin^2\varphi + (\cos^2\vartheta_{n-2} + \sin^2\vartheta_{n-2})\cos^2\vartheta_{n-3}\ldots\cos^2\vartheta_1\cos^2\varphi + \sin^2\vartheta_1\cos^2\varphi
    + \sin^2\vartheta_2\cos^2\vartheta_1\cos\varphi \notag \\
    &+ \sin^2\vartheta_3\cos^2\vartheta_2\cos^2\vartheta_1\cos^2\varphi + \ldots
    + \sin^2\vartheta_{n-3}\cos^2\vartheta_{n-4}\ldots\cos^2\vartheta_1\cos^2\varphi \notag \\
    =& \sin^2\varphi + (\cos^2\vartheta_{n-3} + \sin^2\vartheta_{n-3})\cos^2\vartheta_{n-4}\ldots\cos^2\vartheta_1\cos^2\varphi + \sin^2\vartheta_1\cos^2\varphi
    + \sin^2\vartheta_2\cos^2\vartheta_1\cos\varphi \notag \\
    &+ \sin^2\vartheta_3\cos^2\vartheta_2\cos^2\vartheta_1\cos^2\varphi + \ldots
    + \sin^2\vartheta_{n-4}\cos^2\vartheta_{n-5}\ldots\cos^2\vartheta_1\cos^2\varphi \notag \\
    \vdots& \notag \\
    =& \sin^2\varphi + (\cos^2\vartheta_3 + \sin^2\vartheta_3)\cos^2\vartheta_2\cos^2\vartheta_1\cos^2\varphi + \sin^2\vartheta_1\cos^2\varphi + \sin^2\vartheta_2\cos^2\vartheta_1\cos\varphi \notag \\
    =& \sin^2\varphi + (\cos^2\vartheta_2 + \sin^2\vartheta_2)\cos^2\vartheta_1\cos^2\varphi + \sin^2\vartheta_1\cos^2\varphi \notag \\
    =& \sin^2\varphi + (\cos^2\vartheta_1 + \sin^2\vartheta_1)\cos^2\varphi
    = \sin^2\varphi + \cos^2\varphi = 1,
\end{align}
\begin{align}
    \vec{\vartheta}_1\cdot\vec{\vartheta}_1 =& 0 + \cos^2\vartheta_{n-2}\cos^2\vartheta_{n-3}\ldots\cos^2\vartheta_2\sin^2\vartheta_1\sin^2\varphi
    + \cos^2\vartheta_1\sin^2\varphi + \sin^2\vartheta_2\sin^2\vartheta_1\sin^2\varphi \notag \\
    &+ \sin^2\vartheta_3\cos^2\vartheta_2\sin^2\vartheta_1\sin^2\varphi + \ldots
    + \sin^2\vartheta_{n-2}\cos^2\vartheta_{n-3}\ldots\cos^\vartheta_2\sin^2\vartheta_1\sin^2\varphi \notag \\
    =& (\cos^2\vartheta_{n-2} + \sin^2\vartheta_{n-2})\cos^2\vartheta_{n-3}\ldots\cos^2\vartheta_2\sin^2\vartheta_1\sin^2\varphi
    + \cos^2\vartheta_1\sin^2\varphi + \sin^2\vartheta_2\sin^2\vartheta_1\sin^2\varphi \notag \\
    &+ \sin^2\vartheta_3\cos^2\vartheta_2\sin^2\vartheta_1\sin^2\varphi + \ldots
    + \sin^2\vartheta_{n-3}\ldots\cos^\vartheta_2\sin^2\vartheta_1\sin^2\varphi \notag \\
    \vdots \notag \\
    =& (\cos^2\vartheta_3 + \sin^2\vartheta_3)\cos^2\vartheta_2\sin^2\vartheta_1\sin^2\varphi
    + \cos^2\vartheta_1\sin^2\varphi + \sin^2\vartheta_2\sin^2\vartheta_1\sin^2\varphi \notag \\
    =& (\cos^2\vartheta_2 + \sin^2\vartheta_2)\sin^2\vartheta_1\sin^2\varphi
    + \cos^2\vartheta_1\sin^2\varphi = (\sin^2\vartheta_1 + \cos^2\vartheta_1)\sin^2\varphi = \sin^2\varphi \\
    \Rightarrow \quad \hat{\vartheta}_1 =& \frac{1}{\sin\varphi}\vec{\vartheta}_1,
\end{align}
\begin{align}
    \vec{\vartheta}_i\cdot\vec{\vartheta}_i =& \cos^2\vartheta_{n-2}\ldots\sin^2\vartheta_i\ldots\cos^2\vartheta_1\sin^2\varphi + \sin^2\vartheta_1\sin^2\varphi
    + \ldots + \cos^2\vartheta_i\ldots\cos^2\vartheta_1\sin^2\varphi + \ldots \notag \\
    &+ \sin^2\vartheta_{n-2}\ldots\sin^2\vartheta_i\ldots\cos^2\vartheta_1\sin^2\varphi \notag \\
    =& (\cos^2\vartheta_{n-2} + \sin^2\vartheta_{n-2})\cos^2\vartheta_{n-3}\ldots\sin^2\vartheta_i\ldots\cos^2\vartheta_1\sin^2\varphi + \sin^2\vartheta_1\sin^2\varphi
    + \ldots + \cos^2\vartheta_i\ldots\cos^2\vartheta_1\sin^2\varphi + \ldots \notag \\
    &+ \sin^2\vartheta_{n-3}\ldots\sin^2\vartheta_i\ldots\cos^2\vartheta_1\sin^2\varphi \notag \\
    \vdots \notag \\
    =& (\cos^2\vartheta_{i+1} + \sin^2\vartheta_{i+1})\sin^2\vartheta_i\ldots\cos^2\vartheta_1\sin^2\varphi + \sin^2\vartheta_1\sin^2\varphi
    + \ldots + \cos^2\vartheta_i\ldots\cos^2\vartheta_1\sin^2\varphi \notag \\
    =& (\sin^2\vartheta_i + \cos^2\vartheta_i)\cos^2\vartheta_{i-1}\ldots\cos^2\vartheta_1\sin^2\varphi + \sin^2\vartheta_1\sin^2\varphi
    + \ldots + \sin^2\vartheta_{i-1}\ldots\cos^2\vartheta_1\sin^2\varphi \notag \\
    =& (\cos^2\vartheta_{i-1} + \sin^2\vartheta_{i-1})\cos^2\vartheta_{i-2}\ldots\cos^2\vartheta_1\sin^2\varphi + \sin^2\vartheta_1\sin^2\varphi
    + \ldots + \sin^2\vartheta_{i-2}\ldots\cos^2\vartheta_1\sin^2\varphi \notag \\
    \vdots \notag \\
    =& (\cos^2\vartheta_1 + \sin^2\vartheta_1)\sin^2\varphi = \sin^2\varphi \quad
    \Rightarrow \quad \hat{\vartheta}_i = \frac{1}{\sin\varphi}\vec{\vartheta}_i.
\end{align}

Next, we calculate the angular parameters $\beta_i$, by solving the following equation: 
\begin{align}
    \hat{\Omega}(\beta_1,\ldots,\beta_{n-1}) \overset{!}{=} R_{x_1,x_2}^{-1}(\varphi)\circ R_{x_2,x_3}^{-1}(\vartheta_1)\circ\ldots\circ R_{x_2,x_n}^{-1}(\vartheta_{n-2})\cdot\hat{\vartheta}_i = \left. \frac{1}{\sin\varphi}\vec{\vartheta}_i \right|_{\varphi=0,\vartheta_i=0} \\
    \begin{pmatrix} \cos\beta_1 \\
    \cos\beta_{n-1}\cos\beta_{n-2}\ldots\cos\beta_2\sin\beta_1 \\
    \sin\beta_2\sin\beta_1 \\ \sin\beta_3\cos\beta_2\sin\beta_1 \\
    \sin\beta_4\cos\beta_3\cos\beta_2\sin\beta_1 \\ \vdots \\
    \sin\beta_{n-1}\cos\beta_{n-2}\ldots\cos\beta_2\sin\beta_1
    \end{pmatrix} \overset{!}{=}
    \begin{pmatrix} 0 \\
    -\cos0\ldots\sin0\ldots\cos0 \\
    \sin0 \\ \vdots \\
    \cos0\ldots\cos0 \\ \vdots \\
    -\sin0\ldots\sin0\ldots\cos0
    \end{pmatrix} = \begin{pmatrix} 0 \\ 0 \\ 0 \\ \vdots \\ 1 \\ \vdots \\ 0 \end{pmatrix}
    \begin{matrix}  \\  \\  \\ \vphantom{\vdots} \\ \leftarrow i \\ \vphantom{\vdots} \\ \vphantom{0} \end{matrix} \\
    \cos\beta_1 \overset{!}{=} 0 \quad \Rightarrow \quad \beta_1 = \frac{\pi}{2}, \quad
    \sin\beta_2\sin\frac{\pi}{2} = 0 \quad \Rightarrow \quad \beta_2 = 0, \quad
    \sin\beta_3\cos0\sin\frac{\pi}{2} = 0 \quad \Rightarrow \quad \beta_3 = 0, \ldots \notag \\
    \sin\beta_{i+1}\cos0\ldots\cos0\sin\frac{\pi}{2} = 1 \quad \Rightarrow \quad \beta_{i+1} = \frac{\pi}{2}
\end{align}
All other equations contain a $\cos\beta_{i+1} = \cos\frac{\pi}{2} = 0$ and are thus trivially satisfied i.e. any choice will do. For simplicity we pick $\beta_j = 0$ for $j>i+1$.

Now we are ready to calculate the components of the coordinate line tangent vectors. The fundamental solution $b$ and $\alpha$ for a flat space is given by:\\
\begin{align}
    b(c,\beta_1;\lambda) = \sqrt{c^2 + \lambda^2 + 2c\lambda\cos\beta_1}, \quad
    \alpha(c,\beta_1;\lambda) = \tan^{-1}\left( \frac{\lambda\sin\beta_1}{c + \lambda\cos\beta_1} \right)
\end{align}
The distance function $l=b$ is the same in all directions and thus we can calculate it's derivative for all cases:
\begin{equation}
    \dot{l}\left(c,\frac{\pi}{2};0\right) = \left. \frac{\lambda + c\cos\beta_1}{\sqrt{c^2 + \lambda^2 + 2c\lambda\cos\beta_1}} \right|_{\beta_1=\frac{\pi}{2},\lambda=0} = 0
\end{equation}

Now we would need the functions $\varphi(\hat{\Omega}_p,\hat{\Omega}_\alpha)$ and $\vartheta_i(\hat{\Omega}_p,\hat{\Omega}_\alpha)$. To obtain these we would have to solve
\begin{equation}
    R_{x_2,x_n}^{-1}(\vartheta_{n-2,p})\circ\ldots\circ R_{x_2,x_3}^{-1}(\vartheta_{1,p})\circ R_{x_1,x_2}^{-1}(\varphi_p)\cdot\hat{\Omega}(\alpha_1,\ldots,\alpha_{n-1})
    \overset{!}{=} \hat{\Omega}(\varphi,\vartheta_1,\ldots,\vartheta_{n-2})
\end{equation}
in $n$ dimensions. This is beyond the scope of this work and we thus continue from here with $\mathbb{R}^3$, where we calculate $\varphi$ and $\vartheta$:
\begin{align}
    \dot{\varphi}(p,\hat{\varphi};\lambda)
    = \left.d_{\lambda} \varphi\left( \hat{\Omega}_p,\hat{\Omega}\left( \alpha\left(c,\frac{\pi}{2};\lambda\right),0 \right)\right) \right|_{\lambda=0} = \frac{1}{c}, \quad
    \dot{\vartheta}(p,\hat{\varphi};\lambda)
    = \left.d_{\lambda} \vartheta\left( \hat{\Omega}_p,\hat{\Omega}\left( \alpha\left(c,\frac{\pi}{2};\lambda\right),\frac{\pi}{2} \right)\right) \right|_{\lambda=0} = \frac{1}{c\sin\varphi_p}
\end{align}
We get the metric:
\begin{equation}
    g = \begin{pmatrix} 1 & 0 & 0 \\ 0 & c^2 & 0 \\ 0 & 0 & c^2\sin^2\varphi_p \end{pmatrix},
\end{equation}
the non-vanishing Chritoffel symbols are:
\begin{align}
    \Gamma^c_{\varphi\varphi} = -c, \quad \Gamma^c_{\vartheta\vartheta} = -c\sin^2\varphi, \quad
    \Gamma^\varphi_{c\varphi} = \frac{1}{c}, \quad \Gamma^\varphi_{\vartheta\vartheta} = -\cos\varphi\sin\varphi, \quad \Gamma^\vartheta_{c\vartheta} = \frac{1}{c}, \quad \Gamma^\vartheta_{\varphi\vartheta} = \cot\varphi
\end{align}
The Riemann tensor vanishes, as expected for flat space.

\section{Procedure}\label{sec: Procedure}
The basic idea is, to approximate the Gaussian curvature field with a step function, such that we divide the geodesic triangle into $N^2$ smaller geodesic triangles with constant curvature. We then use trigonometry on the sphere in each of the triangles to calculate the opening angle and top line up to second order and then take the limit $N\to\infty$ where the approximation becomes precise.

\begin{enumerate}
    \item Solve the first slice for finite N.
    \begin{enumerate}
        \item We solve the trigonometry problems within each segment.
        \begin{enumerate}
            \item Identify and solve the different types of trigonometry problems in a general manner creates template formulas.
            \item Solve the $0$-th order problem.
            \item Insert the $0$-th order results into the template formulas and calculate the $2$-nd order solution to each segment.
        \end{enumerate}
        \item Link up the segments of the first slice.
        \begin{enumerate}
            \item Quantities of a segment are dependent on the ones of the next segment from which we derive recursion rules.
            \item The recursion can be solved by a continued fraction.
        \end{enumerate}
        \item Backwards inserting.
        \begin{enumerate}
            \item Quantities of a segment depend on quantities of its successor. Starting with the last one we successively insert all dependencies.
            \item We solve the resulting recursion.
        \end{enumerate}
    \end{enumerate}
    \item Extend the solution of the first slice to the case of the j-th one.
    \begin{enumerate}
        \item Correction terms in the arguments generate additional terms in the template formulas.
        \item We generalize the substitutes we used to simplify our results.
        \item We update the results of the first slice and collect the additional terms in the argument correction coefficients.
    \end{enumerate}
    \item Link the slices together to form the full triangle.
    \begin{enumerate}
        \item Quantities of a slice depend on the ones of its predecessor, which leads to another recursion.
        \item The solution to this recursion contains sums and products which become series and infinite products when $N$ tends to infinity.
    \end{enumerate}
    \item Prepare the mathematical tools for the limit calculations.
    \begin{enumerate}
        \item We convert the asymptotic series to integrals.
        \item We introduce the notion of product integrals and derive an analogue to the fundamental theorem of calculus to calculate the infinite products.
        \item The nested sums lead to layered integrals. We derive a solution in terms of the primitive function to the repeatedly occurring integrand.
    \end{enumerate}
    \item Take the limit of $N$ to infinity.
    \begin{enumerate}
        \item The solution for finite $N$ contains many layers of substitutions. We successively determine and insert the asymptotic behaviour of the substitutes starting with the innermost.
        \item Identifying the power series of sine and cosine we calculate the asymptotic behaviour of the recursion substitutes.
        \item Putting everything together and comparing the asymptotic growth as $N$ tends to infinity, we finally calculate the limit.
    \end{enumerate}
\end{enumerate}

\section{The functions $C$ and $SC$}\label{sec: C, SC}
Most of the calculations discussed in the main text are repeated applications of the cosine- and sine-law for a constantly curved surface with radius $R$.
\begin{equation}
    \cos_Rc = \cos_Ra\cos_Rb + \frac{1}{R^2}\sin_Ra\sin_Rb\cos\gamma, \qquad \frac{\sin_Ra}{\sin\alpha} = \frac{\sin_Rb}{\sin\beta} = \frac{\sin_Rc}{\sin\gamma}, \quad
    \alpha,\beta,\gamma\notin\pi\mathbb{Z}
\end{equation}
If $R\in\mathbb{R}$ the surface is the $2$-sphere. The radius can also be purely imaginary $R\in i\mathbb{R}$, in which case the surface is a pseudo sphere. The curvature of the surface is the inverse square of the radius which leads to a negative real value, in case of the pseudo sphere and the curved versions of sine and cosine are defined by:\\
\begin{minipage}{0.08\linewidth}
    \begin{equation*}
        K = \frac{1}{R^2};
    \end{equation*}
\end{minipage}
\begin{minipage}{0.92\linewidth}
    \begin{align}\label{eq: cosR}
    \cos_Ra &\coloneq \cos\frac{a}{R} = \sum_{n=0}^\infty\frac{(-1)^n}{(2n)!}\left(\frac{a}{R}\right)^{2n} = \sum_{n=0}^\infty\frac{(-K)^n a^{2n}}{(2n)!}
    = \begin{cases} \cos\left(\sqrt{K}a\right), & K > 0 \\ \cosh\left(\sqrt{|K|}a\right), & K < 0 \end{cases} \eqcolon \cos_Ka, \\
    \sin_Ra &\coloneq R\sin\frac{a}{R} = R\sum_{n=0}^\infty\frac{(-1)^n}{(2n+1)!}\left(\frac{a}{R}\right)^{2n+1}
    = \sum_{n=0}^\infty\frac{(-K)^n a^{2n+1}}{(2n+1)!} = \begin{cases} \frac{\sin\left(\sqrt{K}a\right)}{\sqrt{K}}, & K > 0 \\
    \frac{\sinh\left(\sqrt{|K|}a\right)}{\sqrt{|K|}}, & K < 0 \end{cases} \eqcolon \sin_Ka \notag
    \end{align}
\end{minipage}
We observe, that the power series captures both the negative and the positive curvature case i.e. $K\in\mathbb{R}\backslash\{0\}$ in one expression.\\

To relate the curved cosine- and sine law to the flat space version, we need to use the series expansions and take the limit $R\to\infty$.
\begin{align}
    \lim_{R\to\infty}K &= 0 \quad \Rightarrow \quad R\to\infty \quad \Leftrightarrow \quad K\to0\\
    \cos_Kc &= 1 - \frac{c^2}{2}K + \mathcal{O}(K^2) = \left( 1 - \frac{a^2}{2}K + \mathcal{O}(K^2) \right)\left( 1 - \frac{b^2}{2}K + \mathcal{O}(K^2) \right)
    + K\left( a + \mathcal{O}(K) \right)\left( b + \mathcal{O}(K) \right)\cos\gamma \notag \\
    &= 1 - \frac{a^2}{2}K - \frac{b^2}{2}K + Ka\,b\cos\gamma + \mathcal{O}(K^2) = \cos_Ka\cos_Kb + K\sin_Ka\sin_Kb\cos\gamma \notag \\
    \Rightarrow \quad c^2 &= a^2 + b^2 - a\,b\cos\gamma, \quad K = 0
\end{align}
\begin{equation}
    \frac{\sin_Ka}{\sin\alpha} = \frac{a + \mathcal{O}(K)}{\sin\alpha} = \frac{b + \mathcal{O}(K)}{\sin\beta}
    = \frac{\sin_Kb}{\sin\beta} \quad
    \overset{K\to0}{\longrightarrow} \quad \frac{a}{\sin\alpha} = \frac{b}{\sin\beta}
\end{equation}
We see, that we can not take the limit of $K\to0$ on the curvature sine and cosine and then use them in the cosine law, because the relevant term is of order $K$. For the sine law this works: $\sin_Ka = a$.\\

In the case of constant curvature these laws suffice to solve the triangulation problem discussed in the previous section. Problem 1, for $N=1$ describes the triangle exactly. We use the cosine law, to calculate $b$, which in this case coincides with $d^1,1$:
\begin{equation}
    b(c,\beta,a) = \cos_K^{-1}\left( \cos_Ka\cos_Kc + K\sin_Ka\sin_Kc\cos(\pi-\beta) \vphantom{\sqrt{K}}\right),
    \quad K \neq 0
\end{equation}
We can then use $b$ to calculate $\alpha$, which coincides with $\alpha^{0,1}$, using the sine law.
\begin{equation}
    \alpha(c,\beta,a) = \sin^{-1}\left( \frac{\sin_Ka}{\sin_Kb(c,\beta,a)}\sin\beta \right)
\end{equation}

In fact the vast majority of the functions can be expressed in terms of these two functions, which we from now on call $C$ "inverse cosine-law" and $SC$ "inverse sine- on the inverse cosine-law".
\begin{align}
    C(K;\gamma,a,b) \coloneq& \cos_K^{-1}\left( \cos_Ka\cos_Kb + K\sin_Ka\sin_Kb\,\cos\gamma \vphantom{\sqrt{K}}\right),
     &K \neq 0 \notag \\
    =& \frac{1}{\sqrt{K}}\cos^{-1}\left( \cos\sqrt{K}a\cos\sqrt{K}b + \sin\sqrt{K}a\sin\sqrt{K}b\,\cos\gamma \right),  &K > 0\\
    =& \frac{1}{\sqrt{|K|}}\cosh^{-1}\left( \cosh\sqrt{|K|}a\cosh\sqrt{|K|}b - \sinh\sqrt{|K|}a\sinh\sqrt{|K|}b\cos\gamma \right),
    &K < 0 \\
    SC(K;\gamma,a,b) \coloneq& \sin^{-1}\left( \frac{\sin_Ka\,\sin\gamma}{\sin_K\left( \cos_K^{-1}( \cos_Ka\cos_Kb + K\sin_Ka\sin_Kb\,\cos\gamma ) \vphantom{\sqrt{K}}\right)} \right) \notag \\
    =& \sin^{-1}\left( \frac{\sin\sqrt{K}a\,\sin\gamma}{\sqrt{1-\left( \cos\sqrt{K}a\cos\sqrt{K}b + \sin\sqrt{K}a\sin\sqrt{K}b\,\cos\gamma  \right)^2}} \right),
    &K > 0 \\
    =& \sin^{-1}\left( \frac{\sinh\sqrt{|K|}a\,\sin\gamma}{\sqrt{\left( \cosh\sqrt{|K|}a\cosh\sqrt{|K|}b - \sinh\sqrt{|K|}a\sinh\sqrt{|K|}b\,\cos\gamma  \right)^2-1}} \right),
    &K < 0
\end{align}
where we used, that $\sin(\cos^{-1}{y}) = \sqrt{1-y^2}, \ y\in(-1,1)$ and $\sinh(\cosh^{-1}y) = \sqrt{y^2-1}, \ y>1$.\\
\begin{figure}[h!]
    \centering\includegraphics[width=.4\linewidth]{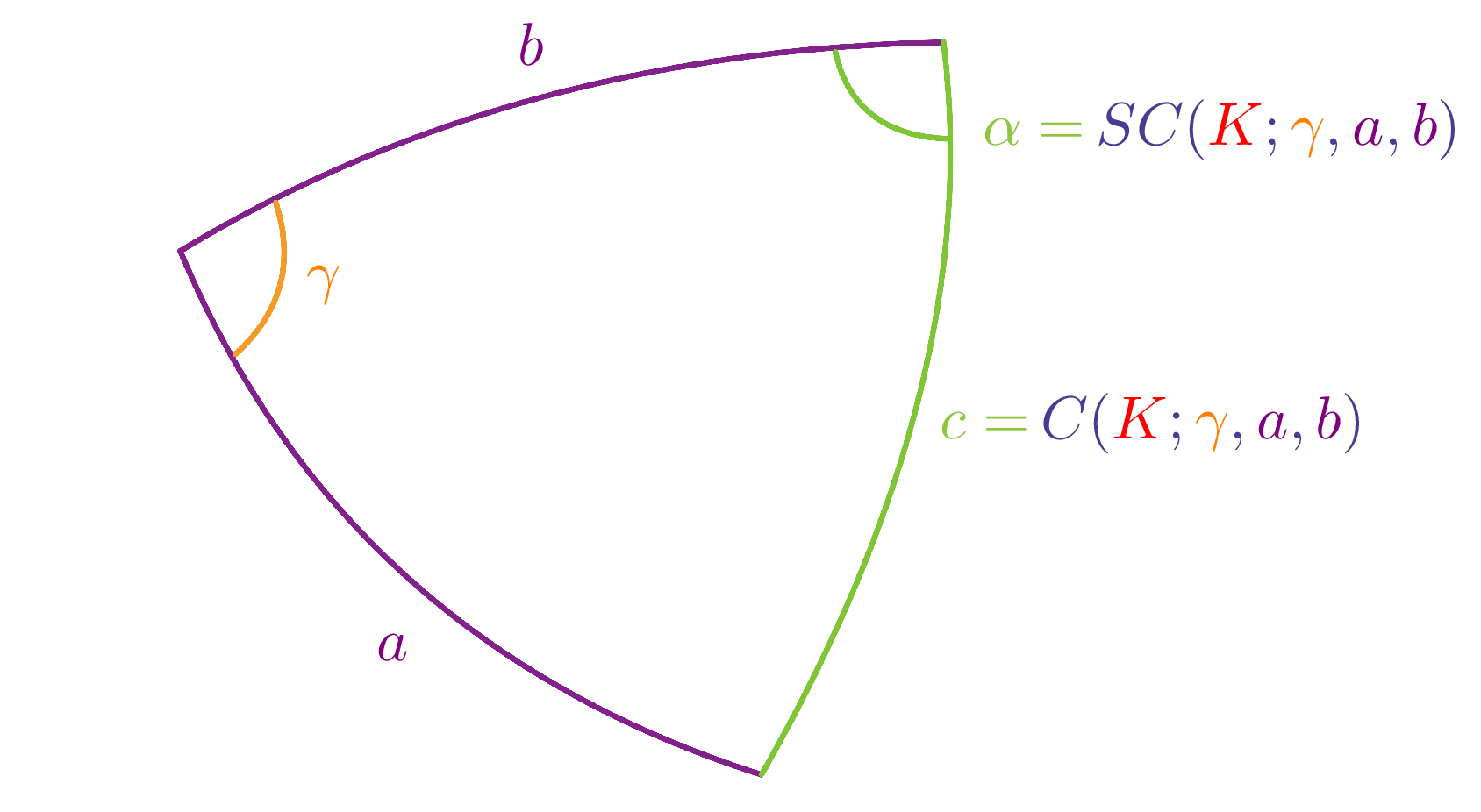}
    \caption{Sketch of the spherical triangle and the labelling we use in this section.}
    \label{fig: Spherical_Triangle_1}
\end{figure}

To more easily memorize, what these functions do: $\gamma$ is always the angle between the sides $a$ and $b$, $C$ calculates the length of the side opposite to $\gamma$ and $SC$ calculates the angle opposite to the first side-length argument. We also note, that $C$ is symmetric, while $SC$ is not. If we swap $a$ and $b$ in the $SC$-function, we calculate the angle $\beta$ opposite to $b$ instead of $\alpha$ opposite to $a$.

It is straight forward to check by inserting the expressions from~\eqref{eq: cosR}, that the inverses of the curvature cosine and sine are given by:
\begin{align}
    \cos_K^{-1}y = \begin{cases} \frac{\cos^{-1}y}{\sqrt{K}}, & K > 0 \\
    \frac{\cosh^{-1}y}{\sqrt{|K|}}, & K < 0 \end{cases} \quad \text{and} \quad
    \sin_K^{-1}y = \begin{cases} \frac{\sin^{-1}\left(\sqrt{K}y\right)}{\sqrt{K}}, & K > 0 \\
    \frac{\sinh^{-1}\left(\sqrt{|K|}y\right)}{\sqrt{|K|}}, & K < 0 \end{cases}
\end{align}
We can use the power series expression for arc-sine and hyperbolic, which we derive in Appendix~\ref{sec: Arcfunc} and the fact, that cosine can be seen as a shifted sine, to write the curved inverses in a power series:
\begin{align}
    \sin_K^{-1}y = \frac{1}{\sqrt{K}}\sum_{k=0}^\infty \frac{(2k)!}{(2^kk!)^2} \frac{(\sqrt{K}y)^{2k+1}}{2k+1}
    = \sum_{k=0}^\infty \frac{(2k)!K^k}{(2^kk!)^2} \frac{y^{2k+1}}{2k+1}, \quad y\in\left[-\frac{1}{\sqrt{K}},\frac{1}{\sqrt{K}}\right]
\end{align}
\begin{align}
    \cos x =& \sin\left(x + \frac{\pi}{2}\right) \quad \Rightarrow \quad \cos^{-1}y = \frac{\pi}{2} - \sin^{-1}y; \\ \cos_K^{-1}y =& \frac{1}{\sqrt{K}}\left( \frac{\pi}{2} - \sin^{-1}y \right) = \frac{1}{\sqrt{K}}\left( \frac{\pi}{2} - \sum_{k=0}^\infty \frac{(2k)!}{(2^kk!)^2} \frac{y^{2k+1}}{2k+1} \right),
    \quad y\in[-1,1]
\end{align}
Since arc-sine is defined on $[-1,1]$ the curved version needs to satisfy $|\sqrt{K}y|<1$, whilst the domain of the arc-cosine remains the same.\\
In the hyperbolic case we use the identity $\cosh^{-1}y = \sinh^{-1}\sqrt{y^2-1}$ to reuse the hyperbolic arc-sine series and we do not get a strict power series for the hyperbolic arc-cosine.
\begin{align}
    \sinh^{-1}y &= \sum_{k=0}^\infty \frac{(-1)^k(2k)!}{(2^kk!)^2} \frac{y^{2k+1}}{2k+1}, \quad y\in(-1,1), \\
    \cosh^{-1}y &= \sqrt{y^2-1}\sum_{k=0}^\infty \frac{(-1)^k(2k)!}{(2^kk!)^2} \frac{(y^2-1)^k}{2k+1}, \quad y\in(1,\sqrt{2})
\end{align}

The cosine law itself, written out in terms of the conventional cosine and sine functions, implies that $C$ is well defined for all possible parameter values:
\begin{align}
    &\cos\sqrt{K}c = \cos\sqrt{K}a\cos\sqrt{K}b + \sin\sqrt{K}a\sin\sqrt{K}b\,\cos\gamma\in [-1,1], \quad K > 0  \\
    &\Rightarrow \quad
    \exists\, C(K;\gamma,a,b), \quad \forall K\in\mathbb{R}_{>0}, \ \gamma\in[0,2\pi), \ a,b\in\mathbb{R}_+
\end{align}
In the same way the existence of $SC$ follows from:
\begin{align}
    \sin\alpha = \frac{\sin\sqrt{K}a}{\sin\sqrt{K}b}\sin\beta\in[-1,1] \quad \Rightarrow \quad
    \exists\,SC(K;\beta,a,c), \quad \forall K\in\mathbb{R}_{>0}, \ \beta\in[0,2\pi), \ a,c\in\mathbb{R}_+
\end{align}

\begin{align}
    \cosh\sqrt{|K|}c = \cosh\sqrt{|K|}a\cosh\sqrt{|K|}b + \sinh\sqrt{|K|}a\sinh\sqrt{|K|}b\,\cos\gamma\in\mathbb{R}_+, \quad K < 0
\end{align}

The functions $C$ and $SC$ are always well defined and the power series always converge for small enough $a,b$ and $c$, but a problem occurs, if the quantity they are supposed to describe does not lie in their image set.\\
To include angles $\alpha > \frac{\pi}{2}$ one would have to find the turning point $a_{turn}$ for which $SC$ assumes $\frac{\pi}{2}$, which corresponds to the maximum of the sine:
\begin{align}\label{eq: SC-branch}
    &SC(K;\gamma,a_{turn},b) \overset{!}{=} \frac{\pi}{2} \quad \rightarrow \quad a_{turn}(K;\gamma,b); \quad
    SC(K;\gamma,a,b) = 
    \begin{cases}
        \sin^{-1}\left( \frac{\sin_Ka}{\sin_Kc}\sin\gamma \right) & a \leqslant a_{turn} \\
        \pi - \sin^{-1}\left( \frac{\sin_Ka}{\sin_Kc}\sin\gamma \right) & a > a_{turn}
    \end{cases}.
\end{align}
When we expand to second order this equation would be solvable. We do encounter a case in Sec.~\ref{subsubsec: Problem 2, Types 2 and 3}, where we clearly have the second case and thus need to use the side-branch. If not explicitly stated otherwise, $SC$ always refers to the principle branch (first case).\\
Since distances arc-lengths of the curves are always positive the arc-cosine covers all occurring cases.
\begin{equation}
    c = C(K;\gamma,a,b)\in[0,\pi], \qquad \begin{matrix} \cos^{-1}: & [-1,1] & \to & [0,\pi] \end{matrix}
\end{equation}

\subsection{Expansion for small triangles}\label{sec: Expansion for small triangles}
When we cut a curved triangle into tiny pieces as described above then the most generic problem is evaluating the $C$ and $SC$ function on expanded distances and angles
\begin{equation}\label{eq: variable expansion}
    a = \sum_{n=1}^\infty a_n\varepsilon^n, \quad b = \sum_{n=1}^\infty b_n\varepsilon^n, \quad \gamma = \sum_{n=0}^\infty \gamma_n\varepsilon^n,
\end{equation}
where $\varepsilon=\frac{1}{N}$ is the global scale parameter of the triangle. We note, that if we make a triangle arbitrarily small the side-lengths will tend to 0, whilest the angles tend to a finite non-zero value as long as the triangle is not degenerate. Therefore the leading order term of the side-lengts $a$, $b$ and $c$ i.e. the $C$-function must be of first order and come with an $\varepsilon$. The leading order of the angles $\alpha$, $\beta$ and $\gamma$ are of zeroth order and thus their series and the one of the $SC$-function starts at $n=0$ instead of $n=1$.\\

We want to get a the full power series in powers of $\varepsilon$ of the $C$ and $SC$ functions which describe them precisely within the convergence radius, to make sure that no terms are forgotten. We calculate these, by first expanding quantity by quantity and then stitching them together via Cauchy product rule.
\begin{equation}
    \sum_{n=0}^\infty a_n \cdot \sum_{n=0}^\infty b_n = \sum_{n=0}^\infty \sum_{k=0}^n a_k b_{n-k}
\end{equation}
Our first concern is,  that power of the expansion parameter is at least the main index of the series of nested sums. This way we can cut the series off at a given $N$ and be sure, that we have all terms up to this order. It will then be easy to drop away the even higher terms as we go along.\\
Trying to follow this strategy we immediately run into a problem because we get powers of power series:
\begin{equation}
    \cos_Ka = \sum_{n=0}^\infty\frac{(-K)^n a^{2n}}{(2n)!}
    = \sum_{n=0}^\infty\frac{(-K)^n}{(2n)!}\left( \sum_{k=1}^\infty a_k\varepsilon^k \right)^{2n}
\end{equation}
We calculate the first few powers of the power series of $a$ to understand the structure:
\begin{align}\label{eq: first powers of power series}
    \left( \sum_{k=1}^\infty a_k\varepsilon^k \right)^0 &= 1, \qquad \left( \sum_{k=1}^\infty a_k\varepsilon^k \right)^1 = \sum_{k=1}^\infty a_k\varepsilon^k \\
    \left( \sum_{k=1}^\infty a_k\varepsilon^k \right)^2 &= \sum_{k=0}^\infty a_{k+1}\varepsilon^{k+1}
    \sum_{i=0}^\infty a_{i+1}\varepsilon^{i+1} = \sum_{k=0}^\infty \sum_{i=0}^k a_{i+1}a_{k-i+1} \varepsilon^{k+2} \\
    \left( \sum_{k=1}^\infty a_k\varepsilon^k \right)^3 &= \sum_{k=0}^\infty \left( \sum_{i=0}^k a_{i+1}a_{k-i+1} \varepsilon^{k+2} \right) \sum_{j=0}^\infty a_{j+1}\varepsilon^{j+1}
    = \sum_{k=0}^\infty \sum_{j=0}^n \left( \sum_{i=0}^j a_{i+1}a_{j-i+1} \varepsilon^{j+2} \right) a_{k-j+1}\varepsilon^{k-j+1} \notag \\
    &= \sum_{k=0}^\infty \sum_{j=0}^k \sum_{i=0}^j a_{i+1}a_{j-i+1}a_{k-j+1}\varepsilon^{k+3}
\end{align}
We see, that the $\varepsilon$-power is shifted by the power of the power series we can capture this structure by introducing the substitutes $A_k^n$:
\begin{equation}
    \left( \sum_{k=1}^\infty a_k\varepsilon^k \right)^n = \sum_{k=0}^\infty A_k^n \varepsilon^{k+n}
\end{equation}
Comparing with~\eqref{eq: first powers of power series} we can write the coefficients up to $n=3$ down explicitly:
\begin{align}
    A_k^0 = \delta_{0k}, \quad A_k^1 = a_{k+1}, \quad A_k^2 = \sum_{i=0}^k a_{i+1}a_{k-i+1}, \quad
    A_k^3 = \sum_{j=0}^k \sum_{i=0}^j a_{i+1}a_{j-i+1}a_{k-j+1}
\end{align}
When we insert this back into the power series for the constant curvature cosine, we see, that the power shift by $n$ accomplishes the condition we required above.
\begin{equation}
    \cos_Ka = \sum_{n=0}^\infty \frac{(-K)^n}{(2n)!} \sum_{k=0}^\infty A_k^{2n} \varepsilon^{k+2n}
    = \sum_{n=0}^\infty \sum_{k=0}^n \frac{(-K)^k}{(2k)!}A_{n-k}^{2k} \varepsilon^{n+k}
\end{equation}
Doing another Cauchy product we can calculate the term
\begin{align}
    \cos_Ka\cos_Kb = \sum_{n=0}^\infty \sum_{k=0}^n \frac{(-K)^k}{(2k)!}A_{n-k}^{2k} \varepsilon^{n+k} \cdot
    \sum_{i=0}^\infty \sum_{j=0}^i \frac{(-K)^j}{(2j)!}B_{i-j}^{2j} \varepsilon^{i+j} = \sum_{n=0}^\infty \sum_{i=0}^n \sum_{k=0}^i \sum_{j=0}^{n-i} \frac{(-K)^{k+j}}{(2k)!(2j)!} A_{i-k}^{2k} B_{n-i-j}^{2j} \varepsilon^{n+k+j}
\end{align}

The same method can be applied to the constant curvature sine terms:
\begin{align}
    \sin_Ka &= \sum_{n=0}^\infty\frac{(-K)^n a^{2n+1}}{(2n+1)!}
    = \sum_{n=0}^\infty\frac{(-K)^n}{(2n+1)!} \left( \sum_{k=1}^\infty a_k\varepsilon^k \right)^{2n+1}
    = \sum_{n=0}^\infty\frac{(-K)^n}{(2n+1)!} \sum_{k=0}^\infty A_k^{2n+1}\varepsilon^{k+2n+1} \notag \\
    &= \sum_{n=0}^\infty \sum_{k=0}^n \frac{(-K)^{k}}{(2n+1)!}  A_{n-k}^{2n+1}\varepsilon^{n+k+1} \\
    \sin_Ka\sin_Kb &= \sum_{n=0}^\infty\sum_{i=0}^n\sum_{k=0}^i\sum_{j=0}^{n-i} \frac{(-K)^{k+j}}{(2k+1)!(2j+1)!}
    A_{i-k}^{2k+1} B_{n-i-j}^{2j+1} \varepsilon^{n+k+j+2}
\end{align}

The $\cos\gamma$ is going to be a bit more complicated since $\gamma$ starts at $0$-th order.
\begin{equation}
    \left( \sum_{k=0}^\infty \gamma_k\varepsilon^k \right)^n = \sum_{k=0}^\infty \Gamma_k^n \varepsilon^k, 
    \qquad \cos\gamma = \sum_{n=0}^\infty \sum_{k=0}^n \frac{(-1)^k}{(2k)!} \Gamma_{n-k}^{2k} \varepsilon^{n-k}
\end{equation}
If we now cut the series off at fore example $n = 3$ we miss several $0$-th order terms, since for each $n$ there is a $k=n$, similar for $1$-st order and so on. Thus, we need to split off the $0$-th order term $\gamma_0$ and define the coefficients $\Gamma_k^n$ in the same way as the $A_k^n$ and $B_k^n$:
\begin{equation}
    \gamma = \gamma_0 + \delta\gamma = \gamma_0 + \sum_{n=1}^\infty\gamma_n\varepsilon^n
\end{equation}
\begin{align}
    \cos\gamma &= \cos(\gamma_0 + \delta\gamma) = \cos\gamma_0\cos\delta\gamma - \sin\gamma_0\sin\delta\gamma \notag \\
    &= \cos\gamma_0 \sum_{n=0}^\infty \frac{(-1)^n}{(2n)!} \left( \sum_{k=1}^\infty \gamma_k\varepsilon^k \right)^{2n} - \sin\gamma_0 \sum_{n=0}^\infty \frac{(-1)^n}{(2n+1)!} \left( \sum_{k=1}^\infty \gamma_k\varepsilon^k \right)^{2n+1} \notag \\
    &= \cos\gamma_0 \sum_{n=0}^\infty \sum_{k=0}^n \frac{(-1)^k}{(2k)!} \Gamma_{n-k}^{2k} \varepsilon^{n+k}
    - \sin\gamma_0 \sum_{n=0}^\infty \sum_{k=0}^n \frac{(-1)^k}{(2k+1)!} \Gamma_{n-k}^{2k+1} \varepsilon^{n+k+1} \notag \\
    &= \sum_{n=0}^\infty \sum_{k=0}^n (-1)^k\left( \frac{\cos\gamma_0}{(2k)!} \Gamma_{n-k}^{2k}
    - \frac{\sin\gamma_0}{(2k+1)!} \Gamma_{n-k}^{2k+1}\varepsilon \right) \varepsilon^{n+k}
\end{align}

Now we are ready to multiply the second term together
\begin{align}
    \sin_Ka&\sin_Kb\cos\gamma \notag \\
    =& \sum_{n=0}^\infty\sum_{i=0}^n\sum_{k=0}^i\sum_{j=0}^{n-i} \frac{(-K)^{k+j}}{(2k+1)!(2j+1)!} A_{i-k}^{2k+1} B_{n-i-j}^{2j+1} \varepsilon^{n+k+j+2} 
    \cdot \sum_{m=0}^\infty \sum_{l=0}^m (-1)^l\left( \frac{\cos\gamma_0}{(2l)!} \Gamma_{m-l}^{2l}
    - \frac{\sin\gamma_0}{(2l+1)!} \Gamma_{m-l}^{2l+1}\varepsilon \right) \varepsilon^{m+l} \notag \\
    =& \sum_{n=0}^\infty \sum_{m=0}^n \sum_{i=0}^m\sum_{k=0}^i\sum_{j=0}^{n-i}
    \frac{(-K)^{k+j}}{(2k+1)!(2j+1)!} A_{i-k}^{2k+1} B_{m-i-j}^{2j+1}
    \sum_{l=0}^{n-m} (-1)^l\left( \frac{\cos\gamma_0}{(2l)!} \Gamma_{n-m-l}^{2l} - \frac{\sin\gamma_0}{(2l+1)!} \Gamma_{n-m-l}^{2l+1}\varepsilon \right) \varepsilon^{n+k+j+l+2}
\end{align}
and merge the two into the series of the constant curvature cosine of $c$:
\begin{align}
    &\cos_Kc = \cos_Ka\cos_Kb + K\sin_Ka\sin_Kb\cos\gamma 
    = \sum_{n=0}^\infty\left\{ \sum_{i=0}^n \sum_{k=0}^i \sum_{j=0}^{n-i} \frac{(-K)^{k+j}}{(2k)!(2j)!} A_{i-k}^{2k} B_{n-i-j}^{2j} \varepsilon^{k+j} \right. \\
    &\left.+
    K\sum_{m=0}^n \sum_{i=0}^m\sum_{k=0}^i\sum_{j=0}^{n-i} \sum_{l=0}^{n-m}
    \frac{(-K)^{k+j}}{(2k+1)!(2j+1)!} A_{i-k}^{2k+1} B_{m-i-j}^{2j+1}
    (-1)^l\left( \frac{\cos\gamma_0}{(2l)!} \Gamma_{n-m-l}^{2l} - \frac{\sin\gamma_0}{(2l+1)!} \Gamma_{n-m-l}^{2l+1}\varepsilon \right) \varepsilon^{k+j+l+2} \right\}\varepsilon^n \notag \\
    &\eqcolon \sum_{n=0}^\infty \tilde{C}_n(\varepsilon)\varepsilon^n
\end{align}
We note, that the term $\tilde{C}_n(\varepsilon)$ in the curly brackets is not the $n$-th order coefficient since it contains terms of higher but not lower order. We denote the proper $n$-th order coefficient of $\cos_Kc$ with $C_n$. The relation between the two substitutes is given by:
\begin{equation}
    \cos_Kc \eqcolon \sum_{n=0}^\infty C_n\varepsilon^n, \qquad C_n = \sum_{k=0}^n\tilde{C}^{(n-k)}_k \label{eq: C_n(tildeC_n)}
\end{equation}

When we insert the arguments in our substitutes we will see, that $C_0 = 1$ and $C_1 = 0$ and we again face the problem that our series starts at $0$. This time it will require some more trickery to write the expression in such a way, that we can apply the addition theorem for the arcus cosine for the case of positive curvature $K>0$:\\
\begin{equation}
    \cos^{-1}x + \cos^{-1}y = \begin{cases} \cos^{-1}(xy-\sqrt{1-x^2}\sqrt{1-y^2}), & x + y \geqslant 0 \\
    2\pi - \cos^{-1}(xy-\sqrt{1-x^2}\sqrt{1-y^2}), & x + y < 0 \end{cases} \quad
    \overset{x=y}{\Rightarrow} \quad 2\cos^{-1}x = \begin{cases} \cos^{-1}(2x^2-1), & x \geqslant 0 \\
    2\pi - \cos^{-1}(2x^2-1), & x < 0 \end{cases} \notag
\end{equation}
\begin{align}
    c &= C(K;\gamma,a,b) = \cos_K^{-1}(\cos_Ka\cos_Kb + K\sin_Ka\sin_Kb\cos\gamma)
    = \cos_K^{-1}\left( \sum_{k=0}^\infty C_k\varepsilon^k \right) 
    \overset{C_0 = 1}{=} \frac{1}{\sqrt{K}}\cos^{-1}\left( 1 + \sum_{n=0}^\infty C_n\varepsilon^n \right),
\end{align}
using the identity
\begin{equation}
    \cos^{-1}(-x) = \pi - \cos^{-1}x, \qquad \text{we get:} \qquad c = \frac{1}{\sqrt{K}}\left[ \pi - \cos^{-1}
    \left(\vphantom{\sqrt{\sum_{n=1}^\infty}}\right.\right.
    2{\underbrace{\sqrt{-\frac{1}{2}\sum_{n=1}^\infty C_n\varepsilon^n}}_{x}}^2 -1
    \left.\left.\vphantom{\sqrt{\sum_{n=1}^\infty}}\right) \right].
\end{equation}
We check, that $x$ is in fact greater then zero:
\begin{equation}
    \forall c > 0: \quad \cos_Kc = \cos\sqrt{K}c = 1 + \sum_{n=1}^\infty C_n\varepsilon^n < 1 \quad \Rightarrow
    \quad \sum_{n=1}^\infty C_n\varepsilon^n < 0 \quad \Rightarrow \quad x > 0
\end{equation}
Now we use the appropriate addition theorem and recall that $\cos^{-1}y = \frac{\pi}{2} - \sin^{-1}y$:
\begin{align}
    c =& \frac{1}{\sqrt{K}}\left[ \pi - 2\cos^{-1}\sqrt{-\frac{1}{2}\sum_{n=1}^\infty C_n\varepsilon^n} \right]
    = \frac{2}{\sqrt{K}}\sin^{-1}\sqrt{-\frac{1}{2}\sum_{n=1}^\infty C_n\varepsilon^n}
    = \frac{2}{\sqrt{K}}\sum_{k=0}^\infty \frac{(2k)!}{(2^kk!)^2}\frac{1}{2k+1}\sqrt{-\frac{1}{2}\sum_{n=1}^\infty C_n\varepsilon^n}^{2k+1} \notag \\
    =& \frac{\sqrt{2}}{\sqrt{K}}\sqrt{-\sum_{n=1}^\infty C_n\varepsilon^n}\sum_{k=0}^\infty \frac{(2k)!}{2^{3k}(k!)^2}\frac{(-1)^k}{2k+1} \left(\sum_{n=1}^\infty C_n\varepsilon^n\right)^k
    = \frac{\sqrt{2}}{\sqrt{K}}\sqrt{-\sum_{n=1}^\infty C_n\varepsilon^n}\sum_{k=0}^\infty \frac{(2k)!}{2^{3k}(k!)^2}\frac{(-1)^k}{2k+1} \sum_{n=0}^\infty \overline{C}_n^k\varepsilon^{n+k} \notag \\
    =& \frac{\sqrt{2}}{\sqrt{K}}\sqrt{-\sum_{n=1}^\infty C_n\varepsilon^n} \sum_{n=0}^\infty \sum_{k=0}^n \frac{(2k)!}{2^{3k}(k!)^2}\frac{(-1)^k}{2k+1} \overline{C}_{n-k}^k\varepsilon^n
\end{align}
To expand the square root, we use the binomial series:
\begin{equation}
    (1 + z)^\alpha = \sum_{k=0}^\infty \binom{\alpha}{k} z^k, \quad \binom{\alpha}{k} \coloneq \frac{(\alpha)_k}{k!}, \quad (\alpha)_k \coloneq \prod_{j=0}^{k-1}(\alpha-j), \quad \forall\alpha\in\mathbb{C}
    \quad \forall z\in\mathring{B}_1(0)\subset\mathbb{C}
\end{equation}
\begin{align}
    \sqrt{-\sum_{n=1}^\infty C_n\varepsilon^n} \overset{C_1=0}{=}& \sqrt{-\sum_{n=2}^\infty C_n\varepsilon^n}
    = \sqrt{-C_2\varepsilon^2}\left( 1 + \frac{1}{C_2}\sum_{n=1}^\infty C_{n+2}\varepsilon^n \right)^\frac{1}{2}
    = \sqrt{-C_2}\varepsilon \sum_{k=0}^\infty \binom{\frac{1}{2}}{k} \left( \frac{1}{C_2}\sum_{n=1}^\infty C_{n+2}\varepsilon^n \right)^k \notag \\
    =& \sqrt{-C_2}\varepsilon \sum_{k=0}^\infty \binom{\frac{1}{2}}{k} \frac{1}{C_2^k} \sum_{n=1}^\infty \widehat{C}_n^k\varepsilon^{n+k}
    = \sqrt{-C_2} \sum_{n=1}^\infty \sum_{k=0}^n \binom{\frac{1}{2}}{k} \frac{1}{C_2^k} \widehat{C}_{n-k}^k\varepsilon^{n+1}
\end{align}
One additional Cauchy product will produce us the series of first function $C$ we are looking for:
\begin{align}
    C(K;\gamma,a,b) =& \frac{\sqrt{2}}{\sqrt{K}} \sqrt{-C_2} \sum_{n=1}^\infty \sum_{k=0}^n \binom{\frac{1}{2}}{k} \frac{1}{C_2^k} \widehat{C}_{n-k}^k\varepsilon^{n+1} \sum_{n=0}^\infty \sum_{k=0}^n \frac{(2k)!}{2^{3k}(k!)^2}\frac{(-1)^k}{2k+1} \overline{C}_{n-k}^k\varepsilon^n \notag \\
    =& \frac{\sqrt{-2C_2}}{\sqrt{K}} \sum_{n=0}^\infty\sum_{k=0}^n\sum_{i=0}^k\sum_{j=0}^{n-k} \binom{\frac{1}{2}}{i}
    \frac{(2j)!}{2^{3j}(j!)^2} \frac{(-1)^j}{2j+1} \frac{\widehat{C}_{k-i}^i\overline{C}_{n-k-j}^j}{C_2^i} \varepsilon^{n+1} = \sum_{n=1}^\infty c_n\varepsilon^n = c
\end{align}
And indeed the expansion for $c$ starts at first order as expected for a side-length.\\

We prepare ourselves to expand the argument of the $SC$ functions arcus sine by introducing jet more substitutes:
\begin{align}
    \sin_Ka =& \sum_{n=0}^\infty \sum_{k=0}^n \frac{(-K)^k}{(2k+1)!} A_{n-k}^{2k+1} \varepsilon^{n+k+1}
    \eqcolon \sum_{n=1}^\infty \mathcal{A}_n(\varepsilon)\varepsilon^n \notag \\
    \sin_Kc =& \sum_{n=0}^\infty \frac{(-K)^n}{(2n+1)!} \left( \sum_{k=1}^\infty c_k \varepsilon^k \right)^{2n+1}
    = \sum_{n=0}^\infty \sum_{k=0}^n \frac{(-K)^k}{(2k+1)!} C_{n-k}^{2k+1} \varepsilon^{k+n+1}
    \eqcolon \sum_{n=1}^\infty \mathcal{C}_n(\varepsilon) \varepsilon^n
\end{align}
We suppress the epsilon dependence, when we expand the fraction of constant curvature sines, using the binomial series again:
\begin{align}
    \frac{\sin_Ka}{\sin_Kc} \overset{\frac{:\varepsilon}{:\varepsilon}}{=}& \frac{\mathcal{A}_1 + \sum_{n=1}^\infty\mathcal{A}_{n+1}\varepsilon^n}
    {\mathcal{C}_1 + \sum_{n=1}^\infty\mathcal{C}_{n+1}\varepsilon^n}
    = \sum_{n=0}^\infty \mathcal{A}_{n+1} \varepsilon^n \frac{1}{\mathcal{C}_1}\left( 1 + \frac{1}{\mathcal{C}_1}\sum_{n=1}^\infty\mathcal{C}_{n+1}\varepsilon^n \right)^{-1}
    = \sum_{n=0}^\infty \mathcal{A}_{n+1} \varepsilon^n \frac{1}{\mathcal{C}_1} \sum_{k=0}^\infty \binom{-1}{k}
    \left( \frac{1}{\mathcal{C}_1} \sum_{l=1}^\infty \mathcal{C}_{l+1}\varepsilon^l \right)^k \notag \\
    =& \sum_{n=0}^\infty \mathcal{A}_{n+1} \varepsilon^n \frac{1}{\mathcal{C}_1} \sum_{k=0}^\infty \binom{-1}{k} \frac{1}{\mathcal{C}_1^k} \sum_{l=1}^\infty D_l^k\varepsilon^{l+k}
    = \sum_{n=0}^\infty \mathcal{A}_{n+1} \varepsilon^n \frac{1}{\mathcal{C}_1} \sum_{k=0}^\infty\sum_{l=0}^k
    \binom{-1}{k} \frac{D_{k-l}^l}{\mathcal{C}_1^l} \varepsilon^k \notag \\
    =& \sum_{n=0}^\infty \sum_{k=0}^n \sum_{l=0}^k \binom{-1}{k} \frac{\mathcal{A}_{n-k+1}D_{k-l}^l}{\mathcal{C}_1^{l+1}} \varepsilon^n
\end{align}

The sine of $\gamma$ works the same as it's cosine:
\begin{align}
    \sin\gamma =& \sin(\gamma_0 + \delta\gamma) = \sin\gamma_0\cos\delta\gamma + \cos\gamma_0\sin\delta\gamma 
    =& \sum_{n=0}^\infty \sum_{k=0}^n (-1)^k\left( \frac{\sin\gamma_0}{(2k)!}\Gamma_{n-k}^{2k} + \frac{\cos\gamma_0}{(2k+1)!}\Gamma_{n-k}^{2k+1}\varepsilon \right)\varepsilon^{n+k}
\end{align}

Then the argument of the arc sine can be multiplied together with the Cauchy product as usual:
\begin{align}
    &\frac{\sin_Ka}{\sin_Kc}\sin\gamma = \sum_{n=0}^\infty \sum_{k=0}^n \sum_{l=0}^k \binom{-1}{k} \frac{\mathcal{A}_{n-k+1}D_{k-l}^l}{\mathcal{C}_1^{l+1}} \varepsilon^n
    \sum_{i=0}^\infty \sum_{j=0}^i (-1)^j\left( \frac{\sin\gamma_0}{(2j)!}\Gamma_{i-j}^{2j} + \frac{\cos\gamma_0}{(2j+1)!}\Gamma_{i-j}^{2j+1}\varepsilon \right)\varepsilon^{i+j} \\
    &= \sum_{n=0}^\infty \sum_{i=0}^n \sum_{k=0}^i \sum_{l=0}^k \binom{-1}{l} \frac{\mathcal{A}_{i-k+1}D_{k-l}^l}{\mathcal{C}_1^{l+1}} \sum_{j=0}^{n-i} (-1)^j\left( \frac{\sin\gamma_0}{(2j)!}\Gamma_{n-i-j}^{2j} + \frac{\cos\gamma_0}{(2j+1)!}\Gamma_{n-i-j}^{2j+1}\varepsilon \right) \varepsilon^{n+j} 
    \eqcolon \sum_{n=0}^\infty \tilde{S}_n(\varepsilon)\varepsilon^n \eqcolon \sum_{n=0}^\infty S_n \varepsilon^n \notag
\end{align}
As with the $\tilde{C}_n(\varepsilon)$ and $C_n$ the $\tilde{S}_n(\varepsilon)$ is the substitute we can extract from the series directly, while the $S_n$ denotes the true $n$-th order coefficient.\\

We could not find a way to apply the addition theorem for the arc-sine or something analogue to it. So, we resorted to the semi explicit Taylor series. The disadvantage being, that we cannot write down the $n$-th derivative of arc-sine.
\begin{align}
    SC(K;\gamma,a,b) =& \sin^{-1}\left( \frac{\sin_Ka}{\sin_KC(K;\gamma,a,b)}\sin\gamma \right)
    = \sin^{-1}\left( \frac{\sin_Ka}{\sin_Kc}\sin\gamma \right)
    = \sin^{-1}\left( S_0 + \sum_{k=1}^\infty S_k\varepsilon^k \right) \\
    =& \sum_{n=0}^\infty (\sin^{-1})^{(n)}(S_0)\left( \sum_{k=1}^\infty S_k\varepsilon^k \right)^n
    = \sum_{n=0}^\infty (\sin^{-1})^{(n)}(S_0) \sum_{k=0}^\infty S_k^n \varepsilon^{n+k}
    = \sum_{n=0}^\infty \sum_{k=0}^n (\sin^{-1})^{(k)}(S_0) S_{n-k}^k \varepsilon^n = \alpha \notag
\end{align}

Now, that we have the full series expansion of both functions we are looking for we can make ourselves a list of terms which we need to expand $SC$ up to second order and $C$ up to third order in $\varepsilon$, by going backwards through the substitutes until we hit the input variables. This way we can avoid forgetting terms, when guessing up to which order the input variables have to be expanded.
\begin{align}
    \alpha \approx& \sum_{n=0}^2 \alpha_n\varepsilon^n + \mathcal{O}(\varepsilon^n) \approx \sum_{n=0}^2 \sum_{k=0}^n (\sin^{-1})^{(k)}(S_0) S_{n-k}^k \varepsilon^n + \mathcal{O}(\varepsilon^3) \notag \\
    \approx& \sin^{-1}(S_0) + (\sin^{-1})'(S_0)S_1\varepsilon
    + \{ (\sin^{-1})'(S_0)S_2 + (\sin^{-1})''(S_0)S_1^2 \}\varepsilon^2 + \mathcal{O}(\varepsilon^3)
\end{align}
So, we need $S_0$, $S_1$ and $S_2$. We can calculate them by picking the terms with appropriate order from the $\tilde{S}_n$ substitutes:
\begin{equation}
    S_n = \sum_{k=0}^n \tilde{S}_k^{(n-k)} \quad \Rightarrow \quad S_0 = \tilde{S}_0^{(0)}, \quad
    S_1 = \tilde{S}_0^{(1)} + \tilde{S}_1^{(0)}, \quad
    S_2 = \tilde{S}_0^{(2)} + \tilde{S}_1^{(1)} + \tilde{S}_2^{(0)}
\end{equation}
Thus, we need $\tilde{S}_0$ up to second order, $\tilde{S}_1$ up to first and only the zeroth order term of $\tilde{S}_2$:
\begin{align}
    \tilde{S}_n(\varepsilon) =& \sum_{i=0}^n \sum_{k=0}^i \sum_{l=0}^k \binom{-1}{l} \frac{\mathcal{A}_{i-k+1}D_{k-l}^l}{\mathcal{C}_1^{l+1}} \sum_{j=0}^{n-i} (-1)^j\left( \frac{\sin\gamma_0}{(2j)!}\Gamma_{n-i-j}^{2j} + \frac{\cos\gamma_0}{(2j+1)!}\Gamma_{n-i-j}^{2j+1}\varepsilon \right) \varepsilon^j; \\
    \tilde{S}_0 =& \frac{\mathcal{A}_1}{\mathcal{C}_1}D_0^0\left( \sin\gamma_0\Gamma_0^0
    + \cos\gamma_0\Gamma_0^1\varepsilon \right) \\
    \tilde{S}_1 \approx& \left\{ \frac{\mathcal{A}_1}{\mathcal{C}_1}D_0^0\Gamma_1^0 + \left[ \frac{\mathcal{A}_2}{\mathcal{C}_1}D_0^0 + \frac{\mathcal{A}_1}{\mathcal{C}_1}D_1^0 - \frac{\mathcal{A}_1}{\mathcal{C}_1^2}D_0^0 \right]\Gamma_0^0 \right\}\sin\gamma_0 \notag \\
    &+ \left\{ \frac{\mathcal{A}_1}{\mathcal{C}_1}D_0^0\left[ \cos\gamma_0\Gamma_1^1 - \frac{1}{2}\sin\gamma_0\Gamma_0^2 \right] + \left[ \frac{\mathcal{A}_2}{\mathcal{C}_1}D_0^0 + \frac{\mathcal{A}_1}{\mathcal{C}_1}D_1^0 - \frac{\mathcal{A}_1}{\mathcal{C}_1^2}D_0^1 \right]\cos\gamma_0\Gamma_0^1 \right\}\varepsilon + \mathcal{O}(\varepsilon^2) \\
    \tilde{S}_2 \approx& \left\{ \frac{\mathcal{A}_1}{\mathcal{C}_1}D_0^0\Gamma_2^0 + \left[ \frac{\mathcal{A}_2}{\mathcal{C}_1}D_0^0 + \mathcal{A}_1\left( \frac{D_1^0}{\mathcal{C}_1} - \frac{D_0^1}{\mathcal{C}_1^2} \right) \right]\Gamma_1^0 \right.\notag\\
    &\left.+ \left[ \frac{\mathcal{A}_3}{\mathcal{C}_1}D_0^0
    + \mathcal{A}_2\left( \frac{D_1^0}{\mathcal{C}_1} - \frac{D_0^1}{\mathcal{C}_1^2} \right)
    + \mathcal{A}_1\left( \frac{D_2^0}{\mathcal{C}_1} - \frac{D_1^1}{\mathcal{C}_1^2}
    + \frac{D_0^2}{\mathcal{C}_1^3} \right) \right]\Gamma_0^0 \right\}\sin\gamma_0 + \mathcal{O}(\varepsilon)
\end{align}

Next we resubstitute the $D_l^k$ with the $\mathcal{C}_n$:
\begin{equation}
    \left( \sum_{n=1}^\infty \mathcal{C}_{n+1}\varepsilon^n \right)^k = \sum_{l=0}^\infty D_l^k \varepsilon^{l+k} \quad \Rightarrow \quad D_l^0 = \delta_{l0}, \quad D_l^1 =  \mathcal{C}_{l+2}, \quad
    D_l^2 = \sum_{k=0}^l \mathcal{C}_{k+2} \mathcal{C}_{l-k+2}
\end{equation}
Inserting this back into the $\tilde{S}$ expansions we get:
\begin{align}\label{eq: tS_n}
    \tilde{S}_0 =& \frac{\mathcal{A}_1}{\mathcal{C}_1}\left( \sin\gamma_0\Gamma_0^0
    + \cos\gamma_0\Gamma_0^1\varepsilon \right) \\
    \tilde{S}_1 \approx& \left\{ \frac{\mathcal{A}_1}{\mathcal{C}_1}\Gamma_1^0 + \left[ \frac{\mathcal{A}_2}{\mathcal{C}_1} - \frac{\mathcal{A}_1\mathcal{C}_2}{\mathcal{C}_1^2} \right]\Gamma_0^0 \right\}\sin\gamma_0 + \left\{ \frac{\mathcal{A}_1}{\mathcal{C}_1}\left[ \cos\gamma_0\Gamma_1^1 - \frac{1}{2}\sin\gamma_0\Gamma_0^2 \right] + \left[ \frac{\mathcal{A}_2}{\mathcal{C}_1}
    - \frac{\mathcal{A}_1\mathcal{C}_2}{\mathcal{C}_1^2} \right]\cos\gamma_0\Gamma_0^1 \right\}\varepsilon + \mathcal{O}(\varepsilon^2) \\
    \tilde{S}_2 \approx& \left\{ \frac{\mathcal{A}_1}{\mathcal{C}_1}\Gamma_2^0 + \left[ \frac{\mathcal{A}_2}{\mathcal{C}_1} - \frac{\mathcal{A}_1\mathcal{C}_2}{\mathcal{C}_1^2} \right]\Gamma_1^0
    + \left[ \frac{\mathcal{A}_3}{\mathcal{C}_1} - \frac{\mathcal{A}_2\mathcal{C}_2}{\mathcal{C}_1^2}
    + \mathcal{A}_1\left( \frac{\mathcal{C}_2^2}{\mathcal{C}_1^3} - \frac{\mathcal{C}_3}{\mathcal{C}_1^2} \right) \right]\Gamma_0^0 \right\}\sin\gamma_0 + \mathcal{O}(\varepsilon)
\end{align}

We hit the input variables of $a$, when we resubstitute $\mathcal{A}_n$ with $A_k^n$ and then that with $a_n$. The $\mathcal{A}$ and $\mathcal{C}$ are also $\varepsilon$-dependent and no proper order coefficients. The $\mathcal{A}_1$ appears in all $\tilde{S}_n$, $\mathcal{A}_2$ only from $\tilde{S}_1$ on and $\mathcal{A}_2$ in $\tilde{S}_2$. So, we need $\mathcal{A}_1$ up to second, $\mathcal{A}_2$ up to first order and only the zeroth order term of $\mathcal{A}_3$.
\begin{align}
    \mathcal{A}_n(\varepsilon) =& \sum_{k=0}^{n-1} \frac{(-K)^k}{(2k+1)!} A_{n-k-1}^{2k+1} \varepsilon^k; \quad
    \mathcal{A}_1 = A_0^1, \quad \mathcal{A}_2 = A_1^1 - \frac{K}{6}A_0^3\varepsilon, \quad \mathcal{A}_3 \approx A_2^1 + \mathcal{O}(\varepsilon) \\
    a^n =& \left( \sum_{k=1}^\infty a_k\varepsilon^k \right)^n = \sum_{k=0}^\infty A_k^n \varepsilon^{k+n}; \\
    A_k^0 =& \delta_{k0}, \quad A_k^1 = a_{k+1}, \quad A_k^2 = \sum_{l=0}^k a_{l+1} a_{k-l+1}, \quad
    A_k^3 = \sum_{l=0}^k\sum_{j=0}^i a_{j+1} a_{i-j+1} a_{k-i+1} \\
    \Rightarrow \quad \mathcal{A}_1 =& a_1, \quad \mathcal{A}_2 = a_2 - \frac{K}{6}a_1^3\varepsilon \quad
    \mathcal{A}_3 \approx a_3 + \mathcal{O}(\varepsilon)
\end{align}
Doing the same for the $\mathcal{C}$'s we get:
\begin{align}
    \mathcal{C}_n(\varepsilon) =& \sum_{k=0}^{n-1} \frac{(-K)^k}{(2k+1)!} C_{n-k-1}^{2k+1} \varepsilon^k, \quad
    c^n = \left( \sum_{k=1}^\infty c_k\varepsilon^k \right)^n = \sum_{k=0}^\infty C_k^n \varepsilon^{k+n}; \\
    \Rightarrow \quad \mathcal{C}_1 =& c_1 \quad \mathcal{C}_2 = c_2 - \frac{K}{6}c_1^3\varepsilon \quad
    \mathcal{C}_3 \approx c_3 + \mathcal{O}(\varepsilon)
\end{align}

With the only difference, that the $c_n$ are not input variables but $n$-th order coefficients of the $C$-function. And thus we continue by expanding it up to third order, since we saw, that we need $c_3$:
\begin{equation}
    c \approx \sum_{n=1}^3 c_n \varepsilon^n + \mathcal{O}(\varepsilon^4)
    \approx \frac{\sqrt{-2C_2}}{\sqrt{K}} \sum_{n=0}^2\sum_{k=0}^n\sum_{i=0}^k\sum_{j=0}^{n-k} \binom{\frac{1}{2}}{i}
    \frac{(2j)!}{2^{3j}(j!)^2} \frac{(-1)^j}{2j+1} \frac{\widehat{C}_{k-i}^i\overline{C}_{n-k-j}^j}{C_2^i} \varepsilon^{n+1} + \mathcal{O}(\varepsilon^4)
\end{equation}

We observe, that the largest value $i$ and $j$ can assume is 2. So, we need the following substitutes:
\begin{align}
    \left( \sum_{k=1}^\infty C_k \varepsilon^k \right)^n = \sum_{k=0}^\infty \overline{C}_k^0 \varepsilon^{k+n}; \qquad \overline{C}_k^0 = \delta_{k0}, \quad \overline{C}_k^1 = C_{k+1}, \quad
    \overline{C}_k^2 = \sum_{l=0}^k C_{l+1} C_{k-l+1} \\
    \left( \sum_{k=1}^\infty C_{k+2} \varepsilon^k \right)^n = \sum_{k=0}^\infty \widehat{C}_k^n \varepsilon^{k+n}; \qquad \widehat{C}_k^0 = \delta_{k0}, \quad \widehat{C}_k^1 = C_{k+3}, \quad
    \widehat{C}_k^3 = \sum_{l=0}^k C_{l+3} C_{k-l+3}
\end{align}

After summing to $2$ and using the substitutions above we get:
\begin{equation}
    c \approx \frac{\sqrt{-2C_2}}{\sqrt{K}}\left( \varepsilon
    - \left[ \frac{C_1}{12} - \frac{C_3}{2C_2} \right]\varepsilon^2
    - \left[ \frac{C_2}{12} - \frac{2C_1^2}{160} + \frac{C_1C_3}{24C_2} - \frac{C_4}{2C_2}
    + \frac{C_3^2}{8C_2^2} \right]\varepsilon^3 + \mathcal{O}(\varepsilon^4) \right)
\end{equation}

As with the $S_n$ we calculate the $C_n$ from the $\tilde{C}_n(\varepsilon)$:
\begin{align}\label{eq: C_n(tC_n)}
    C_n = \sum_{k=0}^n \tilde{C}_k^{(n-k)}; \qquad C_0 =& \tilde{C}_0^{(0)}, \quad
    C_1 = \tilde{C}_0^{(1)} + \tilde{C}_1^{(0)}, \quad
    C_2 = \tilde{C}_0^{(2)} + \tilde{C}_1^{(1)} + \tilde{C}_2^{(0)}, \\
    C_3 =& \tilde{C}_0^{(3)} + \tilde{C}_1^{(2)} + \tilde{C}_2^{(1)} + \tilde{C}_3^{(0)}, \quad
    C_4 = \tilde{C}_0^{(4)} + \tilde{C}_1^{(3)} + \tilde{C}_2^{(2)} + C_3^{(1)} + C_4^{(0)}
\end{align}
\begin{align}\label{eq: tC_n}
    \tilde{C}&_n = \sum_{i=0}^n \sum_{k=0}^i \sum_{j=0}^{n-i} \frac{(-K)^{k+j}}{(2k)!(2j)!} A_{i-k}^{2k} B_{n-i-j}^{2j} \varepsilon^{k+j} \\
    &+ K\sum_{m=0}^n \sum_{i=0}^m\sum_{k=0}^i\sum_{j=0}^{n-i} \sum_{l=0}^{n-m}
    \frac{(-K)^{k+j}}{(2k+1)!(2j+1)!} A_{i-k}^{2k+1} B_{m-i-j}^{2j+1}
    (-1)^l\left( \frac{\cos\gamma_0}{(2l)!} \Gamma_{n-m-l}^{2l} - \frac{\sin\gamma_0}{(2l+1)!} \Gamma_{n-m-l}^{2l+1}\varepsilon \right) \varepsilon^{k+j+l+2} \notag
\end{align}
Thus, the relevant $\tilde{C}$ up to required order are:
\begin{align}
    \tilde{C}_0 =& A_0^0B_0^0 + KA_0^1B_0^1\cos\gamma_0\Gamma_0^0\varepsilon^2
    - KA_0^1B_0^1\sin\gamma_0\Gamma_0^1\varepsilon^3 \\
    \tilde{C}_1 \approx& A_0^0B_1^0 + A_1^0B_0^0 - \frac{K}{2}(A_0^0B_0^2 + A_0^2B_0^0)\varepsilon
    + K(A_0^1B_0^1\Gamma_1^0 + [A_0^1B_1^1 + A_1^1B_0^1]\Gamma_0^1)\cos\gamma_0\varepsilon^2 \notag \\
    &- K\left\{ \left( \frac{1}{2}A_0^1B_0^1\Gamma_0^2 + \frac{K}{6}[A_0^3B_0^1 + A_0^1B_0^3]\Gamma_0^0 \right)\cos\gamma_0 + (A_0^1B_0^1\Gamma_1^1 + [A_0^1B_1^1 + A_1^1B_0^1]\Gamma_0^1)\sin\gamma_0 \right\}\varepsilon^3 + \mathcal{O}(\varepsilon^4) \\
    \tilde{C}_2 \approx& A_0^0B_2^0 + A_1^0B_1^0 + A_2^1B_0^0
    - \frac{K}{2}\{ A_0^0B_1^2 + A_0^2B_1^0 + A_1^0B_0^2 + A_1^2B_0^0 \}\varepsilon \notag \\
    &+ K\left\{ \frac{K}{24}[A_0^0B_0^4 + 6A_0^2B_0^2 + A_0^4B_0^0] + (A_0^1B_0^1\Gamma_2^0 + [A_0^1B_1^1 + A_1^1B_0^1]\Gamma_1^0 + [A_0^1B_2^1 + A_1^1B_1^1 + A_2^1B_0^1]\Gamma_0^0)\cos\gamma_0 \right\}\varepsilon^2 \notag \\
    &+ \mathcal{O}(\varepsilon^3) \\
    \tilde{C}_3 \approx& A_0^0B_3^0 + A_1^0B_2^0 + A_2^0B_1^0 + A_3^0B_0^0
    - \frac{K}{2}\{ A_0^0B_2^2 + A_0^2B_2^0 + A_1^0B_1^2 + A_1^2B_1^0 + A_2^0B_0^2 + A_2^2B_0^0 \}\varepsilon
    + \mathcal{O}(\varepsilon^2) \\
    \tilde{C}_4 \approx& A_0^0B_4^0 + A_1^0B_3^0 + A_2^0B_2 + A_3^0B_1^0 + A_4^0B_0^0 + \mathcal{O}(\varepsilon)
\end{align}

The $B_k^n$ and $\Gamma_k^n$ behave the same way as the $A_k^n$. We need the $A$ and $B$'s up to $n=4$ and the $\Gamma$'s up to $n=2$.
\begin{align}
    b^n = \left( \sum_{k=1}^\infty b_k\varepsilon^k \right)^n \eqcolon& \sum_{k=0}^\infty B_k^n\varepsilon^{k+n}; \qquad B_k^0 = \delta_{k0}, \quad B_k^1 = b_{k+1}, \quad B_k^2 = \sum_{l=0}^k b_{l+1} b_{k-l+1}, \quad
    B_k^3 = \sum_{i=0}^k \sum_{j=0}^i b_{j+1} b_{i-j+1} b_{k-i+1}, \notag \\
    \left( \sum_{k=1}^\infty b_k\varepsilon^k \right)^4
    =& \sum_{k=1}^\infty b_k\varepsilon^k \left( \sum_{k=1}^\infty b_k\varepsilon^k \right)^3
    = \sum_{n=1}^\infty b_n\varepsilon^n \sum_{k=0}^\infty \underbrace{\sum_{i=0}^k \sum_{j=0}^i b_{j+1} b_{i-j+1} b_{k-i+1}}_{B_k^3} \varepsilon^{n+4} \\
    =& \sum_{n=1}^\infty \sum_{k=0}^n \sum_{i=0}^k \sum_{j=0}^i b_{j+1} b_{i-j+1} b_{k-i+1} b_{n-k+1} \varepsilon^{n+4} \quad \Rightarrow \quad B_n^4 = \sum_{k=0}^n \sum_{i=0}^k \sum_{j=0}^i b_{j+1} b_{i-j+1} b_{k-i+1} b_{n-k+1} \notag \\
    \delta\gamma^n = \left( \sum_{k=1}^\infty \gamma_k\varepsilon^k \right)^n \eqcolon& \sum_{k=0}^\infty \Gamma_k^n\varepsilon^{k+n}; \qquad \Gamma_k^0 = \delta_{k0}, \quad \Gamma_k^1 = \gamma_{k+1}, \quad
    \Gamma_k^2 = \sum_{l=0}^k \gamma_{l+1} \gamma_{k-l+1}
\end{align}

Now, that we re-substituted everything down to the input variables, we can start inserting them and go through the process in forward direction again.\\
We start by picking the right terms from the $\tilde{C}$'s according to~\eqref{eq: C_n(tildeC_n)} and insert the input variables:
\begin{align}
    C_0 =& \tilde{C}_0^{(0)} = A_0^0B_0^0 = 1, \qquad
    C_1 = \tilde{C}_0^{(1)} + \tilde{C}_1^{(0)} = 0 + A_0^0\cancelto{0}{B_1^0} + \cancelto{0}{A_1^0}B_0^0 = 0 \\
    C_2 =& \tilde{C}_0^{(2)} + \tilde{C}_1^{(1)} + \tilde{C}_2^{(0)}
    = KA_0^1B_0^1\cos\gamma_0\Gamma_0^0 - \frac{K}{2}(A_0^0B_0^2 + A_0^2B_0^0)
    + A_0^0\cancelto{0}{B_2^0} + \cancelto{0}{A_1^0B_1^0} + \cancelto{0}{A_2^0}B_0^0 \notag \\
    =& -\frac{K}{2}\left[ a_1^2 + b_1^2 - a_1b_1\cos\gamma_0 \right] \\
    C_3 =& -K\left[ b_1b_2 + a_1a_2 - (a_1b_2 + a_2b_1)\cos\gamma_0 + a_1b_1\gamma_1\sin\gamma_0 \right] \\
    C_4 =& K\left\{ \frac{K}{24}\left( a_1^4 + 6a_1^2b_1^2 + b_1^4 - 4[a_1^3b_1 + a_1b_1^3]\cos\gamma_0 \vphantom{\sqrt{2}}\right) - \frac{1}{2}\left(a_2^2 + 2a_1a_3 + 2b_1b_3 + b_3^2\vphantom{\sqrt{2}}\right) \right.\notag\\
    &\left.+ \left( a_1b_3 + a_2b_2 + a_3b_1 - \frac{a_1b_1}{2}\gamma_1^2 \right)\cos\gamma_0
    - \left( [a_1b_2 + a_2b_1]\gamma_1 + a_1b_1\gamma_2 \vphantom{\sqrt{2}}\right)\sin\gamma_0 \right\}
\end{align}
As we can see from Eq.~\eqref{eq: tC_n} the $\tilde{C}$ only depend on $A_k^n$, $B_k^n$ and $\Gamma_k^n$, where no assumption on the $C$'s is used and thus we confirmed, that $C_0 = 1$ and $C_1 = 0$, which we used to expand the arc-cosine.\\
To write the $C$-function in a compact way, we introduce the following substitutions:
\begin{equation}
    y \coloneq \sqrt{-\frac{2C_2}{K}} = \sqrt{a_1^2 + b_1^2 - 2a_1b_1\cos\gamma_0}, \quad
    \Rightarrow \quad 2C_2 = -Ky^2, \qquad x \coloneq \frac{C_3}{K}, \qquad z \coloneq \frac{C_4}{K}
\end{equation}
\begin{align}\label{eq: C(K,gamma,a,b)_expanded}
    C(K;\gamma,a,b) \approx& \frac{\sqrt{-2C_2}}{\sqrt{K}}\left( \varepsilon
    + \frac{C_3}{2C_2}\varepsilon^2
    - \left[ \frac{C_2}{12} - \frac{C_4}{2C_2}
    + \frac{C_3^2}{8C_2^2} \right]\varepsilon^3 + \mathcal{O}(\varepsilon^4) \right) \notag \\
    \approx& y\left( \varepsilon - \frac{x}{y^2}\varepsilon^2 + \left[ \frac{Ky^2}{24} - \frac{z}{y^2}
    - \frac{x^2}{2y^2} \right]\varepsilon^3 + \mathcal{O}(\varepsilon^4) \right) \approx c
\end{align}

To calculate the second order expansion of $SC$ we rewrite the $\mathcal{C}$'s in terms of $y$, $x$ and $z$:
\begin{equation}
    \mathcal{C}_1 = c_1 = y \qquad
    \mathcal{C}_2 = c_2 - \frac{K}{6}c_1^3\varepsilon = -\frac{x}{y} - \frac{K}{6}y^3\varepsilon \qquad
    \mathcal{C}_3 \approx c_3 + \mathcal{O}(\varepsilon)
    \approx y\left[ \frac{Ky^2}{24} - \frac{z}{y^2} - \frac{x^2}{2y^2} \right] + \mathcal{O}(\varepsilon)
\end{equation}
Next, we need the $\tilde{S}$ from~\eqref{eq: tS_n} in terms of the input variables or $y$, $x$, $z$, since we know these explicitly:
\begin{align}
    \tilde{S}_0 =& \frac{\mathcal{A}_1}{\mathcal{C}_1}( \sin\gamma_0\cancelto{1}{\Gamma_0^0}
    + \cos\gamma_0\Gamma_0^1\varepsilon ) = \frac{a_1}{y}( \sin\gamma_0 + \gamma_1\cos\gamma_0\varepsilon ) \\
    \tilde{S}_1 \approx& \frac{1}{y}\left\{ a_2 - \frac{a_1c_2}{y} \right\}\sin\gamma_0
    + \frac{1}{y}\left\{ \frac{K}{6}\left[ \frac{a_1c_1^3}{y} - a_1^3 \right]\sin\gamma_0
    + \left( a_1\gamma_2 + \left[ a_2 - \frac{a_1c_2}{y} \right]\gamma_1 \right)\cos\gamma_0
    - \frac{a_1}{2}\gamma_1^2\sin\gamma_0 \right\}\varepsilon + \mathcal{O}(\varepsilon^2) \notag \\
    \tilde{S}_2 \approx& \frac{1}{y}\left\{ a_3 - \frac{a_1c_2 + a_1c_3}{y} + \frac{a_1c_2^2}{y^2} \right\}\sin\gamma_0 + \mathcal{O}(\varepsilon)
\end{align}
Then we can assemble the $S_n$ just as we did with the $C_n$:
\begin{align}
    S_0 =& \tilde{S}_0^{(0)} = \frac{a_1}{y}\sin\gamma_0 \\
    S_1 =& \tilde{S}_0^{(1)} + \tilde{S}_1^{(0)}
    = \frac{1}{y}\left( \left[ a_2 + \frac{a_1x}{y^2} \right]\sin\gamma_0 + a_1\gamma_1\cos\gamma_0 \right) \\
    S_2 =& \frac{1}{y}\left\{ \frac{K}{6}\left[ \frac{3a_1y^2}{4} - a_1^3 \right]\sin\gamma_0
    + \left( a_1\gamma_2 + \left[ a_2 + \frac{a_1x}{y^2} \right]\gamma_1 \right)\cos\gamma_0
    + \left( a_3 + \frac{a_2x}{y^2} + a_1\left[ \frac{z}{y^2} + \frac{3x^2}{2y^4} - \frac{\gamma_1^2}{2} \right] \right)\sin\gamma_0 \right\}
\end{align}

We leave the $S_n$ as substitutes, since they are now given explicitly and there is not much that can be simplified if we would insert them. With that, the expansion of the $SC$-function to second order in $\varepsilon$ is finally given by:
\begin{align}\label{eq: SC(K,gamma,a,b)_expanded}
    SC(K;\gamma,a,b) \approx& \sin^{-1}(S_0) + (\sin^{-1})'(S_0)S_1\varepsilon
    + \left\{ (\sin^{-1})'(S_0)S_2 + (\sin^{-1})''(S_0)S_1^2 \vphantom{\sqrt{2}}\right\}\varepsilon^2 + \mathcal{O}(\varepsilon^3) \notag \\
    \approx& \sin^{-1}(S_0) + \frac{S_1}{\sqrt{1-S_0^2}}\varepsilon
    + \left\{ \frac{S_2}{\sqrt{1-S_0^2}} + \frac{yS_1^2}{\sqrt{1-S_0^2}^3} \right\}\varepsilon^2 + \mathcal{O}(\varepsilon^3)
\end{align}

\begin{figure}[h!]
    \centering
    \includegraphics[width=\linewidth]{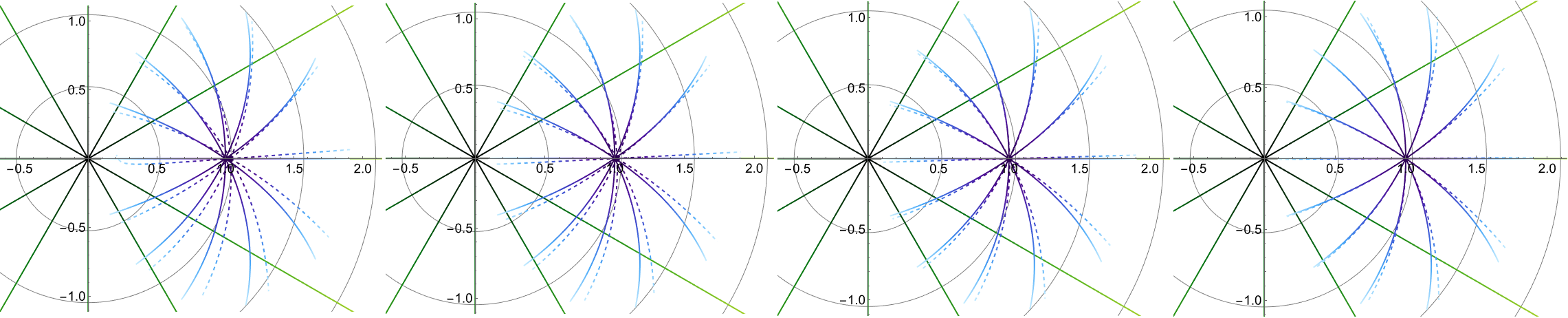}
    \caption{We approximate the geodesic flow from $q(c)$, $c=\frac{\pi}{3}$ on the $2$-sphere using the formulas above (dashed) and compare it to the exact one. We increase the number of cuts $N=\varepsilon^{-1}=3,5,10,20$, to check that this approximation converges to the flow on the sphere up to second order. I.e. we test our approximation, with finesse $\varepsilon$, of the second order approximation of the flow bundle on $\mathcal{S}^2$.}
    \label{fig: Approx-Test}
\end{figure}

\section{A first Slice of the Solution}\label{sec: A first Slice of the Solution}
In this section we calculate the opening angle $\alpha^0$ and side-length of the top line $\bar{b}$ of the first slice $\Sigma_1$. For ease of notation we suppress the slice index $j=1$ in this section.\\
We outlined in the main text how we divide the procedure into problems and subtypes of these problems. We will first derive a general solution to these (template formulas) and then use the template formulas to solve the slice order by order. It is not directly possible to solve the second order explicitly and we instead get two subsequent recursions.\\
In the Sec.~\ref{subsec: The recursion parameters} we solve the first and more complicated coupled recursion of three recursion parameters. We then conclude this section by piecing everything together, solve the second recursion over the rib-lines and discuss the solution to the first slice up to second order.

\subsection{Solving Problems of different Types}\label{subsec: Solving Problems of different Types}
We now use the expansions of the cosine- and sine-laws derived in Sec.~\ref{sec: Expansion for small triangles} on the segments in the first slice, which we established in the main text. To make the procedure easier to understand, we color the side-lengths and angles according to the context in which they are used in the different problems.\\
The segments are constructed to approximate the curvature field with a step-function, which is constant on each segment and we thus use red for segments. Quantities, which can be expressed directly or are already determined in terms of input variables will be black. For most problems we have to few information to solve them directly and we will instead solve them in dependence of still undetermined quantities. We will mark these unknowns, which
we will treat indiscriminately from input variables within a single problem, with blue. Green is used for functions of unknowns, while dark green is for functions of such functions. For example $\hat{\alpha}^1(\alpha^1,d^1)$ is not used in segment $\Delta_1$, where we would view $\alpha^1$ and $d^1$ as unknowns, but in segment $\Delta^2$, where we express the two as functions on $a$. Which brings us to the special case $a$ which is the key to solving "Problem 2". It is first used as an unknown and after solving the $\delta$-equation, it becomes a function of $\alpha^i$ and $d^i$. We use turquoise for it and light blue for quantities such as delta which we are a priory not interested in and merely use them to set up equations to solve for other quantities.
\begin{figure}[h!]
    \centering\includegraphics[width=\linewidth]{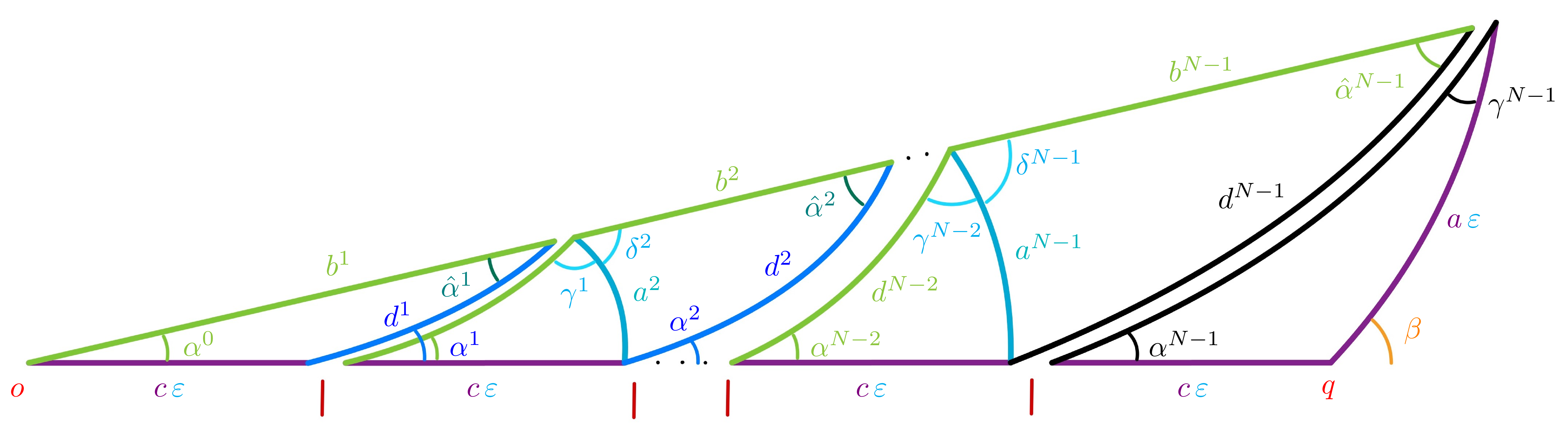}
    \caption{A sketch of the segmentation of the first slice.}
    \label{fig: First_Slice}
\end{figure}

\subsubsection{Problem 0}
First we need to determine the curvature values with which we approximate the Gaussian curvature field on our segments. We pick the values at the lower left edge of each segment:
\begin{equation}
    K_{i,1} = K(q_{i-1,1}) = K((i-1)c\varepsilon,0), \quad
    q_{i,1} = \gamma_{o,\bar{\alpha}^{0,0}}(ic^1\varepsilon) \quad i\in\{1,\ldots,N\},
\end{equation}

\subsubsection{Problem 1}\label{sec: Problem 1}
The segment $\Delta_N$ is a triangle of which we have two side-lengths $a\varepsilon$ and $c\varepsilon$ and the angle between them $\pi-\beta$, see Fig.~\ref{fig: Problem 1}. So, we can solve it directly.
\begin{align}
    \left.\begin{matrix}
        d(\varepsilon) = C(K;\pi-\beta,a\varepsilon,c\varepsilon) \\
        \alpha(\varepsilon) = SC(K;\pi-\beta,a\varepsilon,c\varepsilon)
    \end{matrix}\right\} \quad \Rightarrow \quad
    \begin{matrix}
    a_1 \mapsto a, & a_2 \mapsto 0, & a_3 \mapsto 0 \\
    b_1 \mapsto c, & b_2 \mapsto 0, & b_3 \mapsto 0 \\
    \gamma_0 \mapsto \pi-\beta, & \gamma_1 \mapsto 0, & \gamma_2 \mapsto 0
    \end{matrix}
\end{align}

The substitute $y$ will still be relevant in future calculation as it is the first order term of the $C$-function and appears in many other terms.
\begin{align}
    y = \sqrt{a^2 + c^2 + ac\cos\beta}
\end{align}
\begin{align}
    d_1 &= y, &\quad d_2 &= 0, &\quad d_3 &= -\frac{K}{6y}a^2c^2\sin^2\beta \\
    \alpha_0 &= \sin^{-1}\left(\frac{a}{y}\sin\beta\right), &\quad \alpha_1 &= 0, &\quad
    \alpha_2 &= K\frac{ a^2(4c^2 - 3y^2) + y^2(c^2 + 3y^2) + 2ac(a^2 + c^2)\cos\beta }
    {24y^2\sqrt{y^2 - a^2\sin^2\beta}}a\sin\beta
\end{align}

In the case of $\gamma$ the roles of $a$ and $c$ are swapped.
\begin{equation}
    \gamma(\varepsilon) = SC(K;\pi-\beta,c\varepsilon,a\varepsilon) \quad
    \Rightarrow \quad a_1 \mapsto c, \quad b_1 \mapsto a
\end{equation}

\begin{align}
    \gamma_0 = \sin^{-1}\left(\frac{c}{y}\sin\beta\right), \quad \gamma_1 = 0, \quad 
    \gamma_2 = K\frac{(a + c\cos\beta)\left(y^2 + c(c + a\cos\beta)\right)}
    {6y^2\sqrt{y^2 - c^2\sin^2\beta}} c\,a\sin\beta
\end{align}
As we can see, the first order terms $d_2$, $\alpha_1$ and $\gamma_1$ all vanish.
\begin{figure}[h!]
    \centering\includegraphics[width=.4\linewidth]{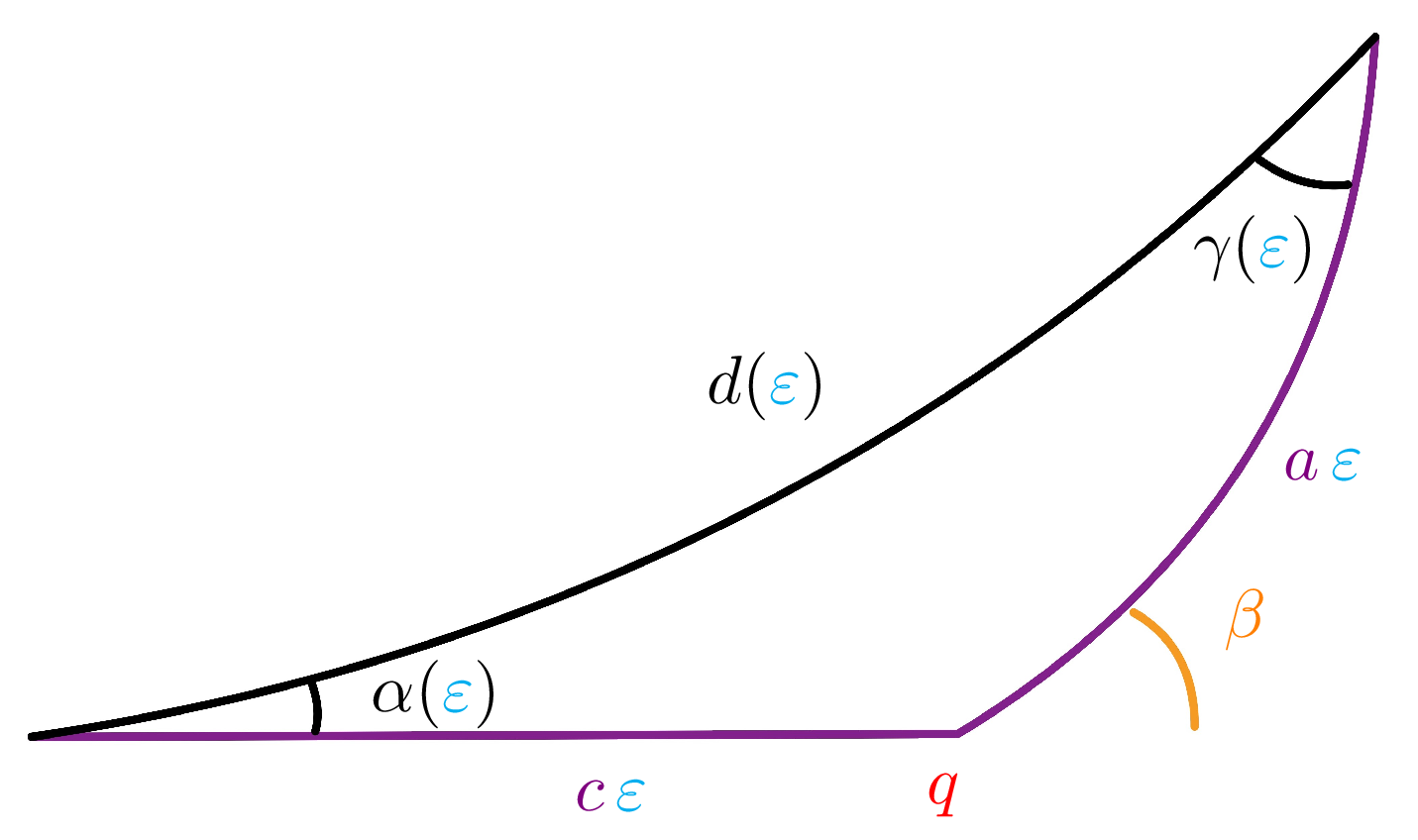}
    \caption{A sketch of Problem 1.}
    \label{fig: Problem 1}
\end{figure}

\subsubsection{Problem 2, Type 1}\label{sec: Problem 2, Type 1}
The segment $\Delta_1$ is also a triangle (Fig.~\ref{fig: Problem 2, Type 1}). We express the angles $\alpha_o$, $\hat{\alpha}$ and the side length $b$ in terms of the still unknown $\alpha$ and $d$. We assume, that the first order terms $\alpha_1$ and $d_2$ vanish.\\
\begin{align}
    \left.\begin{matrix}
    b(\varepsilon) = C(K;\pi-\alpha(\varepsilon),c\varepsilon,d(\varepsilon)) \\
    \hat{\alpha}(\varepsilon) = SC(K;\pi-\alpha(\varepsilon),c\varepsilon,d(\varepsilon))
    \end{matrix}\right\} \quad \Rightarrow \quad
    \begin{matrix}
    a_1 \mapsto c, & a_2 \mapsto 0, & a_3 \mapsto 0 \\
    b_1 \mapsto d_1, & b_2 \mapsto 0, & b_3 \mapsto d_3 \\
    \gamma_0 \mapsto \pi-\alpha_0, & \gamma_1 \mapsto 0, & \gamma_2 \mapsto -\alpha_2
    \end{matrix}
\end{align}

The substitute $y$ in this case takes the form:
\begin{equation}
    y=\sqrt{c^2 + d_1^2 + 2c\,d_1\cos\alpha_0}
\end{equation}

We get the following coefficients:
\begin{align}
    b_1 &= y & b_2 &= 0, & b_3 &= -\frac{1}{y}\left(
    \frac{K}{6}c^2d_1^2\sin^2\alpha_0 - d_3(d_1 + c\cos\alpha_0) + c\,d_1\alpha_2\sin\alpha_0 \right) \\
    \hat{\alpha}_0 &= \sin^{-1}\left(\frac{c}{y}\sin\alpha_0\right), &
    \hat{\alpha}_1, &= 0 &
    \hat{\alpha}_2 &= \frac{c}{y^2}\left( \frac{K}{6}d_1\sin\alpha_0\left[y^2 + c(c + d_1\cos\alpha_0)\right]
    + \alpha_2(c + d_1\cos\alpha_0) - d_3\sin\alpha_0 \right) \notag
\end{align}

For the angle $\alpha^0$ the again the arguments are swapped:
\begin{equation}
    \alpha^0(\varepsilon) = SC(K;\pi-\alpha(\varepsilon),d(\varepsilon),c\varepsilon), \quad
    \Rightarrow \quad a_1 \mapsto d_1, \quad a_3 \mapsto d_3, \quad b_1 \mapsto c,
    \quad b_3 \mapsto 0
\end{equation}

and we get the coefficients:
\begin{align}
    \alpha_0^0 &= \sin^{-1}\left( \frac{d_1}{y}\sin\alpha_0 \right), \quad
    \alpha_1^0 = 0, \\
    \alpha_2^0 &= \frac{1}{y^2}\left( \frac{K}{6}c\,d_1 \left[y^2 + d_1(d_1 + c\cos\alpha_0)\right]\sin\alpha_0
    + \alpha_2\,d_1(d_1 + c\cos\alpha_0) + d_3\,c\sin\alpha_0 \right) \notag
\end{align}

\begin{figure}[h!]
    \centering\includegraphics[width=.6\linewidth]{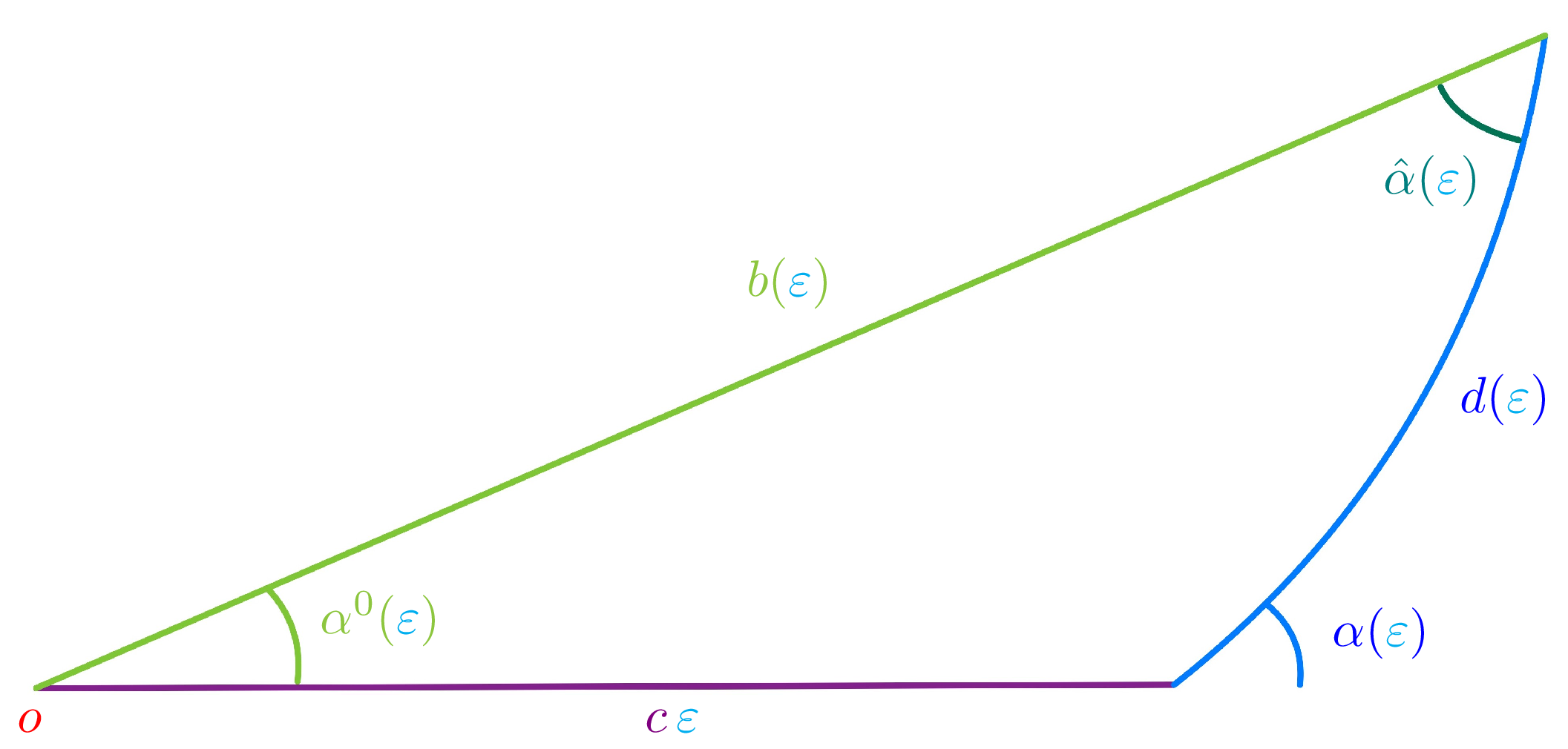}
    \caption{A sketch of Problem 2, Type 1}
    \label{fig: Problem 2, Type 1}
\end{figure}

\subsubsection{Problem 2, Types 2 and 3}\label{subsubsec: Problem 2, Types 2 and 3}
The two previous cases covered the boundary problems, where we had triangles. In this section we cover the recurring case, where we have squares, which we divide into two triangles (Fig.~\ref{fig: Problem 2, Types 2 & 3}). This makes the solution more complicated, because we cannot straight forwardly apply the cosine- and sine-laws. It seems at first glance that we have to few information, to solve it at all.\\
We have the same input $c$, $d$ and $\alpha$, but we are now supposed to calculate $2$ side-lengths and $2$ angles. Starting with the lower left triangle we just have the base-line $c$ and a right angle, so we would have to calculate more quantities from the upper right triangle to solve it. But if we look at the upper right triangle, we are missing the base-line. The additional piece of information, we need to solve this problem, comes from the fact that the top-sine of the slice $\Sigma_1$ is a straight line in faithful normal coordinates. This allows us set up an equation, describing the angle $\delta$ from both sides, and solve it for $a$. With that everything is determined in terms of $d$ and $\alpha$ as we wanted.
\begin{figure}[h!]
    \centering\includegraphics[width=.7\linewidth]{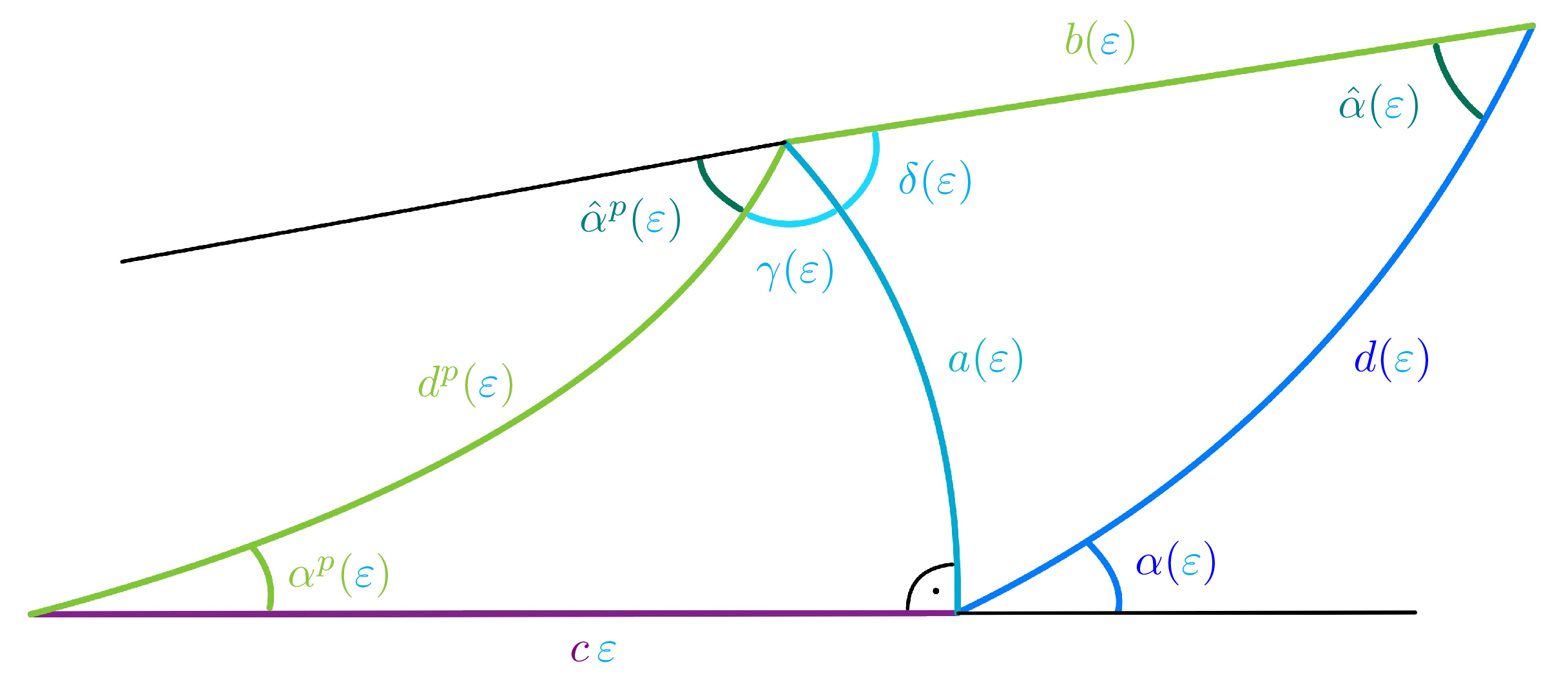}
    \caption{A sketch of the Problem 2, Types 2 and 3.}
    \label{fig: Problem 2, Types 2 & 3}
\end{figure}

We start by determining the previous angle $\alpha^p$ and side-length $d^p$, which we used as unknown variables in the previous segment $\Delta_{n-1}$, in terms of $a$, which is for the first part of this problem regarded as the new unknown. And we again assume, that the first order of the input variables vanish i.e. $a_2 = 0$ in this case:
\begin{align}
    \left.\begin{matrix}
        d^p(a(\varepsilon)) = C(K;\frac{\pi}{2},a(\varepsilon),c\varepsilon) \\
        \alpha^p(a(\varepsilon)) = SC(K;\frac{\pi}{2},a(\varepsilon),c\varepsilon)
    \end{matrix}\right\} \quad \Rightarrow \quad
    \begin{matrix}
    a_1 \mapsto a_1, & a_2 \mapsto 0, & a_3 \mapsto a_3 \\
    b_1 \mapsto c, & b_2 \mapsto 0, & b_3 \mapsto 0 \\
    \gamma_0 \mapsto \frac{\pi}{2}, & \gamma_1 \mapsto 0, & \gamma_2 \mapsto 0
    \end{matrix}
\end{align}

In this problem we are going to have two versions of the substitute $y$.
\begin{equation}
    y_1 = \sqrt{a_1^2 + c^2}, \qquad y_2 = \sqrt{a_1^2 + d_1^2 - 2a_1d_1\sin\alpha_0}
\end{equation}

The coefficients for $\alpha^p(a(\varepsilon))$ and $d^p(a(\varepsilon))$, dependent on $a(\varepsilon)$ are:
\begin{align}
    d^p_1 &= y_1, & d^p_2 &= 0, & d^p_3 &= -\frac{a_1}{y_1}\left( \frac{K}{6}c^2a_1 - a_3 \right) \\
    \alpha^p_0 &= \sin^{-1}\frac{a_1}{y}, & \alpha^p_1 &= 0, &
    \alpha^p_2 &= \frac{c}{y^2}\left( \frac{K}{6}a_1(y^2 + a_1^2) + a_3 \right)
\end{align}

Next we describe the angle $\delta$ from both sides. We use the fact, that the angle $\hat{\alpha}^p$ from the previous segment, the angle $\gamma$ of the lower left triangle, which we calculate via sine law using $a$, $c$ and $\alpha^p$, of the current segment and $\delta$ add up to $\pi$. On the other hand we can calculate $\delta$ using the sine law in the upper right triangle using $a$, $d$ and $\alpha$. We can then solve this equation for $a$ to express it in terms of $\alpha$ and $d$.\\
The angle $\delta$ is larger than $\frac{\pi}{2}$ for sufficiently thin slices and thus we have to use the side-branch of the $SC$-function, Eq.~\eqref{eq: SC-branch}. The sketch Fig.~\ref{fig: Problem 2, Types 2 & 3} suggests otherwise but the plot in Fig.~\ref{fig: 1st Slice Test-plot} offers a better comparison.
\begin{equation}
    \delta = \pi - SC\left(K;\frac{\pi}{2},c\varepsilon,a(\varepsilon)\right) - \hat{\alpha}^p(a(\varepsilon))
    = \pi - SC\left(K;\frac{\pi}{2}-\alpha(\varepsilon),d(\varepsilon),a(\varepsilon)\right)
\end{equation}

We get an equation for every order which becomes linear, when one inserts the solution of the previous order equation, as it is always the case in perturbation theory. Since there is no lower order, the $0$-th order equation will remain non-linear.
\begin{align}
    \delta^{(0)} &= \pi - \sin^{-1}\frac{c}{y_1} - \hat{\alpha}^p_0
    \overset{!}{=} \pi - \sin^{-1}\left(\frac{d_1}{y_2}\right)\cos\alpha_0 \qquad
    \delta^{(1)} = 0 \overset{!}{=} 0 \\
    \delta^{(2)} &= -\frac{1}{y_1^2}\left( \frac{K}{6}a_1c(y_1^2 + c^2) - c a_3 \right) - \hat{\alpha}^p_2 \notag \\
    &\overset{!}{=} \frac{1}{y_2^2}\left(
    \frac{K}{6}a_1d_1\left[ y_2^2 + d_1(d_1 - a_1\sin\alpha_0) \right]\cos\alpha_0
    + (a_1d_3 - a_3d_1)\cos\alpha_0 + d_1\alpha_2(d_1 - a_1\sin\alpha_0) \right)
\end{align}

If we drop the assumption on $a_2$ i.e. $a_2 \overset{i.g.}{\neq} 0$, for the first order equation we get:
\begin{equation}
    \delta^{(1)} = a_2\frac{c}{a_1^2 + c^2} \overset{!}{=}
    - a_2\frac{d_1\cos\alpha_0}{a_1^2 + d_1^2 - 2a_1d_1\sin\alpha_0}
\end{equation}
The terms, to which $a_2$ is multiplied in this equation are in general not proportional to each other and thus $a_2$ has to be $0$, to satisfy the equation. So, we confirmed the assumption we made in the beginning of this section.

Having taken care of the core problem, we now calculate the quantities we need to solve the slice problem.
\begin{align}
    \left.\begin{matrix}
        b(\varepsilon) = C(K;\frac{\pi}{2}-\alpha(\varepsilon),a(\varepsilon),d(\varepsilon)) \\
        \hat{\alpha}(\varepsilon) = SC(K;\frac{\pi}{2}-\alpha(\varepsilon),a(\varepsilon),d(\varepsilon))
    \end{matrix}\right\} \quad \Rightarrow \quad
    \begin{matrix}
    a_1 \mapsto a_1, & a_2 \mapsto 0, & a_3 \mapsto a_3 \\
    b_1 \mapsto d_1, & b_2 \mapsto 0, & b_3 \mapsto d_3 \\
    \gamma_0 \mapsto \frac{\pi}{2} - \alpha_0, & \gamma_1 \mapsto 0, & \gamma_2 \mapsto -\alpha_2
    \end{matrix}
\end{align}
The coefficients of $b(\varepsilon)$ and $\hat{\alpha}(\varepsilon)$ expressed in terms of $d(\varepsilon)$ and $\alpha(\varepsilon)$ are:
\begin{align}
    b_1 &= y_2, \qquad b_2 = 0 \notag \\
    b_3 &= -\frac{1}{y_2}\left( \frac{K}{6}a_1^2d_1^2\cos^2\alpha_0 - d_3(d_1 - a_1\sin\alpha_0)
    + a_1d_1\alpha_2\cos\alpha_0 - a_3(a_1 - d_1\sin\alpha_0) \right) \\
    \hat{\alpha}_0 &= \sin^{-1}\left(\frac{a_1}{y_2}\cos\alpha_0\right), \qquad \hat{\alpha}_1 = 0 \notag \\
    \hat{\alpha}_2 &= \frac{1}{y_2^2}\left(
    \frac{K}{6}a_1d_1\left[ y_2^2 + a_1(a_1 - d_1\sin\alpha_0) \right]\cos\alpha_0
    + (a_3d_1 - a_1d_3)\cos\alpha_0 + a_1\alpha_2(a_1 - d_1\sin\alpha_0) \right)
\end{align}
We observe, that also here the first order terms vanish under the assumption, that $\alpha_1$ and $d_1$ vanish.

\subsubsection{The first order problem indeed vanishes}
We saw, that in every type of every problem the first order results vanished, if the first order of the input variable vanished. Now arguing the procedure backwards, the higher order terms of the input variables in problem one are non existent since the input variables are precisely $a\varepsilon$ and $c\varepsilon$. But the first order results of Problem 1 are the input variables $\alpha$ and $d$ of Problem 2 Type 3 and thus the vanishing of the results in this case, which are the input variables for Problem 2 Type 2, is confirmed. This again confirms the vanishing of the first order in Problem 2 Type 2 and following the same logic from right to left in the sketch Fig.~\ref{fig: First_Slice} we see that also the input variables of Problem 2 Type 1 have vanishing first order coefficients and thus we confirm that the solution to the first order problem is, that all first order coefficients vanish.

\subsection{The $0$-th order problem}
We do a perturbation to linearize the problem, so we can solve it, but the zeroth order will remain non-linear. So, we will not apply the same procedure to it as we do with the second order and instead solve it directly. Which we can, since it is a triangle on a flat surface (Euclidean plane). Fig.~\ref{fig: 0-th order scheme} shows a sketch of the $0$-th order problem.\\
\begin{figure}[h!]
    \centering\includegraphics[width=.8\linewidth]{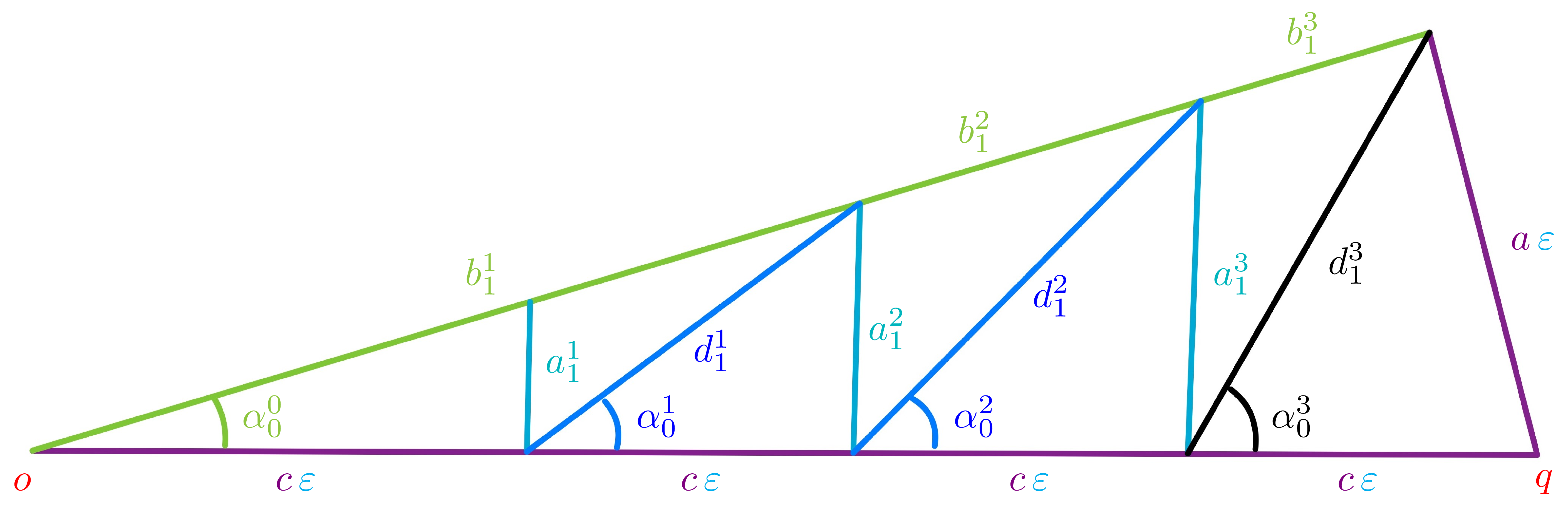}
    \caption{We sketch the zeroth order problem as an easy reference for the labelling conventions. We observe, that all lengths are of first order in $\varepsilon$, while all angles are of zeroth order.}
    \label{fig: 0-th order scheme}
\end{figure}

We will first treat the more generic case and postpone the special cases to the subsequent subsections.

\subsubsection{General 0-th order solution}\label{sec: general 0-th order sol}
We start with the quantities we actually want to calculate as end results $\bar{b}_0\,$, $\alpha^0_0$ of the first slice $\Sigma_1$ and will then use these to calculate the 0-th order terms of the intermediate results, which we need to calculate the higher order terms of $\bar{b}$ and $\alpha^0$.\\
We only need a straight forward application of the cosine-law to get $b_1$ and using the result for $b_1$ we calculate $\alpha_0$ via sine-law.
\begin{equation}
    y \coloneq \sqrt{a^2 + N^2c^2 + 2Nac\cos\beta}, \qquad
    \bar{b}_0 = \sum_{i=1}^{N-1}b_1^i\varepsilon = y\,\varepsilon, \qquad
    \alpha^0_0 = \sin^{-1}\left(\frac{a}{y}\sin\beta\right),
\end{equation}
where we used, that $\cos(\pi-\beta) = -\cos\beta$, which we will use quite often along with $\sin(\pi-\beta) = \sin\beta$.\\

We can use the Euclidean sine-law again to calculate the $a_1^i$. Since we divided the base-line in $N$ equidistant parts, all $b_1^i$ are the same, except of the last one. The $\tilde{b}_1^i$ are defined without merging the first two triangles together to segment $\Delta_1$ and are for $i>2$ related to the $b_1^i$ we otherwise use by: $\tilde{b}_1^i = b_1^{i-1}$.
\begin{align}
    \frac{a_1^i}{\sin\alpha_0^0}=\frac{\sum_{j=1}^i \tilde{b}_1^j}{\sin\frac{\pi }{2}}, \qquad \tilde{b}_1^i\cos\alpha_0^0 = c, \quad \forall i<N \qquad
    \Rightarrow \quad a_1^i = ic\frac{\sin\alpha_0^0}{\cos\alpha_0^0}=ic\tan\alpha_0^0
\end{align}
\begin{equation}
    \Rightarrow \quad a_1^i = \frac{i\,c\,a\sin\beta}{Nc + a\cos\beta}, \qquad \forall i \in \{1,..,N-1\}
\end{equation}
Subsequently using the Euclidean cosine- and then sine-law we can determine $d_1^i$ and then $\alpha_0^i$:
\begin{equation}
    y_1  = \sqrt{\left(a_1^{i+1}\right)^2 + c^2}, \qquad d_1^i = y_1, \qquad
    \alpha_0^i = \sin^{-1}\frac{a_1^{i+1}}{y_1}
\end{equation}
\begin{equation}
    \Rightarrow \quad d_1^i = c\frac{\sqrt{(Nc + a\cos\beta)^2 + (i+1)^2a^2\sin^2\beta}}{Nc + a\cos\beta},
    \quad \alpha_0^i = \sin^{-1}\frac{(i+1)a\sin\beta}{\sqrt{(Nc + a\cos\beta)^2 + (i+1)^2a^2\sin^2\beta}}
\end{equation}
This holds $\forall i < N-1$ i.e. for all triangles except of the last one, which is treated in Sec.~\ref{sec: DN P1}.\\

Having determined the $d_1^i$ and $\alpha_0^i$ we can use them to calculate the $b_1^i$ and $\hat{\alpha}_0^i$:
\begin{equation}
    y_2 = \sqrt{\left(a_1^i\right)^2 + \left(d_1^i\right)^2 - 2a_1^id_1^i\cos\left(\frac{\pi}{2} - \alpha_0^i\right)}, \qquad b_1^i = y_2 = \frac{c}{\cos\alpha_0}, \qquad
    \hat{\alpha}_0^i = \sin^{-1}\left( \frac{a_1^i}{y_2}\sin\left(\frac{\pi}{2}-\alpha_0^i\right) \right)
\end{equation}
\begin{align}
    \Rightarrow \quad b_1^i &= c\frac{\sqrt{a^2 + N^2c^2 + 2Na\,c\cos\beta}}{Nc + a\cos\beta}, \quad \forall i\in\{2,\ldots,N-2\} \notag \\
    \hat{\alpha}_0^i &= \sin^{-1}\frac{i\,a(Nc + a\cos\beta)\sin\beta}{\sqrt{a^2 + N^2c^2 + 2Na\,c\cos\beta}\sqrt{(Nc + a\cos\beta)^2 
    + (i+1)^2a^2 \sin^2\beta}}, \qquad \forall i\in\{2,\ldots,N-2\}
\end{align}
We cover the special case of the first triangle in Sec.~\ref{sec: D1 P2 T1}. The result holds however for $\tilde{b}_1^1$ and $\tilde{b}_1^2$.\\

\subsubsection{$\Delta_N$: Problem 1}\label{sec: DN P1}
For this example, we can directly use the $0$-th order of this Problem, which we derived in last section:
\begin{equation}
    y = \sqrt{a^2 + c^2 + 2ac\cos\beta}, \qquad d_1^{N-1} = y, \qquad
    \alpha_0^{N-1} = \sin^{-1}\left(\frac{a}{y}\sin\beta\right), \qquad
    \gamma_0^{N-1} = \sin^{-1}\left(\frac{c}{y}\sin\beta\right)
\end{equation}
\begin{equation}
    d_1^{N-1} = \sqrt{a^2 + c^2 + 2ac\cos\beta}, \quad
    \alpha_0^{N-1} = \sin^{-1}\frac{a\sin\beta}{\sqrt{a^2 + c^2 + 2ac\cos\beta}}, \quad
    \gamma_0^{N-1} = \sin^{-1}\frac{c\sin\beta}{\sqrt{a^2 + c^2 + 2ac\cos\beta}}
\end{equation}
This result is independent of $N$ and always describes the last triangle.

\subsubsection{$\Delta_{N-1}$: Problem 2, Type 3}
The lower triangle $\overset{\vee}{\Delta}_{N-1}$ works the same as $\overset{\vee}{\Delta}_i$, $i\in\{2,..,N-2\}$. Thus $a_1^i$ can be used for $i = N-1$:\\
\begin{equation}
    a_1^{N-1} = \left.a_1^i\right|_{i=N-1} = \frac{(N-1)\,c\,a\sin\beta}{Nc + a\cos\beta}
\end{equation}
The other quantities which are calculated in the upper triangle $\hat{\Delta}_{N-1}$ change, due to the angle being $\beta$ instead of $\beta_i = \frac{\pi}{2}$. The $d_1^{N-1}$ and $\alpha_0^{N-1}$ we got from $\Delta_N$. So, we can use them to calculate $b_1^{N-1}$ and $\hat{\alpha}_0^{N-1}$.
\begin{align}
    b_1^{N-1} = \sqrt{a^2 + N^2c^2 + 2Na\,c\cos\beta}\frac{c + a\cos\beta}{Nc + a\cos\beta}, \quad
    \hat{\alpha}_0^{N-1} = \sin^{-1}\frac{(N-1)c\,a\sin\beta}
    {\sqrt{a^2 + c^2 + 2a\,c\cos\beta}\sqrt{a^2 + N^2c^2 + 2Na\,c\cos\beta}}
\end{align}

\subsubsection{$\Delta_1$: Problem 2, Type 1}\label{sec: D1 P2 T1}
When we glue the first two triangles together we get another triangle. Thus we create a slightly different case then above, to simplify the problem:
\begin{equation}
    y = \sqrt{c^2 + \left(d_1^1\right)^2 - 2cd_1^1\cos\left(\pi -\alpha_0^1\right)}, \qquad b_1^1 = y, \qquad \gamma_0^1 = \sin^{-1}\left(\sin\left(\pi - \alpha_0^1\right)\frac{c}{y}\right)
\end{equation}
\begin{align}
    \Rightarrow \quad b_1^1 &= 2c\frac{\sqrt{a^2 + N^2c^2 + 2Nac\cos\beta}}{Nc + a\cos\beta} = 2y_2, \notag \\
    \hat{\alpha}_0^1 &= \sin^{-1}\frac{(Nc + a\cos\beta)a\sin\beta}{\sqrt{a^2 + N^2c^2 + 2Nac\cos\beta}
    \sqrt{(Nc + a\cos\beta)^2 + 4a^2\sin^2\beta}}
\end{align}
The merging only affects the definition of $b^1 \coloneq \tilde{b}^1 + \tilde{b}^2$ but not $\hat{\alpha}^1$, thus $\gamma_0^1 = \hat{\alpha}_0^1$.\\

The side length $d_1^1$ and angle $\alpha_0^1$ are no special cases and in fact also $\alpha_0^0$ is consistent with $\alpha_0^i$ for $i=0$:
\begin{equation}\label{eq: alpha_0^i, i->0}
    \left.\alpha_0^i\right|_{i=0} = \sin^{-1}\frac{a\sin\beta}{\sqrt{(Nc + a\cos\beta)^2 + a^2\sin^2\beta}}
    \overset{\eqref{eq: kN1 identity}}{=} \sin^{-1}\frac{a\sin\beta}{\sqrt{a^2 + N^2c^2 + 2Na\,c\cos\beta}} = \alpha_0^0
\end{equation}

\subsection{The $2$-nd order problem}\label{subsec: The $2$-nd order problem}
We will now insert the zeroth and first order (which are all zero) solutions into the second order terms and simplify them. Since there are often reoccurring terms we can define the following substitutes, to simplify our results.
\begin{align}\label{eq: substitutes}
    &X_N \coloneq Nc + a\cos\beta, \quad e^i \coloneq i\,a\sin\beta, \quad Z_N = a + Nc\cos\beta\\
    &Y_N \coloneq \sqrt{a^2 + N^2c^2 + 2Nac\cos\beta}, \quad k_N^i = \sqrt{X_N^2 + (e^i)^2}
    \quad \Rightarrow \quad Y_N = k_N^1 \label{eq: substitute relation}
\end{align}

We check, that the relation in~\eqref{eq: substitute relation} holds:
\begin{align}\label{eq: kN1 identity}
    k_N^1 = \sqrt{N^2c^2 + 2Nac\cos\beta + a^2\cos^2\beta + a^2\sin^2\beta} = \sqrt{N^2c^2 + 2Nac\cos\beta + a^2} = Y_N.
\end{align}

\subsubsection{$\Delta_N$: Problem 1}
In this most simple case there is not much to adjust and we can directly apply the second order results from Sec.~\ref{sec: Problem 1}, simplify some terms and rewrite them, using the new substitutes.
\begin{align}
    d_3^{N-1} =& -\frac{K_Nc^2a^2\sin^2\beta}{6\sqrt{a^2 + c^2 + 2a\,c\cos\beta}}
    = -K_N\frac{c^2(e^1)^2}{6\,Y_1}, \\
    \alpha_2^{N-1} =& \frac{K_N}{6}\left( 1 + \frac{a(a + c\cos\beta)}{a^2 + c^2 + 2a\,c\cos\beta}
    \right)c\,a\sin\beta = \frac{K_N}{6}\left( 1 + \frac{aZ_1}{Y_1^2} \right)c\,e^1 \\
    \gamma_2^{N-1} =& \frac{K_N}{6}\left( 1 + \frac{c(c + a\cos\beta)}{a^2 + c^2 + 2a\,c\cos\beta}
    \right)a\,c\sin\beta = \frac{K_N}{6}\left( 1 + \frac{cX_1}{Y_1^2} \right)c\,e^1
\end{align}

\subsubsection{$\Delta_1$: Problem 2, Type 1}
In this case we do not get the results explicitly in terms of the input variables $a$, $c$ and $\beta$, but as functions of the as of yet unknown variables $d_3^1$ and $\alpha_2^1$. We therefore have to write the results from Sec.~\ref{sec: Problem 2, Type 1} more specifically.
\begin{align}
    y &= \sqrt{c^2 + (d_1^1)^2 + 2c\,d_1^1\cos\alpha_0^1} = b_1^1; \notag \\
    b_3^1(\alpha_2^1,d_3^1) &= -\frac{1}{y}\left(
    \frac{K_1}{6}c^2(d_1^1)^2\sin^2\alpha_0^1 - d_3^1(d_1^1 + c\cos\alpha_0^1)
    + \alpha_2^1\,c\,d_1^1\sin\alpha_0^1 \right) \\
    \hat{\alpha}_2^1(\alpha_2^1,d_3^1) &= \frac{c}{y^2}\left( \frac{K_1}{6}d_1^1\sin\alpha_0^1\left[y^2 + c(c + d_1^1\cos\alpha_0^1)\right] - d_3^1\sin\alpha_0^1 + \alpha_2^1(c + d_1^1\cos\alpha_0^1) \right) \\
    \alpha_2^0(\alpha_2^1,d_3^1) &= \frac{1}{y^2}\left( \frac{K_1}{6}c\,d_1^1 \left[y^2 + d_1^1(d_1^1
    + c\cos\alpha_0^1)\right]\sin\alpha_0^1 + d_3^1\,c\sin\alpha_0^1
    + \alpha_2^1\,d_1^1(d_1^1 + c\cos\alpha_0^1) \right)
\end{align}

Now we can use the zeroth order solutions $d_1^1$ and $\alpha_0^1$ to linearize the problem and simplify the expressions.
\begin{align}
    b_3^1(\alpha_2^1,d_3^1) =& -\frac{1}{\sqrt{a^2 + N^2c^2 + 2Nac\cos\beta}}
        \left( \frac{K_1a^2c^3\sin^2\beta}{3(Nc + a\cos\beta)} + ac\alpha_2^1\sin\beta
        - d_3^1\frac{(Nc + a\cos\beta)^2 + 2a^2\sin^2\beta}{\sqrt{(Nc + a\cos\beta)^2 + 4a^2\sin^2\beta}} \right), \notag \\
     =& -\frac{1}{Y_N}\left( \frac{K_1c^3(e^1)^2}{3X_N} + \alpha_2^1c\,e^1
        - d_3^1\frac{X_N^2 + 2(e^1)^2}{k_N^2} \right) \\
    \hat{\alpha}_2^1(\alpha_2^1,d_3^1) =& \frac{(Nc + a\cos\beta)^2}{a^2 + N^2c^2 + 2Na\,c\cos\beta}
        \left( \frac{K_1c^2a\sin\beta}{6(Nc + a\cos\beta)} \left[
        3 + \frac{2\,a^2\sin^2\beta}{(Nc + a\cos\beta)^2} \right]
        \vphantom{\left[\frac{d_3^1}{\sqrt{(Nc + a\cos\beta)^2 + 4\,a^2\sin^2\beta}}\right]}\right.\notag \\
        &\left. + \frac{\alpha _2^1}{2} - \frac{a\sin\beta d_3^1}{2\,c\sqrt{(Nc + a\cos\beta)^2 + 4a^2\sin^2\beta}} \right) \notag \\
     =& \frac{X_N^2}{Y_N^2}\left( \frac{K_1c^2e^1}{6X_N} \left[ 3 + \frac{2(e^1)^2}{X_N^2} \right]
        + \frac{\alpha_2^1}{2} - \frac{e^1}{2}\frac{d_3^1}{ck_N^2} \right) \\
    \alpha_2^0(\alpha_2^1,d_3^1) =& \frac{(Nc + a\cos\beta)^2}{2(a^2 + N^2c^2 + 2Na\,c\cos\beta)}
        \left( \frac{K_1a\,c^2\sin\beta}{3(Nc + a\cos\beta)}
        \left[ 3 + \frac{4a^2\sin^2\beta}{(Nc + a\cos\beta)^2} \right]
        \vphantom{\left[\frac{d_3^1}{\sqrt{(Nc+a\cos\beta)^2+4a^2\sin^2\beta}}\right]}\right.\notag\\
     &\left. + \frac{d_3^1a\sin\beta}{c\sqrt{(Nc + a\cos\beta)^2 + 4a^2\sin^2\beta}}
        + \alpha_2^1\left( 1 + \frac{2a^2\sin^2\beta}{(Nc + a\cos\beta)^2} \right) \right) \notag \\
     =& \frac{X_N^2}{2Y_N^2}\left( \frac{K_1c^2\,e^1}{3X_N}\left[ 3 + 4\frac{(e^1)^2}{X_N^2} \right]
        + \frac{d_3^1e^1}{c\,k_N^2} + \alpha_2^1\left( 1 + 2\frac{(e^1)^2}{X_N^2} \right) \right)
\end{align}

\subsubsection{$\Delta_i$: Problem 2, Type 2}
First we calculate the second order term of $d$ and $\alpha$ of the previous segment in terms of $a$, using the expressions for $d_3^p$ and $\alpha_2^p$ we derived in Sec.~\ref{subsubsec: Problem 2, Types 2 and 3}:
\begin{equation}
    y_1 = \sqrt{(a_1^i)^2 + c^2}, \quad
    d_3^{i-1}(a_3^i) = -\frac{a_1^i}{y_1}\left( \frac{K_i}{6}c^2a_1^i - a_3^i \right), \quad
    \alpha_2^{i-1}(a_3^i) = \frac{c}{y_1^2}\left( \frac{K_i}{6}a_1^i\left[ y_1^2 + (a_1^i)^2 \right] + a_3^i \right)
\end{equation}

After inserting the lower order solutions we get $d_3^{i-1}$ and $\alpha_2^{i-1}$ as functions of $a_3^i$:
\begin{align}
    d_3^{i-1}(a_3^i) =& \frac{i\,a\sin\beta}{\sqrt{(Nc + a\cos\beta)^2 + i^2a^2\sin^2\beta}}
    \left( a_3^i - \frac{i K_ic^3\,a\sin\beta}{6(Nc + a\cos\beta)} \right)
    = \frac{e^i}{k_N^i}\left( a_3^i - \frac{K_ic^3e^i}{6X_N} \right) \\
    \alpha_2^{i-1}(a_3^i) =& \frac{(Nc + a\cos\beta)^2}{(Nc + a\cos\beta)^2 + i^2a^2\sin^2\beta}
    \left( \frac{a_3^i}{c} + i\,K_ic^2\,a\sin\beta\frac{(Nc + a\cos\beta)^2 + 2i^2a^2\sin^2\beta}
    {6(Nc + a\cos\beta)^3} \right) \notag \\
    =& \frac{X_N^2}{(k_N^i)^2}\left( \frac{a_3^i}{c}
    + \frac{K_i}{6}c^2e^i\frac{(k_N^i)^2 + (e^i)^2}{X_N^3} \right)
\end{align}

This type of problem is the one, which is iteratively applied. This means, that for a general segment $\Delta_i$ the $\hat{\alpha}^{i-1}$ of the previous segment is used in the $\delta$-equation to solve for $a$, which is then ultimately used, to calculate $\hat{\alpha}^i$, which will then appear in the next segments $\delta$-equation.\\
This means, that $\hat{\alpha}^1$ will become a function of a function of a function and so on: $\hat{\alpha}^1(\alpha^1(\alpha^2(\ldots),d^2(\ldots)),d^1(\ldots))$ and that the expressions for $\hat{\alpha}^i$ would become increasingly more complicated the further right we go, since we would essentially pack the entire problem of $N$ segments into one function until we can finally insert the last piece and solve everything at once. Whilst we can brute force this for a given $N$, this is not feasible for a general $N$, since we would need a general form for $\hat{\alpha}^i$ for a generic $i$. But, to calculate the limit of $N\to\infty$, we need the solution for a general $N$.\\
We can obtain such a general form by choosing a linear ansatz and adjusting the some prefactors for convenience, which we found by trial and comparison with brute forcing $N=2$ and $N=3$:
\begin{align}\label{eq: Ansatz}
    \hat{\alpha}_2^i(\alpha_2^i,d_3^i) =& \frac{(Nc + a\cos\beta)^2}{a^2 + N^2c^2 + 2Na\,c\cos\beta}\left(
    \frac{c^2a\sin\beta}{6(Nc + a\cos\beta)^3}C^i + A^i\alpha_2^i
    - \frac{D^i d_3^i}{c\sqrt{(Nc + a\cos\beta)^2 + (i+1)^2a^2\sin^2\beta}} \right) \notag \\
    =& \frac{X_N^2}{Y_N^2}\left( \frac{c^2e^1}{6X_N^3}C^i + A^i\alpha_2^i - \frac{D^i d_3^i}{c\,k_N^{i+1}} \right)
\end{align}
Since we are using perturbation theory all higher quantities and equations will be linear in the variable in that order and $\hat{\alpha}_2^i$ can thus always be written in this form.\\

Inserting the second order term of the unknowns $d$ and $\alpha$ of the previous segment, which we just calulated above into this ansatz, we get $\hat{\alpha}_2^{i-1}$ of the previous segment as a function of $a_3^i$.
\begin{align}
    \hat{\alpha}_2^{i-1}&(a_3^i) = \frac{X_N^2}{Y_2^2}\left( \frac{c^2e^1}{6X_N^3}C^{i-1}
    + A^{i-1}\alpha_2^{i-1}(a_3^i) - \frac{D^{i-1}}{c\,k_N^i}d_3^{i-1}(a_3^i) \right) \notag \\
    =& \frac{(Nc + a\cos\beta)^2}{a^2 + N^2c^2 + 2Na\,c\cos\beta}\left(
    \frac{c^2a\sin\beta}{6(Nc + a\cos\beta)^3}C^{i-1} \right.\notag\\
    &\left. + K_ic^2i\,a\sin\beta\frac{A^{i-1}\left[ (Nc + a\cos\beta)^2 + 2i^2a^2\sin^2\beta\right]
    + D^{i-1}i\,a\sin\beta}{6(Nc + a\cos\beta) \left[ (Nc + a\cos\beta)^2 + i^2a^2\sin^2\beta \right]}
    + \frac{a_3^i}{c}\frac{A^{i-1}(Nc + a\cos\beta)^2 - D^{i-1}i\,a\sin\beta}{(Nc + a\cos\beta)^2
    + i^2a^2\sin^2\beta} \right) \notag \\
    =& \frac{X_N^2}{Y_N^2}\left( \frac{c^2e^1}{6X_N^3}C^{i-1}
    + K_ic^2e^i\frac{A^{i-1}[(k_N^i)^2 + (e^i)^2] + D^{i-1}e^i}{6X_N(k_N^i)^2}
    + \frac{a_3^i}{c}\frac{A^{i-1}X_N^2 - D^{i-1}e^i}{(k_N^i)^2} \right)
\end{align}

Now we solve the second order $\delta$-equation to obtain $a_3^i$:
\begin{align}
    \delta^{(2)} =& -\frac{1}{y_1^2}\left( \frac{K_i}{6}a_1^ic(y_1^2 + c^2) - c\,a_3^i \right)
    - \hat{\alpha}_2^{i-1} \notag \\
    \overset{!}{=}& \frac{1}{y_2^2}\left(
    \frac{K_i}{6}a_1^id_1^i\left[ y_2^2 + d_1^i(d_1^i - a_1^i\sin\alpha_0^i) \right]\cos\alpha_0^i
    + (a_1^id_3^i - a_3^id_1^i)\cos\alpha_0^i + d_1^i\alpha_2^i(d_1^i - a_1^i\sin\alpha_0^i) \right), \\
    y_1 =& \sqrt{(a_1^i)^2 + c^2}, \qquad
    y_2 = \sqrt{(a_1^i)^2 + (d_1^i)^2 - 2a_1^id_1^i\sin\alpha_0^i} \notag
\end{align}

After inserting the zeroth order solutions along with $\hat{\alpha}_2^{i-1}(a_3^i)$, simplifying and substituting the equation looks as follows:
\begin{align}
    \delta^{(2)} =& \frac{X_N^2}{(k_N^i)^2}\left( \frac{a_3^i}{c}
    - \frac{K_ic^2e^i}{6X_N^3}\left[ X_N^2 + (k_N^i)^2 \right] \right) \notag \\
    &- \frac{X_N^2}{Y_N^2}\left( \frac{c^2e^1}{6X_N^3}C^{i-1}
    + K_ic^2e^i\frac{A^{i-1}[(k_N^i)^2 + (e^i)^2] + D^{i-1}e^i}{6X_N(k_N^i)^2}
    + \frac{a_3^i}{c}\frac{A^{i-1}X_N^2 - D^{i-1}e^i}{(k_N^i)^2} \right) \notag \\
    \overset{!}{=}& \frac{X_N^2}{Y_N^2}\left(
    \frac{K_ic^2e^i}{6X_N^3}\left[ 2X_N^2 + (2+i)(e^1)^2 \right]
    + \alpha_2^i\frac{X_N^2 + e^{i+1}e^1}{X_N^2} + \frac{d_3^ie^i}{c\,k_N^{i+1}} - \frac{a_3^i}{c} \right)
\end{align}

We see immediately, that we can divide $X_N^2$ out. Next we move all terms containing $a_3^i$ to the left hand side and all other terms to the right hand side:
\begin{align}
    a_3^i&\frac{Y_N^2 - A^{i-1}X_N^2 + D^{i-1}e^i + (k_N^i)^2}{c\,Y_N^2(k_N^i)^2}
    \overset{!}{=} \frac{1}{Y_N^2(k_N^i)^2}\left\{ \frac{c^2e^1}{6X_N}\left[ \frac{(k_N^i)^2}{X_N^2}C^{i-1}
    \right.\right.\notag\\
    &\left.\left. + iK_i\left( \frac{(k_N^i)^2\left[ 2X_N^2 + (2+i)(e^1)^2 \right]
    + Y_N^2\left[2X_N^2 + (e^i)^2 \right]}{X_N^2} + D^{i-1}e^i + A^{i-1}\left[X_N^2 + 2(e^i)^2 \right] \right) \right] \right.\notag\\
    &\left. + (k_N^i)^2\left[ \frac{\alpha_2^i}{X_N^2}\left[ X_N^2 + (i+1)(e^1)^2 \right]
    + \frac{d_3^ie^i}{c\,k_N^{i+1}} \right] \right\}
\end{align}
Finally we get $a_3^i$ dependent on $d_3^i$, $\alpha_2^i$ and the recursion parameters $C^{i-1}$, $A^{i-1}$ and $D^{i-1}$:
\begin{align}
    a_3^i(\alpha_2^i,d_3^i) =& \frac{c}{Y_N^2 - A^{i-1}X_N^2 + D^{i-1}e^i + (k_N^i)^2}\left\{ \frac{c^2 e^1}{6 X_N}\left[
    \frac{(k_N^i)^2}{X_N^2}C^{i-1} \right.\right.\notag\\
    &\left.\left. + iK_i\left( \frac{(k_N^i)^2\left[ 2X_N^2 + (i+2)(e^1)^2 \right]
    + Y_N^2\left[ 2X_N^2 + (e^i)^2 \right]}{X_N^2} + D^{i-1}e^i + A^{i-1}\left[ X_N^2 + 2(e^i)^2 \right] \right) \right] \right.\notag\\
    &\left. + (k_N^i)^2\left[ \frac{\alpha _2^i}{X_N^2}\left[ X_N^2 + (i+1)(e^1)^2 \right]
    + \frac{d_3^ie^i}{c\,k_N^{i+1}} \right] \right\}
\end{align}

We then move on to calculate $b_3^i$ inserting the zeroth order expressions and $a_3^i$ into:
\begin{align}
    b_3^i(\alpha_2^i,d_3^i) = -\frac{1}{y_2}\left( \frac{K_i}{6}(a_1^i)^2(d_1^i)^2\cos^2\alpha_0^i
    - d_3^i(d_1^i - a_1^i\sin\alpha_0^i) + a_1^id_1^i\alpha_2^i\cos\alpha_0^i
    - a_3^i(a_1^i - d_1^i\sin\alpha_0^i) \right),
\end{align}
which gives us:
\begin{align}
    b_3^i(\alpha_2^i,d_3^i) =& -\frac{1}{Y_N}\left(
    \frac{c\,e^1}{e^1(iD^{i-1} + (i^2 + 1)e^1) - (A^{i-1} - 2)X_N^2}\left\{ \frac{c^2e^1}{6X_N}\left[
    \frac{(k_N^i)^2}{X_N^2}C^{i-1} \right.\right.\right.\notag\\
    &\left.\left.\left. + iK_i\left( \left[ (k_N^i)^2 + (e^i)^2 \right]A^{i-1} + e^iD^{i-1} + Y_N^2
    + \frac{(k_N^i)^2}{X_N^2}\left[ 3Y_N^2 + e^1e^i \right] \right) \right] \right.\right.\notag\\
    &\left.\left. + (k_N^i)^2\left[ \frac{X_N^2 + e^1e^{i+1}}{X_N^2}\alpha_2^i + \frac{e^id_3^i}{c\,k_N^{i+1}} \right] \right\} + \frac{K_ic^3(e^i)^2}{6X_N} + i\alpha_2^ic\,e^1
    - d_3^i\left[ k_N^{i+1} - \frac{e^ie^{i+1}}{k_N^{i+1}} \right] \right)
\end{align}

Having exercised through almost the entire segment we finally come to $\hat{\alpha}_2^i$ of this segment:
\begin{align}
    \hat{\alpha}_2^i(\alpha_2^i,d_3^i) = \frac{1}{y_2^2}\left(
    \frac{K_i}{6}a_1^id_1^i\left[ y_2^2 + a_1^i(a_1^i - d_1^i\sin\alpha_0^i) \right]\cos\alpha_0^i
    + (a_3^id_1^i - a_1^id_3^i)\cos\alpha_0^i + a_1^i\alpha_2^i(a_1^i - d_1^i\sin\alpha_0^i) \right)
\end{align}

As with $b_3^i$ just before we insert all quantities, which we already calculated, especially $a_3^i$:
\begin{align}
    \hat{\alpha}_2^i&(\alpha_2^i,d_3^i) = \frac{X_N^2}{Y_N^2}\left\{ \frac{c^2e^1}{6X_N^3}\left[ \frac{(k_N^i)^2C^{i-1}}
    {Y_N^2 - A^{i-1}X_N^2 + D^{i-1}e^i + (k_N^i)^2} \right.\right.\notag\\
    &\left.\left. + iK_i\left( \frac{ (k_N^i)^2\left[ (3+A^{i-1})X_N^2 + (i+3)(e^1)^2 \right]
    + X_N^2\left[ D^{i-1}e^i + A^{i-1}(e^i)^2 + Y_N^2 \right] }
    {Y_N^2 - A^{i-1}X_N^2 + D^{i-1}e^i + (k_N^i)^2} + \left[ X_N^2 - (i-1)(e^1)^2 \right] \right) \right] 
    \right.\notag\\
    &\left. - \left( i(e^1)^2 - \frac{ (k_N^i)^2\left[ X_N^2 + (i+1)(e^1)^2 \right] }
    { Y_N^2 - A^{i-1}X_N^2 + D^{i-1}e^i + (k_N^i)^2 } \right)\frac{\alpha_2^i}{X_N^2}
    + \left( \frac{(k_N^i)^2}{Y_N^2 - A^{i-1}X_N^2 + D^{i-1}e^i + (k_N^i)^2} - 1 \right)
    \frac{d_3^ie^i}{c\,k_N^{i+1}} \right\}
\end{align}

Comparing with the ansatz~\eqref{eq: Ansatz} we find recursion rules for $C$, $A$ and $D$. Since the lengthy denominator occurs a lot we substitute it with $B^i$, which becomes part of the recursion rule:
\begin{align}
    B^i \coloneq& Y_N^2 - A^{i-1}X_N^2 + D^{i-1}e^i + (k_N^i)^2, \\
    A^i =& \frac{1}{X_N^2}\left( (k_N^i)^2\frac{X_N^2 + (i+1)(e^1)^2}{B^i} - i(e^1)^2 \right), \quad
    D^i = e^i\left( 1 - \frac{(k_N^i)^2}{B^i} \right), \\
    C^i =& \frac{(k_N^i)^2}{B^i}C^{i-1} \\
    &+ iK_i\left( 
    \frac{ X_N^2\left( \left[ (k_N^i)^2 + (e^i)^2 \right]A^{i-1} + e^iD^{i-1} \right)
    + (k_N^i)^2\left[ 3Y_N^2 + i(e^1)^2 \right] + X_N^2Y_N^2 }{B^i} + \left[ X_N^2 - (i-1)(e^1)^2\right] \right) \notag
\end{align}

\subsubsection{$\Delta_{N-1}:$ Problem 2, Type 3}
The upper right triangle $\hat{\Delta}_{N-1}$ in the second last segment $\Delta_{N-1}$ is a special case, since as mentioned in the $0$-th order case, it's neighbouring triangle has in general an angle $(\pi-\beta)$ instead of $\frac{\pi}{2}$. Since the lower left triangle $\overset{\vee}{\Delta}_{N-1}$ is not affected by this change we can use the $d_3^p(a_3)$ and $\alpha_2^p(a_3)$ from the previous section with $i=N-1$:
\begin{align}
    d_3^{N-2}(a_3^{N-1}) =& \frac{e^{N-1}}{k_N^{N-1}}\left( a_3^{N-1} - \frac{K_{N-1}c^3e^{N-1}}{6X_N} \right) \\
    \alpha_2^{N-2}(a_3^{N-2}) =& \frac{X_N^2}{(k_N^{N-1})^2}\left( \frac{a_3^{N-1}}{c}
    + \frac{K_{N-1}}{6}c^2e^{N-1}\frac{(k_N^{N-1})^2 + (e^{N-1})^2}{X_N^3} \right)
\end{align}

This means, that also only the right hand side of the $\delta$-equation changes.
\begin{align}
    \delta^{(2)} =& \frac{1}{y_2^2}\left(
    \frac{K_{N-1}}{6}a_1^{N-1}d_1^{N-1}\left[ y_2^2 + d_1^{N-1}(d_1^{N-1} - a_1^{N-1}\sin\alpha_0^{N-1}) \right]\cos\alpha_0^{N-1} \right.\notag\\
    &\left.\qquad + (a_1^{N-1}d_3^{N-1} - a_3^{N-1}d_1^{N-1})\cos\alpha_0^{N-1} + d_1^{N-1}\alpha_2^{N-1}(d_1^{N-1} - a_1^{N-1}\sin\alpha_0^{N-1}) \vphantom{\frac{K_{N-1}}{6}}\right), \\
    y_2 =& \sqrt{(a_1^{N-1})^2 + (d_1^{N-1})^2 - 2a_1^{N-1}d_1^{N-1}\sin\alpha_0^{N-1}} \notag
\end{align}
But we know $\alpha_2^{N-1}$ and $d_3^{N-1}$ already from Problem 1. When we insert them, together with the zeroth order solutions and simplify, we get the following equation:
\begin{align}
    \delta^{(2)} =& \frac{X_N^2}{(k_N^{N-1})^2}\left( \frac{a_3^{N-1}}{c}
    - \frac{K_{N-1}c^2e^{N-1}}{6X_N^3}\left[ X_N^2 + (k_N^{N-1})^2 \right] \right) \notag \\
    &- \frac{X_N^2}{Y_N^2}\left( \frac{c^2e^1}{6X_N^3}C^{N-2}
    + K_{N-1}c^2e^{N-1}\frac{A^{N-2}[(k_N^{N-1})^2 + (e^{N-1})^2] + D^{N-2}e^{N-1}}{6X_N(k_N^{N-1})^2}
    + \frac{a_3^{N-1}}{c}\frac{A^{N-2}X_N^2 - D^{N-2}e^{N-1}}{(k_N^{N-1})^2} \right) \notag \\
    \overset{!}{=}& \frac{cX_N^2}{X_1Y_N^2}\left\{ \frac{e^1}{6X_N}\left[
    (N-1)K_{N-1}\frac{X_1}{X_N}\left( a^2+Nc^2 + (N+1)a\,c\cos\beta + \frac{X_1}{X_N}Y_N^2 \right)
    \right.\right.\notag\\
    &\left.\left. + K_N \left(2 a^2+N c^2+(2 N+1) a c \cos (\beta )\right) \vphantom{\frac{X_1}{X_N}}\right] 
    -\frac{a_3^{N-1}}{c} \right\}
\end{align}

When we solve for $a_3^{N-1}$ in the same manner as in the previous section we get:
\begin{align}
    a_3^{N-1} =& \frac{c^2X_1(k_N^{N-1})^2}{ 6X_N\left[ c(k_N^{N-1})^2
    + X_1\left( Y_N^2 - A^{N-2}X_N^2 + D^{N-2}e^{N-1} \right) \right] }\left\{ \frac{e^1}{X_1} \right.\notag\\
    &\left. \cdot \left[ (N-1)K_{N-1}\frac{X_1}{X_N}\left( a^2 + Nc^2 +(N+1)a\,c\cos\beta + \frac{X_1}{X_N}Y_N^2 \right) + K_N\left( 2a^2 + Nc^2 + (2N+1)a\,c\cos\beta \right) \right] \right.\notag\\
    &\left. + \frac{c\,e^1}{X_N^2}C^{N-2} + K_{N-1}\frac{c\,e^{N-1}}{(k_N^{N-1})^2}\left[
    Y_N^2\left( 1 + \frac{(k_N^{N-1})^2}{X_N^2} \right)
    + A^{N-2}\left[ 2(k_N^{N-1})^2 - X_N^2 \right] + D^{N-2}e^{N-1} \right] \right\}
\end{align}

When we insert all calculated quantities into
\begin{align}
    b_3^{N-1} =& -\frac{1}{y_2}\left( \frac{K_{N-1}}{6}(a_1^{N-1})^2(d_1^{N-1})^2\cos^2\alpha_0^{N-1}
    - d_3^{N-1}(d_1^{N-1} - a_1^{N-1}\sin\alpha_0^{N-1}) \right.\notag\\
    &\left. + a_1^{N-1}d_1^{N-1}\alpha_2^{N-1}\cos\alpha_0^{N-1}
    - a_3^{N-1}(a_1^{N-1} - d_1^{N-1}\sin\alpha_0^{N-1}) \vphantom{\frac{K_{N-1}}{6}}\right),
\end{align}
we can directly calculate $b_3^{N-1}$ in terms of the recursion parameters and input variables only:
\begin{align}
    b_3^{N-1} =& -\frac{c^2}{6 Y_N}\frac{X_1}{X_N}\left\{ \frac{(k_N^{N-1})^2e^1}
    {X_1\left( Y_N^2 - A^{N-2}X_N^2 + D^{N-2}e^{N-1} \right) + c(k_N^{N-1})^2}\left[ \frac{ce^1}{X_N^2}C^{N-2}
    \right.\right.\notag\\
    & + K_{N-1}e^{N-1}\left( c\frac{A^{N-2}X_N^2 + D^{N-2}e^{N-1} + 2A^{N-2}(e^{N-1})^2}{(k_N^{N-1})^2} + \frac{a^2 + Nc^2 + (N+1)a\,c\cos\beta}{X_N} \right.\notag\\
    &\left.\left. + \frac{Y_N^2}{X_N}\left[ \frac{X_1+c}{X_N} + \frac{cX_N}{(k_N^{N-1})^2} \right] \right) + K_Ne^1\frac{2a^2 + Nc^2 + (2N+1)a\,c\cos\beta}{X_1}\right] \notag \\
    &\left.+ K_{N-1}(e^{N-1})^2 + K_N(2 N-1)(e^1)^2\frac{X_N}{X_1} \right\}
\end{align}
We will see later, that leaving the expression dependent on $a_3^{N-1}$ is more comfortable to handle:
\begin{align}
    b_3^{N-1}(a_3^{N-1}) = -\frac{X_N}{X_1Y_N}\left( \frac{K_{N-1}}{6}c^2(e^{N-1})^2\frac{X_1^2}{X_N^2}
    + (2N-1)\frac{K_N}{6}c^2(e^1)^2\frac{X_1}{X_N} - e^1\left[ \frac{(N-1)c}{X_N}-1 \right]a_3^{N-1} \right)
\end{align}

We again insert all previously calculated quantities into
\begin{align}
    \hat{\alpha}_2^{N-1}(\alpha_2^{N-1},d_3^{N-1}) =& \frac{1}{y_2^2}\left(
    \frac{K_{N-1}}{6}a_1^{N-1}d_1^{N-1}\left[ y_2^2 + a_1^{N-1}(a_1^{N-1}
    - d_1^{N-1}\sin\alpha_0^{N-1}) \right]\cos\alpha_0^{N-1} \right.\notag\\
    &\left. + (a_3^{N-1}d_1^{N-1} - a_1^{N-1}d_3^{N-1})\cos\alpha_0^{N-1}
    + a_1^{N-1}\alpha_2^{N-1}(a_1^{N-1} - d_1^{N-1}\sin\alpha_0^{N-1}) \vphantom{\frac{K_{N-1}}{6}}\right),
\end{align}
which leads us to full form only dependent on the recursion parameters and initial variables:
\begin{align}
    \hat{\alpha}_2^{N-1} =& \frac{1}{6 Y_N^2}\left\{ \frac{c^3X_N(k_N^{N-1})^2e^1}
    {X_1\left(Y_N^2 - A^{N-2}X_N^2 + D^{N-2}e^{N-1}\right) + c(k_N^{N-1})^2}\left[
    \frac{C^{N-2}}{X_N^2} \right.\right.\notag\\
    & + (N-1)K_{N-1}\left( \frac{1}{cX_N}\left[ a^2 + Nc^2 + (N+1)a\,c\cos\beta
    + \frac{X_1}{X_N}Y_N^2 \right] + Y_N^2\frac{X_N^2 + (k_N^{N-1})^2}{X_N^2(k_N^{N-1})^2} \right.\notag\\
    &\left.\left. + \frac{ A^{N-2}\left[ (k_N^{N-1})^2 + (e^{N-1})^2 \right] + D^{N-2}e^{N-1} }{(k_N^{N-1})^2} \right) + \frac{K_N}{c X_1}\left( 2a^2 + Nc^2 + (2N+1)a\,c\cos\beta \right) \right] \notag\\
    &\left. + cK_{N-1}\frac{e^{N-1}}{X_N}(X_1 Y_N^2-c\,e^1 e^{N-1})
    + (N-1)K_N\frac{(e^1)^3c^2}{X_1}\left( \frac{cX_N - a(a + c\cos\beta)}{Y_1^2} - 1 \right) \right\}
\end{align}
and in dependence of $a_3^{N-1}$ for convenience:
\begin{align}
    \hat{\alpha}_2^{N-1}(a_3^{N-1}) =& \frac{1}{X_1Y_N^2}\left( \frac{ce^{N-1}}{6}\left[
    K_{N-1}\frac{X_1}{X_N}(X_1Y_N^2 - c\,e^{N-1}e^1) + K_Nc(e^1)^2\left( \frac{Nc^2 - a^2}{Y_1^2} - 1 \right) \right] + X_N^2a_3^{N-1} \right)
\end{align}

\subsection{The recursion parameters}\label{subsec: The recursion parameters}
To make sure, that the expressions we use are well defined we need to understand the index range of the recursion parameters. The angle $\hat{\alpha}_2^i$ appears in the segments $\Delta_1,\ldots,\Delta_N-2$, thus the recursion formulas are valid for $i\in\{2,\ldots,N-2\}$. Since the $B^i$ depend on the previous $A^{i-1}$ and $D^{i-1}$ it's index range is shortened by one: $i\in\{3,\ldots,N-2\}$.\\
To calculate the first $B^i$ we need the first $A^i$ and $D^i$ which we can read out from the first $\hat{\alpha}^i$, which is unsurprisingly:
\begin{equation}
    \hat{\alpha}_2^1(\alpha_2^1,d_3^1) = \frac{X_N}{2Y_N^2}\left( \frac{K_1}{3}c^2e^1\left[1 + 2\frac{Y_N^2}{X_N^2}\right]
        + X_N\left[\alpha_2^1 - \frac{d_3^1e^1}{c\,k_N^2}\right] \right)
    = \frac{X_N^2}{Y_N^2}\left( \frac{c^2e^1}{6X_N^3}K_1\left[X_N^2 + 2Y_N^2\right]
        + \frac{1}{2}\alpha_2^1 - \frac{e^1d_3^1}{2c\,k_N^2} \right) \notag
\end{equation}

Bringing it into standard form and comparing with the ansatz~\eqref{eq: Ansatz} we get the first recursion parameters:
\begin{equation}
    C^1 = K_1\left[X_N^2 + 2Y_N^2\right], \qquad A^1 = \frac{1}{2}, \qquad D^1 = \frac{e^1}{2},
\end{equation}

and thus $B^2$ is:
\begin{equation}
    B^2 = Y_N^2 - A^1X_N^2 + D^1e^2 + (k_N^2)^2 = 3\left[\frac{X_N^2}{2} + 2(e^1)^2\right],
\end{equation}
\begin{align}
    B^i \coloneq& Y_N^2 - A^{i-1}X_N^2 + D^{i-1}e^i + (k_N^i)^2, \quad i\in\{2,\ldots,N-2\} \label{eq: Bi}\\
    A^i =& \frac{1}{X_N^2}\left( (k_N^i)^2\frac{X_N^2 + (i+1)(e^1)^2}{B^i} - i(e^1)^2 \right), \quad
    D^i = e^i\left( 1 - \frac{(k_N^i)^2}{B^i} \right), \quad i\in\{2,\ldots,N-2\} \label{eq: Ai&Di}\\
    C^i =& \frac{(k_N^i)^2}{B^i}C^{i-1} \\
    &+ iK_i\left( 
    \frac{ X_N^2\left( \left[ (k_N^i)^2 + (e^i)^2 \right]A^{i-1} + e^iD^{i-1} \right)
    + (k_N^i)^2\left[ 3Y_N^2 + i(e^1)^2 \right] + X_N^2Y_N^2 }{B^i} + \left[ X_N^2 - (i-1)(e^1)^2\right] \right), \notag\label{eq: Ci} \\
    i\in&\{2,\ldots,N-2\} \notag
\end{align}

When we look at the recursion parameters~\eqref{eq: Bi},\eqref{eq: Ai&Di} and~\eqref{eq: Ci}, it looks quite convoluted, since $A^i$ and $D^i$ are dependent on $B^i$ which itself is dependent on $A^{i-1}$ and $D^{i-1}$. We could just get rid of $B^i$ using it's definition~\eqref{eq: Bi}. Then $A^i$ and $D^i$ both depend on $A^{i-1}$ and $D^{i-1}$, while $C^{i}$ depends on $C^{i-1}$, $A^{i-1}$ and $D^{i-1}$. It may seem at first, that the introduction of $B^i$ complicated the issue, although it's use simplifies the expressions. We can decouple the recursion rules~\eqref{eq: Bi},\eqref{eq: Ai&Di} and~\eqref{eq: Ci} however, by instead inserting the rules for $A^i$ and $D^i$ into the rule for $B^i$:
\begin{align}
    B^i \coloneq& Y_N^2 - A^{i-1}X_N^2 + D^{i-1}e^i + (k_N^i)^2, \quad A^i = \frac{1}{X_N^2}\left( (k_N^i)^2\frac{X_N^2 + (i+1)(e^1)^2}{B^i} - i(e^1)^2 \right), \quad
    D^i = e^i\left( 1 - \frac{(k_N^i)^2}{B^i} \right) \notag \\
    =& Y_N^2 - (k_N^{i-1})^2\frac{X_N^2 + i(e^1)^2}{B^{i-1}} + (i-1)(e^1)^2 + e^ie^{i-1}\left( 1 - \frac{(k_N^{i-1})^2}{B^{i-1}} \right) + (k_N^i)^2 \qquad \left|\begin{matrix} Y_N^2 = X_N^2 + (e^1)^2 \\
    (k_N^i)^2 = X_N^2 + i^2(e^1)^2 \end{matrix}\right. \notag \\
    =& X_N^2 + \left( \cancel{1} + \cancel{i}-\cancel{1} + i^2-\cancel{i} + i^2 \right)(e^1)^2 + X_N^2
    - \frac{(k_N^{i-1})^2\left[ X_N^2 + \left( \cancel{i} + i^2-\cancel{i} \right)(e^1)^2 \right]}{B^{i-1}} \notag \\
    =& 2(k_N^i)^2 - \frac{(k_N^{i-1})^2(k_N^i)^2}{B^{i-1}}
    = (k_N^i)^2\left( 2 - \frac{(k_N^{i-1})^2}{B^{i-1}} \right), \quad i\in\{3,\ldots,N-2\}
\end{align}
If we start with some $i$ and then just insert the expression for $B^{i-1}$ and so on, we see that the $B^i$'s are finite continued fractions, which become infinite when we take the limit $N\to\infty$ and $i\to N$ i.e. when we consider one of the last segments.
\begin{align}
    B^i =& (k_N^i)^2\left( 2 - \frac{(k_N^{i-1})^2}{ (k_N^{i-1})^2\left( 
    2 - \frac{(k_N^{i-2})^2}{B^{i-2}} \right) } \right)
    = \ldots = (k_N^i)^2\left(\vphantom{\begin{matrix}1\\1\\1\end{matrix}}\right.
    2 - \frac{\cancel{(k_N^{i-1})^2}}{ \cancel{(k_N^{i-1})^2}\left(\vphantom{\begin{matrix}1\\1\\1\end{matrix}}\right. 
    2 - \frac{\cancel{(k_N^{i-2})^2}}{ \cancel{(k_N^{i-2})^2}\left(\vphantom{\begin{matrix}1\\1\\1\end{matrix}}\right. 
    2 - \frac{\cancel{(k_N^{i-3})^2}}{ \ddots \cancel{(k_N^3)^2}\left( 2 - \frac{(k_N^2)^2}{B^2} \right) } 
    \left.\vphantom{\begin{matrix}1\\1\\1\end{matrix}}\right) }
    \left.\vphantom{\begin{matrix}1\\1\\1\end{matrix}}\right) } \left.\vphantom{\begin{matrix}1\\1\\1\end{matrix}}\right) \notag \\
    =& (k_N^i)^2\left(\vphantom{\begin{matrix}1\\1\\1\end{matrix}}\right.
    2 - \frac{1}{ 2 - \frac{1}{  2 - \frac{1}{ \ddots 2 - \frac{(k_N^2)^2}{B^2} } } } \left.\vphantom{\begin{matrix}1\\1\\1\end{matrix}}\right)
\end{align}
We can simplify this by introducing
\begin{align}
    b^i \coloneq \frac{B^i}{(k_N^i)^2} = 2 - \frac{(k_N^{i-1})^2}{B^{i-1}} = 2 - \frac{1}{b^{i-1}}
    = 2 - \frac{1}{2 - \frac{1}{2 - \frac{1}{\ddots 2-\frac{1}{b^2}}}}.
\end{align}
After calculating the first few examples one can see, that they follow a very simple rule.\\
\begin{proposition}
\begin{equation}
    b^i = \frac{i+1}{i}, \qquad i \geqslant 2
\end{equation}
\end{proposition}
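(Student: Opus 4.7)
The plan is to prove this by induction on $i$, using the closed-form recursion $b^i = 2 - \frac{1}{b^{i-1}}$ derived in the excerpt.

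First I would establish the base case $i=2$. The excerpt already computes $B^2 = 3\bigl[\tfrac{X_N^2}{2} + 2(e^1)^2\bigr]$ and records $(k_N^2)^2 = X_N^2 + 4(e^1)^2$ (the general formula $(k_N^i)^2 = X_N^2 + i^2(e^1)^2$ specialized at $i=2$). Dividing gives
\begin{equation*}
b^2 = \frac{B^2}{(k_N^2)^2} = \frac{\tfrac{3}{2}\bigl(X_N^2 + 4(e^1)^2\bigr)}{X_N^2 + 4(e^1)^2} = \frac{3}{2},
\end{equation*}
which agrees with the claimed formula $\tfrac{i+1}{i}$ at $i=2$. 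This confirms the base case; note that the geometric input variables $a$, $c$, $\beta$ fall out cleanly, which is reassuring.

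For the inductive step I would assume $b^{i-1} = \tfrac{i}{i-1}$ and apply the simplified recursion $b^i = 2 - \tfrac{1}{b^{i-1}}$ that was derived at the end of the subsection by cancelling the $(k_N^{i-1})^2$ factors. A direct substitution yields
\begin{equation*}
b^i = 2 - \frac{i-1}{i} = \frac{2i - (i-1)}{i} = \frac{i+1}{i},
\end{equation*}
which closes the induction. This step is purely algebraic once the recursion has been brought into the form $b^i = 2 - 1/b^{i-1}$.

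There is essentially no obstacle here: the two nontrivial pieces of work have already been done above the proposition, namely (i) decoupling the $B^i$ recursion from the $A^{i-1}, D^{i-1}$ system to obtain $B^i = (k_N^i)^2\bigl(2 - (k_N^{i-1})^2/B^{i-1}\bigr)$, and (ii) explicitly computing $B^2$ from the initial data $A^1 = 1/2$, $D^1 = e^1/2$. The proposition itself is then just a two-line induction, and I would only need to be careful to state the index range ($i \geq 2$, with the recursion valid for $i \geq 3$) so that the base and inductive steps together cover every $i$ appearing in the later analysis.
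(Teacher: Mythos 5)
Your proof is correct and follows essentially the same route as the paper: verify the base case $b^2 = 3/2$ by dividing the explicitly computed $B^2$ by $(k_N^2)^2 = X_N^2 + (e^2)^2 = X_N^2 + 4(e^1)^2$, then close the induction with the decoupled recursion $b^i = 2 - 1/b^{i-1}$. The only cosmetic difference is that the paper phrases the step as $i \to i+1$ rather than $i-1 \to i$.
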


\begin{proof}
\begin{flalign}
&\text{Induction start:} &\quad b^2 &= \frac{B^2}{(k_N^2)^2} = \frac{3}{2}\frac{X_N^2 + 4(e^1)^2}{X_N^2 + (e^2)^2} = \frac{3}{2} \qquad \checkmark&& \\
&\text{Induction step:} &\quad b^{i+1} &= 2 - \frac{1}{b^i} = 2 - \frac{i}{i+1} = \frac{2(i+1) - i}{i+1} = \frac{i+2}{i+1} \qquad \checkmark&&
\end{flalign}
\end{proof}

Then the other recursion parameters become:
\begin{align}
    A^i =& \frac{1}{X_N^2}\left( \frac{X_N^2 + (i+1)(e^1)^2}{b^i} - i(e^1)^2 \right)
    = \frac{i}{i+1}, \quad
    D^i = e^i\left( 1 - \frac{1}{b^i} \right) = e^i\left( \frac{i+1 - i}{i+1} \right) = \frac{e^i}{i+1} \notag \\
    C^i =& \frac{i}{i+1}C^{i-1} \\
    &+ \underbrace{iK_i\left( 
    \frac{1}{b^i}\left[  \frac{X_N^2}{(k_N^i)^2}\left( \left[ (k_N^i)^2 + (e^i)^2 \right]A^{i-1} + e^iD^{i-1} \right)
    + 3Y_N^2 + i(e^1)^2 + \frac{X_N^2Y_N^2}{(k_N^i)^2}  \right]
    + X_N^2 - (i-1)(e^1)^2 \right)}_{\textcolor{gray}{\mathcal{K}^i}} \notag \\
    =& \frac{1}{b^i}\left(\vphantom{\frac{X_N^2}{(k_N^i)^2}}\right. C^{i-1}
    + \underbrace{K_i\left[ 2iX_N^2 + i\left( 3 + \frac{X_N^2}{(k_N^i)^2} \right)Y_N^2
    + \left( 1 + \frac{i(i^2-1)X_N^2}{(k_N^i)^2} \right)(e^1)^2 \right]}_{\mathcal{K}^i}
    \left.\vphantom{\frac{X_N^2}{(k_N^i)^2}}\right)
\end{align}

We have $A^i$ and $D^i$ in explicit form now but there is still a recursion remaining in $C^i$. To solve this recursion we simplified the $C^i$ by introducing the substitute $\mathcal{K}^i$ to replace the constant term (not dependent on $C^j$) containing the curvature dependence. This way we get a more generic form of the problem and remove details, which are not relevant for the recursion. We can solve this recursion the same way we solved the $b^i$ by directly inserting the recursion rule into itself until the structure becomes obvious:
\begin{align}
    C^i =& \frac{1}{b^i}\left( C^{i-1} + \mathcal{K}^i \right)
    = \frac{1}{b^i}\left( \frac{1}{b^{i-1}}\left( C^{i-2} + \mathcal{K}^{i-1} \right) + \mathcal{K}^i \right)
    = \frac{1}{b^i}\left( \frac{1}{b^{i-1}}\left( \ldots
    \frac{1}{b^3}\left( \frac{1}{b^2}\left( C^1 + \mathcal{K}^2 \right) + \mathcal{K}^3 \right) \ldots
    + \mathcal{K}^{i-1} \right) + \mathcal{K}^i \right) \notag \\
    =& \frac{1}{b^ib^{i-1}\dots b^3b^2}C^1 + \frac{1}{b^ib^{i-1}\dots b^3b^2}\mathcal{K}^2 + \ldots
    + \frac{1}{b^ib^{i-1}}\mathcal{K}^{i-1} + \frac{1}{b^i}\mathcal{K}^i
    = C^1\prod_{j=2}^i\frac{1}{b^j} + \sum_{k=2}^i\mathcal{K}^k \prod_{j=k}^i\frac{1}{b^j}
\end{align}

\begin{align}
    \prod_{j=k}^i\frac{1}{b^j} = \frac{k}{k+1}\frac{k+1}{k+2}\cdot\ldots\cdot\frac{i}{i+1} = \frac{k}{i+1} \quad
    \Rightarrow \quad C^i = \frac{2}{i+1}C^1 + \sum_{k=2}^i \frac{k}{i+1}\mathcal{K}^k
\end{align}
Since the curvature term $\mathcal{K}^k$ comes from the recursion formula of $C^i$ it's range is $k\in\{2,\ldots,N-2\}$ and we check, that all $k$ appearing in valid $C^i$ are within range.

\subsection{Spherical triangulation of the $1$-st slice}\label{subsec: Spherical triangulation of the $1$-st slice}
With the recursion parameters in explicit form we can start backwards inserting in the following manner:
\begin{align}
    &\Delta_N, \ \text{Problem 1:} & &\to & &d_3^{N-1}, \quad \alpha_2^{N-1}, \quad \gamma_2^{N-1}
    \notag \\
    &\Delta_{N-1}, \ \text{Problem 2, Type 3:} & &\to & &a_3^{N-1}(C^{N-2},A^{N-2},D^{N-2}) \notag \\
    & & &\to & &b_3^{N-1}(a_3^{N-1}), \quad \hat{\alpha}_2^{N-1}(a_3^{N-1}), \quad d_3^{N-2}(a_3^{N-1}),
    \quad \alpha_2^{N-2}(a_3^{N-1}) \notag \\
    &\Delta_{N-2}, \ \text{Problem 2, Type 2:} & &\to & &a_3^{N-2}(C^{N-3},A^{N-3},D^{N-3};d_3^{N-2},\alpha_2^{N-2}), \quad b_3^{N-2}(C^{N-3},A^{N-3},D^{N-3};d_3^{N-1},\alpha_2^{N-1}) \notag \\
    & & &\to & &d_3^{N-3}(a_3^{N-2}), \quad \alpha_2^{N-3}(a_3^{N-2}) \notag \\
    &\vdots \notag \\
    &\Delta_i, \ \text{Problem 2, Type 2:} & &\to & &a_3^i(C^{i-1},A^{i-1},D^{i-1};d_3^i,\alpha_2^i), \quad b_3^i(C^{i-1},A^{i-1},D^{i-1};d_3^i,\alpha_2^i) \notag \\
    & & &\to & &d_3^{i-1}(a_3^i), \quad \alpha_2^{i-1}(a_3^i) \notag \\
    &\vdots \notag \\
    &\Delta_2, \ \text{Problem 2, Type 2:} & &\to & &a_3^2(C^1,A^1,D^1;d_3^2,\alpha_2^2), \quad b_3^2(C^1,A^1,D^1;d_3^2,\alpha_2^2) \quad \to \quad d_3^1(a_3^2), \quad \alpha_2^1(a_3^2) \notag \\
    &\Delta_1, \ \text{Problem 2, Type 1:} & &\to & &b_3^1(d_3^1,\alpha_2^1), \quad \alpha_2^0(d_3^1,\alpha_2^1)
\end{align}

Now that we solved the parameter recursion, we would like to rewrite our results, so we can calculate them more directly. We start by simplifying $a_3^{N-1}$ using $B^{N-2}$ and introducing the substitutes $\chi^{N-1}$ and $\zeta^{N-1}$:
\begin{align}
    a_3^{N-1} =& \frac{(N-1)c^2X_1}{6X_N\left[ (N-1)c + X_1 \right]}\left( \frac{c\,e^1}{X_N^2}C^{N-2}
    + K_{N-1}\chi + K_N\zeta \right) \\
    \chi =& (N-1)\frac{e^1}{X_N}\left( a^2 + Nc^2 + (N+1)a\,c\cos\beta + \frac{X_1}{X_N}Y_N^2 \right) \notag \\
    &+ \left( Y_N^2\frac{(k_N^{N-1})^2 + X_N^2}{(k_N^{N-1})^2X_N^2} + \frac{N-2}{(k_N^{N-1})^2}\left[
    \frac{X_N^2}{N-1} + (2N-1)(e^1)^2 \right] \right)c\,e^{N-1} \\
    \zeta =& \frac{e^1}{X_1}( 2a^2 + Nc^2 + (2N+1)a\,c\cos\beta )
\end{align}

There is no point in inserting the expression for $a_3^{N-1}$ into $b_3^{N-1}$ or $\hat{\alpha}_2^{N-1}$, since we will calculate the asymptotic behaviour of the substitutes first and then work ourselves through the layers of substitutions.
\begin{align}
    b_3^{N-1} =& -\frac{c^2 \left(e^1\right)^2}{6 Y_N}\left( (N-1)^2K_{N-1}\frac{X_1}{X_N}
    + (2N-1)K_N + \frac{6}{c^2e^1}a_3^{N-1} \right) \\
    \hat{\alpha}_2^{N-1}(a_3^{N-1}) =& \frac{1}{X_1Y_N^2}\left( \frac{c\,e^{N-1}}{6}\left[
    K_{N-1}\frac{X_1}{X_N}(X_1Y_N^2 - c\,e^{N-1}e^1) + K_Nc(e^1)^2\left( \frac{Nc^2 - a^2}{Y_1^2} - 1 \right) \right] + X_N^2a_3^{N-1} \right)
\end{align}

By solving the recursion parameters we got one step further, we can calculate the quantities in the segment $\Delta_{N-1}$ as well. But only one step, since as we can see above, that the quantities in the segment $\Delta_i$ are dependent on the ones in segment $\Delta_{i-1}$. This still does not allow us to write an explicit expression for a quantity in $\Delta_i$.\\
A first step to solve this problem is to derive a recursion formula for $a_3^i$ by exercising this procedure from one segment to the next one similar to what we did with $\hat{\alpha}_2^i$. Except that we don't need an ansatz here.\\
When we insert $d_3^i$ and $\alpha_2^i$ into $a_3^i$ we already get the desired result:
\begin{align}
    a_3^i(a_3^{i+1}) =& a_3^i(d_3^i(a_3^{i+1}),\alpha_2^i(a_3^{i+1}))
    = \frac{(k_N^i)^2}{B^i}\left( \frac{c^3e^1}{6X_N}\mathcal{K}_a^i + a_3^{i+1} \right)
    = \frac{1}{b^i}\left( \frac{c^3e^1}{6X_N}\mathcal{K}_a^i + a_3^{i+1} \right), \\
    \mathcal{K}_a^i \coloneq& \frac{C^{i-1}}{X_N^2} + i\,K_i\mathrm{X}^i + (i+1)K_{i+1}\mathrm{Z}^i \\
    \mathrm{X}^i \coloneq& \frac{1}{(k_N^i)^2}\left[ \left(5-\frac{1}{i}\right)X_N^2
    + i^2(i+3)\frac{(e^1)^4}{X_N^2} + (5i^2 + 3)(e^1)^2 \right], \qquad
    \mathrm{Z}^i \coloneq 1 + 2(i+1)\frac{(e^1)^2}{X_N^2}.
\end{align}

This is essentially the same recursion problem as the one for $C^i$, so we can solve it in the same way. This time the index is increasing, so the recursion stops at the last term, which is $a_3^{N-1}$:
\begin{align}
    a_3^i =& \frac{1}{b^i}\left( \frac{c^3e^1}{6X_N}\mathcal{K}_a^i + \frac{1}{b^{i+1}}\left( \frac{c^3e^1}{6X_N}\mathcal{K}_a^{i+1} + a_3^{i+2} \right) \right) \notag \\
    =& \frac{1}{b^i}\left( \frac{c^3e^1}{6X_N}\mathcal{K}_a^i + \frac{1}{b^{i+1}}\left( \frac{c^3e^1}{6X_N}\mathcal{K}_a^{i+1} + \frac{1}{b^{i+2}}\left( \ldots \frac{1}{b^{N-2}}\left( \frac{c^3e^1}{6X_N}\mathcal{K}_a^{N-2} + a_3^{N-1} \right) \ldots \right) \right) \right) \notag \\
    =& \frac{c^3e^1}{6X_N}\frac{\mathcal{K}_a^i}{b^i} + \frac{c^3e^1}{6X_N}\frac{\mathcal{K}_a^{i+1}}{b^ib^{i+1}}
    + \ldots + \frac{c^3e^1}{6X_N}\frac{\mathcal{K}_a^{N-2}}{b^ib^{i+1}\dots b^{N-2}} + \frac{a_3^{N-1}}{b^ib^{i+1}\dots b^{N-2}} \notag \\
    =& \frac{c^3e^1}{6X_N}\sum_{n=i}^{N-2}\mathcal{K}_a^n \prod_{j=i}^n\frac{1}{b^j}
    + a_3^{N-1}\prod_{j=i}^{N-2}\frac{1}{b^j}
    = \frac{c^3e^1}{6X_N}\sum_{n=i}^{N-2}\frac{i}{n+1}\mathcal{K}_a^n
    + \frac{i}{N-1}a_3^{N-1}
\end{align}

Now that we have $a_3^i$ explicitly we can get the explicit at simplified form of $b_3^i$, by inserting $d_3^i$ and $\alpha_2^i$:\\
\begin{align}
    b_3^i(a_3^{i+1}) =& b_3^i(d_3^i(a_3^{i+1}),\alpha_2^i(a_3^{i+1}))
    = -\frac{e^1}{(i+1)Y_N}\left( \frac{c^3e^1}{6X_N}i\mathcal{K}_b^i
    + a_3^{i+1} \right), \\
    \mathcal{K}_b^i \coloneq& \frac{C^{i-1}}{X_N^2} + iK^i\mathrm{X}_b^i
    + (i+1)K^{i+1}\mathrm{Z}_b^i, \\
    \mathrm{X}_b^i \coloneq& i + 5 + \frac{i-1}{i} + (i+3)\frac{(e^1)^2}{X_N^2} \notag \\
    \mathrm{Z}_b^i \coloneq& \left( i + 2 + \frac{e^1e^{i+1}}{X_N^2} \right)
   \left( 1 + \frac{(e^{i+1})^2}{(k_N^{i+1})^2} \right) + \frac{(i+1)^2X_N^2 + (2i+1)e^1e^{i+1}}{i(k_N^{i+1})^2}
\end{align}

The only thing we need to do in segment $\Delta_i$ at this point is inserting the expressions for $d_3^1$ and $\alpha_2^1$ in dependence of $a_3^2$ into $b_3^1$ and $\alpha_2^0$:
\begin{align}
    b_3^1 =& -\frac{e^1}{Y_N}\left( \frac{c^3e^1}{X_N}\left[ \frac{K_1}{3} + K_2 \right] - a_3^2 \right), \qquad
    \alpha_2^0 = \frac{X_N^2}{2Y_N^2}\left( \frac{c^2e^1}{3 X_N}\left[
    K_1\left( 3 + \frac{4(e^1)^2}{X_N^2} \right) + K_2\frac{(k_N^2)^2}{X_N^2} \right]
    + \frac{a_3^2}{c} \right)
\end{align}

\subsubsection{$0$-th order solution}
The zeroth order side-length of the top-line $\bar{b}_0$ and opening angle $\alpha_0$ of the slice are the first quantities we calculated in Sec.~\ref{sec: general 0-th order sol}. We summarize the $0$-th order results in this section and check their consistency.\\
Each piece $b_1^i$ comes with an $\varepsilon$, but if we add $N$ of them together this cancels and we get a $0$-th order result.
\begin{align}
    \bar{b}_0 =& b_1^1\varepsilon + \sum_{i=2}^{N-2}b_1^i\varepsilon + b_1^{N-1}\varepsilon \notag \\
    =& 2c\frac{\sqrt{a^2 + N^2c^2 + 2Na\,c\cos\beta}}{Nc + a\cos\beta}\varepsilon
    + \sum_{i=2}^{N-2}c\frac{\sqrt{a^2 + N^2c^2 + 2Na\,c\cos\beta}}{Nc + a\cos\beta}\varepsilon
    + (c + a\cos\beta)\frac{\sqrt{a^2 + N^2c^2 + 2Na\,c\cos\beta}}{Nc + a\cos\beta}\varepsilon \notag \\
    =& \sqrt{a^2 + N^2c^2 + 2Na\,c\cos\beta}\,\varepsilon = Y_N\varepsilon
\end{align}
We can check, that the limit $N\to\infty$ becomes $c$ as we would expect, when we make the slice infinitely slim and the top-line converges to the base-line:
\begin{equation}
    \lim_{N\to\infty}\bar{b}_0^1 = \lim_{N\to\infty}\sqrt{\frac{a^2}{N^2} + c^2 + \frac{2}{N}a\,c\cos\beta}
    = \sqrt{c^2} \overset{c>0}{=} c \qquad \checkmark
\end{equation}
The limiting value of $\alpha_0^0$ is zero as expected:
\begin{equation}
    \alpha_0^0 = \sin^{-1}\frac{e^1}{Y_N}, \qquad \lim_{N\to\infty}\alpha_0^0
    = \lim_{N\to\infty}\sin^{-1}\left( \frac{a\sin\beta}{\sqrt{a^2 + N^2c^2 + 2Na\,c\cos\beta}} \right)
    = \sin^{-1}0 = 0 \qquad \checkmark
\end{equation}

To calculate the angle $\beta_2$ for the next slice and thus for the recursion across slices we need it's vertical angle, which is the sum of the two adjacent angles $\hat{\alpha}_0^{N-1}$ and $\gamma_0^{N-1}$:
\begin{align}
    \hat{\alpha}_0^{N-1} =& \sin^{-1}\frac{c\,e^{N-1}}{Y_1Y_N}, \qquad
    \gamma_0^{N-1} = \sin^{-1}\left( \frac{c}{Y_1}\sin\beta \right), \qquad
    \hat{\alpha}_0^{N-1},\ \gamma_0^{N-1} < \frac{\pi}{2} \\
    \beta_0^2 =& \hat{\alpha }_0^{N-1} + \gamma _0^{N-1}
    = \sin^{-1}\left( \frac{Nc}{Y_N}\sin\beta \right), \qquad \lim_{N\to\infty}\beta_0^2 = \beta \qquad \checkmark
\end{align}
The limit shows, that if $N$ tends to infinity, the difference between angles of subsequent slices in this case $\beta_0^1$ and $\beta_0^2$ vanishes, as we would expect.

\subsubsection{$2$-nd order solution}\label{subsubsec: $2$-nd order solution}
We list the results for all quantities we need to calculate the top line, top angle and opening angle of the first slice $\Sigma_1$. We use these expressions for the plot in Fig.~\ref{fig: 1st Slice Test-plot} with which we check the consistency of our results.\\

The second order term of the top line is calculated by adding the lengths of the top lines of all segments $\Delta_1$ - $\Delta_{N-1}$. The last segment $\Delta_N$ has no top line.
\begin{equation}
    \bar{b}_2 = b_3^1\,\varepsilon + \sum_{i=2}^{N-2}b_3^i\,\varepsilon + b_3^{N-1}\varepsilon,
\end{equation}
They all depend on the lengths of the rib-lines $a_2^i$ in their segments $\Delta_i$. The first and the second last segments, $\Delta_1$ and $\Delta_{N-1}$ are special cases, since they are boundary problems. The terms containing curvature values in the regular cases are collected in $\mathcal{K}_b^i$. It consists of a term proportional to the curvature parameter $C^{i-1}$, representing the constant term in the ansatz for $\hat{\alpha}_2^i$, and two terms proportional to the curvature of the current $\Delta_i$ and the next slice $\Delta_{i+1}$. We pack their complicated proportionality factors into $\mathrm{X}_b^i$ and $\mathrm{Z}_b^i$ respectively.
\begin{align}
    b_3^1 =& -\frac{e^1}{Y_N}\left( \frac{c^3e^1}{X_N}\left[ \frac{K_1}{3} + K_2 \right] - a_3^2 \right), \quad
    b_3^{N-1} = -\frac{c^2 \left(e^1\right)^2}{6 Y_N}\left( (N-1)^2K_{N-1}\frac{X_1}{X_N}
    + (2N-1)K_N + \frac{6}{c^2e^1}a_3^{N-1} \right), \notag \\
    b_3^i =& -\frac{e^1}{(i+1)Y_N}\left( \frac{c^3e^1}{6X_N}i\mathcal{K}_b^i
    - a_3^{i+1} \right), \qquad
    \mathcal{K}_b^i \coloneq \frac{C^{i-1}}{X_N^2} + iK_i\mathrm{X}_b^i
    + (i+1)K_{i+1}\mathrm{Z}_b^i, \notag \\%, \qquad i\in\{2,\ldots,N-2\}, \\
    \mathrm{X}_b^i \coloneq& i + 5 + \frac{i-1}{i} + (i+3)\frac{(e^1)^2}{X_N^2}, \quad% \notag \\
    \mathrm{Z}_b^i \coloneq \left( i + 2 + \frac{e^1e^{i+1}}{X_N^2} \right)
    \left( 1 + \frac{(e^{i+1})^2}{(k_N^{i+1})^2} \right)
    + \frac{(i+1)^2X_N^2 + (2i+1)e^1e^{i+1}}{i(k_N^{i+1})^2}, \notag \\
    i\in&\{2,\ldots,N-2\}.
\end{align}

The opening angle of the slice is given by the opening angle of the first segment in the slice. It directly depends on the curvature value in it's segment $\Delta_1$ and the next one $\Delta_2$ and the rib-line of the second segment:
\begin{align}
    \alpha_2^0 =& \frac{X_N^2}{2Y_N^2}\left( \frac{c^2e^1}{3 X_N}\left[
    K_1\left( 3 + \frac{4(e^1)^2}{X_N^2} \right) + K_2\frac{(k_N^2)^2}{X_N^2} \right]
    + \frac{a_3^2}{c} \right).
\end{align}

The top angle of the first slice is the opposite angle to the direction angle of the second slice: $\bar{\gamma}^1 = \beta^2$. It's second order term is the sum of the top angle in the last segment $\Delta_N$ and the angle opposite to the rib-line in the upper triangle of the second last segment $\hat{\Delta}_{N-1}$.
\begin{equation}
    \bar{\gamma}_2^1 = \hat{\alpha}_2^{N-1} + \gamma_2^{N-1}.
\end{equation}
Also $\hat{\alpha}_2^{N-1}$ depends on the curvature values in it's segment and the next one and on the rib-line in it's segment. Since there is no next segment to $\Delta_N$ and it contains no rib-line, the top angle of the last segment only depends on the last curvature value.
\begin{align}
    \hat{\alpha}_2^{N-1} =& \frac{1}{X_1Y_N^2}\left( \frac{c\,e^{N-1}}{6}\left[
    K_{N-1}\frac{X_1}{X_N}(X_1Y_N^2 - c\,e^{N-1}e^1) + K_Nc(e^1)^2\left( \frac{Nc^2 - a^2}{Y_1^2} - 1 \right) \right]
    + X_N^2a_3^{N-1} \right), \\
    \gamma_2^{N-1} =& \frac{K_N}{6}\left( 1 + \frac{cX_1}{Y_1^2} \right)c\,e^1.
\end{align}

We saw, that all quantities we want to calculate ultimately depend on the length's of the rib-lines, which is where the name comes from. We again collect all terms containing curvature values in $\mathcal{K}_a^i$, which has the same structure as $\mathcal{K}_b^i$. In difference to the top lines $b_2^i$ the rib-lines contain all $\mathcal{K}_a^n$ of the following segments and with that all their curvature values: $K_{n}$, $n\in\{i,\ldots,N-1\}$. We also observe, that the special case of the last rib-line $a^{N-1}$ appears in all other rib-lines.
\begin{align}
    a_3^i =& \frac{c^3e^1}{6X_N}\sum_{n=i}^{N-2}\frac{i}{n+1}\mathcal{K}_a^n + \frac{i}{N-1}a_3^{N-1}, \qquad
    \mathcal{K}_a^i \coloneq \frac{C^{i-1}}{X_N^2} + i\,K_i\mathrm{X}^i
    + (i+1)K_{i+1}\mathrm{Z}^i, \\
    \mathrm{X}^i \coloneq& \frac{1}{(k_N^i)^2}\left[ \left(5-\frac{1}{i}\right)X_N^2
    + i^2(i+3)\frac{(e^1)^4}{X_N^2} + (5i^2 + 3)(e^1)^2 \right], \quad
    \mathrm{Z}^i \coloneq 1 + 2(i+1)\frac{(e^1)^2}{X_N^2}, \qquad i\in\{2,\ldots,N-2\}. \notag
\end{align}
The last rib-lines is proportional to a term with the structure of a $\mathcal{K}_a^i$ consistent with the expectation that is should only have one, since there is no next one and it should not depend on itself. Since it is a special case as a quantity on the boundary, factors $\chi$ and $\zeta$, which are the analogues to $\mathrm{X}^i$ and $\mathrm{Z}^i$, are not consistent with the regular case.
\begin{align}
    a_3^{N-1} =&\, \frac{(N-1)c^2X_1}{6X_N\left[ (N-1)c + X_1 \right]}\left( \frac{c\,e^1}{X_N^2}C^{N-2}
    + K_{N-1}\chi + K_N\zeta \right), \qquad \zeta = \frac{e^1}{X_1}( 2a^2 + Nc^2 + (2N+1)a\,c\cos\beta ), \notag \\
    \chi =&\, (N-1)\frac{e^1}{X_N}\left( a^2 + Nc^2 + (N+1)a\,c\cos\beta + \frac{X_1}{X_N}Y_N^2 \right) \notag \\
    &+ \left( Y_N^2\frac{(k_N^{N-1})^2 + X_N^2}{(k_N^{N-1})^2X_N^2} + \frac{N-2}{(k_N^{N-1})^2}\left[
    \frac{X_N^2}{N-1} + (2N-1)(e^1)^2 \right] \right)c\,e^{N-1}.
\end{align}

All quantities we discussed before depend in some way on the recursion parameter $C^i$. The contributions of the other two recursion parameters $A^i$ and $D^i$ are contained in $\mathcal{K}^i$ and do not appear explicitly anymore. In the $C^i$ the curvature values are added in the opposite direction, containing all $K_n$ from the first one up to the $i$-th one. This way all quantities in the end depend on all curvature values in their slice.
\begin{align}
    C^1 =&\, K_1\left[X_N^2 + 2Y_N^2\right], \qquad 
    C^i = \frac{2}{i+1}C^1 + \sum_{k=2}^i \frac{k}{i+1}\mathcal{K}^k \\
    \mathcal{K}^i =&\, K_i\left[ 2iX_N^2 + i\left( 3 + \frac{X_N^2}{(k_N^i)^2} \right)Y_N^2
    + \left( 1 + \frac{i(i^2-1)X_N^2}{(k_N^i)^2} \right)(e^1)^2 \right], \qquad i\in\{2,\ldots,N-2\}
\end{align}

We check the consistency of these expressions for the first slice and the case of finite $N$ and constant curvature $K$ on various values for the parameters $a, c$ and $\beta$ by plotting the first slice including rib-lines as we show in Fig.~\ref{fig: 1st Slice Test-plot}. We use the second order terms to the $C$- and $SC$-functions~\eqref{eq: C(K,gamma,a,b)_expanded}, \eqref{eq: SC(K,gamma,a,b)_expanded} to plot the curved lines. If everything is consistently expanded to second order, the triangle has to match up and the rib-lines have to end at the top line.\\
We cannot test the case of varying curvature this way, since \eqref{eq: C(K,gamma,a,b)_expanded}, \eqref{eq: SC(K,gamma,a,b)_expanded} are not the correct functions to draw the curved lines in this case. 
\begin{figure}[h!]
    \centering
    \includegraphics[width=\linewidth]{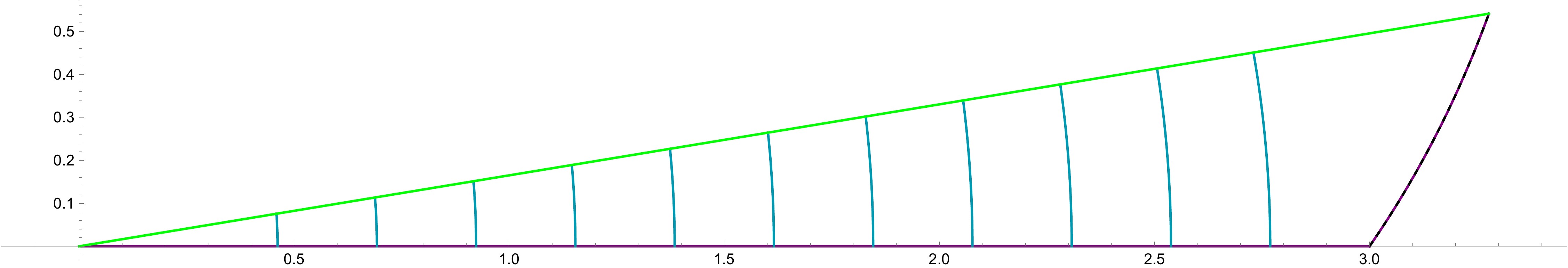}
    \caption{We test our method on the example of the unit sphere, where we expect a precise agreement with the expansions made in Sec.~\ref{sec: C, SC}. We plot the example $N=13$, $a=5$, $c=3$, $\beta=\frac{\pi}{6}$. The chosen values for $a$ and $c$ are not smaller then one in which case we would not expect this to be a good approximation. But this is not only to be seen as an approximation but rather as the second order terms of power series which as a hole are precisely valid for large values.}
    \label{fig: 1st Slice Test-plot}
\end{figure}

\section{The $j$-th slice}\label{sec: The $j$-th slice}
Now that we dealt with the first slice we have everything we need to complete the triangulation.\\
Each slice is congruent to the first one with a different baseline $c^j$ instead of $c^1 = c$ and angle $\beta^j$ instead of $\beta^1 = \beta$, with one exception. The two new arguments have higher order correction terms.\\
The idea is, to calculate the $c^j$ and $\beta^j$ using the results of the first slice in terms of the previous baseline and angle $c^{j-1}$, $\beta^{j-1}$. Thus we get a coupled recursion for the two quantities: $c^j(c^{j-1},\beta^(j-1)$, and $\beta^j(c^{j-1},\beta^{j-1})$.\\
The baseline of the $j$-th slice is the top-line of the previous slice and its angle is the same as the angle across, belonging to the previous triangle.
\begin{align}
    c^j(c^{j-1},\beta^{j-1}) =&\, \bar{b}^{j-1} = \bar{b}(K_{i,j-1};a,c^{j-1},\beta^{j-1}), \notag \\
    \beta^j(c^{j-1},\beta^{j-1}) =&\, \bar{\gamma}^{j-1} \coloneq \hat{\alpha}^{N-1,j-1} + \gamma^{N-1,j-1}
    = \bar{\gamma}(K_{i,j-1};a,c^{j-1},\beta^{j-1}),
\end{align}
\begin{equation}
    \bar{b} \coloneq \bar{b}^1 = \sum_{i=1}^{N-1}b^{i,1}, \qquad \bar{\gamma} \coloneq \bar{\gamma}^1 = \hat{\alpha}^{N-1,1} + \gamma^{N-1,1}
\end{equation}
\begin{equation}
    K_{i,j} = K(q_{i-1,j}) = K((i-1)c^j\varepsilon,\bar{\alpha}^{j-1}), \quad
    q_{i,j} = \gamma_{o,\bar{\alpha}^{j-1}}(ic^j\varepsilon) \quad i,j\in\{1,\ldots,N\},
\end{equation}
where we inserted the faithful normal coordinates of $q_{i-1,j}$.
\begin{align}
    \bar{\alpha}^{j-1} = \sum_{l=1}^{j-1} \alpha^{0,l}
    = \sum_{l=1}^{j-1} \alpha^{0,1}(K_{i,l};a,c^l,\beta^l)
\end{align}
As we can see from our results in the previous section the second order terms of $c^2$ and $\bar{\alpha}^1$ are linear in the $K_i$ and can thus be written up to second order in the following form:
\begin{align}
    c^2 \approx c_0^2 + c_2^2\varepsilon^2 = \bar{b}_0 + \sum_{i=1}^N\kappa_b^iK_{i1}\varepsilon^2, \qquad \bar{\alpha}^1 \approx \bar{\alpha}_0^1 + \bar{\alpha}_2^1\varepsilon^2
    = \alpha_0^0 + \sum_{i=1}^N\kappa_\alpha^iK_{i1}\varepsilon^2
\end{align}
We only need the curvature values $K_{i2}$ of the second slice $\Sigma_2$ to zeroth order, since they only appear in second order terms.
\begin{equation}
    K_{i2} = K((i-1)c^2\varepsilon,\bar{\alpha}^1)
    \approx K\left((i-1)\left\{\bar{b}_0\varepsilon + \sum_{i=1}^N\kappa_b^iK_{i1}\varepsilon^3\right\},
    \alpha_0^0 + \sum_{i=1}^N\kappa_\alpha^iK_{i1}\varepsilon^2\right)
    \approx K\left((i-1)\bar{b}_0\varepsilon,\alpha_0^0\right) + \mathcal{O}(\varepsilon)
\end{equation}
We see, that the curvature terms decouple from the recursion, since they are shifted in the order in $\varepsilon$ and we can calculate all curvature values we need from the zeroth order solution. Extending the zeroth order solutions from the first slice to all slices is straight forward. We again apply the cosine and sine laws the triangle, which includes all slices up to the $j$-th one and we thus have $j\,a$ instead of $a$, see Fig.~\ref{fig: 0-th order Triangulation}.
\begin{figure}[h!]
    \centering\includegraphics[width=.8\linewidth]{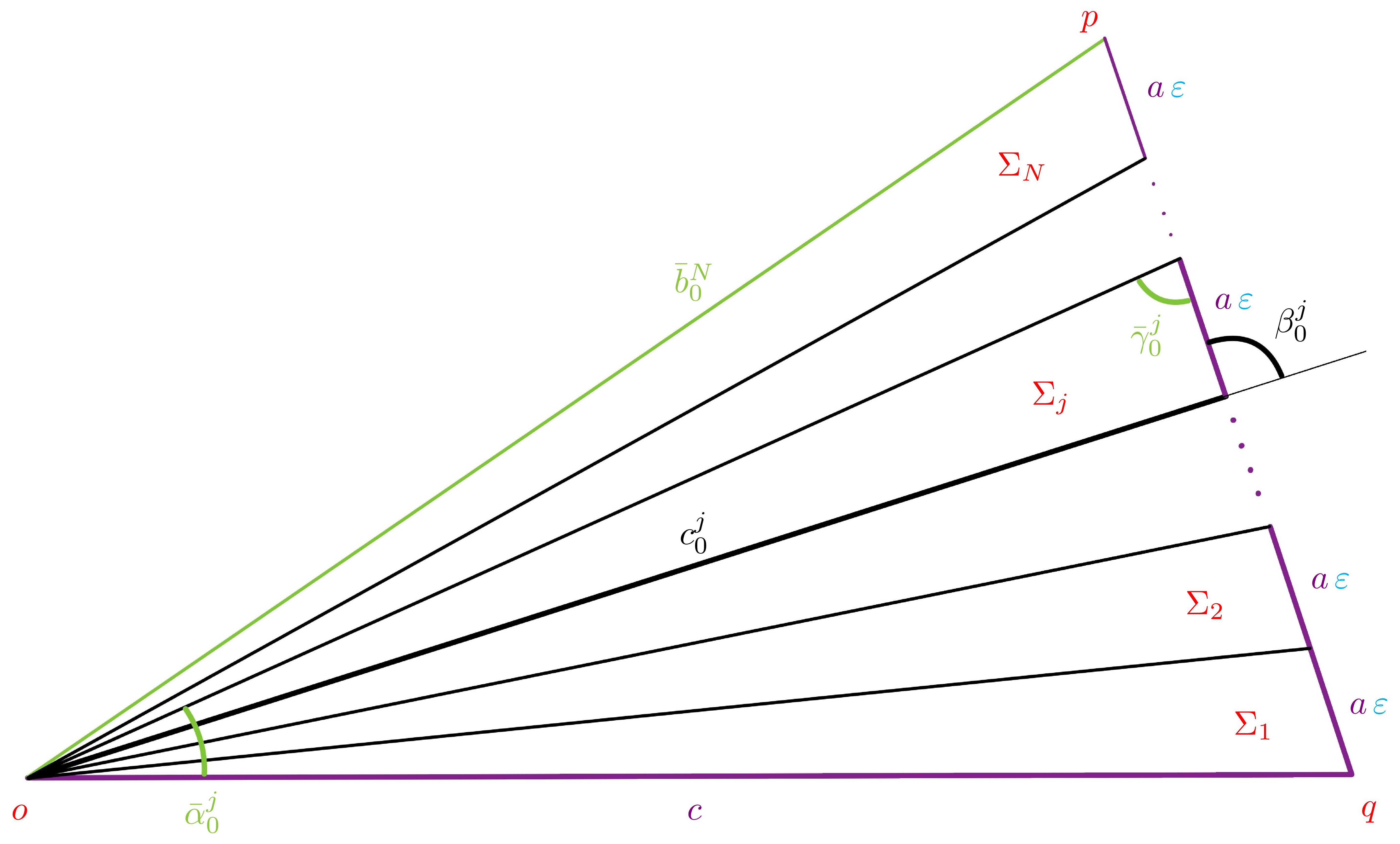}
    \caption{A sketch of the zeroth order slicing, showing the relations between the base- $c_0^j$ and top-lines $b_0^j$, direction- $\beta_0^j$ and top-angles $\gamma_0^j$ and $\bar{\alpha}_0^j$ and the opening angles of the slices $\alpha_0^j$.}
    \label{fig: 0-th order Triangulation}
\end{figure}
\begin{align}\label{eq: c_0^j -> K_ij}
    &c_0^j = \bar{b}_0^{j-1} = \sqrt{ (j-1)^2a^2 + N^2c^2 + 2N(j-1)\,a\,c\cos\beta}\,\varepsilon, \qquad
    \bar{\alpha}_0^j = \sin^{-1}\frac{j\,a\sin\beta}{\sqrt{ j^2a^2 + N^2c^2 + 2Nj\,a\,c\cos\beta}}, \\
    &\rightarrow \quad K_{ij} = K\left((i-1)c_0^j\varepsilon,\bar{\alpha}_0^{j-1}\right) \label{eq: K_ij}
\end{align}
In the same way we get:
\begin{align}\label{eq: beta_0^j}
    \beta_0^j = \bar{\gamma}_0^{j-1} = \sin^{-1}\left( \frac{Nc\sin\beta}{\sqrt{ (j-1)^2a^2 + N^2c^2 + 2N(j-1)\,a\,c\cos\beta}} \right)
\end{align}

In difference to the first slice the arguments $c^j$ and $\beta^j$ in place of $c$ and $\beta$ have higher order terms:
\begin{align}
    \bar{b} &\approx \bar{b}_0 + \bar{b}_2\varepsilon^2 + \mathcal{O}(\varepsilon^3), &\quad
    \bar{\gamma} &\approx \bar{\gamma}_0 + \bar{\gamma}_2\varepsilon^2 + \mathcal{O}(\varepsilon^3), \\
    \bar{b}_0 &= \sum_{i=1}^{N-1} b_1^{i,1}\varepsilon, \quad \bar{b}_2 = \sum_{i=1}^{N-1} b_3^{i,1}\varepsilon
    &\quad \bar{\gamma}_0 &= \hat{\alpha}_0^{N-1} + \gamma_0^{N-1}, \quad
    \bar{\gamma}_2 = \hat{\alpha}_2^{N-1} + \gamma_2^{N-1}
\end{align}
\begin{align}
    \Rightarrow \quad c^j \approx&\, \bar{b}_0\left(a,c^{j-1},\beta^{j-1}\right)
    + \bar{b}_2\left(K_{ij};a,c^{j-1},\beta^{j-1}\right)\varepsilon^2 + \mathcal{O}(\varepsilon^3)
    = c_0^j + c_2^j\varepsilon^2 + \mathcal{O}(\varepsilon^3), \\
    \beta^j \approx&\, \bar{\gamma}_0\left(a,c^{j-1},\beta^{j-1}\right)
    + \bar{\gamma}_2\left(K_{ij};a,c^{j-1},\beta^{j-1}\right)\varepsilon^2 + \mathcal{O}(\varepsilon^3)
    = \beta_0^j + \beta_2^j\varepsilon^2 + \mathcal{O}(\varepsilon^3)
\end{align}
\color{black}

The immediate way to extend the solution of the first slice to the case of the $j$-th slice would be, to substitute the arguments $c\mapsto c_0^j+c_2^j\varepsilon^2$, $\beta\mapsto\beta_0^j+\beta_2^j\varepsilon^2$ in the functions $\bar{b}$ and $\bar{\gamma}$ of the first slice and then expand them to second order in $\varepsilon$.
\begin{align}
    c^j \approx&\, \bar{b}_0(a,c_0^{j-1} + c_2^{j-1}\varepsilon^2,\beta _0^{j-1}
    + \beta _2^{j-1} \varepsilon^2)
    + \bar{b}_2(K_{i j};a,c_0^{j-1}+c_2^{j-1}\varepsilon^2,\beta_0^{j-1}
    + \beta_2^{j-1}\varepsilon^2)\varepsilon^2 + \mathcal{O}(\varepsilon^3) \notag \\
    \approx&\, \bar{b}_0(a,c_0^{j-1},\beta_0^{j-1})
    + \left\{ \left[ \partial_c\,\bar{b}_0(a,c_0^{j-1},\beta_0^{j-1}) \right]c_2^{j-1}
    + \left[ \partial_\beta\,\bar{b}_0(a,c_0^{j-1},\beta_0^{j-1}) \right]\beta_2^{j-1} + \bar{b}_2(K_{ij};a,c_0^{j-1},\beta_0^{j-1}) \right\}\varepsilon^2 + \mathcal{O}(\varepsilon^3) \notag \\
    \approx&\, \overset{\circ}{b}\,_0^{j-1}+ \left\{ \overset{\circ}{b}\,_0^{j-1,c}c_2^{j-1}
    + \overset{\circ}{b}\,_0^{j-1,\beta}\beta_2^{j-1} + \overset{\circ}{b}\,_2^{j-1} \right\}\varepsilon^2
    + \mathcal{O}(\varepsilon^3) \label{eq: b expansion} \\
    \vphantom{a}\notag\\
    \beta^j \approx&\, \bar{\gamma}_0(a,c_0^{j-1}+c_2^{j-1}\varepsilon^2,\beta_0^{j-1}+\beta_2^{j-1}\varepsilon^2)
    + \bar{\gamma}_2(K_{ij};a,c_0^{j-1}+c_2^{j-1}\varepsilon^2,\beta_0^{j-1}+\beta_2^{j-1}\varepsilon^2)
    + \mathcal{O}(\varepsilon^3) \notag \\
    \approx&\, \bar{\gamma}_0(a,c_0^{j-1},\beta_0^{j-1})
    + \left\{ \left[ \partial_c\,\bar{\gamma}_0(a,c_0^{j-1},\beta_0^{j-1}) \right]c_2^{j-1}
    + \left[ \partial_\beta\,\bar{\gamma}_0(a,c_0^{j-1},\beta_0^{j-1}) \right]\beta_2^{j-1}
    + \bar{\gamma}_2(K_{ij};a,c_0^{j-1},\beta_0^{j-1}) \right\}\varepsilon^2 + \mathcal{O}(\varepsilon^3) \notag \\
    \approx&\, \overset{\circ}{\gamma}\,_0^{j-1} + \left\{ \overset{\circ}{\gamma}\,_0^{j-1,c}c_2^{j-1}
    + \overset{\circ}{\gamma}\,_0^{j-1,\beta}\beta_2^{j-1} + \overset{\circ}{\gamma}\,_2^{j-1} \right\}\varepsilon^2 + \mathcal{O}(\varepsilon^3) \label{eq: beta expansion}
\end{align}
We will see in the following subsection however, that this does not work, since the Taylor expansion does not adequately describe the approximation scheme we constructed. To calculate the correct argument correction coefficients:
\begin{align}
    \overset{\circ}{b}\,_n^{j,c} \neq \partial_c\,\bar{b}_n(K_{ij};a,c_0^j,\beta_0^j), \quad
    \overset{\circ}{b}\,_n^{j,\beta} \neq \partial_\beta\,\bar{b}_n(K_{ij};a,c_0^j,\beta_0^j), \quad
    \overset{\circ}{\gamma}\,_n^{j,c} \neq \partial_c\bar{\gamma}_n(K_{ij};a,c_0^j,\beta_0^j), \quad
    \overset{\circ}{\gamma}\,_n^{j,\beta} \neq \partial_\beta\bar{\gamma}_n(K_{ij};a,c_0^j,\beta_0^j),
\end{align}
we need to extend the method used for the first slice to the case with second order corrections to the arguments $c$ and $\beta$ and collect the contributions to the $c_2^j$- and $\beta_2^j$-coefficients for both functions $\bar{b}_2$ and $\bar{\gamma}_2$. We abbreviate them from now on with argument coefficients.\\

Following the same procedure as for the first slice, we first need to extend the problems.

\subsection{Extended Problems}\label{subsec: Extended Problems}
In the simplified notation, dropping the $j$-index, the new arguments are $a\mapsto a$, $c\mapsto c_0 + c_2\varepsilon^2$ and $\beta\mapsto \beta_0 + \beta_2\varepsilon^2$.\\
We update the template formulas from Sec.~\ref{subsec: Solving Problems of different Types} including the additional terms arising from the perturbation of the arguments $c$ and $\beta$.\\
We calculate the additional terms arising from this perturbation in the following subsections. It can be checked, that the results we get reduce to the ones of the first slice, if we set $c_0 = c$, $c_2 = 0$, $\beta_0 = \beta$ and $\beta_2 = 0$.\\

Problem 1 is the only problem, which is directly affected by both changes and thus for $d(\varepsilon)$ and $\alpha(\varepsilon)$ the $b_3$ and $\gamma_2$ are mapped to $c_2$ and $-\beta_2$ instead of $0$.
\begin{figure}[h!]
\begin{minipage}{0.3\linewidth}
    \centering\includegraphics[width=\linewidth]{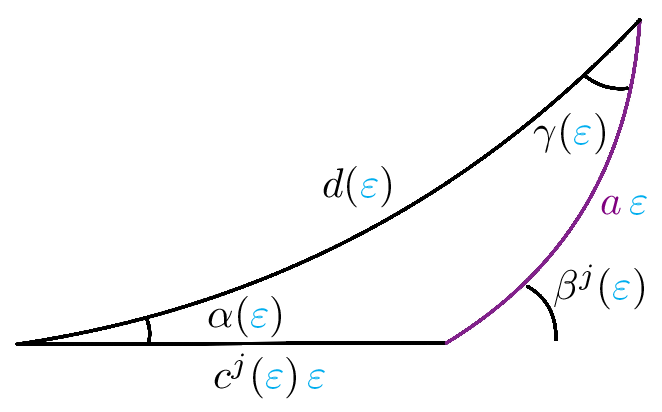}
\end{minipage}
\hfill
\begin{minipage}{0.5\linewidth}
    \begin{align}
    \left.\begin{matrix}
        d(\varepsilon) = C(K;\pi-\beta(\varepsilon),a\varepsilon,c(\varepsilon)) \\
        \alpha(\varepsilon) = SC(K;\pi-\beta(\varepsilon),a\varepsilon,c(\varepsilon))
    \end{matrix}\right\} \quad \Rightarrow \quad
    \begin{matrix}
    a_1 \mapsto a, & a_2 \mapsto 0, & a_3 \mapsto 0 \\
    b_1 \mapsto c_0, & b_2 \mapsto 0, & b_3 \mapsto c_2 \\
    \gamma_0 \mapsto \pi-\beta_0, & \gamma_1 \mapsto 0, & \gamma_2 \mapsto -\beta_2
    \end{matrix} \notag \\
    \gamma(\varepsilon) = SC(K;\pi-\beta(\varepsilon),c(\varepsilon),a\varepsilon) \quad
    \Rightarrow  \quad a_1 \mapsto c_0, \ a_3 \mapsto c_2, \quad b_1 \mapsto a, \ b_3 \mapsto 0 \notag
\end{align}
\end{minipage}
\caption{Problem 1, $j$-th slice, segment $\Delta_N$.}\label{fig: Extended Problem 1}
\end{figure}

Problem 2, Type 1 is only affected by the change to $c$.
\begin{figure}[h!]
\begin{minipage}{0.3\linewidth}
    \centering\includegraphics[width=\linewidth]{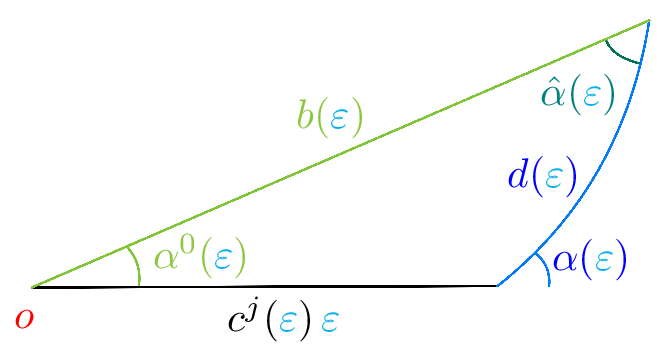}
\end{minipage}
\begin{minipage}{0.6\linewidth}
\begin{align}
    &\left.\begin{matrix}
    b(\varepsilon) = C(K;\pi-\alpha(\varepsilon),c(\varepsilon),d(\varepsilon)) \\
    \hat{\alpha}(\varepsilon) = SC(K;\pi-\alpha(\varepsilon),c(\varepsilon),d(\varepsilon))
    \end{matrix}\right\} \quad \Rightarrow \quad
    \begin{matrix}
    a_1 \mapsto c_0, & a_2 \mapsto 0, & a_3 \mapsto c_2 \\
    b_1 \mapsto d_1, & b_2 \mapsto 0, & b_3 \mapsto d_3 \\
    \gamma_0 \mapsto \pi-\alpha_0, & \gamma_1 \mapsto 0, & \gamma_2 \mapsto -\alpha_2
    \end{matrix} \notag \\
    \vphantom{a}\notag\\
    &\ \alpha^0(\varepsilon) = SC(K;\pi-\alpha(\varepsilon),d(\varepsilon),c(\varepsilon)), \quad \Rightarrow
    \begin{matrix}
    \quad a_1 \mapsto d_1, \quad a_3 \mapsto d_3 \\
    \quad b_1 \mapsto c_0,\quad b_3 \mapsto c_2
    \end{matrix} \notag
\end{align}
\end{minipage}
\caption{Problem 2, Type 1, $j$-th slice, segment $\Delta_1$.}
\label{fig: Extended Problem 2, Type 1}
\end{figure}

It may look a bit confusing, that we have a $c_0$ term which is on the same order in $\varepsilon$ as $d_1$ and the same with $c_2$ and $d_3$. This comes from the different way, the two arise. The $d(\varepsilon)$ describes a side-length of a small triangle in a segment, which we wrote as a truncated power series $d(\varepsilon) = \sum_{n=1}^3d_n\varepsilon^n$. The series has to start at $1$, since $d$ is at most of order $\mathcal{O}(\varepsilon)$. The $c^j$ however describes the entire base-line of the $j$-th segment. So, $c^j$ is of order $\mathcal{O}(1)$ and thus it's truncated power series starts at $0$: $c^j = \sum_{n=0}^2c_n\varepsilon^n$. It is then cut into $N$ pieces of equal length $\frac{c}{N} = c\varepsilon$ and thus each piece $c_0\varepsilon$ is of the same order as $d_1\varepsilon$ and so on.\\

In the Types 2 and 3 of Problem 2 only the lower left triangle $\overset{\vee}{\Delta}$ is affected by the change to $c$. And so, $b_3$ becomes $c_2$ instead of vanishing. We observe, that only the left hand side of the $\delta$-equation changes, since the right hand side only involves the upper right triangle $\hat{\Delta}$ which is not affected by this change. We thus only list the quantities obtained using $\overset{\vee}{\Delta}$:
\begin{figure}[h!]
\begin{minipage}{0.3\linewidth}
    \centering\includegraphics[width=\linewidth]{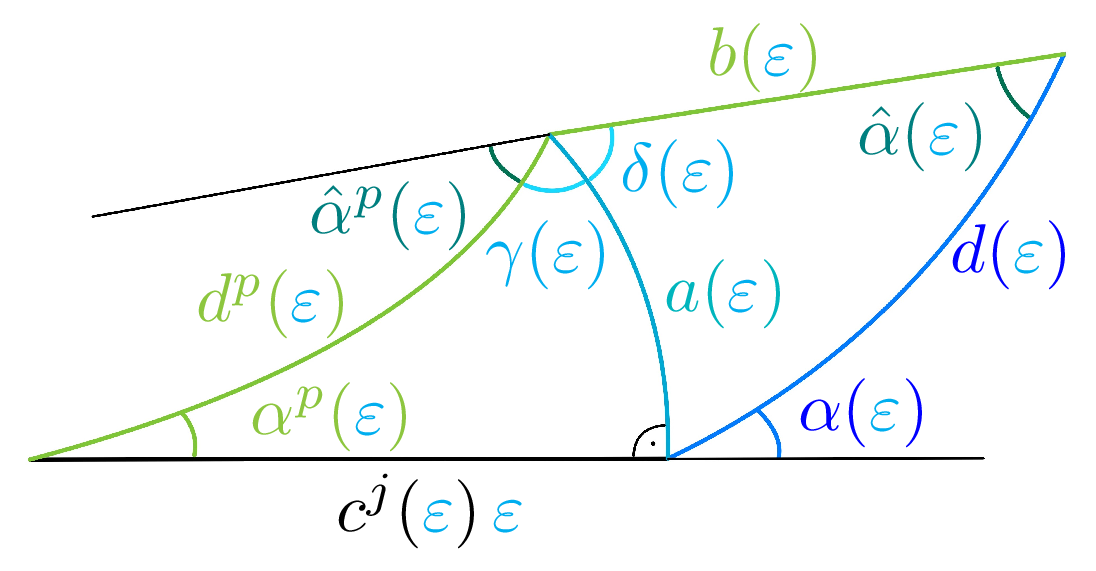}
\end{minipage}
\begin{minipage}{0.7\linewidth}
\begin{align}
    \left.\begin{matrix}
        d^p(a(\varepsilon)) = C(K;\frac{\pi}{2},a(\varepsilon),c(\varepsilon)) \\
        \alpha^p(a(\varepsilon)) = SC(K;\frac{\pi}{2},a(\varepsilon),c(\varepsilon))
    \end{matrix}\right\} \quad \Rightarrow \quad
    \begin{matrix}
    a_1 \mapsto a_1, & a_2 \mapsto 0, & a_3 \mapsto a_3 \\
    b_1 \mapsto c_0, & b_2 \mapsto 0, & b_3 \mapsto c_2 \\
    \gamma_0 \mapsto \frac{\pi}{2}, & \gamma_1 \mapsto 0, & \gamma_2 \mapsto 0
    \end{matrix} \notag \\
    \delta = \pi - SC\left(K;\frac{\pi}{2},c(\varepsilon),a(\varepsilon)\right) - \hat{\alpha}^p(a(\varepsilon))
    = \pi - SC\left(K;\frac{\pi}{2} - \alpha(\varepsilon),d(\varepsilon),a(\varepsilon)\right)
\end{align}
\end{minipage}
\caption{Problem 2, Types 2 and 3, $j$-th slice, segments $\Delta_2 - \Delta_{N-1}$}
\label{fig: Extended Problem 2, Types 2 & 3}
\end{figure}

This is the case, because no quantity from this triangle $\hat{\Delta}$ in the first slice $\Sigma_1$ is directly dependent on $c$ or $\beta$. They are indirectly dependent on $c$ which causes a mistake, if we generalize this result to the $j$-th segment, by replacing $c$ with $c_0^j + c_2^j$ and then expanding in $\varepsilon$. Since there is no direct dependence on $c^j$ in the right hand side of the $\delta$-equation, there cannot be a $c_2^j$ in these terms.
The $\hat{\alpha}_2^p(a_3)$ is a more convoluted case. At first glance it seems to have no $c_2^j$-dependence, since it is calculated from the upper right triangle $\hat{\Delta}$. The $c_2^j$ does indeed not appear in the ansatz, but it does come in via $\alpha_2^p$ and $d_3^p$ which are expressed in terms of $a_3(\varepsilon)$, to set up the $\delta$-equation, using the lower left triangle $\overset{\vee}{\Delta}$. So, $\hat{\alpha}_2$ will also have different $c_2^j$-dependence, then what we would expect from Taylor expansion.\\
This is the reason, why the direct substitution and Taylor expansion does not lead to the correct result. And thus, the only way to derive the correct expressions, is to go through the entire calculation again and extend it to the generic case of the $j$-th slice, which we summarize below.\\

We list the zeroth and second order template formulas, which are different from the first slice. Due to the additional terms proportional to $c_2$ and $\beta_2$ we cannot factor out some pre-factors as we could in the first slice and in turn the terms become longer, but more systematic. Since we did not introduce a first order perturbation $c_1\varepsilon$, nothing changes on the first order and the same argument can thus be applied to check, that the first order terms still all vanish.\\
 
\textbf{Problem 1:}
\begin{align}
    d_1 &= y, &\quad d_3 &= -\frac{1}{y}\left( \frac{K}{6}c_0^2a^2\sin^2\beta_0
    - c_2(c_0 + a\cos\beta_0) + \beta_2c_0a\sin\beta_0 \right) \notag \\
    \alpha_0 &= \sin^{-1}\left(\frac{a}{y}\sin\beta_0\right), &\quad
    \alpha_2 &= \frac{1}{y^2}\left( \frac{K}{6}c_0a\sin\beta_0 \left[ y^2 + a(a + c_0\cos\beta_0) \right]
    + \beta_2\,a(a + c_0\cos\beta_0) - c_2\,a\sin\beta_0 \right) \notag \\
    \gamma_0 &= \sin^{-1}\left(\frac{c}{y}\sin\beta\right), &\quad 
    \gamma_2 &= \frac{1}{y^2}\left( \frac{K}{6}c_0a\sin\beta_0 \left[ y^2 + c_0(c_0 + a\cos\beta_0) \right]
    + \beta_2\,c_0(c_0 + a\cos\beta_0) + c_2\,a\sin\beta_0 \right), \notag
\end{align}
\begin{equation}
    \text{with} \quad y = \sqrt{a^2 + c_0^2 + ac_0\cos\beta_0}
\end{equation}

\textbf{Problem 2, Type 1:}
\begin{align}
    b_1 &= y, \qquad \hat{\alpha}_0 = \sin^{-1}\left(\frac{c_0}{y}\sin\alpha_0\right), \qquad \alpha_0^0 = \sin^{-1}\left( \frac{d_1}{y}\sin\alpha_0 \right), \qquad y = \sqrt{c_0^2 + d_1^2 + 2c_0\,d_1\cos\alpha_0} \notag \\
    b_3 &= -\frac{1}{y}\left(
    \frac{K}{6}c_0^2d_1^2\sin^2\alpha_0 - c_2(c_0 + d_1\cos\alpha_0)
    - d_3(d_1 + c_0\cos\alpha_0) + \alpha_2c_0d_1\sin\alpha_0 \right), \notag \\
    \hat{\alpha}_2 &= \frac{1}{y^2}\left( \frac{K}{6}c_0d_1\left[ y^2
    + c_0(c_0 + d_1\cos\alpha_0) \right]\sin\alpha_0 + c_2d_1\sin\alpha_0 - d_3c_0\sin\alpha_0
    + \alpha_2c_0(c_0 + d_1\cos\alpha_0) \right), \notag \\
    \alpha_2^0 &= \frac{1}{y^2}\left( \frac{K}{6}c_0d_1\left[ y^2
    + d_1(d_1 + c_0\cos\alpha_0) \right]\sin\alpha_0 - c_2d_1\sin\alpha_0 + d_3c_0\sin\alpha_0
    + \alpha_2d_1(d_1 + c_0\cos\alpha_0) \right).
\end{align}

\textbf{Problem 2, Types 2 and 3}
\begin{align}
    d^p_1 &= y_1, \qquad \alpha^p_0 = \sin^{-1}\frac{a_1}{y}; \qquad \delta^{(0)} = \pi - \sin^{-1}\frac{c_0}{y_1} - \hat{\alpha}^p_0
    \overset{!}{=} \pi - \sin^{-1}\left(\frac{d_1}{y_2}\right)\cos\alpha_0 \notag \\
    d^p_3 &= -\frac{1}{y_1}\left( \frac{K}{6}a_1^2c_0^2 - a_1a_3 - c_0c_2 \right), \qquad
    \alpha^p_2 = \frac{1}{y_1^2}\left( \frac{K}{6}a_1c_0(y_1^2 + a_1^2) + c_0a_3 - a_1c_2 \right); \notag \\
    \delta^{(2)} &= -\frac{1}{y_1^2}\left( \frac{K}{6}a_1c_0(y_1^2 + c_0^2) - c_0a_3 + a_1c_2 \right)
    - \hat{\alpha}^p_2 \notag \\
    &\overset{!}{=} \frac{1}{y_2^2}\left(
    \frac{K}{6}a_1d_1\left[ y_2^2 + d_1(d_1 - a_1\sin\alpha_0) \right]\cos\alpha_0
    + (a_1d_3 - a_3d_1)\cos\alpha_0 + d_1\alpha_2(d_1 - a_1\sin\alpha_0) \right), \notag \\
    y_1 &= \sqrt{a_1^2 + c_0^2}, \qquad y_2 = \sqrt{a_1^2 + d_1^2 - 2a_1d_1\sin\alpha_0}
\end{align}

\subsection{Generalizing the substitutes}\label{sec: generalized substitutes}
For the $j$-th slice we need to replace all $c$ and $\beta$ with $c_0^j$ and $\beta_0^j$. We used the substitutes to simplify our expressions in the first slice. So, the best way of generlizing our results to the $j$-th case is to start with generalizing the substitutes.
\begin{equation}
    Y_N^j \coloneq \sqrt{j^2a^2 + N^2c^2 + 2jNac\cos\beta}; \qquad c_0^j = Y_N^j\varepsilon, \quad
    \beta_0^j = \sin^{-1}\frac{Nc\sin\beta}{Y_N^{j-1}}
\end{equation}

We check the consistency with the first slice:
\begin{equation}
    Y_N^1 = Y_N \quad \wedge \quad c_0^1 = Y_N^0\varepsilon = \sqrt{N^2c^2}\frac{1}{N} = c \quad \checkmark
\end{equation}

When we generalize the substitutes we will encounter the term $\cos\beta_0^j$ a couple of times and thus it is practical to prepare its simplification beforehand:
\begin{align}
    \cos\beta_0^j =& \cos\sin^{-1}\frac{Nc\sin\beta}{Y_N^{j-1}}
    = \sqrt{1-\frac{N^2c^2\sin^2\beta}{(Y_N^{j-1})^2}}
    = \sqrt{\frac{(Y_N^{j-1})^2 - N^2c^2(1-\cos^2\beta)}{(Y_N^{j-1})^2}} \notag \\
    =& \sqrt{\frac{(j-1)^2a^2 + 2(j-1)Na\,c\cos\beta + N^2c^2\cos^2\beta}{(Y_N^{j-1})^2}}
    = \sqrt{\frac{[(j-1)a + Nc\cos\beta]^2}{(Y_N^{j-1})^2}} = \frac{(j-1)a + Nc\cos\beta}{Y_N^{j-1}}
\end{align}

Now we are prepared to check, whether $Y_N^j$ is consistent with the generalization of $Y_N$ by substituting the zeroth order arguments of the first slice $\Sigma_1$ with the ones of $\Sigma_j$.
\begin{align}
    \left.Y_N\right|&_{c\mapsto c_0^j, \beta\mapsto\beta_0^j} = \sqrt{a^2 + N^2(c_0^j)^2 + 2Na\,c_0^j\cos\beta_0^j} = \sqrt{a^2 + \cancel{N^2}(Y_N^{j-1})^2\cancel{\varepsilon^2}
    + 2\cancel{N}a\cancel{Y_N^{j-1}}\cancel{\varepsilon}\frac{(j-1)a + Nc\cos\beta}{\cancel{Y_N^{j-1}}}} \notag \\
    =& \sqrt{\{1 + (j-1)^2 + 2(j-1)\}a^2 + N^2c^2 + 2\{(j-1) + 1\}Na\,c\cos\beta} \notag \\
    =& \sqrt{(1+(j-1))^2a^2 + N^2c^2 + 2jNa\,c\cos\beta} = Y_N^j \quad \checkmark
\end{align}

We continue with the generalization of $Z_N$ which becomes much more important in the $j$-th case then it is in the first slice:
\begin{align}
    \left.Z_N\right|&_{c\mapsto c_0^j, \beta\mapsto\beta_0^j} = a + Nc_0^j\cos\beta_0^j
    = a + \cancel{N}\cancel{Y_N^{j-1}}\cancel{\varepsilon}\frac{(j-1)a + Nc\cos\beta}{\cancel{Y_N^{j-1}}}
    = ja + Nc\cos\beta \eqcolon Z_N^j
\end{align}

We will see, that one can write all generalized substitutes in terms of $Y_N^j$ and $Z_N^j$. Thus it is useful, to be aware of the following identities.
\begin{align}
    Z_N^{j-n} =&\, (j-n)a + Nc\cos\beta = Z_N^j - na \label{eq: Z_n^j-n} \\
    (Y_N^{j-n})^2 =&\, (j-n)^2a^2 + N^2c^2 + 2(j-n)Nac\cos\beta
    = (Y_N^j)^2 - 2nja^2 - 2nNac\cos\beta + n^2a^2 \notag \\
    =&\, (Y_N^j)^2 - 2naZ_N^j + n^2a^2 \label{eq: Y_N^j-n} \\
    (Y_N^{j-1})^2 + aZ_N^{j-1} =&\, (Y_N^j)^2 - 2aZ_N^j + a^2 + a(Z_N^j - a) = (Y_N^j)^2 - aZ_N^j \label{eq: Y and Z}
\end{align}

Next, we generalize $X_N$, $e^i$ and $k_N^i$:
\begin{align}
    X_N^j \coloneq& \left.X_N\right|_{c\mapsto c_0^j,\,\beta\mapsto\beta_0^j} = Nc_0^j + a\cos\beta_0^j
    = \cancel{N}Y_N^{j-1}\cancel{\varepsilon} + a\frac{(j-1)a + Nc\cos\beta}{Y_N^{j-1}} = \frac{(Y_N^{j-1})^2 + aZ_N^{j-1}}{Y_N^{j-1}} \notag \\
    \overset{\eqref{eq: Y and Z}}{=}& \frac{(Y_N^j)^2 - aZ_N^j}{Y_N^{j-1}} \\
    e^{ij} \coloneq& \left.e^i\right|_{c\mapsto c_0^j,\,\beta\mapsto\beta_0^j} = i\,a\sin\beta_0^j = i\,a\sin\sin^{-1}\frac{Nc\sin\beta}{Y_N^{j-1}} = \frac{iNa\,c\sin\beta}{Y_N^{j-1}} \\
    k_N^{ij} \coloneq& \left.k_N^i\right|_{c\mapsto c_0^j,\,\beta\mapsto\beta_0^j}
    = \sqrt{(X_N^j)^2 + (e^{ij})^2}
    = \frac{1}{Y_N^{j-1}}\sqrt{[(Y_N^j)^2 - aZ_N^j]^2 + i^2N^2a^2c^2\sin^2\beta}
\end{align}

Last we need to take care of two special cases. We cannot just set $N = 1$ to calculate $Y_1^j$, because $c_0^j$ and $\beta_0^j$ also depend on $N$ and this $N$ is not $1$. The same is true for $X_1^j$.
\begin{align}
    Y_1^j \coloneq& \left.Y_1\right|_{c\mapsto c_0^j,\,\beta\mapsto\beta_0^j}
    = \sqrt{a^2 + (c_0^j)^2 + 2ac_0^j\cos\beta_0^j}
    = \sqrt{a^2 + (Y_N^{j-1})^2\varepsilon^2
    + 2a\cancel{Y_N^{j-1}}\varepsilon\frac{(j-1)a + Nc\cos\beta}{\cancel{Y_N^{j-1}}}} \notag \\
    =& \frac{1}{N}\sqrt{\{N^2 + (j-1)^2 + 2N(j-1)\}a^2 + N^2c^2 + 2\{(j-1) + N\}Na\,c\cos\beta}
    = \frac{Y_N^{N+j-1}}{N} \notag \\
    X_1^j \coloneq& \left.X_1\right|_{c\mapsto c_0^j,\,\beta\mapsto\beta_0^j}
    = c_0^j + a\cos\beta_0^j = Y_N^{j-1}\varepsilon + a\frac{(j-1)a + Nc\cos\beta}{Y_N^{j-1}}
    = \frac{(Y_N^{j-1})^2 + NaZ_N^{j-1}}{NY_N^{j-1}}
\end{align}

\subsection{The extended $2$-nd order problem}\label{sec: The ext. $2$-nd order problem}
When we bring $c^j$ and $\beta^j$ into the form of the last lines in Eqs.~\eqref{eq: b expansion} and~\eqref{eq: beta expansion} we can construct a coupled recursion for their second order terms. We saw in Sec.~\ref{subsec: Extended Problems} why the Taylor approximation does not yield the correct expansion. Thus, instead of applying the Taylor approximation as described in these equations we collect all contributions coming from the second order corrections to the arguments in the argument coefficients. We call the remnants the interior values and mark a function with a $\circ$ when we evaluate them at the zeroth order contribution only. The interior values are the functions of the first slice, with the arguments $c$ and $\beta$ substituted by the zeroth order terms of the $j$-th baseline $c_0^j$ and direction angle $\beta_0^j$. Since we are expanding, the second order terms are linearized and thus the argument coefficients do not contain second order contributions of the arguments themselves and thus get marked with a $\circ$ as well:
\begin{align}
    &f^1(c,\beta) \quad \mapsto \quad f^j(c^j,\beta^j) \approx f^1(c_0^j + c_2^j\varepsilon^2, \beta_0^j + \beta_2^j\varepsilon^2) \approx \overset{\circ}{f}{}^j + \overset{\circ}{f}{}^{j,c}c_2^j + \overset{\circ}{f}{}^{j,\beta}\beta_2^j \\
    &\text{interior values:} \quad \overset{\circ}{f}{}^j \coloneq f^1(c_0^j,\beta_0^j), \qquad \text{argument correction coefficients:} \quad \overset{\circ}{f}{}^{j,c} \neq \partial_c f^1(c,\beta)|_{c\mapsto c_0^j, \beta\mapsto \beta_0^j}
\end{align}

We work out the updated expressions from \ref{subsec: Extended Problems} explicitly, as we did in \ref{subsec: The $2$-nd order problem}, by inserting the new zeroth order terms and extracting the contributions to the argument coefficients. All $c$'s are replaced with $c_0^j$'s, the $\beta$'s with $\beta_0^j$'s and all substitutes are replaced with their generalized versions, which we derived in the previous Sec.~\ref{subsec: generalized substitutes}. When we solve the $\delta$-equation using the generalized ansatz, introducing a generalized version of the recursion substitutes
\begin{equation}
    \hat{\alpha}_2^{i,j}(\alpha_2^{i,j},d_3^{i,j}) = \frac{(X_N^j)^2}{(Y_N^j)^2}\left( \frac{(c_0^j)^2e^{1,j}}{6(X_N^j)^3}C^{ij} + A^{ij}\alpha_2^{i,j} - \frac{D^{ij} d_3^{i,j}}{c\,k_N^{i+1,j}} \right)
\end{equation}
we find that only $C^{ij}$ gets a contribution to the argument coefficients. The other recursion substitutes are generalized straight forwardly.
\begin{equation}
    C^{1,j} = K_{1,j}\left[ (X_N^j)^2 + 2(Y_N^j)^2 \right] + 3\frac{(X_N^j)^2}{(c_0^j)^3}c_2^j, \qquad
    A^{1,j} = \frac{1}{2}, \qquad D^{1,j} = \frac{e^{1,j}}{2},
\end{equation}
\begin{align}
    A^{ij} =& \frac{i}{i+1}, \qquad
    D^{ij} = \frac{e^{ij}}{i+1} \notag \\
    C^{ij} =& \frac{1}{b^i}\left(\vphantom{\frac{(X_N^j)^2}{(k_N^{ij})^2}}\right. C^{i-1,j}
    + \underbrace{K_{ij}\left[ 2i(X_N^j)^2 + i\left( 3 + \frac{(X_N^j)^2}{(k_N^{ij})^2} \right)(Y_N^j)^2
    + \left( 1 + \frac{i(i^2-1)(X_N^j)^2}{(k_N^{ij})^2} \right)(e^{1,j})^2 \right]}_{\mathcal{K}^{ij}}
    + \underbrace{\frac{6 (X_N^j)^2}{i(c_0^j)^3}}_{\mathcal{E}^{ij}}c_2^j 
    \left.\vphantom{\frac{(X_N^j)^2}{(k_N^{ij})^2}}\right)
\end{align}

The recursion is of the same form as in the first slice and we track the additional term $\mathcal{E}^{ij}c_2^j$ through the solution of the recursion:
\begin{align}
    C^{ij} =& \frac{1}{b^i}\left( C^{i-1,j} + \mathcal{K}^{ij} + \mathcal{E}^{ij}c_2^j \right)
    = \frac{1}{b^i}\left( \frac{1}{b^{i-1}}\left( C^{i-2,j} + \mathcal{K}^{i-1,j} + \mathcal{E}^{i-1,j}c_2^j \right) + \mathcal{K}^{ij} + \mathcal{E}^{ij}c_2^j \right) \notag \\
    =& \frac{1}{b^ib^{i-1}\dots b^3b^2}C^{1,j}
    + \frac{1}{b^ib^{i-1}\dots b^3b^2}\left[\mathcal{K}^{2,j} + \mathcal{E}^{2,j}c_2^j \right] + \ldots
    + \frac{1}{b^ib^{i-1}}\left[ \mathcal{K}^{i-1,j} + \mathcal{E}^{i-1,j}c_2^j \right]
    + \frac{1}{b^i}\left[ \mathcal{K}^{ij} + \mathcal{E}^{ij}c_2^j \right] \notag \\
    =& C^{1,j}\prod_{l=2}^i\frac{1}{b^l}
    + \sum_{k=2}^i\left[ \mathcal{K}^{kj} + \mathcal{E}^{kj}c_2^j \right]\prod_{l=k}^i\frac{1}{b^l}
\end{align}
and pull out the second order corrections to $c_0^j$:
\begin{align}
    C^{ij} =& \frac{2}{i+1}C^{1,j} + \sum_{k=2}^i \frac{k}{i+1}\left[ \mathcal{K}^{kj}
    + \mathcal{E}^{kj}c_2^j \right]
    = \overset{\circ}{C}{}^{1,j} + \frac{6}{i+1}\frac{(X_N^j)^2}{(c_0^j)^3}c_2^j + \sum_{k=2}^i \frac{k}{i+1}\mathcal{K}^{kj} + \sum_{k=2}^i \frac{k}{i+1}\frac{6 (X_N^j)^2}{k(c_0^j)^3}c_2^j \notag \\
    =& \overset{\circ}{C}{}^{ij} + \frac{6}{i+1}\frac{(X_N^j)^2}{(c_0^j)^3}\left( 1 + \sum_{k=2}^i 1 \right)c_2^j
    = \overset{\circ}{C}{}^{ij} + \frac{6i(X_N^j)^2}{(i+1)(c_0^j)^3}c_2^j, \qquad
    \overset{\circ}{C}{}^{ij} = \left.C^i\right|_{c\mapsto c_0^j,\ \beta\mapsto\beta_0^j}, \quad C^i = C^{i,1}
\end{align}

Next, we insert the extended recursion coefficients, to update the recursion of the rib-lines $a_3^{ij}$
\begin{align}
    a_3^{ij}(a_3^{i+1,j}) =& \frac{1}{b^i}\left( \frac{(c_0^j)^3e^{1,j}}{6X_N^j}\mathcal{K}_a^{ij} \textcolor{green!40!blue}{-} F^{ij}c_2^j
    + a_3^{i+1,j} \right), \\
    \mathcal{K}_a^{ij} \coloneq& \frac{C^{i-1,j}}{(X_N^j)^2} + i\,K_{ij}\mathrm{X}^{ij}
    + (i+1)K_{i+1,j}\mathrm{Z}^{ij}, \qquad
    F^{ij} \coloneq \frac{e^{i-1 j}}{i X_N^j} \\
    \mathrm{X}^{ij} \coloneq& \frac{1}{(k_N^{ij})^2}\left[ \left(5-\frac{1}{i}\right)(X_N^j)^2
    + i^2(i+3)\frac{(e^{1,j})^4}{(X_N^j)^2} + (5i^2 + 3)(e^{1,j})^2 \right], \qquad
    \mathrm{Z}^{ij} \coloneq 1 + 2(i+1)\frac{(e^{1,j})^2}{(X_N^j)^2},
\end{align}
and find, that they as well remain in the same form. Thus we can solve it in the same way and we collect the $c_2$ contributions in an additional term:
\begin{align}
    a_3^{ij} =& \frac{1}{b^i}\left( \frac{(c_0^j)^3e^{1,j}}{6X_N^j}\mathcal{K}_a^{ij} - F^{ij}c_2^j
    + \frac{1}{b^{i+1}}\left( \frac{(c_0^j)^3e^{1,j}}{6X_N^j}\mathcal{K}_a^{i+1,j} - F^{i+1,j}c_2^j
    + a_3^{i+2,j} \right) \right) \notag \\
    =& \frac{1}{b^i}\left( \frac{(c_0^j)^3e^{1,j}}{6X_N^j}\mathcal{K}_a^{ij} - F^{ij}c_2^j
    + \frac{1}{b^{i+1}}\left( \frac{(c_0^j)^3e^{1,j}}{6X_N^j}\mathcal{K}_a^{i+1,j} - F^{i+1,j}c_2^j
    + \frac{1}{b^{i+2}}\left( \ldots \vphantom{\frac{(c_0^j)^3e^{1,j}}{6X_N^j}}\right.\right.\right.\notag\\
    &\left.\left.\left. \ldots \frac{1}{b^{N-2}}\left( \frac{(c_0^j)^3e^{1,j}}{6X_N^j}\mathcal{K}_a^{N-2,j}
    - F^{N-2,j}c_2^j + a_3^{N-1,j} \right) \ldots \right) \right) \right) \notag \\
    =& \frac{1}{b^i}\left[ \frac{(c_0^j)^3e^{1,j}}{6X_N^j}\mathcal{K}_a^{ij} - F^{ij}c_2^j \right]
    + \frac{1}{b^ib^{i+1}}\left[ \frac{(c_0^j)^3e^{1,j}}{6X_N^j}\mathcal{K}_a^{i+1,j} - F^{i+1,j}c_2^j \right]
    + \ldots \notag \\
    &\ldots + \frac{1}{b^ib^{i+1}\dots b^{N-2}}\left[ \frac{(c_0^j)^3e^{1,j}}{6X_N^j}\mathcal{K}_a^{N-2,j}
    - F^{N-2,j}c_2^j \right] + \frac{a_3^{N-1,j}}{b^ib^{i+1}\dots b^{N-2}} \notag \\
    =& \frac{(c_0^j)^3e^{1,j}}{6X_N^j}\sum_{n=i}^{N-2}\mathcal{K}_a^{nj} \prod_{j=i}^n\frac{1}{b^j}
    - c_2^j\sum_{n=i}^{N-2}F^{nj}\prod_{j=i}^n\frac{1}{b^j} + a_3^{N-1,j}\prod_{j=i}^{N-2}\frac{1}{b^j} \notag \\
    =& \frac{(c_0^j)^3e^{1,j}}{6X_N^j}\sum_{n=i}^{N-2}\frac{i}{n+1}\mathcal{K}_a^{nj}
    + \frac{i}{N-1}a_3^{N-1,j} - c_2^j\sum_{n=i}^{N-2}\frac{i}{n+1}F^{nj}
\end{align}

\subsection{$2$-nd order solution to $\Sigma_j$}\label{subsubsec: 2-nd order of Sigma_j}
After updating all quantities, separating the contributions of the corrections to the arguments from the interior values and collecting them in the argument correction coefficients, we list the results of the $j$-th slice in the form we need them for the limit calculation:

We start with the second order terms to the top line of the $j$-th slice.
\begin{align}
    \bar{b}_2^j =&\ \overset{\circ}{b}{}_2^j
    + \overset{\circ}{b}{}_2^{j,c}c_2^j + \overset{\circ}{b}{}_2^{j,\beta}\beta_2^j, \quad
    \overset{\circ}{b}{}_2^j = \overset{\circ}{b}{}_3^{1,j}\varepsilon + \sum_{i=2}^{N-2}\overset{\circ}{b}{}_3^{ij}\varepsilon
    + \overset{\circ}{b}{}_3^{N-1,j}\varepsilon, \qquad j\in\{1,\ldots,N\} \\
    \overset{\circ}{b}{}_2^{j,c} \coloneq& \frac{1}{Y_N^j}\left[ X_N^j + \frac{I_N(e^{1,j})^2}{NX_N^j} \right], \quad
    \overset{\circ}{b}{}_2^{j,\beta} \coloneq -\frac{c_0^je^{1,j}}{Y_N^j}, \quad
    I_N \coloneq \sum_{i=2}^{N-2}\frac{1}{i+1} = \psi(N) + \gamma - \frac{3}{2}, \quad
    \psi(x) = \frac{d}{dx}\ln\Gamma(x), \label{eq: bo}
\end{align}
where $\psi$ denotes the digamma function and $\gamma$ here is the Euler constant.

The interior values of the top lines of the segments $\Delta{ij}$ in the $j$-th slice $\Sigma_j$ have the same structure as the ones for $\Sigma_1$ with generalized substitutes:
\begin{align}
    \overset{\circ}{b}{}_3^{1,j} =& -\frac{e^{1,j}}{Y_N^j}\left( \frac{(c_0^j)^3e^{1,j}}{X_N^j}\left[ \frac{K_{1,j}}{3} + K_{2,j} \right] - \overset{\circ}{a}{}_3^{2,j} \right), \qquad
    \overset{\circ}{b}{}_3^{ij} = -\frac{e^{1,j}}{(i+1)Y_N^j}\left( \frac{(c_0^j)^3e^{1,j}}{6X_N^j}i\overset{\circ}{\mathcal{K}}{}_b^{ij}
    - \overset{\circ}{a}{}_3^{i+1,j} \right), \\
    \overset{\circ}{b}{}_3^{N-1,j} =& -\frac{(c_0^j)^2 \left(e^{1,j}\right)^2}{6 Y_N^j}\left( (N-1)^2K_{N-1,j}\frac{X_1^j}{X_N^j}
    + (2N-1)K_{N,j} + \frac{6}{(c_0^j)^2e^{1,j}}\overset{\circ}{a}{}_3^{N-1,j} \right), \notag \\
    \overset{\circ}{\mathcal{K}}{}_b^{ij} =&\ \frac{\overset{\circ}{C}{}^{i-1,j}}{(X_N^j)^2}
    + iK_{ij}\mathrm{X}_b^{ij} + (i+1)K_{i+1,j}\mathrm{Z}_b^{ij}, \qquad
    \mathrm{X}_b^{ij} \coloneq i + 5 + \frac{i-1}{i} + (i+3)\frac{(e^{1,j})^2}{(X_N^j)^2}, \notag \\
    \mathrm{Z}_b^{ij} \coloneq& \left( i + 2 + \frac{e^{1,j}e^{i+1,j}}{(X_N^j)^2} \right)
   \left( 1 + \frac{(e^{i+1,j})^2}{(k_N^{i+1,j})^2} \right) + \frac{(i+1)^2(X_N^j)^2 + (2i+1)e^{1,j}e^{i+1,j}}{i(k_N^{i+1,j})^2}, \qquad i\in\{2,\ldots,N-2\}. \notag
\end{align}

In the argument coefficients of the opening angle of $\Sigma_j$ we see the substitute $Z_N^j$ appearing, which we never used in $\Sigma_1$. We drop the $0$-index in the argument coefficients here, since we do not need them for other index values.
\begin{align}\label{eq: alphao}
    \alpha_2^{0,j} =& \overset{\circ}{\alpha}{}_2^{0,j}
    + \overset{\circ}{\alpha}{}_2^{j,c} c_2^j + \overset{\circ}{\alpha}{}_2^{j,\beta} \beta_2^j, \\
    \overset{\circ}{\alpha}{}_2^{0,j} =& \frac{(X_N^j)^2}{2(Y_N^j)^2}\left( \frac{(c_0^j)^2e^{1,j}}{3X_N^j}\left[ K_{1,j}\left( 3 + 4\frac{(e^{1,j})^2}{(X_N^j)^2} \right)
    + K_{2,j}\frac{(k_N^{2,j})^2}{(X_N^j)^2} \right] + \frac{\overset{\circ}{a}{}_3^{2,j}}{c_0^j} \right), \quad
    \overset{\circ}{\alpha}{}_2^{j,c} = \frac{a\cos\beta_0^j - X_N^j}{(Y_N^j)^2c_0^j}e^{1,j}, \quad
    \overset{\circ}{\alpha}{}_2^{j,\beta} = \frac{aZ_N^j}{(Y_N^j)^2} \notag
\end{align}

We note that none of the argument coefficients contain curvature values. The dependence them is given by the interior values alone and thus each slice depends on it's curvature values in the same way.
\begin{align}
  \bar{\gamma}_2^j =& \overset{\circ}{\gamma}{}_2^j + \overset{\circ}{\gamma}{}_2^{j,c}c_2^j
    + \overset{\circ}{\gamma}{}_2^{j,\beta}\beta_2^j, \qquad
    \overset{\circ}{\gamma}{}_2^j = \overset{\circ}{\hat{\alpha}}{}_2^{N-1,j} + \overset{\circ}{\gamma}{}_2^{N-1,j}, \quad
    \overset{\circ}{\gamma}{}_2^{j,c} = \frac{e^{N j}}{(Y_N^j)^2}, \quad
    \overset{\circ}{\gamma}{}_2^{j,\beta} = Nc_0^j\frac{X_N^j}{(Y_N^j)^2}, \label{eq: gammao} \\
    \overset{\circ}{\hat{\alpha}}{}_2^{N-1,j} =& \frac{1}{X_1^j(Y_N^j)^2}\left( \frac{c_0^j\,e^{N-1,j}}{6}\left[
    K_{N-1,j}\frac{X_1^j}{X_N^j}(X_1^j(Y_N^j)^2 - c_0^j\,e^{N-1,j}e^{1,j}) + K_{Nj}c_0^j(e^{1,j})^2\left( \frac{N(c_0^j)^2 - a^2}{(Y_1^j)^2} - 1 \right) \right] \right. \notag \\
    &\left.+ (X_N^j)^2\overset{\circ}{a}{}_3^{N-1,j} \vphantom{\frac{X_1^j}{X_N^j}}\right), \qquad
    \overset{\circ}{\gamma}{}_2^{N-1,j} = \frac{1}{(Y_1^j)^2}\left( \frac{K_{Nj}}{6}c_0^je^{1,j}\left[ (Y_1^j)^2
    + c_0^jX_1^j \right] \right).
\end{align} 

The contribution of the argument coefficients of the recursion parameters were already included into in the ones of the rib-lines and those into the argument coefficients presented above. Since they are not required on their own, we leave them away here. We list the interior values though, since we will calculate the limits in Sec.~\ref{sec: Limit calculations} starting from the inner most recursion level, working ourselves from the bottom of this list to the top.
\begin{align}
    \overset{\circ}{a}{}_3^{ij} =& \frac{(c_0^j)^3e^{1,j}}{6X_N^j}\sum_{n=i}^{N-2}\frac{i}{n+1}\overset{\circ}{\mathcal{K}}{}_a^{nj}
    + \frac{i}{N-1}\overset{\circ}{a}{}_3^{N-1,j}, \qquad
    \overset{\circ}{\mathcal{K}}{}_a^{nj} = \frac{\overset{\circ}{C}\,^{n-1,j}}{(X_N^j)^2}
    + n\,K_{nj}\mathrm{X}^{nj} + (n+1)K_{n+1,j}\mathrm{Z}^{nj}, \\
    \mathrm{X}^{ij} \coloneq& \frac{1}{(k_N^{ij})^2}\left[ \left(5-\frac{1}{i}\right)(X_N^j)^2
    + i^2(i+3)\frac{(e^{1,j})^4}{(X_N^j)^2} + (5i^2 + 3)(e^{1,j})^2 \right], \qquad
    \mathrm{Z}^{ij} \coloneq 1 + 2(i+1)\frac{(e^{1,j})^2}{(X_N^j)^2}, \\
    i\in&\{2,\ldots,N-2\}, \qquad j\in\{1,\ldots,N\} \notag
\end{align}
\begin{align}
    \overset{\circ}{a}{}_3^{N-1,j} =& \frac{(N-1)(c_0^j)^2X_1^j}{6X_N^j\left[ (N-1)c_0^j + X_1^j \right]}\left( \frac{c_0^j\,e^{1,j}}{(X_N^j)^2}\overset{\circ}{C}{}^{N-2,j}
    + K_{N-1,j}\chi^j + K_{N,j}\zeta^j \right), \\
    \chi^j =& (N-1)\frac{e^{1,j}}{X_N^j}\left( a^2 + N(c_0^j)^2 + (N+1)a\,c_0^j\cos\beta_0^j + \frac{X_1^j}{X_N^j}(Y_N^j)^2 \right)
    \notag \\
    &+ \left( (Y_N^j)^2\frac{(k_N^{N-1,j})^2 + (X_N^j)^2}{(k_N^{N-1,j})^2(X_N^j)^2} + \frac{N-2}{(k_N^{N-1,j})^2}\left[
    \frac{(X_N^j)^2}{N-1} + (2N-1)(e^{1,j})^2 \right] \right)c_0^j\,e^{N-1,j} \\
    \zeta^j =& \frac{e^{1,j}}{X_1^j}\left( 2a^2 + N(c_0^j)^2 + (2N+1)a\,c_0^j\cos\beta_0^j \right), \qquad j\in\{1,\ldots,N\}
\end{align}
\begin{align}
    \overset{\circ}{C}{}^{1,j} =&\, K_{1,j}\left[ (X_N^j)^2 + 2(Y_N^j)^2 \right], \qquad
    \overset{\circ}{C}{}^{ij} = \frac{2}{i+1}\overset{\circ}{C}{}^{1,j} + \sum_{k=2}^i \frac{k}{i+1}\mathcal{K}^{kj},
    &\qquad i\in&\{2,\ldots,N-2\}, \notag \\
    \mathcal{K}^{ij} =&\, K_{ij}\left[ 2i(X_N^j)^2 + i\left( 3 + \frac{(X_N^j)^2}{(k_N^{ij})^2} \right)(Y_N^j)^2
    + \left( 1 + i(i^2-1)\frac{(X_N^j)^2}{(k_N^{ij})^2} \right)(e^{1,j})^2 \right], &\qquad j\in&\{1,\ldots,N\}.
\end{align}
We repeat the same kind of test as we did for the first slice, to check whether we correctly accounted for the second order correction terms to the arguments $c_2^j$ and $\beta_2^j$. Fig.~\ref{fig: jth Slice Test-plot} shows one such plot.
\begin{figure}[h!]
    \centering
    \includegraphics[width=\linewidth]{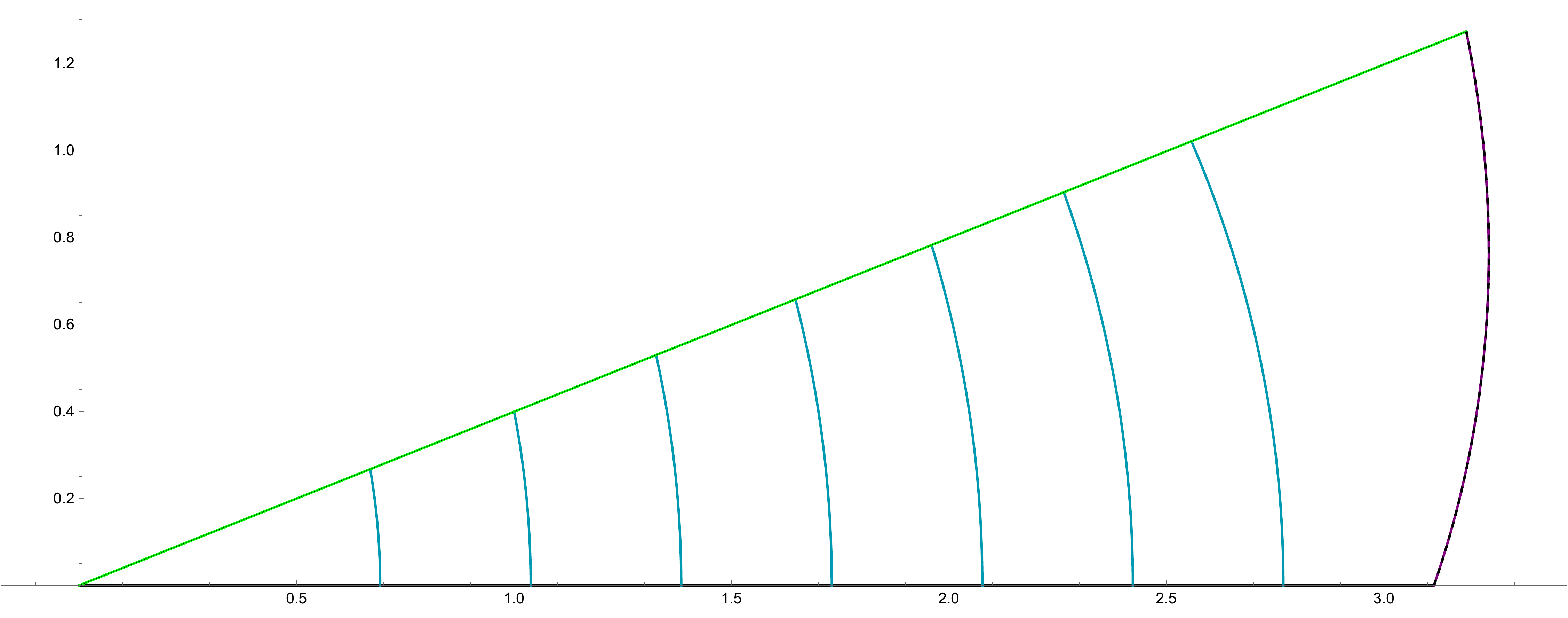}
    \caption{We check the consistency of our results by plotting the triangulation. The angle $\gamma^{N.1,j}$ is tested by plotting the continuing line backwards (black dashed line on top of the purple side-line). We used the parameters $N=9$, $a=5$, $c_0^j=3$, $c_2^j=9.3$, $\beta_0^j=\frac{\pi}{3}$ and $\beta_2^j=-20.8$.}
    \label{fig: jth Slice Test-plot}
\end{figure}

\section{Recursion across slices}\label{sec: Recursion across slices - full}
In the beginning of the previous Sec.~\ref{sec: The $j$-th slice} we calculated the zeroth order term of the base-line $c_0^j(a,c,\beta)$ and the angles $\beta_0^j(a,c,\beta)$ and $\bar{\alpha}_0^j$ for the $j$-th slice. Since $\bar{b}_0^j = c_0^{j+1}$, $b_0 = \bar{b}_0^N$ and $\alpha_0 = \bar{\alpha}_0^N$ we solved the zeroth order of the full triangle.\\
We also derived the function functions for $\bar{b}_2^j$, $\beta_2^{j+1}$ and $\alpha_2^{0,j}$ in terms of $a$, $c_2^j$ and $\beta_2^j$. So, we solved the second order problem of $j$-th slice in terms of the second slice and what remains to do, is to connect all slices together and establish a recursion across the slices.\\

Just as for the zeroth order the base-line of the $j$-th slice is the top-line of the previous slice. The expression for $\beta_2^{j+1}$ depends on $\beta_2^j$ and in this form it becomes obvious, that we have a coupled recursion:
\begin{align}
    c_2^j = \bar{b}_2^{j-1} = \overset{\circ}{b}{}_2^{j-1}
    + \overset{\circ}{b}{}_2^{j-1,c}c_2^{j-1} + \overset{\circ}{b}{}_2^{j-1,\beta}\beta_2^{j-1}, \quad
    \beta_2^{j} = \bar{\gamma}_2^{j-1} = \overset{\circ}{\gamma}{}_2^{j-1}
    + \overset{\circ}{\gamma}{}_2^{j-1,c}c_2^{j-1} + \overset{\circ}{\gamma}{}_2^{j-1,\beta}\beta_2^{j-1}
\end{align}
If we now replace the $c_2^{j-1}$ and $\beta_2^{j-1}$ with their respecting expressions according to the equations above, we get constant term for each of the two and the terms depending on the iterates. We collect the terms with $c_2^{j-2}$ and $\beta_2^{j-2}$ and pull the constant terms, which are not dependent on the recursion variables, out front.
\begin{align}\label{eq: b_2^j expansion}
    \bar{b}_2^j =& c_2^{j+1} = \overset{\circ}{b}\,_2^j
    + \overset{\circ}{b}\,_2^{j,c}\left( \overset{\circ}{b}{}_2^{j-1}
    + \overset{\circ}{b}{}_2^{j-1,c}c_2^{j-1} + \overset{\circ}{b}{}_2^{j-1,\beta}\beta_2^{j-1} \right)
    + \overset{\circ}{b}{}_2^{j,\beta}\left( \overset{\circ}{\gamma}{}_2^{j-1}
    + \overset{\circ}{\gamma}{}_2^{j-1,c}c_2^{j-1} + \overset{\circ}{\gamma}{}_2^{j-1,\beta}\beta_2^{j-1} \right) \notag \\
    =& \overset{\circ}{b}\,_2^j
    + \underbrace{\overset{\circ}{b}\,_2^{j,c}}_{O_1^j} \overset{\circ}{b}\,_2^{j-1}
    + \underbrace{\overset{\circ}{b}\,_2^{j,\beta}}_{Q_1^j} \overset{\circ}{\gamma}\,_2^{j-1}
    + \underbrace{\left( \overset{\circ}{b}\,_2^{j,c}\overset{\circ}{b}\,_2^{j-1,c}
    + \overset{\circ}{b}\,_2^{j,\beta}\overset{\circ}{\gamma}\,_2^{j-1,c} \right)}_{O_2^j} c_2^{j-1}
    + \underbrace{\left( \overset{\circ}{b}\,_2^{j,c}\overset{\circ}{b}\,_2^{j-1,\beta}
    + \overset{\circ}{b}\,_2^{j,\beta}\overset{\circ}{\gamma}\,_2^{j-1,\beta} \right)}_{Q_2^j} \beta_2^{j-1}
    \notag \\
    =& \overset{\circ}{b}\,_2^j
    + O_1^j\overset{\circ}{b}\,_2^{j-1} + Q_1^j\overset{\circ}{\gamma}\,_2^{j-1}
    +  O_2^j\overset{\circ}{b}\,_2^{j-2} + Q_2^j\overset{\circ}{\gamma}{}_2^{j-2}
    \ldots + O_{j-2}^j\overset{\circ}{b}\,_2^2 + Q_{j-2}^j\overset{\circ}{\gamma}\,_2^2
    + O_{j-1}^jc_2^2 + Q_{j-1}^j\beta_2^2
\end{align}
We observe, that the constant terms always end with a second order coefficient. More importantly, the prefactors of these second order coefficients are the prefactors of the recursion variables in the previous step. We substitute the prefactors of $c_2^{j-n}$ with $O_n^j$ and the prefactors of $\beta_2^{j-n}$ with $Q_n^{j-n}$.\\

We can easily verify, that the last two terms seamlessly fit into the sequence we sum over:
\begin{equation}
    c_2^2 = \bar{b}_2^1(K_{i1};a,c^1,\beta^1) = \bar{b}_2(K_{i1};a,c_0^1,\beta_0^1)
    = \overset{\circ}{b}\,_2^1, \qquad
    \beta_2^2 = \bar{\gamma}_2^1(K_{i1};a,c^1,\beta^1) = \bar{\gamma}_2(K_{i1};a,c_0^1,\beta_0^1)
    = \overset{\circ}{\gamma}\,_2^1,
\end{equation}
because the first $c$ and $\beta$ are precise and have no higher order terms:
\begin{equation}
    c^1 = c_0^1 = c, \qquad \beta^1 = \beta_0^1 = \beta
\end{equation}

\begin{proposition}
Thus, $c_2^j$ can be written as a sum over $O_n^j$ and $Q_n^j$:
\begin{equation}\label{eq: b_2^j}
    \bar{b}_2^j = c_2^{j+1} = \overset{\circ}{b}\,_2^{j}
    + \sum_{n=1}^{j-1}\left( O_n^j\overset{\circ}{b}\,_2^{j-n} + Q_n^j\overset{\circ}{\gamma}\,_2^{j-n} \right)
\end{equation}

From the calculation above Eq.~\eqref{eq: b_2^j expansion} we can see, that the $O_n^j$ and $Q_n^j$ follow the recursion rules:
\begin{equation}
    O_{n+1}^j = O_n^j\overset{\circ}{b}\,_2^{j-n,c} + Q_n^j\overset{\circ}{\gamma}\,_2^{j-n,c}, \quad
    Q_{n+1}^j = O_n^j\overset{\circ}{b}\,_2^{j-n,\beta} + Q_n^j\overset{\circ}{\gamma}\,_2^{j-n,\beta} \qquad
    O_1^j = \overset{\circ}{b}\,_2^{j,c}, \quad Q_1^j = \overset{\circ}{b}\,_2^{j,\beta}
\end{equation}
\end{proposition}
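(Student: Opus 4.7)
The plan is to prove the expansion formula and the recursion rules for $O_n^j, Q_n^j$ simultaneously by induction on the unfolding depth $n$. The more informative intermediate statement is that after $n$ substitutions one has
\begin{equation*}
\bar{b}_2^j = \overset{\circ}{b}\,_2^j + \sum_{k=1}^{n}\bigl(O_k^j\overset{\circ}{b}\,_2^{j-k} + Q_k^j\overset{\circ}{\gamma}\,_2^{j-k}\bigr) + O_{n+1}^j c_2^{j-n} + Q_{n+1}^j \beta_2^{j-n},
\end{equation*}
so that the proposition is the case $n=j-1$ after absorbing the boundary values $c_2^2, \beta_2^2$. The base case $n=0$ is immediate from the very definition $\bar{b}_2^j = \overset{\circ}{b}\,_2^j + \overset{\circ}{b}\,_2^{j,c}\,c_2^j + \overset{\circ}{b}\,_2^{j,\beta}\,\beta_2^j$ given in~\eqref{eq: bo}, which sets $O_1^j = \overset{\circ}{b}\,_2^{j,c}$ and $Q_1^j = \overset{\circ}{b}\,_2^{j,\beta}$, matching the initial conditions of the recursion.

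For the inductive step I would replace the remaining recursion variables $c_2^{j-n}$ and $\beta_2^{j-n}$ using the coupled rules $c_2^{j-n} = \bar{b}_2^{j-n-1}$ and $\beta_2^{j-n} = \bar{\gamma}_2^{j-n-1}$, together with the linear expansions of $\bar{b}_2^{j-n-1}$ and $\bar{\gamma}_2^{j-n-1}$ from~\eqref{eq: bo} and~\eqref{eq: gammao}. Collecting terms, the constants $\overset{\circ}{b}\,_2^{j-n-1}$ and $\overset{\circ}{\gamma}\,_2^{j-n-1}$ receive the prefactors $O_{n+1}^j$ and $Q_{n+1}^j$, extending the sum by the $k = n+1$ term; the recursion variables $c_2^{j-n-1}$ and $\beta_2^{j-n-1}$ pick up the new coefficients
\begin{equation*}
O_{n+2}^j = O_{n+1}^j\,\overset{\circ}{b}\,_2^{j-n-1,c} + Q_{n+1}^j\,\overset{\circ}{\gamma}\,_2^{j-n-1,c}, \qquad Q_{n+2}^j = O_{n+1}^j\,\overset{\circ}{b}\,_2^{j-n-1,\beta} + Q_{n+1}^j\,\overset{\circ}{\gamma}\,_2^{j-n-1,\beta},
\end{equation*}
which is exactly the claimed recursion rule with the index shifted by one.

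To finish, I would stop the unfolding at $n = j-2$, where the recursion variables are $c_2^2$ and $\beta_2^2$. Because the first slice uses the exact inputs $c^1 = c_0^1 = c$ and $\beta^1 = \beta_0^1 = \beta$ with no higher-order correction, the argument-coefficient terms in $\bar{b}_2^1$ and $\bar{\gamma}_2^1$ vanish identically, giving $c_2^2 = \overset{\circ}{b}\,_2^1$ and $\beta_2^2 = \overset{\circ}{\gamma}\,_2^1$. Substituting these closes the remaining summand at $k = j-1$ and yields the stated form of $\bar{b}_2^j$. There is no real analytic obstacle since the coupled recurrence is linear in $c_2^{\bullet}, \beta_2^{\bullet}$; the only place to be careful is the bookkeeping of the index shift on $j-n$ in the recursion of $O_n^j, Q_n^j$, which is what dictates that the coefficients in the proposition read $\overset{\circ}{b}\,_2^{j-n,c}$ rather than $\overset{\circ}{b}\,_2^{j,c}$.
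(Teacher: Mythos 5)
Your proposal is correct and follows essentially the same route as the paper: an induction on the unfolding depth that carries the residual terms $O_{n+1}^j c_2^{j-n} + Q_{n+1}^j \beta_2^{j-n}$, substitutes the coupled linear expansions of $\bar{b}_2^{j-n-1}$ and $\bar{\gamma}_2^{j-n-1}$ to advance one step, and closes the recursion at the first slice using $c_2^2 = \overset{\circ}{b}\,_2^1$ and $\beta_2^2 = \overset{\circ}{\gamma}\,_2^1$. Your explicit statement of the intermediate invariant is if anything slightly cleaner bookkeeping than the paper's version, but the argument is the same.
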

We can prove this together with~\eqref{eq: b_2^j} via induction.

\begin{proof}$ $\\
Induction start:
\begin{align}
    n &= 1: &\quad &c_2^{j+1} =\, \overset{\circ}{b}{}_2^j + \overset{\circ}{b}{}_2^{j,c}c_2^j
    - \overset{\circ}{b}{}_2^{j,\beta}\beta_2^j
        = \overset{\circ}{b}\,_2^j
        + O_1^j \overset{\circ}{b}\,_2^{j-1} + Q_1^j \overset{\circ}{\gamma}\,_2^{j-1} \notag \\
    n &= 2: &\quad &+ \underbrace{\left( \overset{\circ}{b}\,_2^{j,c}\overset{\circ}{b}\,_2^{j-1,c}
        + \overset{\circ}{b}\,_2^{j,\beta}\overset{\circ}{\gamma}\,_2^{j-1,c} \right)}_{O_2^j} c_2^{j-1}
        + \underbrace{\left( \overset{\circ}{b}\,_2^{j,c}\overset{\circ}{b}\,_2^{j-1,\beta}
        + \overset{\circ}{b}\,_2^{j,\beta}\overset{\circ}{\gamma}\,_2^{j-1,\beta} \right)}_{Q_2^j} \beta_2^{j-1}
\end{align}
Induction step: Under the assumption, that~\eqref{eq: b_2^j} holds up to $n-1$ we get:
\begin{align}
    c_2^{j+1} =& \overset{\circ}{b}\,_2^j
    + \sum_{m=1}^{n-1} \left( O_m^j\overset{\circ}{b}\,_2^{j-m} + Q_m^j\overset{\circ}{\gamma}\,_2^{j-m} \right) + O_n^jc_2^{j-n+1} + Q_n^j\beta_2^{j-n+1} \notag \\
    =& \overset{\circ}{b}\,_2^j
    + \sum_{m=1}^{n-1} \left( O_m^j\overset{\circ}{b}\,_2^{j-m} + Q_m^j\overset{\circ}{\gamma}\,_2^{j-m} \right)
    + O_n^j\overset{\circ}{b}\,_2^{j-n} + Q_n^j\overset{\circ}{\gamma}\,_2^{j-n} \notag \\
    &+ \underbrace{\left( O_n^j\overset{\circ}{b}\,_0^{j-n,c}
    + Q_n^j\overset{\circ}{\gamma}\,_0^{j-n,c} \right)}_{O_{n+1}^j}c_2^{j-n}
    + \underbrace{\left( O_n^j\overset{\circ}{b}\,_0^{j-n,\beta} + Q_n^j\overset{\circ}{\gamma}\,_0^{j-n,\beta} \right)}_{Q_{n+1}^j}\beta_2^{j-n}
\end{align}
\end{proof}

We can do an analogue calculation for the angles $\beta^j$,
\begin{equation}
    \bar{\gamma}_2^j = \beta_2^{j+1} = \overset{\circ}{\gamma}{}_2^j + \underbrace{\overset{\circ}{\gamma}{}_2^{j,c}}_{V_1^j}\overset{\circ}{b}{}_2^{j-1}
    + \underbrace{\overset{\circ}{\gamma}{}_2^{j,\beta}}_{W_1^j}\overset{\circ}{\gamma}{}_2^{j-1}
    + \underbrace{ \overset{\circ}{\gamma}{}_2^{j,c}\overset{\circ}{b}{}_2^{j-1,c}
    + \overset{\circ}{\gamma}{}_2^{j,\beta}\overset{\circ}{\gamma}{}_2^{j-1,c} }_{V_2^j} c_2^{j-1}
    + \underbrace{ \overset{\circ}{\gamma}{}_2^{j,c}\overset{\circ}{b}{}_2^{j-1,\beta}
    + \overset{\circ}{\gamma}{}_2^{j,\beta}\overset{\circ}{\gamma}{}_2^{j-1,\beta} }_{W_2^j} \beta_2^{j-1},
\end{equation}
which allows us to write $\bar{\gamma}_2^j$ explicitly given the recursion substitutes $V_n^j$ and $W_n^j$:
\begin{equation}\label{eq: gamma_2^j}
    \bar{\gamma}_2^j = \beta_2^{j+1} = \overset{\circ}{\gamma}{}_2^j
    + \sum_{n=1}^{j-1}\left( V_n^j\overset{\circ}{b}{}_2^{j-n} + W_n^j\overset{\circ}{\gamma}{}_2^{j-n} \right),
\end{equation}
and we find the same recursion rules as we did for $O_n^j$ and $Q_n^j$, with different starting points:
\begin{equation}
    V_{n+1}^j = V_n^j\overset{\circ}{b}{}_2^{j-n,c} + W_n^j\overset{\circ}{\gamma}{}_2^{j-n,c}, \quad
    W_{n+1}^j = V_n^j\overset{\circ}{b}{}_2^{j-n,\beta} + W_n^j\overset{\circ}{\gamma}{}_2^{j-n,\beta}, \qquad
    V_1^j = \overset{\circ}{\gamma}{}_2^{j,c}, \quad W_1^j = \overset{\circ}{\gamma}{}_2^{j,\beta}
\end{equation}

With these quantities given, we can write the explicit formula for the opening angle of the $j$-th slice:
\begin{align}\label{eq: alpha_2^j}
    \alpha_2^{0,j} =& \overset{\circ}{\alpha}{}_2^{0,j} + \overset{\circ}{\alpha}{}_2^{j,c}c_2^j
    + \overset{\circ}{\alpha}{}_2^{j,\beta}\beta_2^j \\
    =& \overset{\circ}{\alpha}{}_2^{0,j} + \overset{\circ}{\alpha}{}_2^{j,c}\left\{ \overset{\circ}{b}\,_2^{j-1}
    + \sum_{n=1}^{j-2}\left( O_n^{j-1}\overset{\circ}{b}\,_2^{j-1-n} + Q_n^{j-1}\overset{\circ}{\gamma}\,_2^{j-1-n} \right) \right\}
    + \overset{\circ}{\alpha}{}_2^{j,\beta}\left\{ \overset{\circ}{\gamma}{}_2^{j-1}
    + \sum_{n=1}^{j-2}\left( V_n^{j-1}\overset{\circ}{b}{}_2^{j-1-n} + W_n^{j-1}\overset{\circ}{\gamma}{}_2^{j-1-n} \right) \right\}  \notag
\end{align}

Now, what is missing are the explicit forms of the recursion substitutes $O_n^j$, $Q_n^j$, $W_n^j$ and $V_n^j$.

We insert the generalized substitutes from Sec.~\ref{sec: generalized substitutes} into the argument coefficients~\eqref{eq: bo},~\eqref{eq: gammao} and~\eqref{eq: alphao}:
\begin{align}\label{eq: CoeffFuncs}
    \overset{\circ}{b}{}_2^{j,c} =& \frac{Y_N^j}{Y_N^{j-1}}\left( 1 - \frac{aZ_N^j}{(Y_N^j)^2} + \frac{\Lambda_N}{(Y_N^j)^2L_N^j} \right),
    \quad \overset{\circ}{b}{}_2^{j,\beta} = -\frac{ce^1}{Y_N^j}, \qquad
    \overset{\circ}{\gamma}{}_2^{j,c} = \frac{N^2ce^1}{(Y_N^j)^2Y_N^{j-1}}, \quad \overset{\circ}{\gamma}{}_2^{j,\beta} = 1 - \frac{aZ_N^j}{(Y_N^j)^2} \\
    \overset{\circ}{\alpha}{}_2^{j,c} =& -\frac{N^2ce^1}{(Y_N^j)^2Y_N^{j-1}}, \quad
    \overset{\circ}{\alpha}{}_2^{j,\beta} = \frac{aZ_N^j}{(Y_N^j)^2}
\end{align}
where we introduced additional substitutes:
\begin{equation}
    \Lambda_N \coloneq NI_Nc^2(e^1)^2, \qquad L_N^j \coloneq (Y_N^j)^2 - aZ_N^j
\end{equation}

We also define the following quantities to simplify the following calculations and help reveal the structure of the recursions:
\begin{align}
    f_N \coloneq& Nce^1, \quad \tilde{Q}_n^j \coloneq -\frac{1}{ce^1}Q_n^j \quad
    \Rightarrow \quad Q_n^j = -ce^1\tilde{Q}_n^j \\
    P_m^n \coloneq& \prod_{l=m}^n\overset{\circ}{\gamma}{}_2^{j-l,\beta} = \prod_{l=m}^n\left( 1 - \frac{aZ_N^{j-l}}{(Y_N^{j-l})^2} \right),
    \qquad \hat{P}_m^n \coloneq \prod_{l=m}^n\left( 1 - \frac{aZ_N^{j-l}}{(Y_N^{j-l})^2} + \frac{\Lambda_N}{(Y_N^{j-l})^2L_N^{j-l}} \right)
\end{align}

We attempt to solve the two coupled recursions by writing down the first few terms of the sequence in terms of the coupled sequence.
\begin{proposition}
This way we get a semi-explicit form for the four sequences:\\
\begin{minipage}{.5\linewidth}
    \begin{align}
    O_n^j =& \frac{Y_N^j}{Y_N^{j-n}}\hat{P}_0^{n-1} - \frac{f_N^2}{Y_N^{j-n}}\sum_{m=1}^{n-1}\frac{\hat{P}_{m+1}^{n-1}}{(Y_N^{j-m})^2}\tilde{Q}_m^j, \label{eq: Onj(Qmj)} \\
    V_n^j =& \frac{Nf_N}{Y_N^{j-n}}\sum_{m=0}^{n-1}\frac{\hat{P}_{m+1}^{n-1}}{(Y_N^{j-m})^2}W_m^j,
    \quad W_0^j \coloneq 1 \label{eq: Vnj(Wmj)}
\end{align}
\end{minipage}
\begin{minipage}{.5\linewidth}
    \begin{align}
    Q_n^j =& -c\,e^1\sum_{m=0}^{n-1}\frac{P_{m+1}^{n-1}}{Y_N^{j-m}}O_m^j, \quad O_0^j \coloneq 1 \label{eq: Qnj(Omj)} \\
    W_n^j =& P_0^{n-1} - c\,e^1\sum_{m=1}^{n-1}\frac{P_{m+1}^{n-1}}{Y_N^{j-m}}V_m^j \label{eq: Wnj(Vmj)}
\end{align}
\end{minipage}
\end{proposition}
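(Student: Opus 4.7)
The plan is to prove all four identities simultaneously by induction on $n$, splitting the work into two independent blocks since the pairs $(O_n^j, Q_n^j)$ and $(V_n^j, W_n^j)$ are coupled only within each pair (they refer to each other, not across the pairs). Throughout, the index $j$ is fixed and plays no active role, so the induction really happens in the single variable $n$.

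First I would verify the base case $n = 1$ for all four formulas by direct substitution. For $O_1^j$, the sum from $m=1$ to $0$ is empty, so only the prefactor $\tfrac{Y_N^j}{Y_N^{j-1}}\hat{P}_0^0$ survives; unfolding $\hat{P}_0^0$ reproduces precisely $\overset{\circ}{b}{}_2^{j,c}$ from Eq.~\eqref{eq: CoeffFuncs}. For $Q_1^j$, the single surviving term at $m=0$ uses $P_1^0 = 1$ (empty product) and $O_0^j = 1$, giving $-\tfrac{ce^1}{Y_N^j} = \overset{\circ}{b}{}_2^{j,\beta}$. The $V_1^j$ and $W_1^j$ cases reduce identically, using $W_0^j = 1$ and $f_N = Nce^1$ to recover $\overset{\circ}{\gamma}{}_2^{j,c}$ and $\overset{\circ}{\gamma}{}_2^{j,\beta}$.

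The inductive step for the $(O,Q)$ pair proceeds as follows. Assume \eqref{eq: Onj(Qmj)} and \eqref{eq: Qnj(Omj)} hold up to index $n$. Substitute these expressions into the recursion rule $O_{n+1}^j = O_n^j\,\overset{\circ}{b}{}_2^{j-n,c} + Q_n^j\,\overset{\circ}{\gamma}{}_2^{j-n,c}$. The crucial algebraic observation is that the factor
\begin{equation*}
\overset{\circ}{b}{}_2^{j-n,c} = \frac{Y_N^{j-n}}{Y_N^{j-n-1}}\left(1 - \frac{aZ_N^{j-n}}{(Y_N^{j-n})^2} + \frac{\Lambda_N}{(Y_N^{j-n})^2 L_N^{j-n}}\right)
\end{equation*}
multiplies each product $\hat{P}_m^{n-1}$ by exactly the $l=n$ factor needed to extend it to $\hat{P}_m^n$, with the telescoping $Y_N^{j-n}/Y_N^{j-n-1}$ combining cleanly with the $1/Y_N^{j-n}$ in the inductive hypothesis. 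The remaining term $Q_n^j\,\overset{\circ}{\gamma}{}_2^{j-n,c}$ produces, via $f_N^2 = N^2 c^2 (e^1)^2$ and $Q_n^j = -ce^1\,\tilde Q_n^j$, precisely the missing $m=n$ summand $-\frac{f_N^2}{Y_N^{j-n-1}}\frac{\hat{P}_{n+1}^n}{(Y_N^{j-n})^2}\tilde Q_n^j$, where $\hat{P}_{n+1}^n = 1$. An analogous manipulation for $Q_{n+1}^j$ shows that $\overset{\circ}{\gamma}{}_2^{j-n,\beta}$ extends each $P_{m+1}^{n-1}$ to $P_{m+1}^n$, while $O_n^j\,\overset{\circ}{b}{}_2^{j-n,\beta}$ supplies the new $m=n$ contribution with $P_{n+1}^n = 1$. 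Both formulas for step $n+1$ are thereby established.

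The $(V,W)$ pair is handled in exactly the same way: the only differences are in the starting values ($V_1^j$ comes from $\overset{\circ}{\gamma}{}_2^{j,c}$, not $\overset{\circ}{b}{}_2^{j,c}$), which is why the $V_n^j$ formula lacks the leading $Y_N^j/Y_N^{j-n}$ term and instead has only a sum multiplied by $Nf_N$, and why $W_n^j$ retains a pure $P_0^{n-1}$ prefactor. I expect no real obstacle beyond careful bookkeeping: keeping track of empty products $P_{n+1}^n = \hat{P}_{n+1}^n = 1$, correctly shifting the summation upper limits from $n-1$ to $n$, and matching the $l=n$ factors against the single factor introduced by each coefficient at step $n+1$. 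The entire argument is purely algebraic, with no analytic input required.
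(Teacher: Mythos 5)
Your proposal is correct and follows essentially the same route as the paper: an induction on $n$ with the base case $n=1$ checked by direct substitution, and an inductive step in which $\overset{\circ}{b}{}_2^{j-n,c}$ (resp.\ $\overset{\circ}{\gamma}{}_2^{j-n,\beta}$) supplies the $l=n$ factor extending $\hat{P}_m^{n-1}$ to $\hat{P}_m^n$ (resp.\ $P_{m+1}^{n-1}$ to $P_{m+1}^n$) while the cross term furnishes the new $m=n$ summand via the empty products $\hat{P}_{n+1}^n = P_{n+1}^n = 1$. The telescoping of $Y_N^{j-n}/Y_N^{j-n-1}$ you describe is exactly the cancellation the paper performs.
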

We prove the validity of these semi-explicit expressions via induction:\\

\begin{proof}
Induction start:
\begin{align}
    O_1^j =& \frac{Y_N^j}{Y_N^{j-1}}\hat{P}_0^0 - \frac{f_N^2}{Y_N^{j-1}}\sum_{m=1}^0\frac{\hat{P}_{m+1}^1}{(Y_N^{j-m})^2}\tilde{Q}_m^j \quad \checkmark &\qquad
    Q_1^j =& -c\,e^1\sum_{m=0}^0\frac{\cancel{P_1^0}}{Y_N^j}O_0^j = -\frac{c\,e^1}{Y_N^j} \quad \checkmark \notag \\
    V_1^j =& \frac{Nf_N}{Y_N^{j-1}}\sum_{m=0}^0\frac{\cancel{\hat{P}_1^0}}{(Y_N^j)^2}W_0^j = \frac{Nf_N}{Y_N^{j-1}(Y_N^j)^2} \quad \checkmark &\qquad
    W_1^j =& P_0^0 - c\,e^1\cancelto{0}{\sum_{m=1}^0\frac{P_{m+1}^0}{Y_N^{j-m}}V_m^j} = \overset{\circ}{\gamma}{}_2^{j,\beta} \quad \checkmark
\end{align}

Induction step:
\begin{align}
    O_{n+1}^j =& O_n^j\overset{\circ}{b}\,_2^{j-n,c} + Q_n^j\overset{\circ}{\gamma}\,_2^{j-n,c} \notag \\
    =& \left( \frac{Y_N^j}{\cancel{Y_N^{j-n}}}\hat{P}_0^{n-1} - \frac{f_N^2}{\cancel{Y_N^{j-n}}}\sum_{m=1}^{n-1}\frac{\hat{P}_{m+1}^{n-1}}{(Y_N^{j-m})^2}\tilde{Q}_m^j \right)
    \frac{\cancel{Y_N^{j-n}}}{Y_N^{j-n-1}}\hat{P}_n^n - ce^1\tilde{Q}_n^j \frac{N^2ce^1}{(Y_N^{j-n})^2Y_N^{j-n-1}} \notag \\
    =& \frac{Y_N^j}{Y_N^{j-(n+1)}}\hat{P}_0^n - \frac{f_N^2}{Y_N^{j-(n+1)}}\left( \sum_{m=1}^{n-1}\frac{\hat{P}_{m+1}^n}{(Y_N^{j-m})^2}\tilde{Q}_m^j + \frac{\tilde{Q}_n^j}{(Y_N^{j-n})^2} \right) \notag \\
    =& \frac{Y_N^j}{Y_N^{j-(n+1)}}\hat{P}_0^n
    - \frac{f_N^2}{Y_N^{j-(n+1)}} \sum_{m=1}^n\frac{\hat{P}_{m+1}^n}{(Y_N^{j-m})^2}\tilde{Q}_m^j \qquad \checkmark
\end{align}
\begin{align}
    Q_{n+1}^j =& O_n^j\overset{\circ}{b}\,_2^{j-n,\beta} + Q_n^j\overset{\circ}{\gamma}\,_2^{j-n,\beta}
    = -\frac{ce^1}{Y_N^{j-n}} O_n^j - c\,e^1\sum_{m=0}^{n-1}\frac{P_{m+1}^{n-1}}{Y_N^{j-m}}O_m^j P_n^n
    = - c\,e^1\sum_{m=0}^n\frac{P_{m+1}^n}{Y_N^{j-m}}O_m^j \quad \checkmark
\end{align}
\begin{align}
    V_{n+1}^j =& V_n^j\overset{\circ}{b}{}_2^{j-n,c} + W_n^j\overset{\circ}{\gamma}{}_2^{j-n,c}
    = \frac{Nf_N}{\cancel{Y_N^{j-n}}}\sum_{m=0}^{n-1}\frac{\hat{P}_{m+1}^{n-1}}{(Y_N^{j-m})^2}W_m^j
    \frac{\cancel{Y_N^{j-n}}}{Y_N^{j-n-1}}\hat{P}_n^n + W_n^j \frac{N^2ce^1}{(Y_N^{j-n})^2Y_N^{j-n-1}} \notag \\
    =& \frac{Nf_N}{Y_N^{j-(n+1)}}\sum_{m=0}^n\frac{\hat{P}_{m+1}^n}{(Y_N^{j-m})^2}W_m^j \qquad \checkmark
\end{align}
\begin{align}
    W_{n+1}^j =& V_n^j\overset{\circ}{b}{}_2^{j-n,\beta} + W_n^j\overset{\circ}{\gamma}{}_2^{j-n,\beta}
    = -\frac{ce^1}{Y_N^{j-n}} V_n^j + \left( P_0^{n-1} - c\,e^1\sum_{m=1}^{n-1}\frac{P_{m+1}^{n-1}}{Y_N^{j-m}}V_m^j \right)P_n^n \notag \\
    =& P_0^n - c\,e^1\sum_{m=1}^n\frac{P_{m+1}^n}{Y_N^{j-m}}V_m^j \quad \checkmark
\end{align}
\end{proof}

Now we can use \eqref{eq: Qnj(Omj)} to replace $\tilde{Q}_m^j$ in \eqref{eq: Onj(Qmj)}. Then $O_n^j$ is given in terms of $O_m^j$'s and we can replace them using \eqref{eq: Onj(Qmj)} again and so on:
\begin{align}
    O_n^j =& \frac{Y_N^j}{Y_N^{j-n}}\hat{P}_0^{n-1} - \frac{f_N^2}{Y_N^{j-n}}\sum_{m_1=1}^{n-1}\frac{\hat{P}_{m_1+1}^{n-1}}{(Y_N^{j-m_1})^2}  \sum_{m_2=0}^{m_1-1}\frac{P_{m_2+1}^{m_1-1}}{Y_N^{j-m_2}}O_{m_2}^j \notag \\
    =& \frac{Y_N^j}{Y_N^{j-n}}\hat{P}_0^{n-1} - \frac{f_N^2}{Y_N^{j-n}}\sum_{m_1=1}^{n-1}\frac{\hat{P}_{m_1+1}^{n-1}}{(Y_N^{j-m_1})^2}  \sum_{m_2=0}^{m_1-1}\frac{P_{m_2+1}^{m_1-1}}{Y_N^{j-m_2}} \left( \frac{Y_N^j}{Y_N^{j-m_2}}\hat{P}_0^{m_2-1} - \frac{f_N^2}{Y_N^{j-m_2}}\sum_{m_3=1}^{m_2-1}\frac{\hat{P}_{m_3+1}^{m_2-1}}{(Y_N^{j-m_3})^2}\tilde{Q}_{m_3}^j \right)
\end{align}
Every time we do this we get another layer of nested sums. We observe, that the upper bounds of the sums decrease by one in each step.
\begin{equation}
    m_1 \leq n-1, \quad m_2 \leq n-2, \quad \ldots, \quad m_{\imath} \leq n-\imath, \quad \ldots, \quad m_n \leq n-n = 0
\end{equation}
Thus the nesting ends at the $n$-th sum and we define
\begin{equation}\label{eq: S_l}
    S_l \coloneq -\frac{f_N^2}{Y_N^{j-m_{2(l-1)}}}
    \sum_{m_{2l-1}=1}^{m_{2(l-1)}-1} \frac{\hat{P}_{m_{2l-1}+1}^{m_{2(l-1)}-1}}{(Y_N^{j-m_{2l-1}})^2}
    \sum_{m_{2l}=0}^{m_{2l-1}-1} \frac{P_{m_{2l}+1}^{m_{2l-1}-1}}{Y_N^{j-m_{2l}}}
\end{equation}
to simplify the expression and write out the entire expression, until we arrive at $m_n$:
\begin{align}
    O_n^j =& \frac{Y_N^j}{Y_N^{j-n}}\hat{P}_0^{n-1} + S_1\left( \frac{Y_N^j}{Y_N^{j-m_2}}\hat{P}_0^{m_2-1} +S_2\left( \ldots 
    \left( \frac{Y_N^j}{Y_N^{j-m_{n-2}}}\hat{P}_0^{m_{n-2}-1} + S_\frac{n}{2}O_{m_n}^j \right) \ldots \right) \right)
    \label{eq: Onj_explicit_long}
\end{align}
This expression is only valid for even $n$. For odd $n$ we would have to stop at $S_\frac{n-1}{2}O_{n-1}^j$.\\

We can go through the same process, starting with $Q_n^j$ to calculate it explicitly as well and write the two in a more elegant and general form:
\begin{equation}\label{eq: Onj&Qnj_explicit}
    O_n^j = \sum_{\underset{\textit{\small even}}{\imath=0,}}^n \prod_{l=1}^\frac{\imath}{2} S_l
    \cdot \frac{Y_N^j}{Y_N^{j-m_\imath}} \hat{P}_0^{m_\imath-1}, \quad m_0 \coloneq n, \qquad
    Q_n^j = -c\,e^1 \sum_{m_1=0}^{n-1} \frac{P_{m_1+1}^{n-1}}{Y_N^{j-m_1}} \sum_{\underset{\textit{\small odd}}{\imath=0,}}^n
    \prod_{l=1}^\frac{\imath-1}{2} S_{l+\frac{1}{2}} \cdot \frac{Y_N^j}{Y_N^{j-m_\imath}} \hat{P}_0^{m_\imath-1}
\end{equation}
The dot $\cdot$ stands for the action of an operator group. So, we take the product over the operators and then act the resulting operator on the argument on the right of the dot. Thus the argument is not taken to the power of $\frac{\imath}{2}$, as it would be, if the product would multiply over it as well.\\
We check, in the example of $O_n^j$ that this expression \eqref{eq: Onj&Qnj_explicit} is equally valid for even and odd $n$, by writing out the last sum and using, that $m_\imath\leq n-\imath$, the first sum in $S_l$ starts from $1$ and the second one from $0$:\\
\textbf{$n$ even:}
\begin{align}
    S_\frac{n}{2}\frac{Y_N^j}{Y_N^{j-m_n}}\hat{P}_0^{m_n-1} \quad \overset{\text{last sum}}{\longrightarrow} \quad
    \sum_{m_n = 0}^{m_{n-1}}\frac{P_{m_n+1}^{m_{n-1}-1}}{Y_N^{j-m_n}}\frac{Y_N^j}{Y_N^{j-m_n}}\hat{P}_0^{m_n-1}
    \overset{m_n=0}{=} \frac{P_1^{m_{n-1}-1}}{Y_N^j}\cancel{\frac{Y_N^j}{Y_N^j}}\cancelto{1}{\hat{P}_0^{-1}}
    \overset{m_1=1}{=} \frac{\cancelto{1}{P_1^0}}{Y_N^j} = \frac{1}{Y_N^j}
\end{align}
\textbf{$n$ odd:}
\begin{align}
    &S_\frac{n-1}{2}\frac{Y_N^j}{Y_N^{j-m_{n-1}}}\hat{P}_0^{m_{n-1}-1} \quad \overset{\text{last sum}}{\longrightarrow} \quad
    \sum_{m_{n-1}=0}^{m_{n-2}-1}\frac{P_{m_{n-1}+1}^{m_{n-2}-1}}{Y_N^{j-m_{n-1}}}\frac{Y_N^j}{Y_N^{j-m_{n-1}}}\hat{P}_0^{m_{n-1}-1}
    = \dots \notag \\
    &\dots \overset{m_{n-2}\in\{1,2\}}{=} \sum_{m_{n-1}=0}^0\frac{\cancelto{1}{P_1^0}}{Y_N^j}\cancel{\frac{Y_N^j}{Y_N^j}}\cancelto{1}{\hat{P}_0^{-1}}
    + \sum_{m_{n-1}=0}^1\frac{P_{m_{n-1}+1}^1Y_N^j}{(Y_N^{j-m_{n-1}})^2}\hat{P}_0^{m_{n-1}-1}
    = \frac{1}{Y_N^j} + \frac{P_1^1Y_N^j}{(Y_N^j)^2}\cancelto{1}{\hat{P}_0^{-1}} + \frac{\cancelto{1}{P_2^1}Y_N^j}{(Y_N^{j-1})^2}\hat{P}_0^0
    \notag \\
    &= \frac{1}{Y_N^j} + \frac{P_1^1}{Y_N^j} + \frac{O_1^j}{Y_N^{j-1}} = \sum_{m_{n-1}=0}^{m_{n-2}-1}\frac{P_{m_{n-1}+1}^{m_{n-2}-1}}{Y_N^{j-m_{n-1}}} O_{m_{n-1}}^j, \quad O_0^j = 1 \quad
    \longrightarrow \quad S_\frac{n-1}{2}O_{m_{n-1}}
\end{align}
We find, that in both cases we get the correct last terms, comparing with .\\

The same procedure applies to the coupled recursion of $W_n^j$ and $V_n^j$, especially since it is the same recursion with different starting points. But because they have different starting expressions the repetitive term is a bit different and we therefore define $T_l$ in the place of $S_l$.
\begin{equation}\label{eq: T_l}
    T_l \coloneq -c\,e^1 \sum_{m_{2l-1}=1}^{m_{2(l-1)}-1} \frac{P_{m_{2l-1}+1}^{m_{2(l-1)}-1}}{Y_N^{j-m_{2l-1}}}
    \frac{Nf_N}{Y_N^{j-m_{2l-1}}} \sum_{m_{2l}=0}^{m_{2l-1}-1} \frac{\hat{P}_{m_{2l+1}}^{m_{2l-1}-1}}{(Y_N^{j-m_{2l}})^2}
\end{equation}
This definition allows us to write $V_n^j$ and $W_n^j$ explicitly in the same manor:
\begin{equation}\label{eq: Vnj&Wnj_explicit}
    V_n^j = \frac{Nf_N}{Y_N^{j-n}} \sum_{m_1=0}^{n-1} \frac{\hat{P}_{m_1+1}^{n-1}}{(Y_N^{j-m_1})^2}
    \sum_{\underset{\textit{\small odd}}{\imath=0}}^n \prod_{l=1}^\frac{\imath-1}{2} T_{l+\frac{1}{2}} \cdot P_0^{m_\imath-1}, \qquad
    W_n^j = \sum_{\underset{\textit{\small even}}{\imath=0,}}^n \prod_{l=1}^\frac{\imath}{2} T_l \cdot P_0^{m_\imath-1}, \quad m_0=n
\end{equation}

\section{Mathematical methods needed for the limit calculations}\label{sec: Mathematical methods}
From equations \eqref{eq: b_2^j},\eqref{eq: gamma_2^j},\eqref{eq: alpha_2^j},\eqref{eq: Onj&Qnj_explicit} and \eqref{eq: Vnj&Wnj_explicit} we see, that the limiting second order terms of the top line $b_2$ and the opening angle $\alpha_2$ will infinitely many nested series, over infinite products.\\
In this section we develop the methods to convert the series \ref{subsec: SumtoInt} and infinite products \ref{subsec: InfProd} into integrals and product integrals. This leads us to an infinitely nested integral. We find the expression for the $n$-layered nested integral in \ref{subsec: nNestInt}, which will then allow us to relate it to the power series of sine and cosine in the next section.

\subsection{Sum to integral conversion}\label{subsec: SumtoInt}
The upper bound of the sums appearing in $b_2$ and $\alpha_2$ can be up to $n$ which can become as large as $N$ which we want to take to infinity. We cannot treat them as series and take the limit of the sequence of partial sums, because the summand is dependent on $N$ instead of summing over a fixed infinite sequence. Thus we study here, how to write them into a form such that they converge to integrals in the case of $m_\imath\to n\to N$.

We can for example approximate the integral over $x^2$ via step functions and get an upper and lower bound for it. If we do that, our upper and lower bounds are the sums we are interested in~\eqref{eq: IntBound}. We can now turn these bounds inside out, to get upper and lower bounds for the sum~\eqref{eq: SumBound}.\\
\begin{figure}[h!]
\begin{minipage}{0.6\linewidth}
    \centering
    \includegraphics[width=\linewidth]{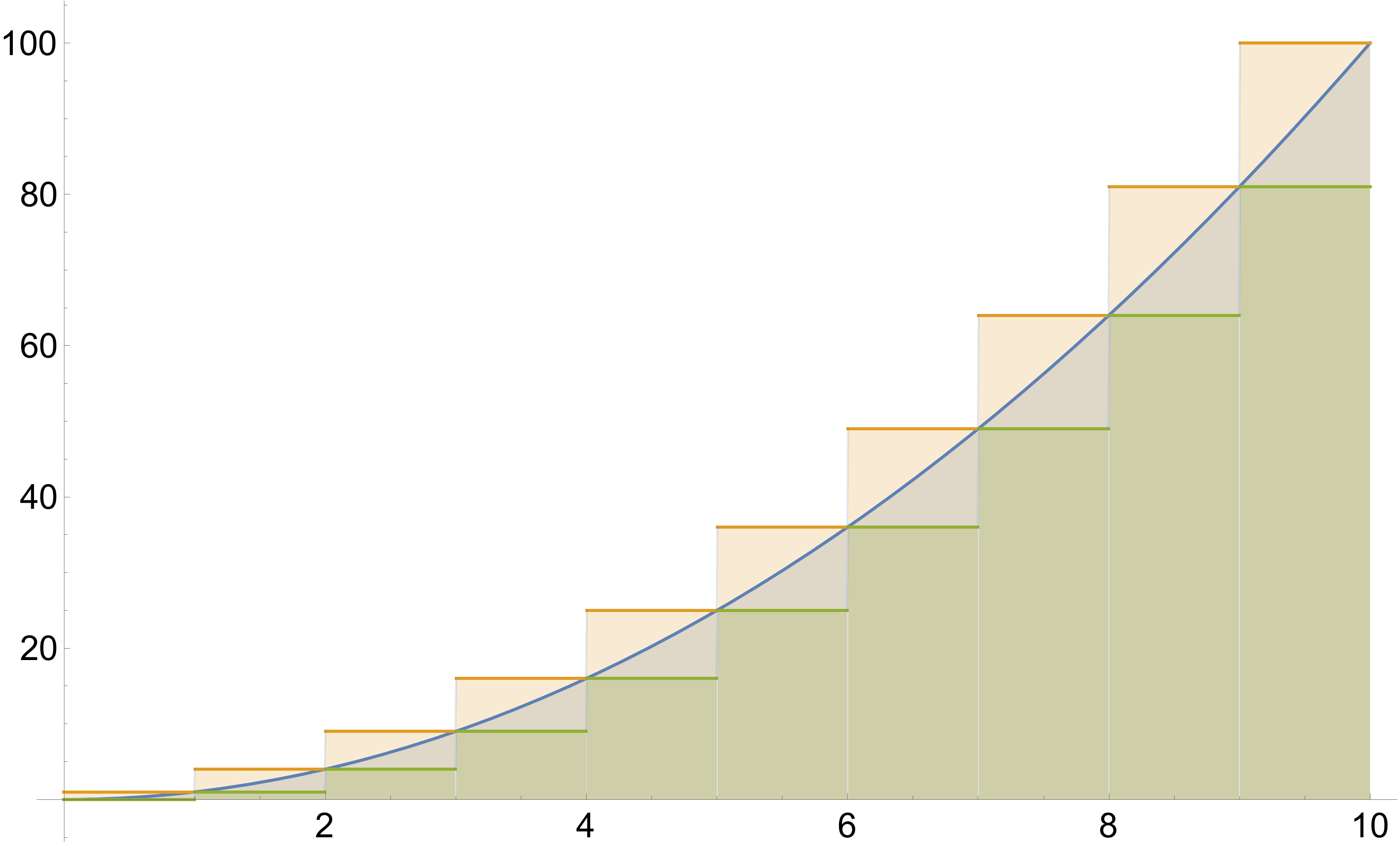}
\end{minipage}
\begin{minipage}{0.4\linewidth}
    \begin{align}
        &\sum_{i=n}^N i^2 < \int_n^{N+1} x^2 dx < \sum_{i=n+1}^{N+1} i^2 \label{eq: IntBound} \\
        \Rightarrow \quad &\int_{n-1}^N x^2 dx < \sum_{i=n}^N i^2 < \int_n^{N+1} x^2 dx \label{eq: SumBound}
    \end{align}
\end{minipage}
\caption{The step-functions provide upper (orange) and lower bounds (green) for the area beneath the function $x^2$ (blue).}
\label{fig:my_label}
\end{figure}

These bounds will however not converge to the sum since the missing peaces to the lower bound are growing and getting more, as the function grows. Both the upper and lower bounds can be anywhere between $0$ and $N$, which we let go to $\infty$. If we consider the summand as a function $f$ of $i$ and divide the argument by $N$. Then this becomes an increasingly fine sampling of the function $f$ between $0$ and $1$. Since we are now adding $N$ numbers of order $\mathcal{O}(1)$ we need to divide the sum by $N$. If we now turn the upper and lower bounds of step-functions inside out, we get upper and lower integral bounds for the sum which converge to the same value as $N$ tends to infinity. Thus the sum converges to the limiting integral.\\
In a more generic case the smallest and largest argument values we sum over determine the lower and upper limits of the integral.
\begin{figure}[h!]
\begin{minipage}{.3\linewidth}
    \centering
    \includegraphics[width=\linewidth]{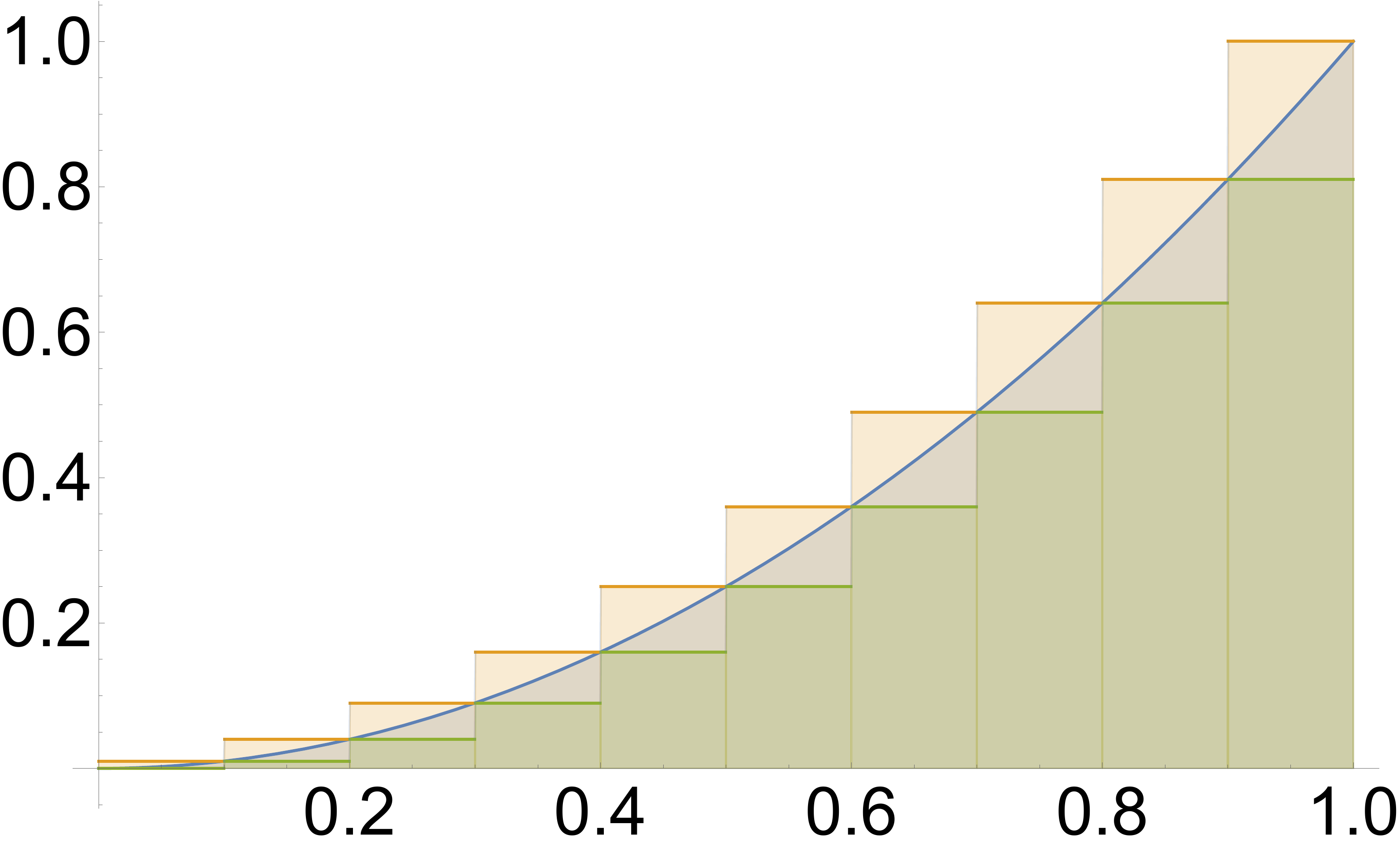}
\end{minipage}
\begin{minipage}{.08\linewidth}
    $\overset{N=10\mapsto20}{\longrightarrow}$
\end{minipage}
\begin{minipage}{.3\linewidth}
    \centering
    \includegraphics[width=\linewidth]{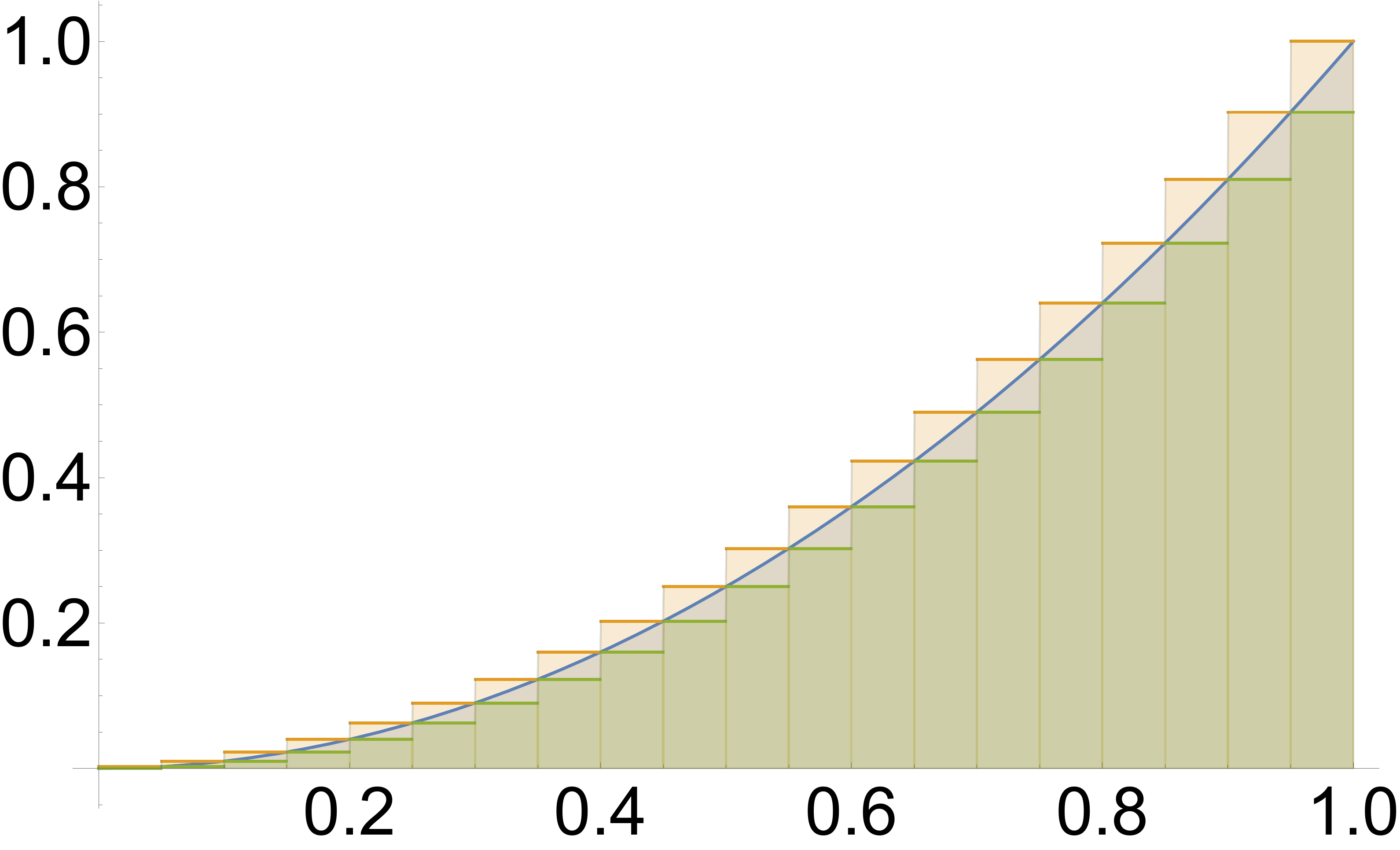}
\end{minipage}
\begin{minipage}{.32\linewidth}
    \begin{align}
        &\frac{1}{N}\sum_{j=n}^M\frac{j}{N} \sim \int_\frac{n}{N}^\frac{M}{N} j^2\, dj\,, \\
        &N \to \infty \notag
    \end{align}
\end{minipage}
    \caption{As $N$ increases, the area under the stepfunctions converge to the area under $x^2$ (blue) between $0$ and $1$.}
    \label{fig:SamplingSums}
\end{figure}\\
The $\frac{1}{N}=\Delta j$ turns our sum over sampling points into the area under a stepfunction which converges to an integral as $N\to\infty$ and $\frac{1}{N}\to dj$. We can always multiply with an apropriate one $\frac{N}{N}$ to get the $\Delta j$ factor we need.\\
For a general function we get the following asymptotic behaviour for the sum when $N$ tends to infinity.
\begin{equation}
    \frac{1}{N}\sum_{j=n(N)}^{M(N)} f\left( \frac{j}{N} \right) \sim \int_\frac{n}{N}^\frac{M}{N} f(j) dj, \quad N\to\infty
\end{equation}
Essentially, we rewrite our sums until we arrive at a special case of the definition of the Riemann integral, where $x_j=\frac{j}{N}$ and $\Delta x=\frac{1}{N}\to dx$.

It is now straight forward to extend this to nested sums, which lead to multidimensional integrals. We are here especially interested, in a case of the following type:
\begin{equation}
    \underbrace{\frac{1}{N}}_{\to dx}\sum_{i=1}^{N-1}\underbrace{\frac{1}{N}}_{\to dy}\sum_{j=1}^{i-1}
    f\left(\frac{i}{N},\frac{j}{N}\right)\sim \frac{1}{N}\sum_{i=1}^{N-1} \underbrace{\int_\frac{1}{N}^\frac{i}{N}f\left(\frac{i}{N},j\right)dj}_{F\left(\frac{i}{N}\right)}
    \sim \int_0^1\int_0^i f(i,j) dj\,di
\end{equation}
We see from \eqref{eq: SumBound} that get a $+1$ to the upper limit, when we take the limit to the integral. The integral limits are the smallest and largest values of the argument $j$ in the sum and thus get a $\frac{1}{N}$, which converts the upper limit $i-1\to\frac{i}{N}$ into a sampling point as required for the next sum to integral conversion. The $\frac{1}{N}$ vanishes for $N\to\infty$ which does not apply to $\frac{i}{N}$, since $i$ can be as big as $N$.\\
On this note it is useful to look at some examples involving finite shifts in the arguments of sums and products, to be aware, when we can drop them and when not. Let us first consider the following test-case:
\begin{align}\label{eq: AsympSumExample}
    \frac{1}{N}\sum_{j=2}^{N-2}\frac{j(j-1)}{N^2} &\overset{\frac{1}{N}\to 0}{\sim}
    \frac{1}{N}\sum_{j=2}^{N-2}\frac{j^2}{N^2} \sim \int_\frac{2}{N}^\frac{N-2}{N} j^2 dj
    \overset{N\to\infty}{\longrightarrow} \int_0^1 j^2 dj \\
    &\not\sim \int_\frac{2}{N}^\frac{N-2}{N} j(j-1) dj
    \sim \frac{1}{N}\sum_{j=2}^{N-2}\frac{j}{N}\left( \frac{j}{N} - 1 \right)
\end{align}
We can see, that we can neglect the shift by $-1$ in the argument by considering the converse. In the case of a nested sum, the constant shifts in the limits become shifts in the arguments, when we approximate the inner sum with an integral.\\
Such shifts are less harmless, when they appear in infinite products:
\begin{align}\label{eq: AsympProdExample}
    \frac{1}{N}\prod_{j=3}^{N-2}\frac{j}{j-1} =& \frac{1}{N}\frac{\cancel{3}}{2}\frac{\cancel{4}}{\cancel{3}}\cdot
    \ldots\cdot\frac{N-2}{\cancel{N-3}} = \frac{N-2}{2N} \to \frac{1}{2}, \quad N\to\infty \\
    \not\sim& \frac{1}{N}\prod_{j=3}^{N-2}\frac{j}{j} = \frac{N-2-3+1}{N} \to 1, \quad N\to\infty
\end{align}
We again compare with the converse assumption, that we can just neglect the shift by $-1$ and compare the results. The lower limit of the product leads to the factor of $\frac{1}{2}$, whereas the the $-2$ in the upper limit vanishes for $N\to\infty$. We present a more thorough approach to infinite products in the following section.\\

It may seem a bit unnatural choice, that we keep the name of the index and write the factor $\frac{1}{N}$, which represents $dx$ in front of the sum. We do this, because it is simpler this way to count the powers in $N$ later in the limit calculation and we keep the name of the index so we can track better, which sum transformed into what integral.

\subsection{A main theorem of calculus for products}\label{subsec: InfProd}
In the main text we motivated and introduced the notion of product integrals as limits of infinite products. In this section we discuss their convergence and proof an analogue to fundamental theorem of calculus for product integrals.\\

We create a lower bound by taking the smallest of all correction terms. When we then expand the lower bound of the product, using the binomial theorem, we see that the contribution of the correction terms in general do not vanish.
\begin{align}
    &a \coloneq \text{min}\{a_n(N)|n\in\{0,\ldots,N\},N\in\mathbb{N}\} = \underset{x\in[0,1)}{\text{min}}\{f(x)\} \\
    &\lim_{N\to\infty}\prod_{n=0}^{N-1}\left( 1 + \frac{a_n(N)}{N} \right)
    \geqslant \lim_{N\to\infty}\prod_{n=0}^{N-1}\left( 1 + \frac{a}{N} \right)
    = \lim_{N\to\infty}\left( 1 + \frac{a}{N} \right)^N
    = \lim_{N\to\infty}\sum_{k=0}^N\frac{N!}{(N-k)!k!}\frac{a^k}{N^k} \notag \\
    &= \lim_{N\to\infty} 1 + \cancel{N}\frac{a}{\cancel{N}} + \frac{\cancel{N(N-1)}}{2}\frac{a^2}{\cancel{N^2}}
    + \frac{\cancel{N(N-1)(N-2)}}{3!}\frac{a^3}{\cancel{N^3}} + \ldots = 1 + a + \frac{a^2}{2} + \frac{a^3}{3!} + \ldots < e^a
\end{align}
We observe that for the first couple of terms, powers of $N$ in the denominator cancel with the binomial coefficients in the limit $N\to\infty$. But the binomial coefficients cannot keep up with the growth of $N^k$ and terms with to large $k$ don't contribute significantly anymore. Thus, the sequence of terms we sum over decays faster then the ones in the exponential series, which guaranties convergence. It is not immediately obvious from this expression however, to what number this product converges to.\\

We conclude, that while a Riemann integral is the limit of adding an increasing number of values increasingly close to $0$, the neutral element to addition, we multiply an increasing number of values increasingly close to $1$, the neural element to multiplication, in the case of these products. It thus makes sense to consider them as product integrals.
\begin{equation}
    \int f dx = \lim_{N\to\infty}\sum_{n=0}^N\left( 0 + \frac{f(x_n)}{N} \right) \qquad \longrightarrow \qquad
    \mathcal{P} f dx \coloneq \lim_{N\to\infty}\prod_{n=0}^N\left( 1 + \frac{f(x_n)}{N} \right)
\end{equation}

To derive an analogue to the fundamental theorem of calculus for product integrals we first have a closer look at how the fundamental theorem of calculus is related to surfaces under step functions.\\

If we hold the starting point $a$ fixed and move the end point $b$ away from $a$, starting at $a$, then the surface under the graph of a smooth function $f(x)$ from $a$ to $b$ is given by some function of $b$. If we now move $a$ up towards $b$ we are decreasing the enclosed surface and this decrease clearly must be given by the same function since we just moved $b$ through this interval. Thus, the integral of $f$ from $a$ to $b$ must be given by the difference of some a priori unknown function $F$ evaluated at $a$ and $b$. We construct the Riemann integral backwards until we arrive at the integrand expressed in terms of the primitive function $F$. This is a simplistic derivation of the fundamental theorem of calculus, which will show us how to do derive an equivalent theorem for product integrals.
\begin{align}
    &x_n \coloneq a + n\Delta x, \quad \Delta x = \frac{b-a}{N} \quad \Rightarrow \quad x_0 = a, \quad x_N = b \\
    \vphantom{a} \notag \\
    &F(b) - F(a) = F(x_N) - F(x_{N-1}) + F(x_{N-1}) - \ldots - F(x_1) + F(x_1) - F(x_0) \notag \\
    &= F(x_{N-1}+\Delta x) - F(x_{N-1}) + \ldots + F(x_0+\Delta x) - F(x_0)
    = \sum_{n=0}^{N-1}F(x_n+\Delta x) - F(x_n) \quad \forall N\in\mathbb{N} \\
    &= \lim_{N\to\infty}\sum_{n=0}^{N-1} \underbrace{\frac{F(x_n+\Delta x) - F(x_n)}{\Delta x}\Delta x}_{a_n\to0}
    = \lim_{N\to\infty} \frac{b-a}{N} \sum_{n=0}^{N-1} \lim_{\Delta x\to0} \frac{F(x_n+\Delta) - F(x_n)}{\Delta x}
    = \lim_{N\to\infty} \frac{b-a}{N} \sum_{n=0}^{N-1} F'(x_n) \notag \\
    &= \int_a^b F'(x) dx
\end{align}

$ $\\
We are now prepared to proof an analogue version to this theorem for product integrals.

\begin{theorem}[Fundamental Theorem of Product Integration] The product integral of a smooth, real function $f$ is given by:
\begin{equation}
    \underset{a}{\overset{b}{\mathcal{P}}} f dx = \frac{F(b)}{F(a)},
\end{equation}
where $F$ is the product primitive function of $f$.
\end{theorem}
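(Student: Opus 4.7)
The plan is to mirror, step by step, the derivation of the ordinary fundamental theorem of calculus given immediately above the statement, replacing sums with products and additive differences with multiplicative quotients. First, partition $[a,b]$ uniformly as $x_n = a + n\Delta x$ with $\Delta x = (b-a)/N$, so $x_0 = a$ and $x_N = b$, and write $F(b)/F(a)$ as a telescoping product
\begin{equation*}
\frac{F(b)}{F(a)} = \prod_{n=0}^{N-1}\frac{F(x_n+\Delta x)}{F(x_n)},
\end{equation*}
which is valid for every $N\in\mathbb{N}$ and is the multiplicative analogue of $F(b)-F(a) = \sum_n [F(x_n+\Delta x)-F(x_n)]$.

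Next, I would rewrite each factor in the shape $1 + (\text{small})/N$ to match the definition of the product integral. Using smoothness of $F$,
\begin{equation*}
\frac{F(x_n+\Delta x)}{F(x_n)} = 1 + \frac{F(x_n+\Delta x)-F(x_n)}{F(x_n)} = 1 + \frac{F'(x_n)}{F(x_n)}\Delta x + O(\Delta x^2),
\end{equation*}
so the natural \emph{product derivative} is $F'/F = (\ln F)'$, and ``$F$ is the product primitive of $f$'' means precisely $f = F'/F$. Absorbing the factor $(b-a)$ from $\Delta x = (b-a)/N$ into the sample-point convention of $\mathcal{P}$, the telescoping product becomes
\begin{equation*}
\frac{F(b)}{F(a)} = \lim_{N\to\infty}\prod_{n=0}^{N-1}\left(1 + \frac{f(x_n)(b-a)}{N} + O(N^{-2})\right) = \underset{a}{\overset{b}{\mathcal{P}}} f\,dx.
\end{equation*}

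The main obstacle is showing that the $O(\Delta x^2)$ errors, when multiplied over $N$ factors, do not accumulate to a non-vanishing contribution. The cleanest way is to pass to logarithms: writing $\varepsilon_n = f(x_n)\Delta x + O(\Delta x^2) = O(1/N)$ and using $\log(1+\varepsilon_n) = \varepsilon_n - \varepsilon_n^2/2 + \cdots$, one gets $\sum_{n=0}^{N-1}\log(1+\varepsilon_n) = \sum_n \varepsilon_n + N\cdot O(N^{-2}) = \sum_n \varepsilon_n + O(1/N)$, so the quadratic error is harmless. The sum $\sum_n \varepsilon_n$ is a Riemann sum converging to $\int_a^b f$, and by continuity of $\exp$ and the identity $F(b)/F(a) = \exp\int_a^b (\ln F)'\,dx = \exp\int_a^b f\,dx$, the two sides agree. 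Smoothness of $F$ together with a positivity assumption $F>0$ on $[a,b]$ (so $1/F$ is bounded, keeping the Taylor remainder uniform) is what makes this clean; a brief bookkeeping step reconciles the bounds $a,b$ with the normalized partition $\{0,\ldots,N\}$ used in the definition of $\mathcal{P}$ in the excerpt.
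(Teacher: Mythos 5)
Your proof is correct and follows essentially the same route as the paper's: the telescoping product $\frac{F(b)}{F(a)} = \prod_{n=0}^{N-1}\frac{F(x_{n+1})}{F(x_n)}$ with each factor rewritten as $1 + \frac{F'(x_n)}{F(x_n)}\Delta x$, so that the product primitive is characterized by $f = F'/F$. Your extra passage to logarithms to control the accumulated $O(\Delta x^2)$ errors tightens a step the paper handles only informally (it takes $\lim_{\Delta x\to0}$ factorwise inside the $N\to\infty$ limit), but it does not change the underlying argument.
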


\begin{proof}
We assume the existence of a primitive function for product integrals and do the analogue calculation on multiplication. Subtracting is adding the additive inverse thus for products we multiply with the multiplicative inverse i.e., we divide. Then we expand by multiplying with ones instead of adding zeros to include the sample points. Finally, we need to bring it into a form, where we multiply over small deviations from $1$ instead of summing over small deviations from $0$:
\begin{align}\label{eq: MainThProd}
    \frac{F(b)}{F(a)} =& F(x_N)\frac{F(x_{N-1})}{F(x_{N-1})}\cdot\ldots\cdot\frac{F(x_1)}{F(x_1)}\frac{1}{F(x_0)}
    = \frac{F(x_N)}{F(x_{N-1})}\cdot\ldots\cdot\frac{F(x_1)}{F(x_0)} = \prod_{n=0}^{N-1}\frac{F(x_{n+1})}{F(x_n)}
    \quad \forall N\in \mathbb{N} \\
    =& \prod_{n=0}^{N-1}\frac{F(x_{n+1}) + F(x_n) - F(x_n)}{F(x_n)}
    = \lim_{N\to\infty}\prod_{n=0}^{N-1}\left( 1 + \frac{F(x_{n+1}) - F(x_n)}{F(x_n)} \right) \notag \\
    =& \lim_{N\to\infty}\prod_{n=0}^{N-1}
    \left( 1 + \lim_{\Delta x\to0}\frac{F(x_{n+1}) - F(x_n)}{F(x_n)\Delta x}\Delta x \right)
    = \lim_{N\to\infty} \prod_{n=0}^{N-1}\left( 1 + \frac{F'(x_n)}{F(x_n)}\Delta x \right)
    = \underset{a}{\overset{b}{\mathcal{P}}} \frac{F'(x)}{F(x)} dx
\end{align}
With this result we see, that $F$ has to be $C^1([a,b])$ and nonzero on the interval $[a,b]$, to be a product primitive function. Such functions exist, which justifies the assumption at the beginning of the proof.
\end{proof}

When we add a constant to a primitive function $F$ we get another primitive function $F+c$ to the same function $f$. The same is true for multiplying factors to product primitive functions.

\begin{proposition}
Multiplying a product primitive function with a constant factor leads to another product primitive function to the same product integrand $f$:
\begin{equation}
    F = \mathcal{P}f \quad \Rightarrow \quad c\,F = \mathcal{P}f, \quad \forall\,c\in\mathbb{R}
\end{equation}
\end{proposition}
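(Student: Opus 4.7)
The plan is to imitate the familiar argument that additive constants drop out of ordinary primitive functions, replaced by the observation that multiplicative constants drop out of ratios. Concretely, the Fundamental Theorem of Product Integration, proved just above, identifies the product integrand associated with a primitive $F$ as the logarithmic derivative $F'/F$. So I would first check that this characterization is invariant under the rescaling $F \mapsto cF$, and then check that the value of the product integral is likewise invariant.

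For the first step, I would simply compute
\begin{equation}
    \frac{(cF)'(x)}{(cF)(x)} = \frac{c\,F'(x)}{c\,F(x)} = \frac{F'(x)}{F(x)} = f(x),
\end{equation}
so $cF$ satisfies the same defining relation as $F$ does in~\eqref{eq: MainThProd}. For the second step, I would apply the Fundamental Theorem of Product Integration to $cF$ and observe that the constant cancels in the ratio:
\begin{equation}
    \underset{a}{\overset{b}{\mathcal{P}}} f\, dx = \frac{(cF)(b)}{(cF)(a)} = \frac{c\,F(b)}{c\,F(a)} = \frac{F(b)}{F(a)},
\end{equation}
which is exactly the product integral obtained from $F$. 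Hence $cF$ is also a product primitive function of $f$.

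The only subtlety I foresee is the standing requirement, noted immediately after the Fundamental Theorem, that a product primitive function must be $C^1([a,b])$ and nonzero on $[a,b]$. Multiplying by $c$ preserves $C^1$-regularity and, provided $c \neq 0$, preserves nonvanishing. So the statement should be restricted to $c \in \mathbb{R}\setminus\{0\}$; the case $c=0$ produces the zero function, which is not a valid product primitive because the ratio $F'/F$ is undefined. I would make this restriction explicit in the final write-up rather than allowing all of $\mathbb{R}$.
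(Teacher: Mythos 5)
Your proof is correct and rests on exactly the same one-line computation as the paper's: the logarithmic derivative $\frac{(cF)'}{cF}=\frac{cF'}{cF}=\frac{F'}{F}=f$ is invariant under rescaling, so $cF$ satisfies the defining relation of a product primitive. Your added caveat that $c\neq 0$ is in fact a correction to the statement as printed (the paper asserts $\forall\,c\in\mathbb{R}$, but $c=0$ yields the zero function, for which $F'/F$ is undefined and which violates the nonvanishing requirement stated after the Fundamental Theorem), and is worth keeping.
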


\begin{proof}
\begin{equation}
    F = \mathcal{P}f \quad \Rightarrow \quad \frac{(c\,F(x))'}{cF(x)} = \frac{cF'(x)}{cF(x)} = \frac{F'(x)}{F(x)}
    = f(x)
\end{equation}
\end{proof}

\begin{corollary}
Thus, $\mathcal{P}f$ analogously to $\int f$, represents a set of functions rather than a single one:
\begin{equation}
    \mathcal{P}f = \left\{ F\in C^\infty(\mathbb{R}) \left.\vphantom{\frac{F'(x)}{F(x)}}\right| \frac{F'(x)}{F(x)} = f(x) \right\}, \qquad \forall f\in C^\infty(\mathbb{R})
\end{equation}
\end{corollary}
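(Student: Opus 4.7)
The plan is to establish the claimed set equality by showing both inclusions and then, as an enrichment, verifying that the set is in fact a one-parameter family generated by multiplicative constants, which is the precise analogue of the classical integration constant for ordinary antiderivatives.

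For the inclusion $\mathcal{P}f \subseteq \{F : F'/F = f\}$, I would simply invoke the Fundamental Theorem of Product Integration proved just above: if $F$ is by definition a product primitive of $f$, then the derivation in Eq.~\eqref{eq: MainThProd} forced the identification $f(x) = F'(x)/F(x)$ as the pointwise limit of $(F(x_{n+1})-F(x_n))/(F(x_n)\Delta x)$. Conversely, for $\{F : F'/F = f\} \subseteq \mathcal{P}f$, given any smooth, nonvanishing $G$ with $G'/G = f$, I would run the chain of equalities in Eq.~\eqref{eq: MainThProd} in reverse starting from the telescoping product $\prod_{n=0}^{N-1} G(x_{n+1})/G(x_n) = G(b)/G(a)$, thereby exhibiting $G$ as a primitive in the sense of the theorem.

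To justify the ``set of functions rather than a single one'' phrasing, I would invoke the preceding Proposition to show that the set is at least as large as the orbit $\{cF_0 : c \in \mathbb{R}\setminus\{0\}\}$ of any fixed primitive $F_0$, and then show it is no larger. The latter is the ODE-uniqueness step: if $F_1'/F_1 = F_2'/F_2 = f$ with $F_1, F_2$ nowhere vanishing, then
\begin{equation}
    \left(\frac{F_1}{F_2}\right)' = \frac{F_1' F_2 - F_1 F_2'}{F_2^2} = \frac{F_1}{F_2}\left(\frac{F_1'}{F_1} - \frac{F_2'}{F_2}\right) = 0,
\end{equation}
so $F_1 = c F_2$ for some constant $c\in\mathbb{R}\setminus\{0\}$. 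This matches the parameter freedom of the classical antiderivative, where an additive constant becomes a multiplicative one under the exchange of the neutral elements.

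The main obstacle, such as it is, is bookkeeping around the nonvanishing hypothesis: the quotient $F'/F$ is only sensible where $F \neq 0$, so strictly speaking the set on the right-hand side should be understood as smooth functions that are nowhere zero on the interval of interest. I would make this explicit at the start of the proof, note that it is already implicit in the hypotheses of the Fundamental Theorem (since $F(a)$ appears in a denominator), and then the argument goes through without further complication. No other delicate step is involved; everything reduces to the theorem proved immediately above and a one-line logarithmic-derivative computation.
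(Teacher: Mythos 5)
Your proposal is correct, and it is in fact more complete than what the paper does: the paper offers no separate proof of the Corollary at all, treating it as an immediate consequence of the preceding Proposition (that $cF$ is again a product primitive of $f$). That Proposition only establishes one direction of one inclusion — it shows the set of product primitives is \emph{at least} a one-parameter family, but says nothing about whether every $F$ with $F'/F = f$ arises as a product primitive, nor whether the family is \emph{exactly} the orbit $\{cF_0\}$. Your two-inclusion argument (reading Eq.~\eqref{eq: MainThProd} forwards for one direction and running the telescoping product backwards for the other) plus the logarithmic-derivative uniqueness step $\bigl(F_1/F_2\bigr)' = 0$ fills both of these gaps and makes the analogy with the additive integration constant precise. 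Your explicit handling of the nonvanishing hypothesis is also a genuine improvement: the paper's Proposition as stated allows $c = 0$, which would produce the zero function and break the quotient $F'/F$, whereas your restriction to $c \in \mathbb{R}\setminus\{0\}$ is the correct statement. The only thing worth flagging is that the uniqueness step $F_1 = cF_2$ requires the domain to be connected (an interval, or all of $\mathbb{R}$ as in the Corollary's statement), which you should state when you invoke it; on a disconnected domain the constant could differ between components.
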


We saw, that the small terms of order $\frac{1}{N}$ in $P_m^n$ stack up and we know now that we need to find the product primitive function associated to that term to calculate the limit of this infinite product. But the product $\hat{P}_m^n$ has an additional term of order $\frac{\ln{N}}{N^2}$. We construct an upper bound and expand the product as above, to show that terms suppressed with higher powers then $\frac{1}{N}$ vanish in the limit $N\to\infty$:

\begin{proposition}
Terms of order $\mathcal{O}(\frac{1}{N^2})$ in a product integral vanish in the limit $N\to\infty$.
\end{proposition}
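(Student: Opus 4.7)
The plan is to extend the binomial-squeeze argument that was used a few paragraphs earlier for the convergence of $\prod_{n}(1+a_n/N)$, exploiting the fact that one additional power of $N$ in the denominator is more than enough to suppress every contribution except the constant term.

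First I would isolate the $\mathcal O(1/N^2)$ correction as a multiplicative ratio. Writing a generic factor as $1 + a_n(N)/N + b_n(N)/N^2$, factor
\[
\prod_{n=0}^{N-1}\left(1+\frac{a_n}{N}+\frac{b_n}{N^2}\right)
= \prod_{n=0}^{N-1}\left(1+\frac{a_n}{N}\right) \cdot \prod_{n=0}^{N-1}\left(1+\frac{c_n}{N^2}\right),
\]
where $c_n(N)=b_n/(1+a_n/N)$. Since on the compact parameter domain the $a_n$ are uniformly bounded and $1+a_n/N$ is bounded away from $0$ for large $N$, the $c_n$ inherit the same growth rate as $b_n$. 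It thus suffices to prove that $R_N\coloneq\prod_{n=0}^{N-1}(1+c_n/N^2)\to 1$, for then the $\mathcal O(1/N^2)$ perturbation does not change the value of the product integral.

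Next I would set $c\coloneq\max_n|c_n(N)|$. In the strict $\mathcal O(1/N^2)$ case $c$ is uniformly bounded, and in the slightly worse case actually occurring in $\hat P_m^n$ (where the extra factor is $\mathcal O(\ln N/N^2)$) the constant $c$ grows only logarithmically in $N$. For $N$ large enough that $c/N^2<1$, all factors are positive and the same monotone bound as before gives
\[
\left(1-\frac{c}{N^2}\right)^N \;\le\; R_N \;\le\; \left(1+\frac{c}{N^2}\right)^N.
\]
Expanding the upper bound via the binomial theorem and using $\binom{N}{k}\le N^k/k!$,
\[
\left(1+\frac{c}{N^2}\right)^N = \sum_{k=0}^N \binom{N}{k}\frac{c^k}{N^{2k}}
\;\le\; \sum_{k=0}^N \frac{1}{k!}\left(\frac{c}{N}\right)^k \;\le\; e^{c/N},
\]
which tends to $1$ because $c/N\to 0$ (this is the decisive point, and it still holds for $c\sim\ln N$ since $\ln N/N\to 0$). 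The lower bound satisfies $\bigl(1-c/N^2\bigr)^N\ge 1-c/N\to 1$ by Bernoulli, so by the squeeze $R_N\to 1$.

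The result then follows: the $\mathcal O(1/N^2)$ perturbation drops out of the product integral in the limit. The one real obstacle is bookkeeping rather than analysis --- on each concrete product appearing in the recursion (in particular $\hat P_m^n$, where the $\Lambda_N/((Y_N^{j-l})^2 L_N^{j-l})$ correction must be controlled) one has to check that the coefficient $c_n(N)$ is indeed uniformly bounded, or at worst $\mathcal O(\ln N)$, on the compact parameter domain under consideration. Once this uniform estimate is in place, the binomial squeeze above finishes the proof.
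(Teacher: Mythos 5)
Your proof is correct, and it takes a genuinely different route from the paper's. The paper keeps the product in additive form, replaces $a_n$ and $b_n$ by their maxima $A$ and $B$, and expands $\bigl(1+\frac{A}{N}+\frac{B}{N^\alpha}\bigr)^N$ binomially to show that every term carrying a power of $B$ is suppressed by $N^{k(1-\alpha)}$ and hence vanishes for $\alpha>1$. That argument exhibits the mechanism clearly, but it is only a one-sided bound, and replacing the $a_n$ by their maximum changes the value of the leading product, so strictly speaking it does not by itself show that $\lim\prod_n(1+\frac{a_n}{N}+\frac{b_n}{N^\alpha})=\lim\prod_n(1+\frac{a_n}{N})$. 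Your multiplicative decomposition fixes exactly this: by factoring the perturbed product as the unperturbed one times $R_N=\prod_n(1+\frac{c_n}{N^2})$ with $c_n=b_n/(1+\frac{a_n}{N})$, and squeezing $R_N$ between $(1-\frac{c}{N^2})^N$ and $(1+\frac{c}{N^2})^N$, you obtain a genuine two-sided estimate showing the correction factor tends to $1$, so the limit of the product integral is literally unchanged. You also explicitly cover the $\mathcal{O}(\ln N/N^2)$ growth that actually occurs in $\hat{P}_m^n$ via the $\Lambda_N$ term (recall $\Lambda_N=NI_Nc^2(e^1)^2$ with $I_N\sim\ln N$), which the paper handles only implicitly through its general exponent $\alpha>1$. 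The remaining bookkeeping you flag --- uniform boundedness of $c_n(N)$ on the compact parameter domain --- is the same hypothesis the paper implicitly assumes when taking $A$ and $B$ as maxima, so nothing is lost; your version is the more complete proof of the proposition as stated.
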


\begin{proof}
Let
\begin{equation}
    \alpha > 1, \quad A \coloneq \underset{n\in \mathbb{N}}{max}\{a_n\}, \quad B \coloneq \underset{n\in \mathbb{N}}{max}\{b_n\}
\end{equation}
then
\begin{align}\label{eq: 2ndOrdProd}
    &\lim_{N\to\infty}\prod_{n=0}^{N-1}\left( 1 + \frac{a_n}{N} + \frac{b_n}{N^\alpha} \right) \leqslant \lim_{N\to\infty}\prod_{n=0}^{N-1}\left( 1 + \frac{A}{N} + \frac{B}{N^\alpha} \right)
    = \lim_{N\to\infty}\sum_{k=0}^N\binom{N}{k}\left( 1 + \frac{A}{N} \right)^{N-k}\frac{B^k}{N^{k\alpha}} \notag \\
    &= \lim_{N\to\infty} \left( 1 + \frac{A}{N} \right)^N
    + \frac{N!}{(N-1)!}\left( 1 + \frac{A}{N} \right)^{N-1}\frac{B}{N^\alpha}
    + \frac{N!}{(N-2)!2}\left( 1 + \frac{A}{N} \right)^{N-2}\frac{B^2}{N^{2\alpha}} + \ldots \notag \\
    &= \lim_{N\to\infty} \left( 1 + \frac{A}{N} \right)^N
    + \underbrace{\left( 1 + \frac{A}{N} \right)^{N-1}\frac{B}{N^{\alpha-1}}}_{\to0}
    + \underbrace{\frac{1}{2}\left( 1 + \frac{A}{N} \right)^{N-2}\frac{B^2}{N^{2(\alpha-1)}}}_{\to0} + \ldots
    = \lim_{N\to\infty} \left( 1 + \frac{A}{N} \right)^N.
\end{align}
\end{proof}

\subsection{The $n$-layered nested integrals}\label{subsec: nNestInt}
The last mathematical problem we need to deal with, so that we can calculate limits, is the increasing number of nested sums. As we know now, this leads to $n$ nested integrals, where $n$ can be as large as $N$, which we take to infinity. In the next section it will turn out that we repeatedly integrate over the same function. More precisely the $n$-dimensional integral we are interested in is of the form:
\begin{equation}
    I_n \coloneq \int^{x_n} f(x_{n-1})\int^{x_{n-1}} f(x_{n-2}) \ldots \int^{x_1} f(x_0)\, dx_0 \ldots dx_{n-2}\,dx_{n-1}
\end{equation}
\begin{equation}
    I_n \coloneq \int^{x_n} f(x_{n-1})I_{n-1} dx_{n-1}, \qquad I_1 \coloneq \int^{x_1} f(x_0) dx_0
\end{equation}
We left away the lower integration limits to signify that we are neglecting its contribution for now i.e., we assume that $F(a) = 0$. This is not true for the case we want to calculate in the next section, but we can use the result of this simplified problem, to derive the solution of the sufficiently generic one.\\
When we write out the first few integrals we see, that we can use integration by parts (P.I.) and the structure becomes apparent very quickly. We again, as in the previous subsection use $F$ as a primitive function to $f$:
\begin{align}
    I_1 =& \int^{x_1} f(x_0) dx_0 = F(x_1) \label{eq: I1} \\
    I_2 =& \int^{x_2} f(x_1)\underbrace{\int^{x_1} f(x_0) dx_0}_{I_1} dx_1 = \int^{x_2} f(x_1)F(x_1) dx_1
    \overset{P.I.}{=} F^2(x_2) - \int^{x_2} F(x_1)f(x_1) dx_1 \notag \\
    \Rightarrow& \quad I_2 = \frac{F^2(x_2)}{2} \label{eq: I2} \\
    I_3 =& \int^{x_3} f(x_2)I_2 dx_2 = \frac{1}{2}\int^{x_3} f(x_2)F^2(x_2) dx_2
    = \frac{1}{2}\left( F^3(x_3) - 2\int^{x_3} F(x_2)F(x_2)f(x_2) dx_2 \right) \notag \\
    =& \frac{F^3(x_3)}{2} - \int^{x_3} F^2(x_2)f(x_2) dx_2 \quad \Rightarrow \quad I_3 = \frac{F^3(x_3)}{2\cdot3} \label{eq: I3}
\end{align}

\begin{proposition}
From here it is reasonable to assume, that the general formula is given by:
\begin{equation}
    I_n = \frac{F^n(x_n)}{n!},
\end{equation}
\end{proposition}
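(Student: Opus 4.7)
The plan is to prove the formula $I_n = F^n(x_n)/n!$ by induction on $n$, exploiting the fact that $F' = f$ turns the integrand at each step into an exact derivative. The recursive definition $I_n = \int^{x_n} f(x_{n-1}) I_{n-1}\, dx_{n-1}$ is perfectly suited for this, since multiplying the inductive hypothesis for $I_{n-1}$ by $f(x_{n-1})$ produces precisely the derivative of $F^n/n!$.

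For the base case, I would simply check $n=1$: the definition $I_1 = \int^{x_1} f(x_0)\, dx_0 = F(x_1) = F^1(x_1)/1!$ gives exactly the claimed formula, matching \eqref{eq: I1}. (It is worth noting here that, because we have dropped the lower limit, we are implicitly choosing the primitive with $F(a) = 0$; this is the convention under which the pattern from \eqref{eq: I1}, \eqref{eq: I2}, \eqref{eq: I3} is clean, and the inductive computation is carried out with respect to this convention.)

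For the inductive step, assume $I_{n-1} = F^{n-1}(x_{n-1})/(n-1)!$. Substituting into the recursion yields
\begin{equation*}
    I_n = \int^{x_n} f(x_{n-1}) \frac{F^{n-1}(x_{n-1})}{(n-1)!}\, dx_{n-1}.
\end{equation*}
The key observation is then
\begin{equation*}
    \frac{d}{dx}\!\left[\frac{F^n(x)}{n!}\right] = \frac{n F^{n-1}(x) F'(x)}{n!} = \frac{F^{n-1}(x) f(x)}{(n-1)!},
\end{equation*}
so the integrand is exactly the derivative of $F^n/n!$ and the Fundamental Theorem of Calculus closes the step: $I_n = F^n(x_n)/n!$. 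This mirrors the ad hoc integrations by parts used in \eqref{eq: I2} and \eqref{eq: I3}, but eliminates the need to re-do them at every level.

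There is no real obstacle here; the only subtlety is bookkeeping of the constants of integration, i.e.\ making sure the inductive formula is consistent with the convention of dropping the lower limit. Once one recognizes that, with $F(a)=0$, every integration by parts in \eqref{eq: I1}--\eqref{eq: I3} produces a boundary term that vanishes, the induction is essentially one line. The result will be used in the next section to recognize $\sum_n I_n$ as the power series of the exponential (or, after splitting by parity, of $\sin$ and $\cos$), which is the actual reason we need the compact closed form $F^n(x_n)/n!$.
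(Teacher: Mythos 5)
Your proof is correct and takes essentially the same route as the paper: induction on $n$ via the recursion $I_n = \int^{x_n} f(x_{n-1}) I_{n-1}\,dx_{n-1}$, with the same base case and the same convention $F(a)=0$ for the dropped lower limit. The only (cosmetic) difference is that you evaluate $\int^{x_n} f\,F^{n-1}/(n-1)!$ by recognizing it as the exact derivative of $F^n/n!$, whereas the paper integrates by parts and then solves the resulting identity $(1+\tfrac{1}{n})\int f F^n = \tfrac{1}{n}F^{n+1}$ for the integral — your version is marginally more direct but mathematically identical.
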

which we proof by induction.

\begin{proof}
Induction start: See \eqref{eq: I1}, \eqref{eq: I2}, \eqref{eq: I3}.\\
Induction step:
\begin{align}
    &I_{n+1} = \int^{x_{n+1}} f(x_n)I_n dx_n \overset{I.A.}{=} \frac{1}{n!}\int^{x_{n+1}} f(x_n)F^n(x_n) dx_n \notag \\
    &\quad\ \ \overset{P.I.}{=} \frac{1}{n!}\left( F^{n+1}(x_{n+1}) - \int^{x_{n+1}} F(x_n)nF^{n-1}(x_n)f(x_n) dx_n \right)
    = \frac{F^{n+1}(x_{n+1})}{n!} - \frac{1}{(n-1)!}\int^{x_{n+1}} F^n(x_n)f(x_n) dx_n \notag \\
    &\Rightarrow \quad \frac{F^{n+1}(x_{n+1})}{n!} = \frac{1}{(n-1)!}\left( \frac{1}{n} + 1 \right)\int^{x_{n+1}} f(x_n)F^n(x_n) dx_n
    = \frac{n+1}{n!}\int^{x_{n+1}} f(x_n)F^n(x_n) dx_n = (n+1)I_{n+1} \notag \\
    &\Rightarrow \quad I_{n+1} = \frac{F^{n+1}(x_{n+1})}{(n+1)!}
\end{align}
\end{proof}

Now we turn to the more complicated problem and define the type of integrals to which the nested sums will converge:
\begin{equation}
    J_n = \int_0^{x_n} f(x_{n-1})\int_0^{x_{n-1}} f(x_{n-2}) \ldots \int_0^{x_1} f(x_0)\,dx_0 \ldots dx_{n-2}\,dx_{n-1}
\end{equation}
It is the same type of integral as the $I_n$ including the lower limits and thus in every step one gets a $-F(0)$-term. In our case the contribution of the lower integral limit does not vanish $F(0)\neq0$.\\

\begin{proposition}
We find $J_n$ by applying the same strategy of writing down the first terms and using $I_n$ and arrive at:
\begin{equation}\label{eq: J_imath}
    J_\imath = \sum_{s=1}^\imath\frac{(-1)^{\imath-s}}{(\imath-s)!}F^{\imath-s}(0)[I_s]_0^{x_\imath}
\end{equation}
\end{proposition}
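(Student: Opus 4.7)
My plan is induction on $\imath$, mirroring the induction that established the closed-form expression for $I_n$. The base case $\imath=1$ is direct: $J_1 = \int_0^{x_1} f(x_0)\,dx_0 = F(x_1) - F(0) = [I_1]_0^{x_1}$, which matches the formula with its single summand $s=1$.

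For the inductive step, I would substitute the hypothesis into $J_{\imath+1} = \int_0^{x_{\imath+1}} f(x_\imath)\,J_\imath(x_\imath)\,dx_\imath$, write each bracket as $[I_s]_0^{x_\imath} = F^s(x_\imath)/s! - F^s(0)/s!$, and split the integral into a ``variable'' part and a ``boundary'' part. The variable part $\int_0^{x_{\imath+1}} f(x_\imath) F^s(x_\imath)/s!\,dx_\imath$ evaluates to $[I_{s+1}]_0^{x_{\imath+1}}$ by the same integration by parts that produced $I_n$. After re-indexing $t = s+1$, this reproduces precisely the summands $s = 2,\dots,\imath+1$ of the target formula for $J_{\imath+1}$. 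The boundary part contributes $-F^s(0)/s! \cdot [I_1]_0^{x_{\imath+1}}$ from each $s$, and these should sum to $\tfrac{(-1)^\imath}{\imath!} F^\imath(0)\,[I_1]_0^{x_{\imath+1}}$ once one applies the combinatorial identity $\sum_{s=1}^{\imath}\tfrac{(-1)^{\imath-s}}{(\imath-s)!\,s!} = -\tfrac{(-1)^\imath}{\imath!}$, which is just the binomial expansion of $(1-1)^\imath = 0$ with the $s=0$ term isolated. This is exactly the missing $s=1$ summand of the target formula, closing the induction.

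A cleaner route I would flag as a sanity check: since $G(x) := F(x) - F(0)$ is also a primitive of $f$ and satisfies $G(0) = 0$, running the argument for $I_n$ verbatim with $G$ in place of $F$ immediately yields the compact closed form $J_\imath = (F(x_\imath)-F(0))^\imath/\imath!$; binomial expansion of this together with the same identity $\sum_{s=0}^\imath\binom{\imath}{s}(-1)^{\imath-s}=0$ recovers the claimed expression. The main obstacle is purely organizational---tracking the index shift from $s$ to $t=s+1$ and the sign/factorial bookkeeping so that the boundary contributions collapse neatly onto the $s=1$ summand---rather than any genuine analytical difficulty.
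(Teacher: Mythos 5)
Your induction is correct and is essentially the paper's own proof: the same substitution of the hypothesis into $J_{\imath+1}=\int_0^{x_{\imath+1}}f(x_\imath)J_\imath\,dx_\imath$, the same split into the $F^s(x_\imath)/s!$ part (which integrates to $[I_{s+1}]_0^{x_{\imath+1}}$ and reindexes onto the summands $s\geqslant 2$) and the $F^s(0)/s!$ boundary part, and the same combinatorial identity $\sum_{s=1}^{\imath}\frac{(-1)^{\imath-s}}{(\imath-s)!\,s!}=\frac{(-1)^{\imath+1}}{\imath!}$ to collapse the latter onto the $s=1$ summand.

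Your ``cleaner route'' is worth more than a sanity check, and the paper does not take it: since $G(x)\coloneq F(x)-F(0)$ is also a primitive of $f$ with $G(0)=0$, the argument for $I_n$ applies verbatim and gives $J_\imath=\frac{(F(x_\imath)-F(0))^\imath}{\imath!}$ directly; expanding this binomially and using $\sum_{s=0}^{\imath}\binom{\imath}{s}(-1)^{\imath-s}=0$ recovers the stated sum exactly (I checked: the two leftover $(-1)^\imath F^\imath(0)/\imath!$ terms cancel). This bypasses the induction and the index bookkeeping entirely, at the cost of having to notice the gauge freedom in the choice of primitive --- which the paper's own Sec.~\ref{subsec: InfProd} discussion of primitive functions already makes available.
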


\begin{proof}
\begin{flalign}
\text{Induction start:} \qquad J_1 = \sum_{s=1}^1\frac{(-1)^0}{0!}F^0(0)[I_s]_0^{x_1} = F(x_1) - F(0) = \int_0^{x_1} f(x_0) dx_0 \qquad \checkmark&&
\end{flalign}
Induction step:
\begin{align}
    J_{\imath+1} =& \int_0^{x_{\imath+1}} f(x_\imath)J_\imath dx_\imath
    \overset{I.A.}{=} \int_0^{x_{\imath+1}} f(x_\imath)\sum_{s=1}^\imath\frac{(-1)^{\imath-s}}{(\imath-s)!}
    F^{\imath-s}(0)\frac{F^s(x_\imath)-F^s(0)}{s!} dx_\imath \notag \\
    =& \sum_{s=1}^\imath\frac{(-1)^{\imath-s}}{(\imath-s)!}F^{\imath-s}(0)\int_0^{x_{\imath+1}}
    f(x_\imath)\frac{F^s(x_\imath)}{s!} dx_\imath - \sum_{s=1}^\imath\frac{(-1)^{\imath-s}}{(\imath-s)!s!}
    F^\imath(0)\int_0^{x_{\imath+1}} f(x_\imath) dx_\imath \notag \\
    \overset{\eqref{eq: Identity}}{=}& \sum_{\imath=2}^{n+1}\frac{(-1)^{\imath-s+1}}{(\imath-s+1)!}F^{\imath-s+1}(0)\int_0^{x_{\imath+1}}
    \underbrace{f(x_\imath)\frac{F^{s-1}(x_\imath)}{(s-1)!} dx_\imath}_{I_s}
    - \frac{(-1)^{\imath+1}}{\imath!}F^\imath(0)\int_0^{x_{\imath+1}} \underbrace{f(x_\imath) dx_\imath}_{I_1} \notag \\
    =& \sum_{s=1}^{\imath+1}\frac{(-1)^{\imath+1-s}}{(\imath+1-s)!}F^{\imath+1-s}(0)[I_s]_0^{x_{\imath+1}},
\end{align}
where we used
\begin{equation}\label{eq: Identity}
    \sum_{s=1}^\imath\frac{(-1)^{\imath-s}}{(\imath-s)!s!} = \frac{(-1)^{\imath+1}}{\imath(\imath-1)!}.
\end{equation}
\end{proof}

\section{Limit calculations}\label{sec: Limit calculations}
Now we have all results we need in explicit form and we developed all the tools we need to calculate the limits. We first calculate the limits of the quantities in each slice, which we need. Then we calculate the limits of the recursion substitutes and in the last subsection we put everything together to derive our final result.

\subsection{Limits of the interior values}
We start by calculating the asymptotic limit of the quantities of the $j$-th slice presented in Sec.~\ref{subsubsec: 2-nd order of Sigma_j}, starting from the inner most level of substitution. We then insert the asymptotic expressions until we arrive at $\overset{\circ}{b}{}_2^j$, $\overset{\circ}{\gamma}_2^j$ and $\overset{\circ}{\alpha}_2^{0,j}$, which we need for \eqref{eq: b_2^j}, \eqref{eq: gamma_2^j} and \eqref{eq: alpha_2^j}. We note that these are being summed over and thus we have to calculate the asymptotic behaviour and drop terms according to the example \eqref{eq: AsympSumExample}.\\

We drop no terms in the following substitutes for now, but it will be very useful to track their algebraic order in $N$:
\begin{align}
    Y_N^j = \sqrt{j^2a^2 + N^2c^2 + 2jNac\cos\beta} \sim \mathcal{O}(N), \qquad Z_N^j \coloneq ja + Nc\cos\beta \sim \mathcal{O}(N) \\
    Y_1^j = \frac{Y_N^{N+j-1}}{N} \sim \mathcal{O}(1), \qquad X_1^j = \frac{(Y_N^{j-1})^2+NaZ_N^{j-1}}{NY_N^{j-1}} \sim \mathcal{O}(1) \\
    c_0^j = \frac{Y_N^{j-1}}{N} \sim \mathcal{O}(1), \qquad \beta_0^j = \sin^{-1}\frac{Nc\sin\beta}{Y_N^{j-1}} \sim \mathcal{O}(1),
    \qquad \cos\beta_0^j = \frac{Z_N^{j-1}}{Y_N^{j-1}} \sim \mathcal{O}(1)
\end{align}

The asymptotic limit of the remaining substitutes are:
\begin{align}
    X_N^j = \frac{(Y_N^j)^2 - aZ_N^j}{Y_N^{j-1}} \sim Y_N^j \sim \mathcal{O}(N), \qquad e^{ij} = \frac{Nce^i}{Y_N^{j-1}} \sim \frac{Nce^i}{Y_N^j} \sim \mathcal{O}(0) \\
    k_N^{ij} = \frac{1}{Y_N^{j-1}}\sqrt{[(Y_N^j)^2-aZ_N^j]^2 + N^2c^2(e^i)^2} \sim \frac{1}{Y_N^{j-1}}\sqrt{(Y_N^j)^4 + N^2c^2(e^i)^2}
    \sim \mathcal{O}(N)
\end{align}

We now use these asymptotic forms to calculate the limits of the recursion parameters:
\begin{equation}
    \mathcal{K}^{ij} \sim 6iK_{ij}(Y_N^j)^2 \sim \mathcal{O}(N^3), \qquad
    \overset{\circ}{C}{}^{1,j} \sim 3K_{1,j}(Y_N^j)^2 \sim \mathcal{O}(N^2) \qquad
    \overset{\circ}{C}^{ij} \sim 6(Y_N^j)^2\frac{1}{i}\sum_{k=1}^i k^2K_{kj}
\end{equation}

Next we calculate the limits of the rib-lines.
\begin{align}
    &\chi^j \sim \left( 5 + 2N\frac{aZ_N^{j-1}}{(Y_N^j)^2} \right)c\,e^N \sim \mathcal{O}(N), \qquad
    \zeta^j \sim Nc\,e^1\left( 1 + \frac{NaZ_N^j}{(Y_N^j)^2 + NaZ_N^j} \right) \sim \mathcal{O}(N), \\
    &\overset{\circ}{a}_3^{N-1,j} \sim \frac{ce^1}{Y_N^j}\left[ (Y_N^{j-1})^2 + NaZ_N^{j-1} \right]
    \frac{1}{N}\sum_{k=2}^{N-2} \frac{k^2}{N^2}K_{kj} \sim \mathcal{O}(N), \\
    &\mathrm{X}^{ij} \sim 5 \sim \mathcal{O}(1), \qquad \mathrm{Z}^{ij} \sim 1 \sim \mathcal{O}(1), \qquad
    \overset{\circ}{\mathcal{K}}{}_a^{nj} \sim \frac{6}{n}\sum_{k=1}^n k^2K_{kj} \sim \mathcal{O}(N^2) \\
    &\overset{\circ}{a}{}_3^{ij} \sim iY_N^jc\,e^1\frac{1}{N}\sum_{n=i}^N \frac{1}{n^2} \frac{1}{N}\sum_{k=1}^nk^2K_{kj}
    \sim \mathcal{O}(N^2).
\end{align}

Finally we can calculate the asymptotic interior values of the quantities we are interested in, starting with the top-line:
\begin{align}
    &\overset{\circ}{b}{}_3^{N-1,j} \sim -\frac{c^2(e^1)^2}{6Y_N^j}\left( N\frac{(Y_N^{j-1})^2+NaZ_N^{j-1}}{(Y_N^j)^2}K_{N-1,j}
    + 2NK_{Nj} + \frac{(Y_N^{j-1})^2+NaZ_N^{j-1}}{(Y_N^j)^2}\sum_{k=2}^{N-2}\frac{k^2}{N^2}K_{kj} \right) \sim \mathcal{O}(1) \\
    &\mathrm{X}_b^{ij} \sim i \sim \mathcal{O}(N), \qquad \mathrm{Z}_b^{ij} \sim 2i \sim \mathcal{O}(N), \qquad
    \overset{\circ}{\mathcal{K}}{}_b^{ij} \sim \frac{6}{i}\sum_{k=1}^i k^2K_{kj} + 3i^2K_{ij} \sim \mathcal{O}(N^2) \\
    &\overset{\circ}{b}{}_3^{ij} \sim \frac{(c\,e^1)^2}{Y_N^j}\left( \sum_{n=i}^N\frac{1}{n^2} \frac{1}{N}\sum_{k=1}^n k^2K_{kj}
    - \frac{1}{iN}\sum_{k=1}^i k^2K_{kj} - \frac{i^2}{2N}K_{ij} \right) \sim \mathcal{O}(1) \\
    &\overset{\circ}{b}{}_3^{1,j} \sim \frac{2c^2(e^1)^2}{Y_N^j}\sum_{n=2}^N\frac{1}{n^2} \frac{1}{N}\sum_{k=1}^nk^2K_{kj} \sim \mathcal{O}(1)
\end{align}

When we put everything together, to calculate $\overset{\circ}{b}{}_2^j$ we see, that the boundary terms $\overset{\circ}{b}{}_3^{1,j}$ and $\overset{\circ}{b}{}_3^{N-1,j}$ are suppressed by $\varepsilon$ which is balanced for the $\overset{\circ}{b}{}_3^{ij}$ by adding of order $N$ of them.
\begin{equation}
    \overset{\circ}{b}_2^j = \overset{\circ}{b}{}_3^{1,j}\varepsilon + \sum_{i=2}^{N-2}\overset{\circ}{b}{}_3^{ij}\varepsilon
    + \overset{\circ}{b}{}_3^{N-1,j}\varepsilon \sim \sum_{i=2}^{N-2}\overset{\circ}{b}{}_3^{ij}\varepsilon
\end{equation}
We insert the explicit expressions of the $K_{ij}$ from \eqref{eq: K_ij} as a function of position, to prepare for the conversion to integrals. To simplify the calculations we introduce the semi-discrete version of the Gaussian curvature field $K_j[i]$.
\begin{align}\label{eq: Kij_to_K(b,alpha)}
    &\bar{\alpha}_0^j = \sin^{-1}\frac{j\,a\sin\beta}{Y_N^j}, \qquad K_{ij} = K\left((i-1)c_0^j\varepsilon,\bar{\alpha}_0^{j-1}\right)
    \sim K\left(\frac{i\,Y_N^j}{N^2},\,\sin^{-1}\frac{j\,a\sin\beta}{Y_N^j}\right)
    \eqcolon K_j\left[ \frac{i}{N} \right], \\
    &K_j[i]\in C^\infty([0,1];\mathbb{R}), \quad \forall j\in\{1,\ldots,N\}
\end{align}
We multiply with appropriate ones to complete the terms with the appropriate powers of $N$ and introduce $y(x)$, so we can convert the sums into integrals:
\begin{align}
    \overset{\circ}{b}{}_2^j =& \sum_{i=2}^{N-2} \frac{(c\,e^1)^2}{Y_N^j}\left\{ \frac{1}{N}\sum_{n=i}^N\frac{N^2}{n^2} \frac{1}{N}\sum_{k=1}^n \frac{k^2}{N^2}\,K_j\left[\frac{k}{N}\right]
    - \frac{N}{i}\frac{1}{N}\sum_{k=1}^i\frac{k^2}{N^2}\,K_j\left[\frac{k}{N}\right] 
    - \frac{i^2}{2N^2}\,K_j\left[\frac{i}{N}\right] \right\} \\
    =& \sum_{i=2}^{N-2} \frac{(c\,e^1)^2}{Y_N^j}\left\{ \frac{1}{N}\sum_{n=i}^N\frac{N^2}{n^2} \int_0^\frac{n}{N} k^2\,K_j[k]\, dk
    - \frac{N}{i} \int_0^\frac{i}{N} k^2\,K_j[k]\, dk - \frac{i^2}{2N^2}\,K_j\left[\frac{i}{N}\right] \right\} \\
    =& \frac{N(c\,e^1)^2}{Y_N^j}\left\{ \frac{1}{N}\sum_{i=2}^{N-2}\int_\frac{i}{N}^1 \frac{1}{n^2} \int_0^n k^2\,K_j[k]\, dk\,dn
    - \frac{1}{N}\sum_{i=2}^{N-2}\frac{N}{i} \int_0^\frac{i}{N} k^2\,K_j[k] dk
    - \frac{1}{N}\sum_{i=2}^{N-2}\frac{i^2}{2N^2}\,K_j\left[\frac{i}{N}\right] \right\} \\
    =& \frac{N(c\,e^1)^2}{Y_N^j}\left\{ \int_0^1 \int_i^1 \frac{1}{n^2} \int_0^n k^2\,K_j[k]\, dk\,dn\,di
    - \int_0^1 \frac{1}{i} \int_0^i k^2\,K_j[k] dk\,di - \frac{1}{2}\int_0^1 i^2\,K_j[i]\,di \right\}
\end{align}

By expanding the curvature field $K_j[k]$ along the $j$-th slice in a Laurent series, we can prove, that the first two terms in the bracket always cancel. We need to impose a requirement on the curvature field though: a power of $\frac{1}{k^3}$ leads to $\ln0$ i.e. a non-existing integral.
\begin{equation}
    K_j[k] = \sum_{l=-\infty}^\infty\kappa_l^j k^l, \qquad \kappa_{-3}^j = 0
\end{equation}
\begin{align}
    &\int_0^1 \int_i^1 \frac{1}{n^2} \int_0^n k^2\,K_j[k]\, dk\,dn\,di - \int_0^1 \frac{1}{i} \int_0^i k^2\,K_j[k] dk\,di
    = \sum_{l=-\infty}^\infty\kappa_l^j\left( \int_0^1 \int_i^1 \frac{1}{n^2} \int_0^n k^2\,k^l\, dk\,dn\,di
    - \int_0^1 \frac{1}{i} \int_0^i k^2\,k^l dk\,di \right) \notag \\
    &= \sum_{\underset{\text{\scriptsize$l\neq-2$}}{l=-\infty}}^\infty\kappa_l^j
    \left( \int_0^1 \int_i^1 \frac{n^{l+1}}{l+3}\, dn\,di - \int_0^1 \frac{i^{l+2}}{l+3}\,di \right) + \kappa_{-2}^jI_{-2}
    = \sum_{\underset{\text{\scriptsize$l\neq-2$}}{l=-\infty}}^\infty\kappa_l^j
    \left( \int_0^1 \frac{1-i^{l+2}}{(l+2)(l+3)}\, di - \frac{1}{(l+3)^2} \right) + \kappa_{-2}^jI_{-2} \notag \\
    &= \sum_{\underset{\text{\scriptsize$l\neq-2$}}{l=-\infty}}^\infty\kappa_l^j
    \underbrace{\left( \frac{1}{(l+2)(l+3)} - \frac{1}{(l+2)(l+3)^2} - \frac{1}{(l+3)^2} \right)}_{=0} + \kappa_{-2}^jI_{-2}
    = \kappa_{-2}^jI_{-2}
\end{align}
The special case of the power $\frac{1}{k^2}$ needs to shown to vanish separately, because it's integrand cannot be written in this form:
\begin{align}
    I_{-2} =& \int_0^1 \int_i^1 \frac{1}{n^2} \int_0^n dk\,dn\,di - \int_0^1 \frac{1}{i} \int_0^i dk\,di
    = \int_0^1 \int_i^1 \frac{1}{n}\, dn\,di - \int_0^1 di  = \int_0^1 \ln1-\ln i \,di - 1 \notag \\
    =& -\underbrace{\int_0^1\ln i\,di}_{=-1} - 1 = 0
\end{align}
And thus we find that $\overset{\circ}{b}{}_2^j$ asymptotically approaches:
\begin{equation}
    \overset{\circ}{b}{}_2^j \sim -\frac{N(ce^1)}{2Y_N^j}\int_0^1 i^2K_j[i]\, di.
\end{equation}

When we calculate the interior values of the top-angles $\overset{\circ}{\gamma}{}_2^j$ we find that the contribution of the last segment $\overset{\circ}{\gamma}{}_2^{N-1,j}$ vanishes asymptotically.
\begin{align}
    &\overset{\circ}{\gamma}{}_2^{N-1,j} = \frac{K_{Nj}}{6}c\,e^1\left[ 1
    + \frac{(Y_N^{j-1})^2+NaZ_N^{j-1}}{(Y_N^{N+j-1})^2} \right] \sim \mathcal{O}(1), \qquad \overset{\circ}{\hat{\alpha}}{}_2^{N-1,j} \sim \frac{c\,e^1}{N^2}\sum_{k=2}^{N-2}k^2K_{kj} \sim \mathcal{O}(N), \\
    &\overset{\circ}{\gamma}{}_2^j = \overset{\circ}{\hat{\alpha}}{}_2^{N-1,j} + \overset{\circ}{\gamma}{}_2^{N-1,j}
    \sim f_N\int_0^1 k^2\,K_j[k]\,dk \sim \mathcal{O}(N)
\end{align}
There is however a contribution from that segment $\Delta_N$ in the recursion substitutes $V_n$ and $W_n$.\\

Finally we get to the asymptotic interior value of the opening angle:
\begin{equation}
    \overset{\circ}{\alpha}{}_2^{0,j} \sim f_N\int_0^1 \frac{1}{n^2} \int_0^n k^2K_j[k]\, dk\,dn \sim \mathcal{O}(N)
\end{equation}

\subsection{Limits of the recursion substitutes}
Now we get to the tricky part of the limit calculation, where we need all the mathematical methods we derived in the previous section. We want to calculate the asymptotic limit of the recursion substitutes $O_n$, $Q_n$, $V_n$ and $W_n$, which contain infinite products and infinitely nested sums.\\
We start with the products $\hat{P}_m^n$ and $P_m^n$. If we look at \eqref{eq: S_l}, \eqref{eq: Onj&Qnj_explicit},
\eqref{eq: T_l} and \eqref{eq: Vnj&Wnj_explicit} wee see, that the lower bound $m$ can range from $0$ to $n+1$ and the upper bound $n$ can range from $0$ to $N$, which we take to infinity. Thus both products multiply over at most order $N$ factors. We saw in \eqref{eq: 2ndOrdProd}, a suppression larger than $\frac{1}{N}$ cannot be balanced by multiplying over $N$ factors and vanishes. Thus $\hat{P}_n^m$ converges to $P_m^n$ and it converges to the product integral \eqref{eq: MainThProd} as we take $N$ to infinity:
\begin{equation}
    \lim_{N\to\infty}\hat{P}_m^n = \lim_{N\to\infty}\prod_{l=m}^n\left(\vphantom{\frac{Z_N^j}{(Y_N^j)^2}}\right.
    1 - \frac{aZ_N^{j-l}}{(Y_N^{j-l})^2} + \underbrace{\frac{\Lambda_N}{(Y_N^{j-l})^2L_N^{j-l}}}_{\to0} \left.\vphantom{\frac{Z_N^j}{(Y_N^j)^2}}\right)
    = \lim_{N\to\infty}\prod_{l=m}^n\left( 1 - \frac{aZ_N^{j-l}}{(Y_N^{j-l})^2} \right) = \lim_{N\to\infty}P_m^n
\end{equation}

We introduce the functions $y(x)$ and $z(x)$, to bring the product into the right form, so that we can use the main theorem for product integrals \eqref{eq: MainThProd}:
\begin{align}
    y(x) \coloneq&\ \sqrt{x^2a^2 + c^2 + 2xac\cos\beta}, \quad \text{with} \quad y(1) = y, \quad y(0) = c \quad \text{and} \quad
    Y_N^j = N\,y\left(\frac{j}{N}\right) \\
    z(x) \coloneq&\ x\,a + c\cos\beta, \quad \text{with} \quad Z_N^j = N\,z\left( \frac{j}{N} \right) \\
    \lim_{N\to\infty}P_m^n =& \lim_{N\to\infty}\prod_{l=m}^n\left( 1 - \frac{a\,z\left(\frac{j-l}{N}\right)}{N\,y^2\left(\frac{j-l}{N}\right)} \right)
    = \lim_{N\to\infty}\prod_{l=m}^n\left( 1 + \frac{f\left(\frac{l}{N}\right)}{N} \right)
    = \lim_{N\to\infty}\underset{\frac{m}{N}}{\overset{\frac{n}{N}}{\mathcal{P}}}f\,dx
\end{align}

It is straight forward to check, that $F(x) = y\left(\frac{j}{N}-x\right)$ is a product primitive function to $f(x)$.
\begin{align}
    \frac{F'(x)}{F(x)} = \frac{-2\left(\frac{j}{N}-x\right)a^2-2ac\cos\beta}{2y^2\left(\frac{j}{N}-x\right)}
    = -\frac{a\,z\left(\frac{j}{N}-x\right)}{y^2\left(\frac{j}{N}-x\right)} = f(x)
\end{align}

And thus the asymptotic limit of the infinite products is given by:
\begin{equation}
    P_m^n \sim \underset{\frac{m}{N}}{\overset{\frac{n}{N}}{\mathcal{P}}}f\,dx
    = \frac{F\left(\frac{n}{N}\right)}{F\left(\frac{m}{N}\right)}
    = \frac{y\left(\frac{j-n}{N}\right)}{y\left(\frac{j-m}{N}\right)} = \frac{Y_N^{j-n}}{Y_N^{j-m}}
\end{equation}

We can use this result to calculate the asymptotic limits of the two summation operators $S_l$ and $T_l$. As we have seen in \eqref{eq: AsympSumExample}, constant shifts in the sums go to zero.
\begin{align}
    S_l =& -\frac{f_N^2}{Y_N^{j-m_{2(l-1)}}}
    \sum_{m_{2l-1}=1}^{m_{2(l-1)}-1} \frac{\hat{P}_{m_{2l-1}+1}^{m_{2(l-1)}-1}}{(Y_N^{j-m_{2l-1}})^2}
    \sum_{m_{2l}=0}^{m_{2l-1}-1} \frac{P_{m_{2l}+1}^{m_{2l-1}-1}}{Y_N^{j-m_{2l}}} \notag \\
    \sim& -\frac{f_N^2}{ \cancel{Y_N^{j-m_{2(l-1)}}} }\sum_{m_{2l-1}=1}^{m_{2(l-1)}-1}
    \frac{1}{(Y_N^{j-m_{2l-1}})^2} \frac{ \cancel{Y_N^{j-m_{2(l-1)}-1}} }{ \cancel{Y_N^{j-m_{2l-1}+1}} }
    \sum_{m_{2l}=0}^{m_{2l-1}-1} \frac{1}{Y_N^{j-m_{2l}}} \frac{ \cancel{Y_N^{j-m_{2l-1}-1}} }{Y_N^{j-m_{2l}+1}} \notag \\
    \sim& -(c\,e^1)^2\frac{1}{N}\sum_{m_{2l-1}=1}^{m_{2(l-1)}-1} \frac{1}{ y^2\left(\frac{j-m_{2l-1}}{N}\right) }
    \frac{1}{N}\sum_{m_{2l}=0}^{m_{2l-1}-1} \frac{1}{ y^2\left(\frac{j-m_{2l}}{N}\right) } \\
    \vphantom{a} \notag \\
    T_l =& -c\,e^1 \sum_{m_{2l-1}=1}^{m_{2(l-1)}-1} \frac{P_{m_{2l-1}+1}^{m_{2(l-1)}-1}}{Y_N^{j-m_{2l-1}}}
    \frac{Nf_N}{Y_N^{j-m_{2l-1}}} \sum_{m_{2l}=0}^{m_{2l-1}-1} \frac{\hat{P}_{m_{2l}+1}^{m_{2l-1}-1}}{(Y_N^{j-m_{2l}})^2}
    \notag \\
    \sim& -c\,e^1 \sum_{m_{2l-1}=1}^{m_{2(l-1)}-1} \frac{1}{Y_N^{j-m_{2l-1}}}
    \frac{ Y_N^{j-m_{2(l-1)}-1} }{ Y_N^{j-m_{2l-1}+1} } \frac{Nf_N}{ \cancel{Y_N^{j-m_{2l-1}}} }
    \sum_{m_{2l}=0}^{m_{2l-1}-1} \frac{1}{(Y_N^{j-m_{2l}})^2} \frac{ \cancel{Y_N^{j-m_{2l-1}-1}} }{ Y_N^{j-m_{2l}+1} } \notag \\
    \sim& -(c\,e^1)^2 \frac{1}{N}\sum_{m_{2l-1}=1}^{m_{2(l-1)}-1}
    \frac{ y\left(\frac{j-m_{2(l-1)}}{N}\right) }{ y^2\left(\frac{j-m_{2l-1}}{N}\right) }
    \frac{1}{N}\sum_{m_{2l}=0}^{m_{2l-1}-1} \frac{1}{ y^3\left(\frac{j-m_{2l}}{N}\right) } \label{eq: T_l}
\end{align}
The $y$ in the first sum is going to cancel with a $\frac{1}{y}$ from the second sum of the next $T_{l+1}$ we multiply to $T_l$:
\begin{equation}
    2l = 2(l+1-1)
\end{equation}

We convert nested sums created by the product over the summation operators $S_l$ into integrals according to \ref{subsec: SumtoInt} which leads to arbitrarily many nested integrals of the type we calculated in \ref{subsec: nNestInt}. With these results we can calculate the asymptotic limit of $O_n^j$:
\begin{align}
    O_n^j =& \sum_{\underset{\textit{\small even}}{\imath=0,}}^n \prod_{l=1}^\frac{\imath}{2} S_l
    \cdot \frac{Y_N^j}{Y_N^{j-m_\imath}} \hat{P}_0^{m_\imath-1}
    \sim \sum_{\underset{\textit{\small even}}{\imath=0,}}^n \prod_{l=1}^\frac{\imath}{2} S_l
    \cdot \frac{ \cancel{Y_N^j} }{ \cancel{Y_N^{j-m_\imath}} } \frac{ \cancel{Y_N^{j-m_\imath-1}} }{ \cancel{Y_N^j} } \notag \\
    \sim& \sum_{\underset{\textit{\small even}}{\imath=2,}}^n \left( -(c\,e^1)^2 \right)^\frac{\imath}{2}
    \int_0^\frac{n}{N} \frac{1}{y^2\left(\frac{j}{N}-m_1\right)} \int_0^{m_1} \frac{1}{y^2\left(\frac{j}{N}-m_2\right)} \ldots
    \int_0^{m_{\imath-1}} \frac{1}{y^2\left(\frac{j}{N}-m_\imath\right)}  dm_\imath \ldots  dm_2\,dm_1
    + \cancelto{\mathbbb{1}}{\prod_{l=1}^0S_l}\cdot1 \notag \\
    \overset{\eqref{eq: J_imath}}{=}& \sum_{\underset{\textit{\small even}}{\imath=0,}}^n (-1)^\frac{\imath}{2}(c\,e^1)^\imath
    J_\imath\left(\frac{n}{N}\right) + 1 \sim \mathcal{O}(1), \quad
    J_\imath(x) = \sum_{s=1}^\imath\frac{(-1)^{\imath-s}}{(\imath-s)!}F^{\imath-s}(0)[I_s]_0^x,
\end{align}
where now $F$ is the primitive function of:
\begin{align}
    f(x) = \frac{1}{y^2\left(\frac{j}{N}-x\right)}; \qquad F' = f \quad \Rightarrow \quad
    F(x) = -\frac{1}{c\,e^1}\tan^{-1}\frac{a\,z\left(\frac{j}{N}-x\right)}{c\,e^1}
\end{align}
Note, that in the special case $\imath = 0$, where the product over the summation is empty and thus becomes the unit operator $\mathbbb{1}$ we need to start the sum from $\imath = 2$ and add the special term separately. We can start the sum from $\imath=0$ again in the next step, which we do for convenience, since $J_0 = 0$.\\

We follow the same method to calculate the limit of $Q_n^j$.
\begin{align}
    Q_n^j =& -c\,e^1 \sum_{m_1=0}^{n-1} \frac{P_{m_1+1}^{n-1}}{Y_N^{j-m_1}} \sum_{\underset{\textit{\small odd}}{\imath=0,}}^n
    \prod_{l=1}^\frac{\imath-1}{2} S_{l+\frac{1}{2}} \cdot \frac{Y_N^j}{Y_N^{j-m_\imath}} \hat{P}_0^{m_\imath-1}
    \sim -c\,e^1 \sum_{m_1=0}^{n-1} \frac{1}{Y_N^{j-m_1}} \frac{Y_N^{j-n+1}}{Y_N^{j-m_1-1}}
    \sum_{\underset{\textit{\small odd}}{\imath=0,}}^n \prod_{l=1}^\frac{\imath-1}{2} S_{l+\frac{1}{2}} \cdot 1 \notag \\
    \sim& -c\,e^1\,y\left(\frac{j-n}{N}\right) \int_0^\frac{n}{N} \frac{1}{y\left(\frac{j}{N}-m_1\right)}
    \sum_{\underset{\textit{\small odd}}{\imath=3,}}^n (-1)^\frac{\imath-1}{2}(c\,e^1)^{\imath-1}
    \int_0^{m_1} \ldots \int_0^{m_{\imath-1}} \frac{1}{y\left(\frac{j}{N}-m_\imath\right)} dm_\imath \ldots dm_2
    + \cancelto{\mathbbb{1}}{\prod_{l=1}^0S_l}\cdot1 \,dm_1 \notag \\
    \sim&\ y\left(\frac{j-n}{N}\right) \sum_{\underset{\textit{\small odd}}{\imath=0,}}^n (-1)^\frac{\imath+1}{2}(c\,e^1)^\imath
    \int_0^\frac{n}{N} \frac{1}{y\left(\frac{j}{N}-m_1\right)} \ldots \int_0^{m_{\imath-1}} \frac{1}{y\left(\frac{j}{N}-m_\imath\right)}
    dm_\imath \ldots dm_1 \notag \\
    \sim&\ y\left(\frac{j-n}{N}\right) \sum_{\underset{\textit{\small odd}}{\imath=0,}}^n (-1)^\frac{\imath+1}{2}(c\,e^1)^\imath
    J_\imath\left(\frac{n}{N}\right) \sim \mathcal{O}(1)
\end{align}
For $O_n^j$ we could stop at this stage where we proved that the recursion substitutes are of order $\mathcal{O}(1)$, since it will turn out, that it's contribution will vanish. The only contribution to the top-line comes from $Q_n^j$ and so we insert the expressions for $J_\imath$ and $F$ to finish it's limit calculation.
\begin{align}
    Q_n^j \sim& y\left(\frac{j-n}{N}\right) \sum_{\underset{\textit{\small odd}}{\imath=0,}}^n (-1)^\frac{\imath+1}{2}(c\,e^1)^\imath
    \sum_{s=1}^\imath\frac{(-1)^{\imath-s}}{(\imath-s)!}F^{\imath-s}(0) \frac{F^s\left(\frac{n}{N}\right)-F^s(0)}{s!} \notag \\
    \sim&\ y\left(\frac{j-n}{N}\right) \sum_{\underset{\textit{\small odd}}{\imath=0,}}^n (-1)^\frac{\imath+1}{2}\cancel{(c\,e^1)^\imath}
    \sum_{s=1}^\imath\frac{(-1)^{\imath-s}}{(\imath-s)!s!} \frac{(-1)^\imath}{\cancel{(c\,e^1)^\imath}}
    \eta^{\imath-s} \left[ \xi^s - \eta^s \right] \notag \\
    \sim&\ y\left(\frac{j-n}{N}\right) \sum_{\underset{\textit{\small odd}}{\imath=0,}}^n
    \sum_{s=1}^\imath \frac{(-1)^{\frac{\imath+1}{2}+s}}{(\imath-s)!s!} \eta^{\imath-s}\left[ \xi^s - \eta^s \right],
\end{align}
where we substitute
\begin{equation}
    \eta = \tan^{-1}\frac{a\,z\left(\frac{j}{N}\right)}{c\,e^1}, \qquad
    \xi = \tan^{-1}\frac{a\,z\left(\frac{j-n}{N}\right)}{c\,e^1}
\end{equation}
to make the terms more handy, when we deal with the double sum.\\

As compared to the previous case we do not have to worry about an extra term for $\imath = 1$, since we integrate over it. Then it is consistent with the $J_1$ term and we can include it into the sum again in the third step.\\

We observe, that $Q_n^j$ now looks similar to a Cauchy product of two exponential series. If we want to calculate the limit we need to disentangle these sums into a product of two sums which separately converge to a power series. To do this, we have to deal with two problems: the first sum only sums over odd values of $\imath$ and the $\imath$ in the power of $-1$ does not come with a $-s$.\\
Because the first sum is over odd indices the total power $t$ in $x$ and $y$ is always odd $t = \imath - s + s = \imath$. We can create a power series with odd total power, by taking the Cauchy product of an even and an odd series. To get all the terms appearing in $O_n^j$ we need to add both possible combinations and we can solve the issue with the $(-1)^\imath$ by adjusting the alternation of the two series appropriately.
\begin{align}
    &\sum_{\underset{\textit{\small odd}}{\imath=0}}^\infty \sum_{s=1}^\imath
    \frac{(-1)^{\frac{\imath+1}{2}+s}}{(\imath-s)!s!} \eta^{\imath-s} \xi^s
    = \sum_{\underset{\textit{\small odd}}{\imath=0}}^\infty \sum_{s=1}^\imath
    \frac{(-1)^\frac{\imath+1}{2}}{(\imath-s)!s!} \eta^{\imath-s} (-\xi)^s \notag \\
    &= \sum_{\underset{\textit{\small odd}}{\imath=0}}^\infty \frac{(-1)^\frac{\imath+1}{2}}{\imath!} \eta^\imath
    \cdot \sum_{\underset{\textit{\small even}}{\imath=2}}^\infty \frac{(-1)^\frac{\imath}{2}}{\imath!} (-\xi)^\imath
    + \sum_{\underset{\textit{\small even}}{\imath=0}}^\infty \frac{(-1)^\frac{\imath}{2}}{\imath!} \eta^\imath
    \cdot \sum_{\underset{\textit{\small odd}}{\imath=1}} \frac{(-1)^\frac{\imath+1}{2}}{\imath!} (-\xi)^\imath \notag \\
    &= \sum_{\imath=0}^\infty \frac{(-1)^{\imath+1}}{(2\imath+1)!} \eta^{2\imath+1}
    \cdot \left( \sum_{\imath=0}^\infty \frac{(-1)^\imath}{(2\imath)!} \xi^{2\imath} - 1 \right)
    + \sum_{\imath=0}^\infty \frac{(-1)^\imath}{(2\imath)!} \eta^{2\imath}
    \cdot \sum_{\imath=0}^\infty \frac{(-1)^{\imath+1}}{(2\imath+1)!} (-\xi)^{2\imath+1} \notag \\
    &= -\sin{\eta}(\cos{\xi}-1) + \cos{\eta}\sin{\xi} = \sin{\eta} - \sin(\eta-\xi)
\end{align}

Using this result we find, that for large $n$ the recursion substitute $Q_n^j$ converges to:
\begin{align}
    Q_n^j \sim&\ y\left(\frac{j-n}{N}\right) \left[ \cancel{\sin{\eta}} - \sin(\eta-\xi) - \cancel{\sin{\eta}} + \cancelto{0}{\sin(\eta-\eta)} \right]
    = - y\left(\frac{j-n}{N}\right) \sin\left( \tan^{-1}\tilde{\eta} - \tan^{-1}\tilde{\xi} \right) \notag \\
    =& - y\left(\frac{j-n}{N}\right) \left[ \sin(\tan^{-1}\tilde{\eta})\cos(\tan^{-1}\tilde{\xi})
    - \cos(\tan^{-1}\tilde{\eta})\sin(\tan^{-1}\tilde{\xi}) \right] \notag \\
    =& - y\left(\frac{j-n}{N}\right) \left[ \frac{\tilde{\eta}}{\sqrt{1+\tilde{\eta}^2}} \frac{1}{\sqrt{1+\tilde{\xi}^2}}
    - \frac{1}{\sqrt{1+\tilde{\eta}^2}} \frac{\tilde{\xi}}{\sqrt{1+\tilde{\xi}^2}} \right],
\end{align}
where we substitute:
\begin{equation}
    \tilde{\eta} = \frac{a\,z\left(\frac{j}{N}\right)}{c\,e^1}, \qquad
    \tilde{\xi} = \frac{a\,z\left(\frac{j-n}{N}\right)}{c\,e^1}.
\end{equation}

The square roots in the denominators are of the form:
\begin{equation}
    \sqrt{ 1 + \left(\frac{z\left(\frac{j}{N}-x\right)}{c\sin\beta}\right)^2 }
    = \frac{1}{c\sin\beta}\sqrt{ c^2\sin\beta + \left(\frac{j}{N}-x\right)^2a^2 + 2\left(\frac{j}{N}-x\right)ac\cos\beta + c^2\cos^2\beta }
    = \frac{y\left(\frac{j}{N}-x\right)}{c\sin\beta}
\end{equation}

After re-substitution many terms cancel and we get:
\begin{equation}\label{eq: limQnj}
    Q_n^j \sim -\frac{c\,e^1}{y\left(\frac{j}{N}\right)}\frac{n}{N}.
\end{equation}

The last puzzle peaces we need to take limits of are $V_n^j$ and $W_n^j$.
\begin{align}
    V_n^j =& \frac{Nf_N}{Y_N^{j-n}} \sum_{m_1=0}^{n-1} \frac{\hat{P}_{m_1+1}^{n-1}}{(Y_N^{j-m_1})^2}
    \sum_{\underset{\textit{\small odd}}{\imath=0}}^n \prod_{l=1}^\frac{\imath-1}{2} T_{l+\frac{1}{2}}
    \cdot P_0^{m_\imath-1}
    \sim \frac{Nf_N}{ \cancel{Y_N^{j-n}} } \sum_{m_1=0}^{n-1} \frac{1}{(Y_N^{j-m_1})^2}
    \frac{ \cancel{Y_N^{j-n+1}} }{Y_N^{j-m_1-1}}
    \sum_{\underset{\textit{\small odd}}{\imath=0}}^n \prod_{l=1}^\frac{\imath-1}{2} T_{l+\frac{1}{2}}
    \cdot \frac{Y_N^{j-m_\imath-1}}{Y_N^j} \notag \\
    \sim&\ Nf_N \sum_{m_1=0}^{n-1} \frac{1}{(Y_N^{j-m_1})^2}
    \frac{1}{ \cancel{N}\,y\left(\frac{j-m_1}{N}\right) }
    \sum_{\underset{\textit{\small odd}}{\imath=0}}^n \prod_{l=1}^\frac{\imath-1}{2} T_{l+\frac{1}{2}}
    \cdot \frac{ \cancel{N}\,y\left(\frac{j-m_{\imath}}{N}\right) }{Y_N^j} \label{eq: Vnj step 2} \\
    \sim&\ \cancel{N} \frac{c\,e^1}{N} \sum_{m_1=0}^{n-1} \frac{1}{ y^2\left(\frac{j-m_1}{N}\right) }
    \sum_{\underset{\textit{\small odd}}{\imath=0}}^n \prod_{l=1}^\frac{\imath-1}{2} T_{l+\frac{1}{2}}
    \cdot \frac{1}{ \cancel{N}\,y\left(\frac{j}{N}\right) } \label{eq: Vnj step 3}
\end{align}

In the last step, \eqref{eq: Vnj step 2} to \eqref{eq: Vnj step 3}, a $\frac{1}{y}$-term cancels with the $y$-term in the numerator of the first summation operator \eqref{eq: Cancellation l=1} and the same happens with the denominator in the second sum of the last summation operator \eqref{eq: Cancellation l=last} and the numerator of \eqref{eq: Vnj step 2} on which the operators are acting.
\begin{align}
    l =& 1 \ \to \ T_{1+\frac{1}{2}}: & y\left(\frac{j-m_{2(1+\frac{1}{2}-1)}}{N}\right) =&\ y\left(\frac{j-m_1}{N}\right) \label{eq: Cancellation l=1} \\
    l =& \frac{\imath-1}{2} \ \to \ T_{\frac{\imath}{2}}: & y\left(\frac{j-m_{2\frac{\imath}{2}}}{N}\right) =&\ y\left(\frac{j-m_{\imath}}{N}\right) \label{eq: Cancellation l=last}
\end{align}
Next we convert all sums to integrals. We can pull the last term in \eqref{eq: Cancellation l=last} out of the integrals since it does not depend on the integration variables:
\begin{align}
    V_n^j \sim&\ \frac{c\,e^1}{y\left(\frac{j}{N}\right)} \int_0^\frac{n}{N}
    \frac{1}{y^2\left(\frac{j}{N}-m_1\right)} \sum_{\underset{\textit{\small odd}}{\imath=0}}^n
    (-1)^\frac{\imath-1}{2}(c\,e^1)^{\imath-1} \int_0^{m_1} \frac{1}{y^2\left(\frac{j}{N}-m_2\right)} \ldots
    \int_0^{m_{\imath-1}} \frac{1}{y^2\left(\frac{j}{N}-m_\imath\right)} dm_\imath \ldots dm_2\, dm_1 \notag \\
    =& \frac{1}{y\left(\frac{j}{N}\right)} \sum_{\underset{\textit{\small odd}}{\imath=0}}^n
    (-1)^\frac{\imath-1}{2} (c\,e^1)^\imath J_\imath\left(\frac{n}{N}\right) \sim \mathcal{O}(1)
\end{align}
As in the case of $Q_n^j$, the additional integral over the sum allows us to write $V_n^j$ in one sum over $\imath$.\\

In $W_n^j$ the cancellation only happens for $T_\frac{\imath}{2}$ and thus we get an additional $y$-factor from $T_1$.
\begin{align}
    W_n^j =& \sum_{\underset{\textit{\small even}}{\imath=0,}}^n \prod_{l=1}^\frac{\imath}{2} T_l
    \cdot P_0^{m_\imath-1}
    \sim \sum_{\underset{\textit{\small even}}{\imath=0,}}^n \prod_{l=1}^\frac{\imath}{2} T_l
    \cdot \frac{Y_N^{j-m_\imath+1}}{Y_N^j}
    \sim \sum_{\underset{\textit{\small even}}{\imath=0,}}^n \prod_{l=1}^\frac{\imath}{2} T_l
    \cdot \frac{ y\left(\frac{j-m_{\imath}}{N}\right) }{ y\left(\frac{j}{N}\right) } \\
    \sim& \frac{1}{ y\left(\frac{j}{N}\right) } \sum_{\underset{\textit{\small even}}{\imath=2,}}^n (-1)^\frac{\imath}{2}(c\,e^1)^\imath
    \int_0^\frac{n}{N} \frac{ y\left(\frac{j-n}{N}\right) }{ y^2\left(\frac{j}{N}-m_1\right) } \ldots
    \int_0^{m_{\imath-1}} \frac{1}{y^2\left(\frac{j}{N}-m_\imath\right)} dm_\imath \ldots dm_1
    + \cancelto{\mathbbb{1}}{\prod_{l=1}^0 T_l}\cdot \frac{ y\left(\frac{j-m_0}{N}\right) }{ y\left(\frac{j}{N}\right) } \\
    \sim& \frac{ y\left(\frac{j-n}{N}\right) }{ y\left(\frac{j}{N}\right) }\left[ \sum_{\underset{\textit{\small even}}{\imath=0,}}^n (-1)^\frac{\imath}{2}(c\,e^1)^\imath J_\imath\left(\frac{n}{N}\right) + 1 \right]
    \sim \mathcal{O}(1)
\end{align}

In the case of $V_n^j$ and $W_n^j$ the latter turns out to be the only contributing recursion substitute and we proceed with it as we did for $Q_n^j$:
\begin{align}
    W_n^j \sim& \frac{ y\left(\frac{j-n}{N}\right) }{ y\left(\frac{j}{N}\right) }
    \sum_{\underset{\textit{\small even}}{\imath=0,}}^n (-1)^\frac{\imath}{2}\cancel{(c\,e^1)^\imath} 
    \sum_{s=1}^\imath \frac{(-1)^{\imath-s}}{(\imath-s)!}\frac{(-1)^\imath}{\cancel{(c\,e^1)^\imath} s!}\eta^{\imath-s}[\xi^s - \eta^s]
    \sim \frac{ y\left(\frac{j-n}{N}\right) }{ y\left(\frac{j}{N}\right) }
    \sum_{\underset{\textit{\small even}}{\imath=0,}}^n 
    \sum_{s=1}^\imath \frac{(-1)^{\frac{\imath}{2}-s}}{(\imath-s)!s!}\eta^{\imath-s}[\xi^s - \eta^s]
\end{align}

Analogue to the case before, we multiply two even and two odd series with adjusted powers on the signs to get all terms with even total power appearing in the series:
\begin{align}
    &\sum_{\underset{\textit{\small even}}{\imath=0}}^\infty \sum_{s=1}^\imath
    \frac{(-1)^\frac{\imath}{2}}{(\imath-s)!s!} \eta^{\imath-s} (-\xi)^s
    = \sum_{\underset{\textit{\small even}}{\imath=0}}^\infty \frac{(-1)^\frac{\imath}{2}}{\imath!} \eta^\imath
    \cdot \sum_{\underset{\textit{\small even}}{\imath=2}}^\infty \frac{(-1)^\frac{\imath}{2}}{\imath!} (-\xi)^\imath
    + \sum_{\underset{\textit{\small odd}}{\imath=0}}^\infty \frac{(-1)^\frac{\imath-1}{2}}{\imath!} \eta^\imath
    \cdot \sum_{\underset{\textit{\small odd}}{\imath=1}}^\infty \frac{(-1)^\frac{\imath+1}{2}}{\imath!} (-\xi)^\imath \notag \\
    &= \sum_{\imath=0}^\infty \frac{(-1)^\imath}{(2\imath)!} \eta^{2\imath}
    \cdot \left( \sum_{\imath=0}^\infty \frac{(-1)^\imath}{(2\imath)!} \xi^{2\imath} - 1 \right)
    + \sum_{\imath=0}^\infty \frac{(-1)^\imath}{(2\imath+1)!} \eta^{2\imath+1}
    \cdot \left( -\sum_{\imath=0}^\infty \frac{(-1)^\imath}{(2\imath+1)!} (-\xi)^{2\imath+1} \right) \notag \\
    &= \cos{\eta}\,(\cos{\xi}-1) + \sin{\eta}\,(-\sin(-\xi)) = \cos(\eta-\xi) - \cos{\eta}
\end{align}
And thus $W_n^j$ converges to:
\begin{align}
    W_n^j \sim& \frac{ y\left(\frac{j-n}{N}\right) }{ y\left(\frac{j}{N}\right) }
    \left[ \cos(\eta-\xi) - \cancel{\cos{\eta}} - \cos(\eta-\eta) + \cancel{\cos{\eta}} + 1 \right]
    = \frac{ y\left(\frac{j-n}{N}\right) }{ y\left(\frac{j}{N}\right) }
    \cos\left(\tan^{-1}\tilde{\eta}-\tan^{-1}\tilde{\xi}\right) \notag \\
    =& \frac{ y\left(\frac{j-n}{N}\right) }{ y\left(\frac{j}{N}\right) }
    \left[ \cos\left(\tan^{-1}\tilde{\eta}\right)\cos\left(\tan^{-1}\tilde{\xi}\right)
    + \sin\left(\tan^{-1}\tilde{\eta}\right)\sin\left(\tan^{-1}\tilde{\xi}\right) \right] \notag \\
    =& \frac{ y\left(\frac{j-n}{N}\right) }{ y\left(\frac{j}{N}\right) }
    \left[ \frac{1}{\sqrt{1+\tilde{\eta}^2}}\frac{1}{\sqrt{1+\tilde{\xi}^2}}
    + \frac{\tilde{\eta}}{\sqrt{1+\tilde{\eta}^2}}\frac{\tilde{\xi}}{\sqrt{1+\tilde{\xi}^2}} \right] \notag \\
    =& \frac{ \cancel{y\left(\frac{j-n}{N}\right)} }{ y\left(\frac{j}{N}\right) }
    \frac{ c^2\sin^2\beta }{ \cancel{y\left(\frac{j-n}{N}\right)} y\left(\frac{j}{N}\right) }
    \left[ 1 + \frac{\frac{j}{N}a^2 + ac\cos\beta}{c\,e^1}\frac{\frac{j-n}{N}a^2 + ac\cos\beta}{c\,e^1} \right] \notag \\
    =& \frac{1}{ y^2\left(\frac{j}{N}\right) } \left[ c^2\sin^2\beta
    + \left( \frac{j}{N}a + c\cos\beta \right)\left( \frac{j-n}{N}a + c\cos\beta \right) \right] \notag \\
    =& \frac{1}{ y^2\left(\frac{j}{N}\right) } \left[ \frac{j}{N}\frac{j-n}{N}a^2
    + \left(\frac{j}{N}+\frac{j+n}{N}\right)ac\cos\beta + c^2 \right]
    = \frac{ y^2\left(\frac{j}{N}\right) - \frac{j}{N}\frac{n}{N}a^2 - \frac{n}{N}ac\cos\beta }
    { y^2\left(\frac{j}{N}\right) }
    = 1 - \frac{n}{N}\frac{ a\,z\left(\frac{j}{N}\right) }{ y^2\left(\frac{j}{N}\right) }
\end{align}

\subsection{The second order of the full triangulation}
Now we are ready to calculate the second order term of the top-line. We insert the limits we calculated into \eqref{eq: b_2^j} for the case of $j=N$:
\begin{align}
    \bar{b}_2^N\varepsilon^2 =&\ \overset{\circ}{b}\,_2^N\varepsilon^2
    + \sum_{n=1}^{N-1}\left( O_n^N\overset{\circ}{b}\,_2^{N-n} + Q_n^N\overset{\circ}{\gamma}\,_2^{N-n} \right)\varepsilon^2 \\
    \sim& -\underbrace{\frac{N(ce^1)}{2Y_N^N}\int_0^1 i^2K_N[i]\, di\,\varepsilon^2}
    _{\sim\mathcal{O}(N^{-2})\overset{N\to\infty}{\longrightarrow}0}
    + \frac{1}{N}\sum_{n=1}^{N-1}\left(\vphantom{\frac{N}{N}}\right.
    -\underbrace{\frac{N(ce^1)}{2NY_N^{N-n}}\int_0^1 i^2K_{N-n}[i]\, di\, O_n^N}_{\sim\mathcal{O}(N^{-1})\overset{N\to\infty}{\longrightarrow}0}
    + \underbrace{\frac{f_N}{N}\int_0^1 k^2\,K_{N-n}[k]\,dk\, Q_n^N}_{\sim\mathcal{O}(1)} \left.\vphantom{\frac{N}{N}}\right)
\end{align}
The sum over $n$ grows with $N$. Since the exponential series converges very fast, almost all terms become arbitrarily close to the power series, as $N$ tends to infinity. Thus we can use \eqref{eq: limQnj} and we re-substitute the curvature field according to \eqref{eq: Kij_to_K(b,alpha)}, when we convert the sum over $n$ to an integral. To simplify the expressions we introduce the index function $K[i,j]$, which is defined over the Gaussian curvature field by:
\begin{equation}
    K[i,j] \coloneq K\left( i\,y(j), \,\sin^{-1}\frac{j\,a\sin\beta}{y(j)} \right)
    \in C^\infty([0,1]^2;\mathbb{R}).
\end{equation}
Then the semi-discrete version is related to it via:
\begin{equation}
    K_j(k) = K\left(k\frac{Y_N^j}{N},\,\sin^{-1}\frac{j\,a\sin\beta}{Y_N^j}\right)
    = K\left( k\,y\left(\frac{j}{N}\right), \,\sin^{-1}\frac{j\,a\sin\beta}{N\,y\left(\frac{j}{N}\right)} \right) = K\left[ k, \frac{j}{N} \right].
\end{equation}
Using this relation we calculate the second order term to the top-line:
\begin{align}\label{eq: b_2}
    b_2\varepsilon^2 =&\ \bar{b}_2^N\varepsilon^2 \sim \frac{c\,e^1}{N}\sum_{n=1}^{N-1} \int_0^1 k^2\,K_{N-n}[k]\,dk\, Q_n^N
    \sim -\frac{c\,e^1}{N}\sum_{n=1}^{N-1} \int_0^1 k^2\, K\left[ k,\frac{N-n}{N} \right]\, dk\,
    \frac{c\,e^1}{y\left(\frac{N}{N}\right)}\frac{n}{N} \\
    \overset{N\to\infty}{\longrightarrow}& -\frac{(c\,e^1)^2}{y(1)}
    \int_0^\frac{N}{N} \int_0^1 k^2\, K[k,1-n] \,dk\, n\, dn
    = -\frac{a^2c^2\sin^2\beta}{\sqrt{a^2 + c^2 + 2ac\cos\beta}}
    \int_0^1 n \int_0^1 k^2\, K[k,1-n] \,dk\,dn
\end{align}

In the same manner we can calculate the limit of the second order term to the top angle of the geodesic triangle:
\begin{align}\label{eq: gamma_2}
    \gamma_2\varepsilon^2 =&\ \bar{\gamma}_2^N\varepsilon^2 = \overset{\circ}{\gamma}{}_2^N\varepsilon^2
    + \sum_{n=1}^{N-1}\left( V_n^N\overset{\circ}{b}{}_2^{N-n} + W_n^N\overset{\circ}{\gamma}{}_2^{N-n} \right)\varepsilon^2 \\
    \sim& \underbrace{f_N\int_0^1 k^2\,K_N[k]\,dk\,\varepsilon^2}_{\sim\mathcal{O}(N^{-1})\overset{N\to\infty}{\longrightarrow}0}
    + \frac{1}{N}\sum_{n=1}^{N-1}\left(\vphantom{\frac{N}{N}}\right. -\underbrace{\frac{N(ce^1)}{2NY_N^{N-n}}
    \int_0^1 i^2K_{N-n}[i]\, di\, V_n^N}_{\sim\mathcal{O}(N^{-1})\overset{N\to\infty}{\longrightarrow}0}
    + \underbrace{\frac{f_N}{N}\int_0^1 k^2\,K_{N-n}[k]\,dk\,W_n^N}_{\sim\mathcal{O}(1)}
    \left.\vphantom{\frac{N}{N}}\right) \notag \\
    =& \frac{c\,e^1}{N}\sum_{n=1}^{N-1} \int_0^1 k^2\, K\left[ k, \frac{N-n}{N} \right] \,dk\,
    \left( 1 - \frac{n}{N}\frac{ a\,z\left(\frac{N}{N}\right) }{ y^2\left(\frac{N}{N}\right) } \right)
    \overset{N\to\infty}{\longrightarrow} c\,e^1 \int_0^1 \left( 1 - n\frac{ a\,z(1) }{ y^2(1) } \right) \int_0^1 k^2\, K[k,1-n] \,dk\,dn \notag
\end{align}

In the case of the sphere we have:
\begin{equation}
    \gamma_2\varepsilon^2 = SC^{(2)}(K;\pi-\beta,c,a) \qquad \alpha_2\varepsilon^2 = SC^{(2)}(K;\pi-\beta,a,c)
\end{equation}
The arguments $a$ and $c$ are just swapped for the two angles but a non-uniform curvature field in general breaks this symmetry. The arguments in the curvature field cannot be swapped, the $K_{ij}$ asymptotically converge to $K[i,j]$ independent of which quantity we calculate. Thus we cannot just swap the arguments in the case of a varying curvatrue field $K(l,\phi)$. Since we find no shortcut to this we calculate $\alpha_2\varepsilon^2$ via Eq.~\eqref{eq: alpha_2^j}:
\begin{align}
    \alpha_2^{0,j} =& \overset{\circ}{\alpha}{}_2^{0,j} + \overset{\circ}{\alpha}{}_2^{j,c}\left[ \overset{\circ}{b}\,_2^{j-1}
    + \sum_{n=1}^{j-2}\left( O_n^{j-1}\overset{\circ}{b}\,_2^{j-1-n} + Q_n^{j-1}\overset{\circ}{\gamma}\,_2^{j-1-n} \right) \right]
    + \overset{\circ}{\alpha}{}_2^{j,\beta}\left[ \overset{\circ}{\gamma}{}_2^{j-1}
    + \sum_{n=1}^{j-2}\left( V_n^{j-1}\overset{\circ}{b}{}_2^{j-1-n} + W_n^{j-1}\overset{\circ}{\gamma}{}_2^{j-1-n} \right) \right]  \notag
\end{align}

First we need the limits of the argument coefficients $\overset{\circ}{\alpha}{}_2^{j,c}$ and $\overset{\circ}{\alpha}{}_2^{j,\beta}$ from \eqref{eq: CoeffFuncs}. In this case they appear in a sum. The other argument coefficients only appear in the recursion substitutes, in the context of a product.
\begin{equation}
    \overset{\circ}{\alpha}{}_2^{j,c} = -\frac{N^2ce^1}{(Y_N^j)^2Y_N^{j-1}} \sim -\frac{N^2ce^1}{(Y_N^j)^3}
    \sim \mathcal{O}(N^{-1}),
    \qquad \overset{\circ}{\alpha}{}_2^{j,\beta} = \frac{aZ_N^j}{(Y_N^j)^2} \sim \mathcal{O}(N^{-1})
\end{equation}

Now, that we know the asymptotic behaviour of all involved quantities we see, that $4$ of the $7$ terms tend to $0$ and can thus be discarded:
\begin{align}
    \alpha_2\varepsilon^2 =&\ \bar{\alpha}_2^N\varepsilon^2 = \sum_{j=1}^N\alpha_2^{0,j}\varepsilon^2
    = \sum_{j=1}^N\left\{ \overset{\circ}{\alpha}{}_2^{0,j} + \overset{\circ}{\alpha}_2^{j,c}c_2^j
    + \overset{\circ}{\alpha}{}_2^{j,\beta}\beta_2^j \right\}\varepsilon^2 \\
    =& \sum_{j=1}^N\left\{ \overset{\circ}{\alpha}{}_2^{0,j} + \overset{\circ}{\alpha}{}_2^{j,c}\left[ \overset{\circ}{b}\,_2^{j-1}
    + \sum_{n=1}^{j-2}\left( O_n^{j-1}\overset{\circ}{b}\,_2^{j-1-n} + Q_n^{j-1}\overset{\circ}{\gamma}\,_2^{j-1-n} \right) \right] \right. \notag \\
    &\left. + \overset{\circ}{\alpha}{}_2^{j,\beta}\left[ \overset{\circ}{\gamma}{}_2^{j-1}
    + \sum_{n=1}^{j-2}\left( V_n^{j-1}\overset{\circ}{b}{}_2^{j-1-n} + W_n^{j-1}\overset{\circ}{\gamma}{}_2^{j-1-n} \right) \right] \right\}\frac{1}{N^2} \\
    \sim& \underbrace{ \frac{1}{N} \sum_{j=1}^N \frac{f_N}{N}\int_0^1 \frac{1}{n^2} \int_0^n k^2K_j[k] dk\,dn }
    _{\sim\mathcal{O}(1)} - \sum_{j=1}^N \frac{N^2(ce^1)^2}{(Y_N^j)^3}\left[\vphantom{\frac{(e^1)^2}{Y_N^j}}\right.
    \underbrace{ -\frac{N(ce^1)^2}{2N^2Y_N^j}\int_0^1 i^2K_j[i] di }_{\sim\mathcal{O}(N^{-2})\overset{N\to\infty}{\longrightarrow}0} \notag \\
    &+ \frac{1}{N}\sum_{n=1}^{j-2}\left(\vphantom{\frac{(e^1)^2}{Y_N^j}}\right.
    \underbrace{ -O_n^{j-1}\frac{N(ce^1)^2}{2NY_N^{j-n}}\int_0^1 i^2K_{j-n}[i] di }
    _{\sim\mathcal{O}(N^{-1})\overset{N\to\infty}{\longrightarrow}0}
    + \underbrace{ Q_n^{j-1}\frac{f_N}{N}\int_0^1 k^2K_{j-n}[k] dk }_{\sim\mathcal{O}(1)}
    \left.\vphantom{\frac{(e^1)^2}{Y_N^j}}\right) \left.\vphantom{\frac{(e^1)^2}{Y_N^j}}\right] \\
    &+ \sum_{j=1}^N \frac{aZ_N^j}{(Y_N^j)^2}\left[\vphantom{\frac{(e^1)^2}{Y_N^j}}\right.
    \underbrace{ \frac{f_N}{N^2}\int_0^1 k^2K_j[k] dk }_{\sim\mathcal{O}(N^{-1})\overset{N\to\infty}{\longrightarrow}0}
    + \frac{1}{N}\sum_{n=1}^{j-2}\left(\vphantom{\frac{(e^1)^2}{Y_N^j}}\right. 
    \underbrace{ -V_n^{j-1}\frac{N(ce^1)^2}{2NY_N^{j-n}}\int_0^1 i^2K_{j-n}[i] di }
    _{\sim\mathcal{O}(N^{-1})\overset{N\to\infty}{\longrightarrow}0}
    + \underbrace{ W_n^j\frac{f_N}{N}\int_0^1 k^2K_{j-n}[k] dk }_{\sim\mathcal{O}(1)}
    \left.\vphantom{\frac{(e^1)^2}{Y_N^j}}\right) \left.\vphantom{\frac{(e^1)^2}{Y_N^j}}\right] \notag
\end{align}

Next we convert the sums in the remaining terms into integrals:
\begin{align}
    \alpha_2\varepsilon^2 \sim&\ c\,e^1\frac{1}{N}\sum_{j=1}^N
    \int_0^1 \frac{1}{n^2} \int_0^n k^2\,K\left[k,\frac{j}{N}\right] dk\,dn
    + \frac{1}{N}\sum_{j=1}^N \frac{c\,e^1}{y\left(\frac{j}{N}\right)^3} \frac{1}{N}\sum_{n=1}^{j-2} \frac{c\,e^1}{y\left(\frac{j}{N}\right)}\frac{n}{N} c\,e^1\int_0^1 k^2\,K\left[k,\frac{j-n}{N}\right] dk \notag \\
    &+ \frac{1}{N}\sum_{j=1}^N \frac{a\,z\left(\frac{j}{N}\right)}{y^2\left(\frac{j}{N}\right)}
    \frac{1}{N}\sum_{n=1}^{j-2}\left( 1 - \frac{n}{N}\frac{ a\,z\left(\frac{j}{N}\right) }{ y^2\left(\frac{j}{N}\right) } \right) c\,e^1\int_0^1 k^2\,K\left[k,\frac{j-n}{N}\right] dk \\
    \sim&\ c\,e^1\int_0^1 \int_0^1 \frac{1}{n^2} \int_0^n k^2\,K[k,j] dk\,dn\,dj
    + \frac{(c\,e^1)^3}{N}\sum_{j=1}^N \frac{1}{y\left(\frac{j}{N}\right)^4}
    \int_0^\frac{j}{N} n \int_0^1 k^2\,K\left[k,\frac{j}{N}-n\right] dk\,dn \notag \\
    &+ \frac{c\,e^1}{N}\sum_{j=1}^N \frac{a\,z\left(\frac{j}{N}\right)}{y^2\left(\frac{j}{N}\right)}
    \int_0^\frac{j}{N} \left( 1 - n\frac{ a\,z\left(\frac{j}{N}\right) }{ y^2\left(\frac{j}{N}\right) } \right) \int_0^1 k^2\,K\left[k,\frac{j}{N}-n\right] dk\,dn \\
    \overset{N\to\infty}{\longrightarrow}&\ c\,e^1\int_0^1 \int_0^1 \frac{1}{n^2} \int_0^n k^2\,K[k,j]\, dk\,dn\,dj
    + (c\,e^1)^3 \int_0^1 \frac{1}{y(j)^4}
    \int_0^j n \int_0^1 k^2\,K[k,j-n]\, dk\,dn\,dj \notag \\
    &+ c\,e^1 \int_0^1 \frac{a\,z(j)}{y^2(j)}
    \int_0^j \left( 1 - n\frac{ a\,z(j) }{ y^2(j) } \right) \int_0^1 k^2\,K[k,j-n] dk\,dn\,dj
\end{align}

Finally, we write the term in the square brackets in a more compact form:
\begin{equation}\label{eq: alpha_2}
    \alpha_2\varepsilon^2 = c\,e^1\int_0^1 \int_0^1 \frac{1}{n^2}\int_0^n k^2\,K[k,j] dk\,dn
    + \int_0^j \left[ \frac{a\,z(j)}{y^2(j)} + n\frac{ (c\,e^1)^2 - a^2z^2(j) }{y^4(j)} \right]
    \int_0^1 k^2\,K[k,j-n] dk\,dn\, dj
\end{equation}

\section{Testing the results}\label{sec: Tests}
After such a lengthy calculation it makes sense to test the results on some examples. The flat space is a trivial test: All three second order terms are proportional to an integral over $K$ and thus vanish if $K=0$.\\
The $2$-sphere allows us to compare with a known exact result. We know no other ways to calculate comparable quantities for less symmetric examples, so we have to content ourselves with consistency checks.

\subsection{On the 2-sphere}\label{subsec: S^2 test}
We only expanded up to second order in all calculations on the segments and then took the limit. What we got is the second order term to the cosine and sine laws for an arbitrary curvature field. Instead of expanding in each variable $a$, $c$, $\beta$ separately, we introduce a scale-factor $\delta>0$. We consider $\delta$ to be small and thus use it as the variable, in which the expansion happens, while $a$, $c$ and $\beta$ are left undetermined and thus cover all possibilities and retain the generality of the expansion.\\
This agrees with the setup in Sec.~\ref{sec: Expansion for small triangles}, Eq.~\eqref{eq: variable expansion}, with $a_1 = a$, $b_1 = c$, $\gamma_0 = \pi-\beta$ and $a_n = b_n = 0$ for $n>1$, $\gamma_n = 0$ for $n > 0$. The scale parameter $\delta$ assumes the role of $\varepsilon$. But now we work with the limit i.e. we do not cut the triangle into pieces which is equivalent to having infinite pieces and thus we have no sensible notion of $N$-pieces.\\
We calculate the substitutes we used for the expansion for this special case:
\begin{align}
    C_0 =& 1, \qquad
    C_1 = 0 \\
    C_2 =& -\frac{K}{2}\left[ a^2 + c^2 - ac\cos(\pi-\beta) \right] \\
    C_3 =& 0 \\
    C_4 =& \frac{K^2}{24}\left( a^4 + 6a^2c^2 + c^4 - 4[a^3c + ac^3]\cos(\pi-\beta) \vphantom{\sqrt{2}}\right)
\end{align}
\begin{equation}
    y \coloneq \sqrt{-\frac{2C_2}{K}} = \sqrt{a^2 + c^2 + 2ac\cos\beta}, \qquad x \coloneq \frac{C_3}{K} = 0, \qquad z \coloneq \frac{C_4}{K}
\end{equation}
The cosine law, expanded up to third order in the scale parameter is given by \eqref{eq: C(K,gamma,a,b)_expanded} for this special case:
\begin{align}\label{eq: CO3}
    C(K;\pi-\beta,a,c) = \sum_{n=1}^\infty c_n\delta^n \approx&\ y\left( \delta - \frac{x}{y^2}\delta^2 + \left[ \frac{Ky^2}{24}
    - \frac{z}{y^2} - \frac{x^2}{2y^2} \right]\delta^3 + \mathcal{O}(\delta^4) \right)
\end{align}
And thus the first order recovers the expression for the plane $K=0$, the second order vanishes and thus the fird order is the first non-trivial term:
\begin{equation}
     c_1 = y = \sqrt{a^2 + c^2 + 2ac\cos\beta} \qquad c_2 = -\frac{x}{y} = 0 \qquad
     c_3 = y\left[ \frac{Ky^2}{24} - \frac{z}{y^2} - \frac{x^2}{2y^2} \right]
     = -\frac{K a^2 c^2 \sin^2\beta}{6\sqrt{a^2 + c^2 + 2ac\cos\beta}}
\end{equation}
We compare $c_3$ with $b_2\varepsilon^2$ for constant curvature $K(l,\varphi) = const.$:
\begin{align}
    b_2\varepsilon^2 =& -\frac{a^2c^2\sin^2\beta}{\sqrt{a^2 + c^2 + 2ac\cos\beta}}
    \int_0^1 n \int_0^1 k^2\,K \,dk\,dn
    = -\frac{K\,a^2c^2\sin^2\beta}{6\sqrt{a^2 + c^2 + 2ac\cos\beta}} = C_2(K;\pi-\beta,a,c)
\end{align}
The in this case disentangled double integral contributes the missing factor of $\frac{1}{6}$.

\begin{align}
    S_0 = \frac{a}{y}\sin(\pi-\beta), \qquad S_1 = 0, \qquad
    S_2 = \frac{1}{y}\left\{ \frac{K}{6}\left[ \frac{3a\,y^2}{4} - a^3 \right]\sin(\pi-\beta)
    + a\frac{z}{y^2}\sin(\pi-\beta) \right\}
\end{align}
\begin{align}
    SC(K;\pi-\beta,a,c) = \sum_{n=0}^\infty \alpha_n\delta^n \approx& \sin^{-1}(S_0) + \frac{S_1}{\sqrt{1-S_0^2}}\delta
    + \left\{ \frac{S_2}{\sqrt{1-S_0^2}} + \frac{yS_1^2}{\sqrt{1-S_0^2}^3} \right\}\delta^2 + \mathcal{O}(\delta^3)
\end{align}
\begin{align}
    \alpha_0 =& \sin^{-1}(S_0) = \sin^{-1}\left(\frac{a}{y}\sin\beta\right), \qquad \alpha_1 = \frac{S_1}{\sqrt{1-S_0^2}} = 0, \\
    \alpha_2 =& \frac{S_2}{\sqrt{1-S_0^2}} + \frac{yS_1^2}{\sqrt{1-S_0^2}^3}
    = \frac{Kac\sin\beta\left(2a^2 + c^2 + 3ac\cos\beta\right)}{6\left(a^2 + c^2 + 2ac\cos\beta\right)}
    \textcolor{gray}{ = Kac\sin\beta\frac{y^2(1)+a\,z(1)}{6\,y^2(1)} }
\end{align}

We calculate $\gamma_2\varepsilon^2$ for $K=const.$, which should give us the second order term of $SC$, with $a$ and $c$ swapped, because for the angle $\gamma$ the two side-lengths change the roles as compared to $\alpha$.
\begin{align}
    \gamma_2\varepsilon^2 =& c\,e^1 \int_0^1 \left( 1 - n\frac{ a\,z(1) }{ y^2(1) } \right) \int_0^1 k^2\,K\, dk\,dn
    = \frac{Kac\sin\beta}{3} \int_0^1  1 - n\frac{ a\,z(1) }{ y^2(1) }\, dn
    = \frac{Kac\sin\beta}{3}\left( 1 - \frac{ a\,z(1) }{ 2y^2(1) } \right) \notag \\
    =& Kac\sin\beta \frac{ 2y^2(1) - a\,z(1) }{ 6y^2(1) }
    = Kac\sin\beta \frac{ 2( a^2 + c^2 + 2ac\cos\beta ) - a( a + c\cos\beta ) }{ 6y^2(1) } \notag \\
    =& \frac{ Kac\sin\beta(a^2 + 2c^2 + 3ac\cos\beta) }{ 6(a^2 + c^2 + 2ac\cos\beta) } = SC^{(2)}(K,\pi-\beta,c,a)
\end{align}

Finally we check, whether $\alpha_2\varepsilon^2$ indeed reduces to $SC_2(K;\pi-\beta,a,c)$ and thus becomes equal to $\gamma_2\varepsilon^2$ with $a$ and $c$ swapped in the case of constant curvature. For a generic curvature field this is clearly not the case.
\begin{align}
    \alpha_2\varepsilon^2 =&\ c\,e^1\int_0^1 \int_0^1 \frac{1}{n^2}\int_0^n k^2\,K dk\,dn
    + \int_0^j \left[ \frac{a\,z(j)}{y^2(j)} + n\frac{ (c\,e^1)^2 - a^2z^2(j) }{y^4(j)} \right]
    \int_0^1 k^2\,K dk\,dn\, dj \\
    =& K\frac{c\,e^1}{3}\int_0^j \int_0^1 n\, dn
    + \int_0^j \frac{a\,z(j)}{y^2(j)} + n\frac{ (c\,e^1)^2 - a^2z^2(j) }{y^4(j)}\, dn\, dj
    = K\frac{c\,e^1}{3}\int_0^j \frac{1}{2} + j\frac{a\,z(j)}{y^2(j)} + j^2\frac{ (c\,e^1)^2 - a^2z^2(j) }{2y^4(j)}\, dj \notag
\end{align}
We use Mathematica~\cite{Mathematica} to calculate the integral over $j$ and simplify the expression:
\begin{align}
    \alpha_2\varepsilon^2 =&\ K\frac{c\,e^1}{12}\left( 3 + \frac{(a-c)(a+c)}{a^2 + c^2 + 2ac\cos\beta} \right)
    = \frac{Kac\sin\beta\left(2a^2 + c^2 + 3ac\cos\beta\right)}{6\left(a^2 + c^2 + 2ac\cos\beta\right)}
    = SC_2(K;\pi-\beta,a,c)
\end{align}

\subsection{The hilly landscape approach}
We use the analogy of a two dimensional metric to a chart of a hilly landscape. The distances we would have to walk does not necessarily correspond to the distance seen on the chart. The curvature of the surface can be visualized, in this case, by plotting the height as a function of position $h(p)$.\\
In the case of rotation symmetric geometry / landscape for example a rotation symmetric hill it is enough to know the profile of the hill. We will thus call the height function profile function in this case, which will only depend on the radial variable $h(r)$.\\
\begin{figure}[h!]
    \centering\includegraphics[width=0.6\linewidth]{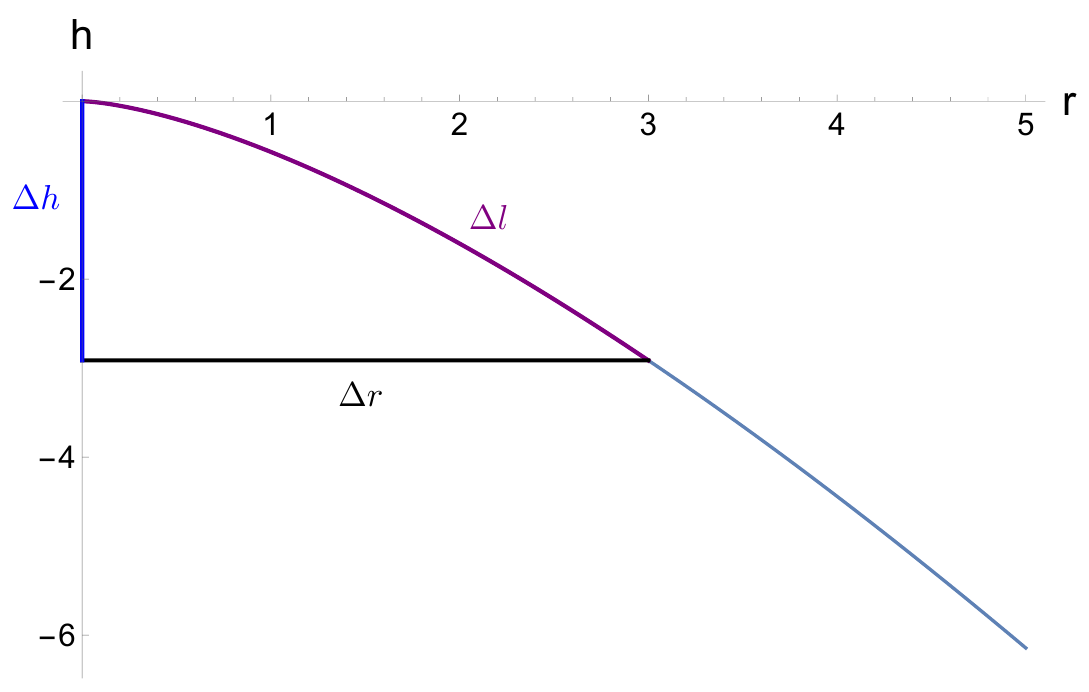}
    \caption{We sketch the idea of the hilly landscape approach on a plot of the example $h(r) = -\int \sqrt{\ln^2(e+r) - 1}\,dr$ we use in this section.}
    \label{fig:my_label}
\end{figure}

The metric of a general $2$-dimensional geometry can be written in the form:
\begin{equation}
    ds^2 = f(r)dr^2 + r^2d\varphi^2,
\end{equation}
where the function $f$ quantifies in a rather non-transparent way how much the observed distance on the chart differs from the actual one. The chart is representative of the true distances, wherever the value of $f(r)$ is close to one.\\
We can transform the metric such that the radial component becomes one and thus the new radial component $l$ represents the true distance from the center. Since the geometry is rotational symmetric the coordinate line along $r$ is a geodesic and the new coordinate $l$ is it's arc-length. In this case the metric could be mistaken for the flat one, if we overlook that $r$ must now be written in terms of $l$.
\begin{equation}
    r\mapsto l, \quad dl = \sqrt{f(r)}dr \quad \Rightarrow \quad ds^2 = dl^2 + r^2(l)d\varphi^2 \quad
    \wedge \quad l(r) = \int_0^r\sqrt{f(\rho)}d\rho
\end{equation}
Since the $\varphi$-coordinate was the angle at the center by definition the metric is now written in faithful normal coordinates and the curvature is pushed into the angular component. We also note, that we could calculate $r^2(l)$ from the geodesic flow bundle as discussed in the main text via:
\begin{equation}
    r^2(l) = \frac{1-\dot{l}(p;0)}{\dot{\varphi}(p;0)}
    \approx \frac{1-\partial_a C^{(2)}\left(K;\frac{\pi}{2},0,c\right)}
    {\partial_a SC^{(2)}\left(K;\frac{\pi}{2},0,c\right)}
\end{equation}

The integral must start from $0$, since the distance of the center to the central point is trivially $0$. By taking the derivative of $l(r)$ we can calculate function $f(r)$ from the relation between $l$ and $r$, which must be an invertible function. The arc-length must be a strictly monotonically growing function of $r$, which also puts a constrain on $f(r)$. 
\begin{equation}
    f(r) = l'^2(r)>0, \quad l(r) \rightleftharpoons r(l)
\end{equation}

We now use the idea of the hilly landscape approach and add a direction (a height $h$) to make the intrinsic curvature appear as extrinsic in the flat ambient space $\mathbb{R}^3$. By construction the metric in the $r,h$-plane must assume the flat Cartesian form, from which we can determine the profile function in terms of $f(r)$:
\begin{equation}
    dl^2 = dr^2 + dh^2 \quad \Rightarrow \quad dh = \sqrt{dl^2 + dr^2} = \sqrt{f(r)dr^2 + dr^2}
    = \sqrt{f(r) + 1}\,dr \quad \Rightarrow \quad h(r) = \int\sqrt{f(r)-1}\,dr
\end{equation}
The integration limits here correspond to a choice of embedding and have no further consequence.\\

Due to the rotation symmetry it is straight forward to extend this to an embedding of the two dimensional manifold into $\mathbb{R}^3$ and $h(r)$ allows us to construct an immersion.
\begin{equation}\label{eq: immersion}
    dS^2 = dr^2 + r^2d\varphi^2 + dh^2, \qquad \imath:\ [0,\infty)\times[0,2\pi) \to \mathbb{R}^3, \quad
    (r,\varphi) \mapsto \begin{pmatrix} r\cos\varphi \\ r\sin\varphi \\ h(r) \end{pmatrix}
\end{equation}

When trying to construct a solvable example it is inconvenient to work from $f(r)$ since we need to integrate over it's square root and hope that we can invert the $l(r)$ which comes out, if we can even calculate the integral. We can instead search for a suitable relation between $l$ and $r$ and calculate the profile function from this:
\begin{equation}
    dl = \sqrt{dr^2 + dh^2} = \sqrt{dr^2 + \left(\frac{dh}{dr}\right)^2dr^2} = \sqrt{1 + h'^2(r)}\,dr \quad
    \Rightarrow \quad l(r) = \int_0^r\sqrt{1 + h'^2(\rho)}\,d\rho
\end{equation}
We invert this relation, using the main theorem of calculus, to obtain the profile function from $l(r)$:
\begin{equation}
    l'(r) = \frac{d}{dr}\int_0^r\sqrt{1 + h'^2(\rho)}\,d\rho = \sqrt{1 + h'^2(r)} \quad \Rightarrow \quad l'^2(r) = 1 + h'^2(r) \quad \Rightarrow \quad l'(r)\geqslant 1
\end{equation}
From this result we see, that the derivative of $l(r)$ must be bigger or equal to $1$. This makes sense geometrically, since a constant profile function $h(r) = const.$ represents a flat space with $l=r$ and as soon as $h(r)$ bends we would expect the length of it's graph from $0$ to $r$ to be longer then $r$:
\begin{equation}
    h(r) = const. \quad \Rightarrow \quad h'(r) = 0 \quad \Rightarrow \quad l(r) = \int_0^r\sqrt{1 + h'^2(\rho)}\,d\rho = \int_0^r\,d\rho = r
\end{equation}
Since the relation between the derivatives of the arc-length and the profile function with respect to $r$ is quadratic we get two possible solutions which again merely correspond to a choice of the embedding and can thus be chosen by preference.
\begin{equation}
    h'(r) = \pm\sqrt{l'^2(r)-1} \quad \Rightarrow \quad h(r) = \pm\int\sqrt{l'^2(r)-1}dr
\end{equation}

\subsubsection{On a non-trivial rotation symmetric immersion into $\mathbb{R}^3$}
It is convenient to search for a function $l(r)$, satisfying the following conditions, to construct an example of such an immersion.
\begin{equation}
    i)\ \exists\ l^{-1}(l) = r(l), \qquad ii)\ l(0) = 0, \qquad iii)\ l'(r) \geqslant 1
\end{equation}
With the condition $iii)$ the strict monotony is satisfied automatically. For this example we would like a geometry which is initially curved and becomes asymptotically flat as $r$ tends to infinity. It is not immediately obvious, that the function
\begin{equation}
    l(r) = (e+r)\ln(e+r) - e - r, \quad \text{which satisfies} \quad l(0) = e\,\cancelto{1}{\ln{e}} - e = 0
\end{equation}
satisfies this requirement, since it's first derivative
\begin{equation}
    l'(r) = \ln(e+r) > 1, \quad r > 0
\end{equation}
is not asymptotically constant. The constant $e$ denotes Euler's number. It follows, that also the derivative of the profile function
\begin{equation}
    h'(r) = -\sqrt{\ln^2(e+r) - 1},
\end{equation}
where we chose a negative sign, is not asymptotically constant. So, the profile function does not asymptotically behave like a linear function, which would correspond to a cone. We need to keep in mind though that both principle curvatures are multiplied and we will see later, that the Gauss curvature field indeed tends to zero in the limit $r\to\infty$.\\
The inverse of $l$ is given by:
\begin{equation}
    r(l) = \frac{l}{W_0\left(\frac{l}{e}\right)} - e, \qquad x\,e^x = y, \quad \text{if} \quad x = W_0(y), \quad x\geqslant0
\end{equation}
where $W_0$ denotes the principal branch of the Lambert $W$ function, also called the product logarithm.\\

We calculate the integral for the profile function numerically to plot the surface embedded into $\mathbb{R}^3$.
\begin{equation}\label{eq: h(r)}
    h(r) = -\int \sqrt{\ln^2(e+r) - 1}\,dr
\end{equation}
The calculation of the function $f(r)$ is more straight forward and we calculate the Gaussian curvature in $r$,$\varphi$-coordinates for comparison with the faithful normal ones.
\begin{equation}
    f(r) = \ln^2(e+r), \qquad ds^2 = \ln^2(e+r)\,dr^2 + r^2d\varphi^2
\end{equation}
The non-vanishing Christoffel symbols are:
\begin{equation}
    \Gamma^r_{rr} = \frac{1}{(e+r)\ln(e+r)}, \qquad \Gamma^r_{\varphi\varphi} = -\frac{r}{\ln^2(e+r)}, \qquad
    \Gamma^\varphi_{r\varphi} = \frac{1}{r}
\end{equation}
The non-vanishing components of the Riemann tensor are given by:
\begin{equation}
    R^r_{\varphi r\varphi} = -R^r{\varphi\varphi r} = \frac{r}{(e+r)\ln^3(e+r)}, \qquad
    R^\varphi_{r\varphi r} = -R^\varphi_{rr\varphi} = \frac{1}{r(e+r)\ln(e+r)}
\end{equation}
We contract to calculate the Ricci-tensor:
\begin{equation}
    R_{ij} = \begin{pmatrix} R_{rr} & 0 \\ 0 & R_{\varphi\varphi} \end{pmatrix}, \qquad
    R_{rr} = \frac{1}{r(e+r)\ln(e+r)}, \quad R_{\varphi\varphi} = \frac{r}{(e+r)\ln^3(e+r)}
\end{equation}
Finally the Gaussian curvature field in two dimensions is half of the Ricci-scalar:
\begin{equation}
    K(r,\varphi) = \frac{R}{\det{g}} = \frac{R}{2} = \frac{1}{r(e+r)\ln^3(e+r)}
\end{equation}
And we see, that this curvature field goes to $0$ rather quickly as $r$ tends to infinity.

To calculate the geodesic flow bundle, we using the formulas derived in the previous section, we need the curvature field in faithful normal coordinates. We transform the metric using $r(l)$ and repeat the calculation above:
\begin{equation}
    ds^2 = dl^2 + r^2(l)d\varphi^2 = dl^2 + \left( \frac{l}{W_0\left(\frac{l}{e}\right)} - e \right)d\varphi^2
\end{equation}
We list the non-zero Christoffel symbols which are not the analogue components, since the curvature moved to the $\varphi$-component.
\begin{equation}
    \Gamma^l_{\varphi\varphi} = \frac{e\,W_0\left(\frac{l}{e}\right)-l}{W_0\left(\frac{l}{e}\right) \left(1+W_0\left(\frac{l}{e}\right)\right)}, \qquad
    \Gamma^\varphi_{l\varphi} = \frac{W_0\left(\frac{l}{e}\right)}{\left(1 + W_0\left(\frac{l}{e}\right)\right)
    \left(l - e\,W_0\left(\frac{l}{e}\right)\right)}
\end{equation}
The curvature is then given by:
\begin{align}
    R^l_{\varphi l\varphi} =& -R^l_{\varphi\varphi l}
    = \frac{l-e\,W_0\left(\frac{l}{e}\right)}{l \left(1 + W_0\left(\frac{l}{e}\right)\right)^3}, \qquad
    R^\varphi_{l\varphi l} = -R^\varphi_{ll\varphi}
    = \frac{W_0\left(\frac{l}{e}\right)^2}{l \left(1+W_0\left(\frac{l}{e}\right)\right)^3
    \left(l-e\,W_0\left(\frac{l}{e}\right)\right)} \\
    R_{ij} =& \begin{pmatrix} R_{ll} & 0 \\ 0 & R_{\varphi\varphi} \end{pmatrix}, \qquad
    R_{ll} = \frac{W_0\left(\frac{l}{e}\right)^2}{l \left(1+W_0\left(\frac{l}{e}\right)\right)^3
    \left(l-e\,W_0\left(\frac{l}{e}\right)\right)}, \quad
    R_{\varphi\varphi} = \frac{l-e\,W_0\left(\frac{l}{e}\right)}{l \left(1 + W_0\left(\frac{l}{e}\right)\right)^3}
\end{align}
And the Gaussian curvature field in faithful normal coordinates is given by:
\begin{equation}\label{eq: K(l,varphi)}
    K(l,\varphi) = \frac{W_0\left(\frac{l}{e}\right)^2}{l \left(1+W_0\left(\frac{l}{e}\right)\right)^3
    \left(l-e\,W_0\left(\frac{l}{e}\right)\right)}
\end{equation}
We observe, that the curvature field in the $r$,$\varphi$-coordinates is distorted in radial direction in a non-trivial way with respect to the distances $l$ from the center. We see from Fig.~\ref{fig: K(l)} that since the curvature drops very quickly the numerical values of $K(r)$ are very close to those of $K(l)$.
\begin{figure}[h!]
    \centering
    \includegraphics[width=.6\linewidth]{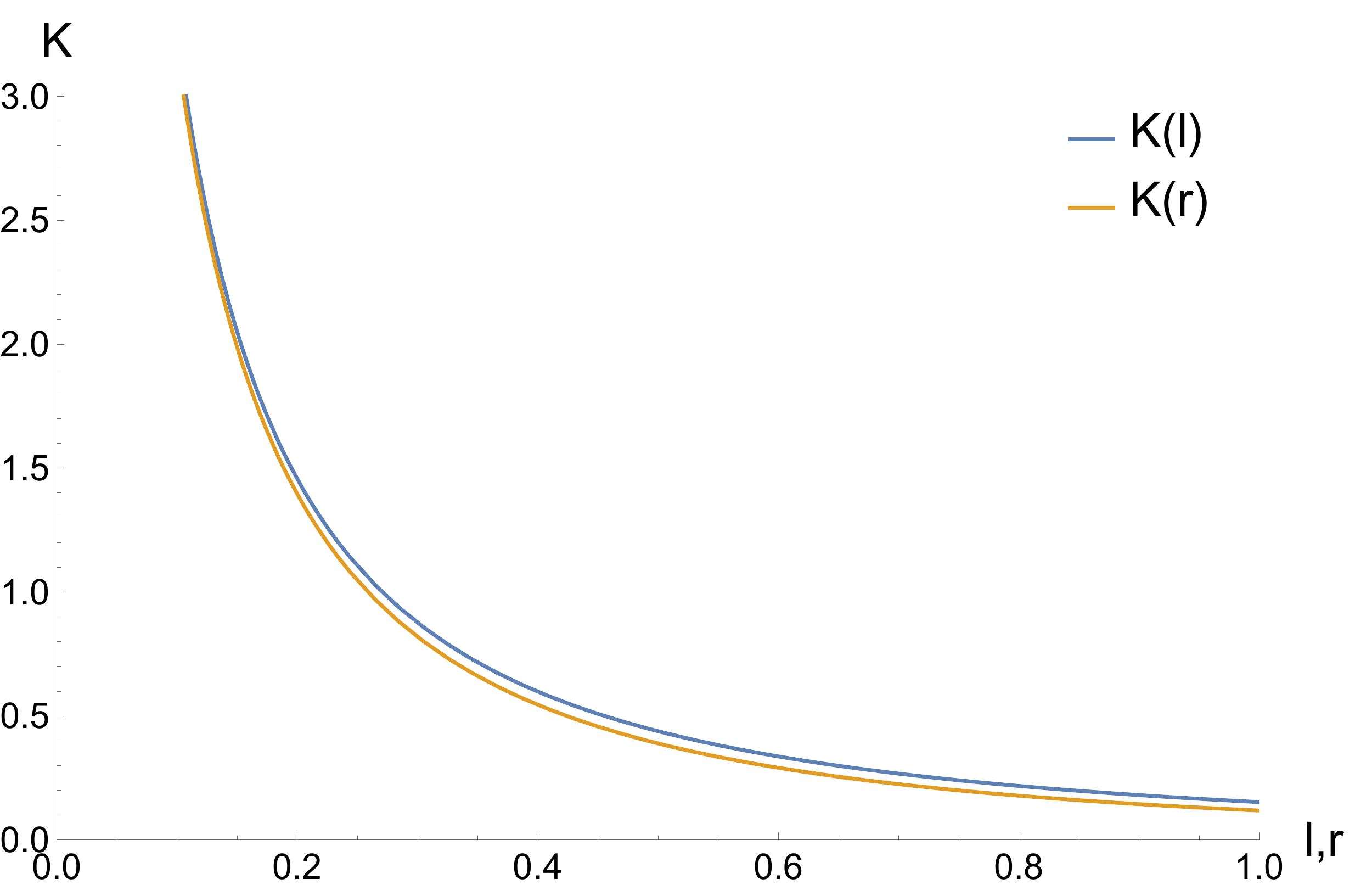}
    \caption{We plot the Gaussian curvature field $K$ in radial direction versus arclength $l$ and radial coordinate $r$, both running from $0$ to $1$.}
    \label{fig: K(l)}
\end{figure}

We use the results of our triangulation \eqref{eq: b_2} and \eqref{eq: alpha_2} together with the immersion $\imath$~\eqref{eq: immersion} constructed via the profile function $h(r)$~\eqref{eq: h(r)} to plot a sample of the geodesic flow bundle onto the embedded curved surface in $\mathbb{R}^3$ on the left in Fig.~\ref{fig: GFB on embedding}. On the right hand side we compare the same sample with the analogue sample of the unit $2$-sphere $\mathcal{S}^2$ and the Euclidean plane $E$ in a faithful normal chart.\\
\begin{figure}[h!]
    \begin{minipage}{.33\linewidth}
        \centering\includegraphics[width=\linewidth]{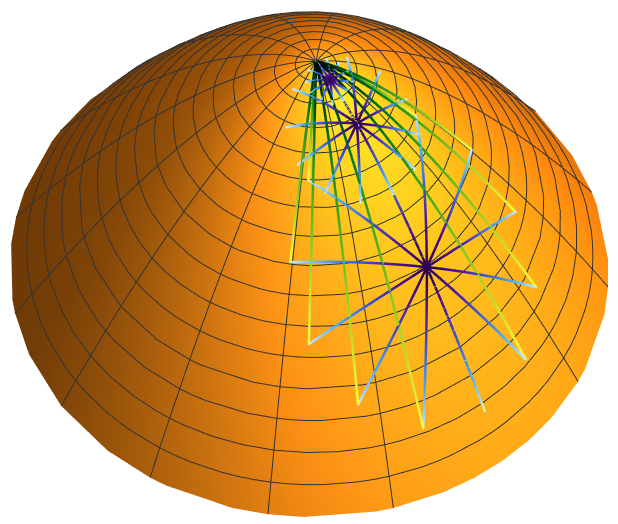}
    \end{minipage}
    \begin{minipage}{.66\linewidth}
        \centering\includegraphics[width=\linewidth]{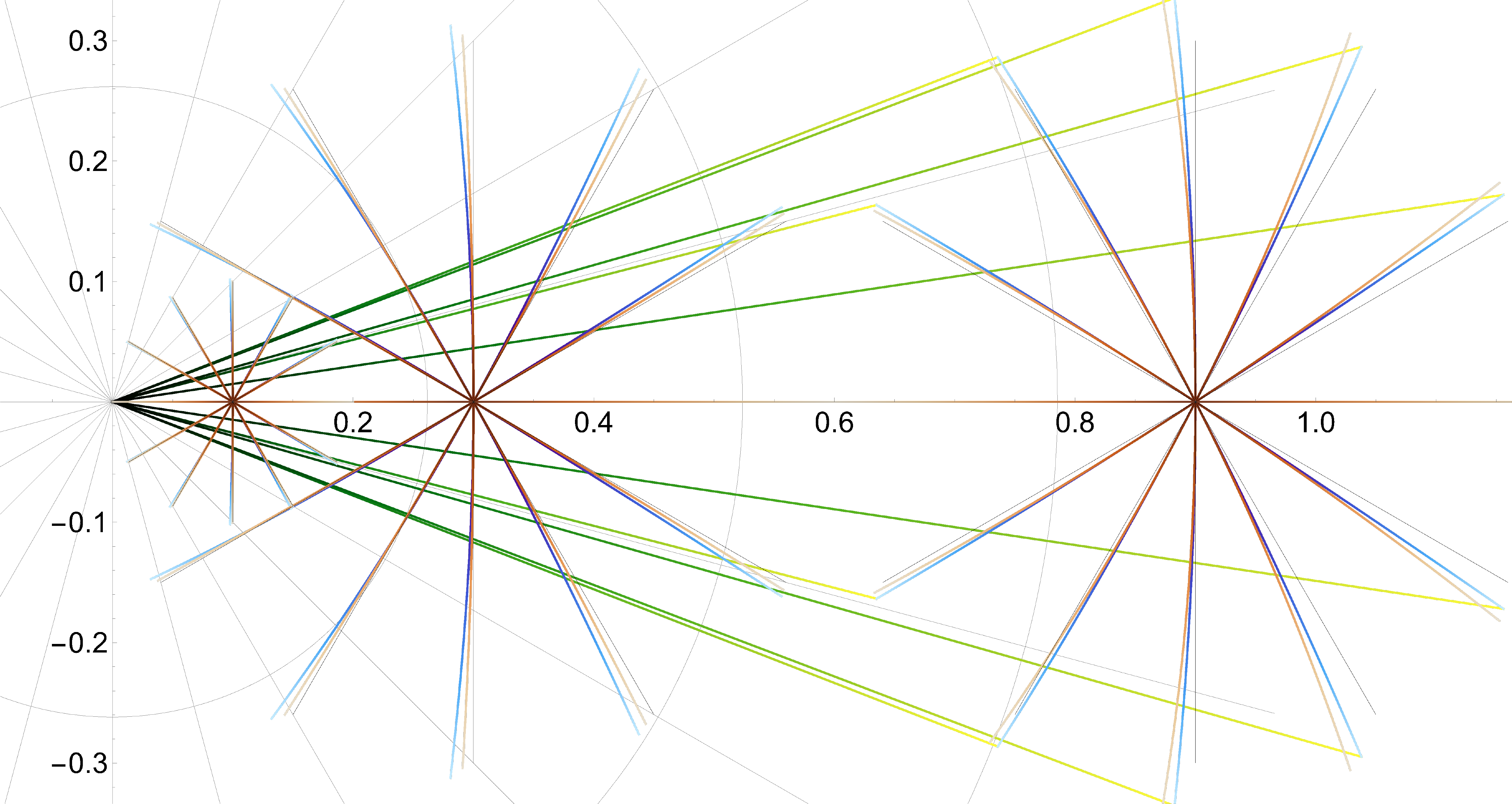}
    \end{minipage}
    \caption{We sample the geodesic flow bundle to the Gaussian curvature field $K(l,\varphi)$ from equation~\eqref{eq: K(l,varphi)} in blue tones at $l=0.1,0.3,0.9$ in the directions $\beta=n\frac{\pi}{6}$, $n\in\mathbb{Z}$ for the arclengt parameter range $\lambda\in[0,0.1),\,[0,0.3),\,[0,0.9)$ respectively. We plot the flow from $o$ to the end points of the flow from $l=0.9$ in green tones. On the left we plot it onto the embedded surface in $\mathbb{R}^3$ and on the right in faithful normal coordinates with the flows of the $2$-sphere in orange and the Euclidean plane in thin black lines for comparison.}
    \label{fig: GFB on embedding}
\end{figure}

We observe, that the geodesics on this surface are closer to the ones of the Euclidean plane then the $2$-sphere as $l$ increases as we would expect from a decreasing curvature field. The geodesics will however never become straight lines, since this chart is from the perspective of an observer in the origin and thus the image is distorted by the curvature surrounding the origin $o$. In more mathematical language: the curvature around $o$ always contributes to the integrals over $K(l,\varphi)$.\\
We also plot the geodesics from $o$ which are supposed to match up with the flow from $q(0.9)$. The fact that the geodesic triangles match up in faithful normal coordinates as well as in the constructed immersion shows the consistency of the results on a non-trivial curvature field.\\
It may seem strange that the curvature field has a pole at the center, especially since the derivative of the profile function vanishes there and thus seems to match up. The second derivative diverges however and it is thus not possible to fit a sphere with a finite radius to this point.
\begin{equation}
    h'(0) = -\sqrt{\ln^2(e) - 1} = 0, \qquad
    h''(r) = \frac{\ln(e+r)}{ (e+r)\sqrt{\ln^2(e+r) - 1} } \overset{r\to0}{\longrightarrow} \infty
\end{equation}

\subsection{On a central $\frac{1}{l}$-curvature field}
In this section we test our results on a central $\frac{1}{l}$ curvature field.
\begin{equation}
    K(l,\varphi) = \frac{1}{l} \quad \Rightarrow \quad
    K(i,j) = K\left( i\,y(j), \sin^{-1}\frac{j\,a\sin\beta}{y(j)} \right) = \frac{1}{i\,y(j)}
\end{equation}
A point on a geodesic from $q(c) = \gamma_{o,0}(c)$ heading into direction $\beta$ in faithful normal coordinates is given by the two components $l$ and $\varphi$, which we expand to second order:
\begin{align}
    l(c,\beta;\lambda) \approx&\ C^{(2)}(K(l,\varphi);\pi-\beta,\lambda,c) = b_0 + b_2\varepsilon^2
    = y -\frac{f^2}{y}\int_0^1 n \int_0^1 \frac{k}{y(1-n)} \,dk\,dn, \\
    \varphi(c,\beta;\lambda) \approx& SC^{(2)}(K(l,\varphi);\pi-\beta,\lambda,c) = \alpha_0 + \alpha_2\varepsilon^2 \notag \\
    =& \sin^{-1}\frac{e^1}{y} + f\int_0^1 \int_0^1 \frac{1}{n^2}\int_0^n \frac{k}{y(j)} dk\,dn
    + \int_0^j \left[ \frac{a\,z(j)}{y^2(j)} + n\frac{ f^2 - a^2z^2(j) }{y^4(j)} \right]
    \int_0^1 \frac{k}{y(j-n)} dk\,dn\, dj \\
    \vphantom{a}\notag\\
    y(j) =&\ \sqrt{j^2a^2 + c^2 + 2jac\cos\beta}, \quad y = y(1), \qquad z(j) = j\,a + c\cos\beta \qquad f = f_1 = c\,e^1 = a^2c^2\sin^2\beta
\end{align}
The integrals are not analytically calculatable for most curvature fields. Also in this case the integral over $j$ to calculate $\varphi$ is not solvable. For the numerical integration, which we use instead, it is more convenient to split the integral for $\alpha_2\varepsilon^2$ into two, due to the different integration boundaries of the $n$- and $k$-integrals:
\begin{equation}
    \alpha_2\varepsilon^2 = f(I_1 + I_2), \qquad b_2\varepsilon^2 = -\frac{f^2}{y}I_3
\end{equation}
\begin{align}
    I_1 \coloneq& \int_0^1 \int_0^1 \frac{1}{n^2} \int_0^n k^2\,K[k,j]\, dk\,dn\,dj, & I_3 \coloneq \int_0^1 n \int_0^1 k^2\,K[k,1-n]\, dk\,dn \\
    I_2 \coloneq& \int_0^1 \int_0^j \left[ \frac{a\,z(j)}{y^2(j)} + n\frac{f^2-a^2z^2(j)}{y^4(j)} \right]
    \int_0^1 k^2\,K[k,j-n]\, dk\,dn\,dj
\end{align}
We plot a sample of the geodesic flow bundle of the $\frac{1}{l}$-curvature field in Fig.~\ref{fig: 1/l-field}. The geodesics get closer to the ones of the unit sphere as we get closer to $l=1$, where the curvature field drops to $K=1$, consistent with our expectation. We avoid plotting outside the range of $1$, since the approximation becomes increasingly less accurate towards $1$.
\begin{figure}[h!]
    \centering
    \includegraphics[width=.8\linewidth]{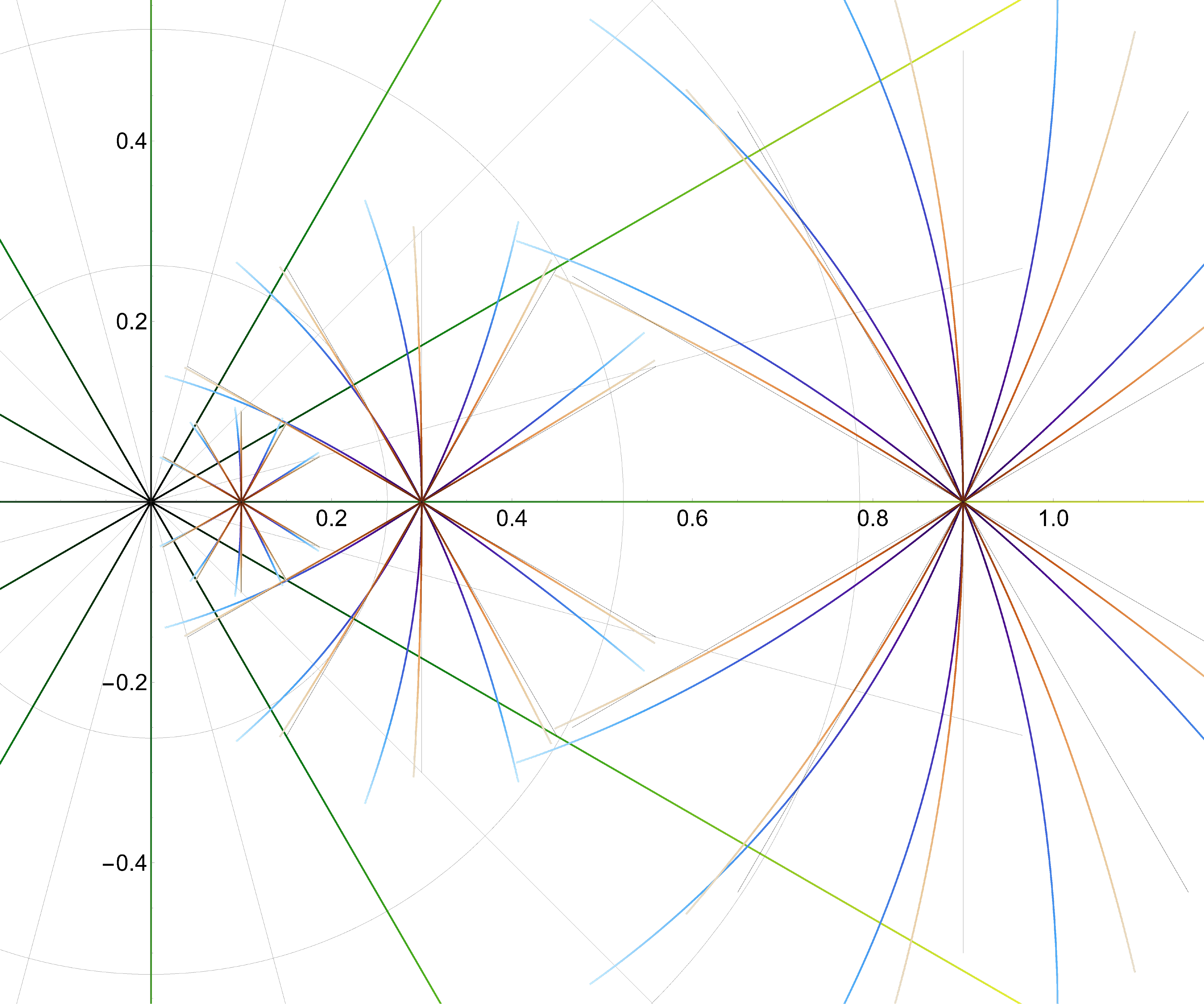}
    \caption{We plot the $K(l,\varphi) = \frac{1}{l}$-curvature field in blue and the constant curvature $K=1$ and $K=0$ in orange and thin gray lines respectively for comparison. The geodesic flow bundle is sampled at $l=0.1,0.3,0.9$ and w.l.o.g. in direction $\varphi=0$ up to second order.}
    \label{fig: 1/l-field}
\end{figure}

\subsection{On a circular curvature wave}
We model a curvature wave, traveling outwards from the center with a finite amplitude at the center, which decays with $\frac{1}{l}$. We treat the time $t$ as an extrinsic parameter and assure, that the curvature values are always positive.
\begin{equation}
    K(l,\varphi) = \frac{\kappa}{l+\frac{1}{2}}\left( 1 + \sin(2\pi l - \omega t) \right)
\end{equation}
We can see from the Figs.~\ref{fig: curvature wave} and \ref{fig: curvature wave GFB} that all geodesics behind the peak are bent most, since the rays from the center collect more curvature until they reach them. The flows closer to the center become flatter as the peak travels away, consistent with the low curvature around center at $\omega t = \frac{2\pi}{3}$.
\begin{figure}[h!]
    \begin{minipage}{.33\linewidth}
        \centering\includegraphics[width=\linewidth]{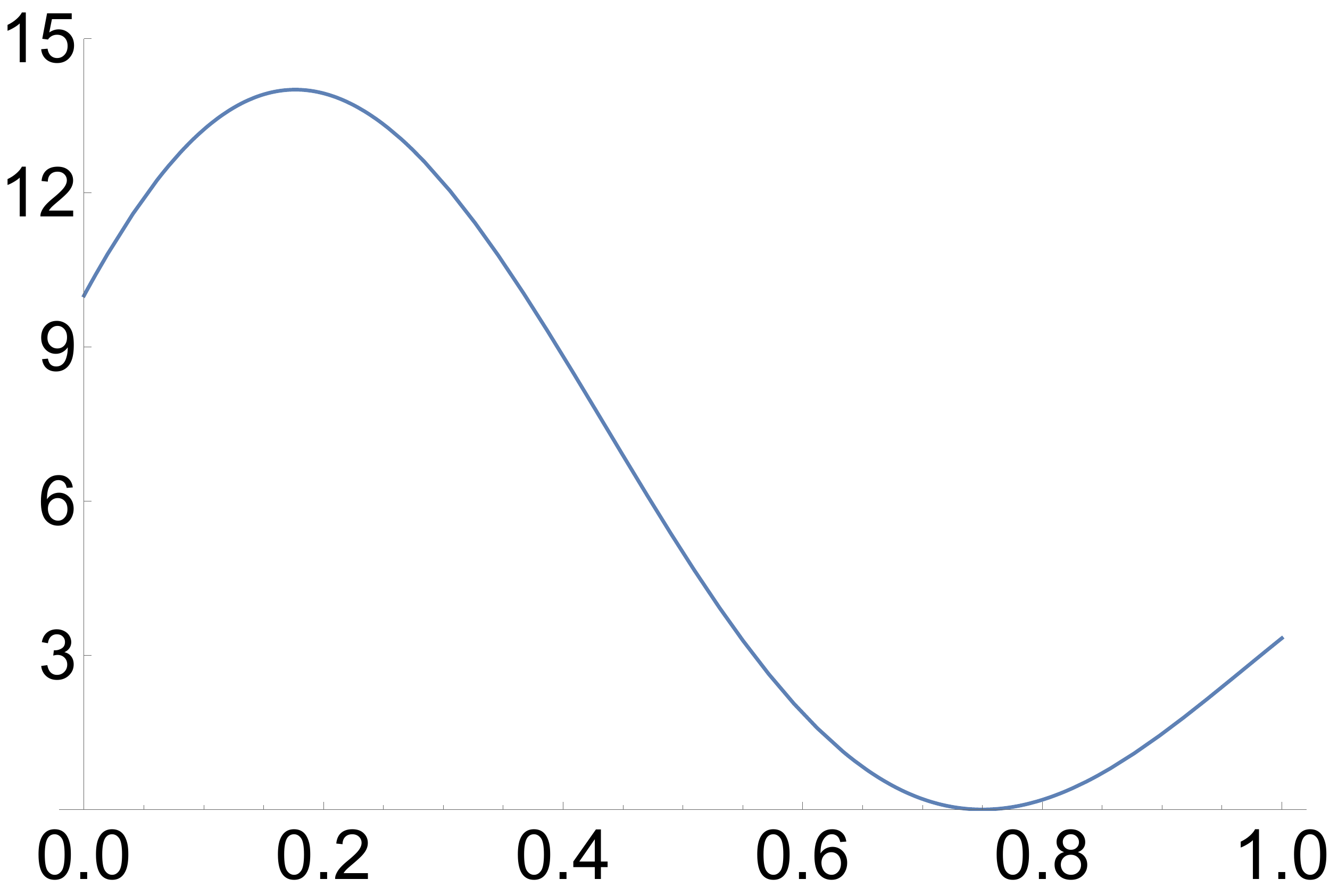}
    \end{minipage}
    \begin{minipage}{.33\linewidth}
        \centering\includegraphics[width=\linewidth]{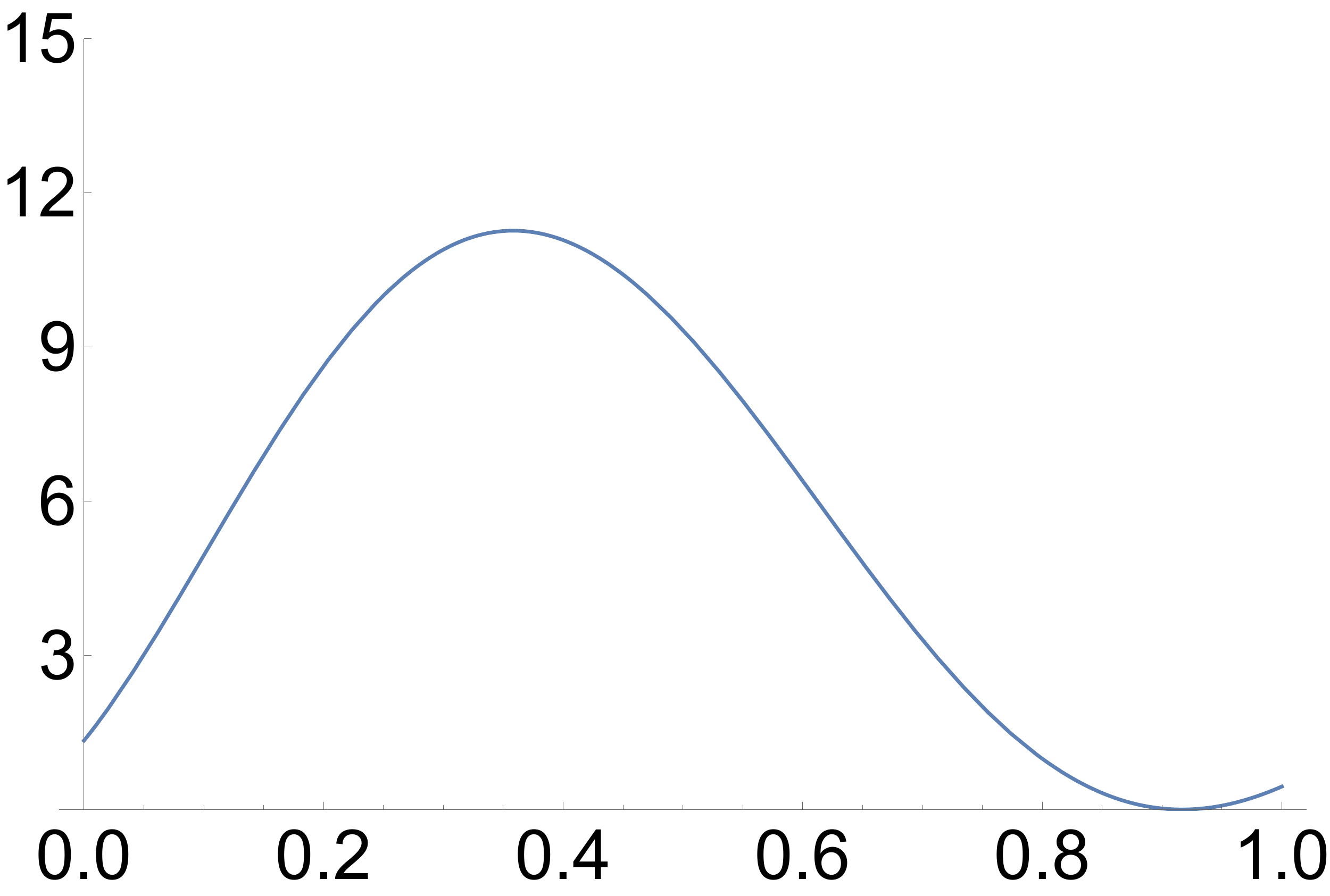}
    \end{minipage}
    \begin{minipage}{.33\linewidth}
        \centering\includegraphics[width=\linewidth]{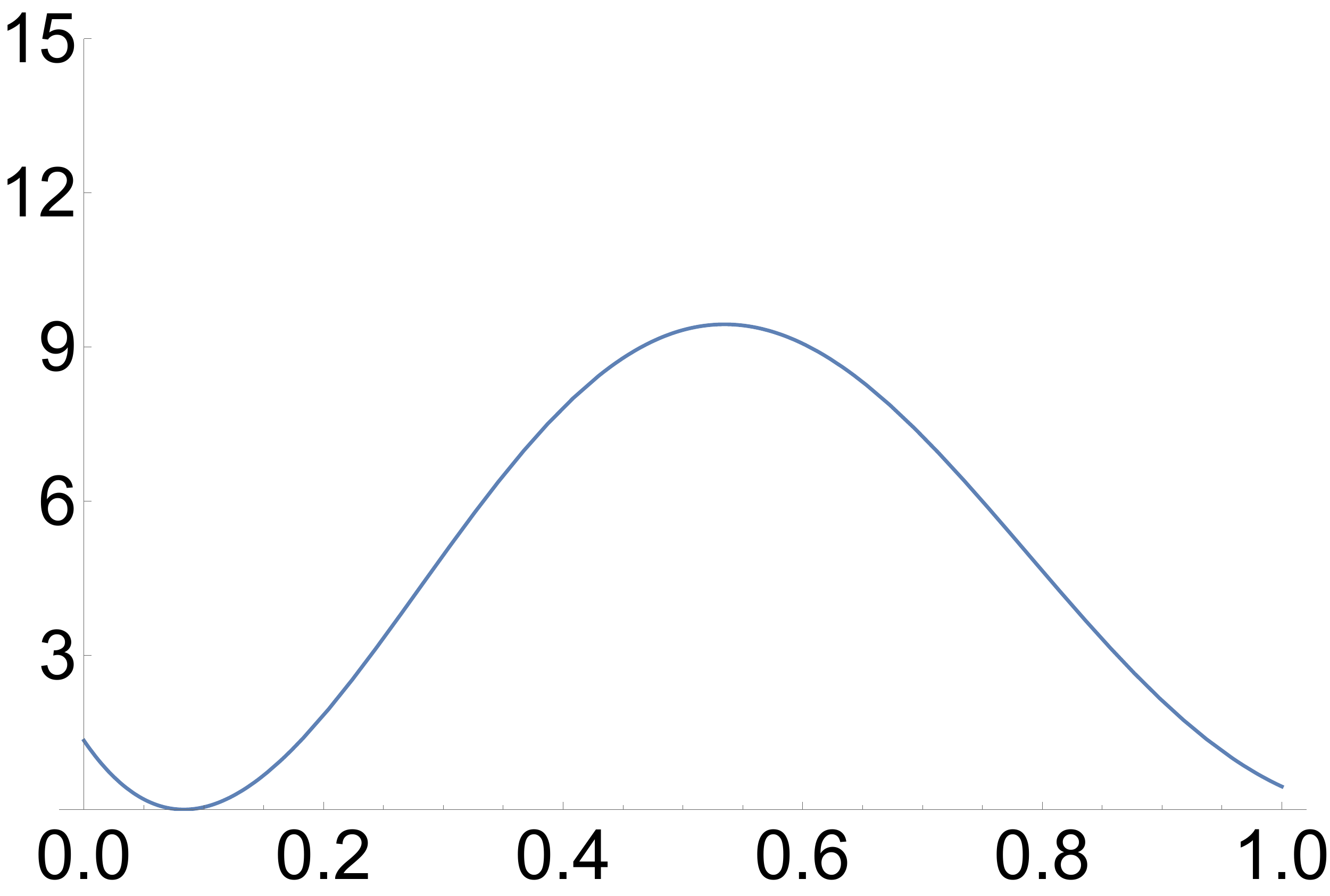}
    \end{minipage}
    \caption{We plot the curvature wave profile at $\omega t = 0,\,\frac{\pi}{3},\,\frac{2\pi}{3}$ from left to right.}
    \label{fig: curvature wave}
    \begin{minipage}{\linewidth}
        \centering\includegraphics[width=.9\linewidth]{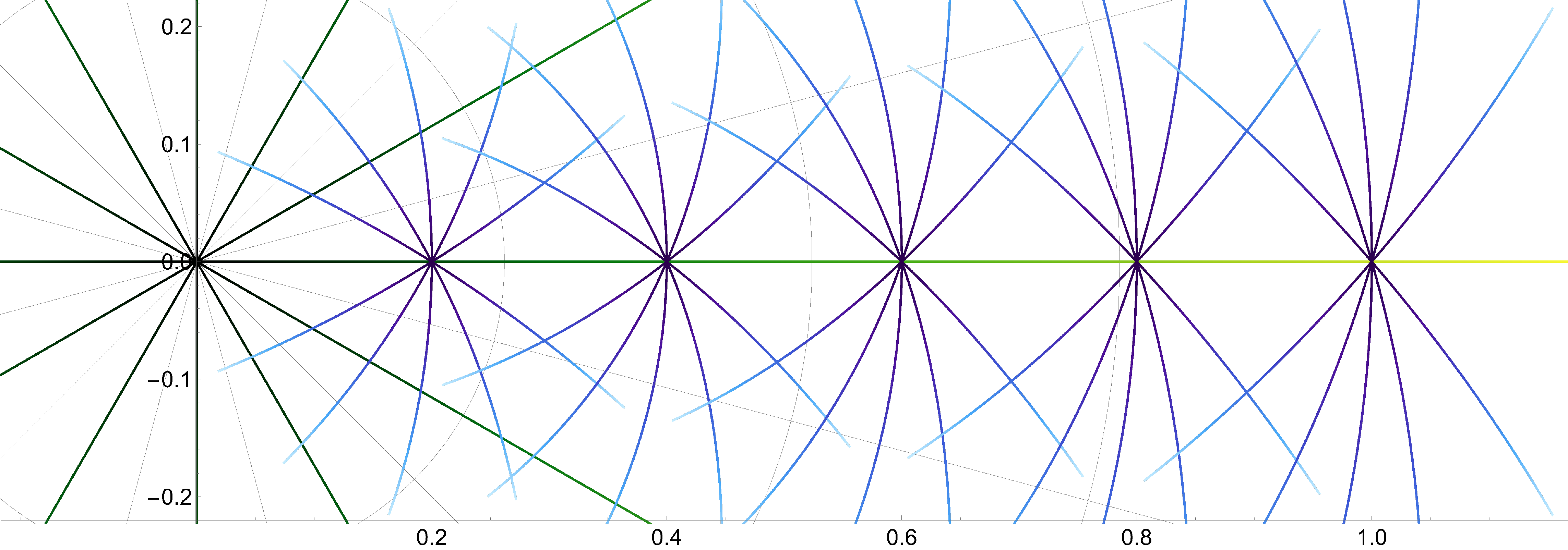}
    \end{minipage}
    \begin{minipage}{\linewidth}
        \centering\includegraphics[width=.9\linewidth]{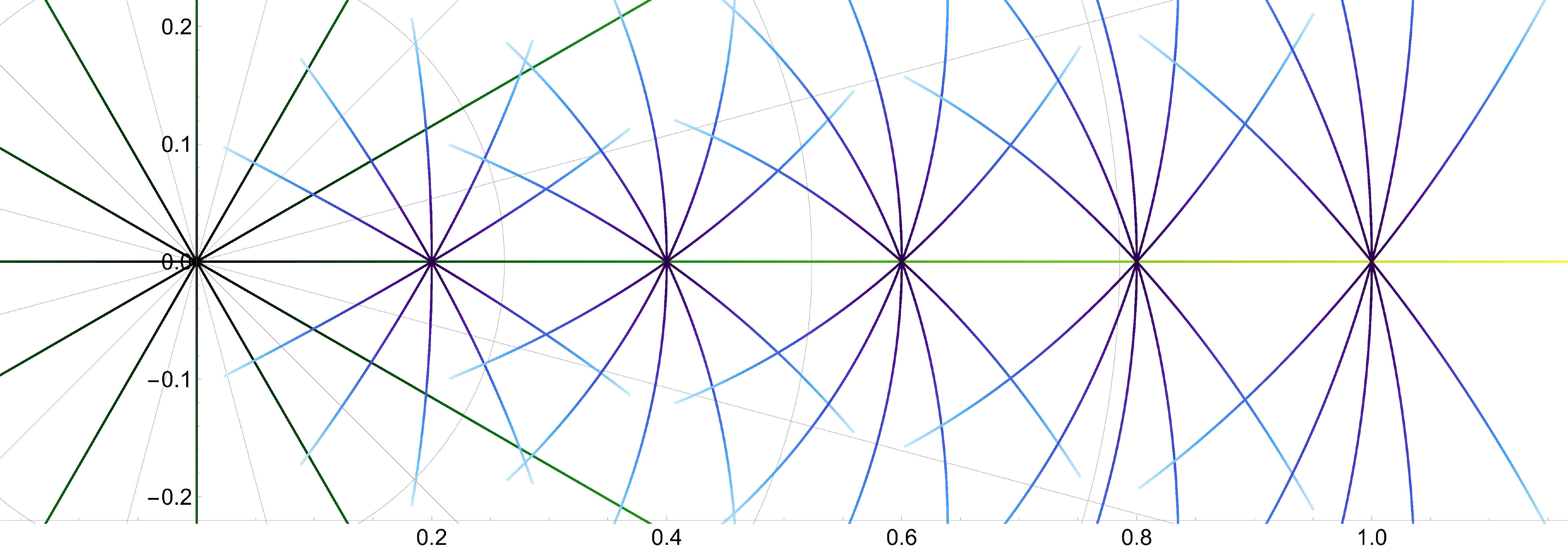}
    \end{minipage}
    \begin{minipage}{\linewidth}
        \centering\includegraphics[width=.9\linewidth]{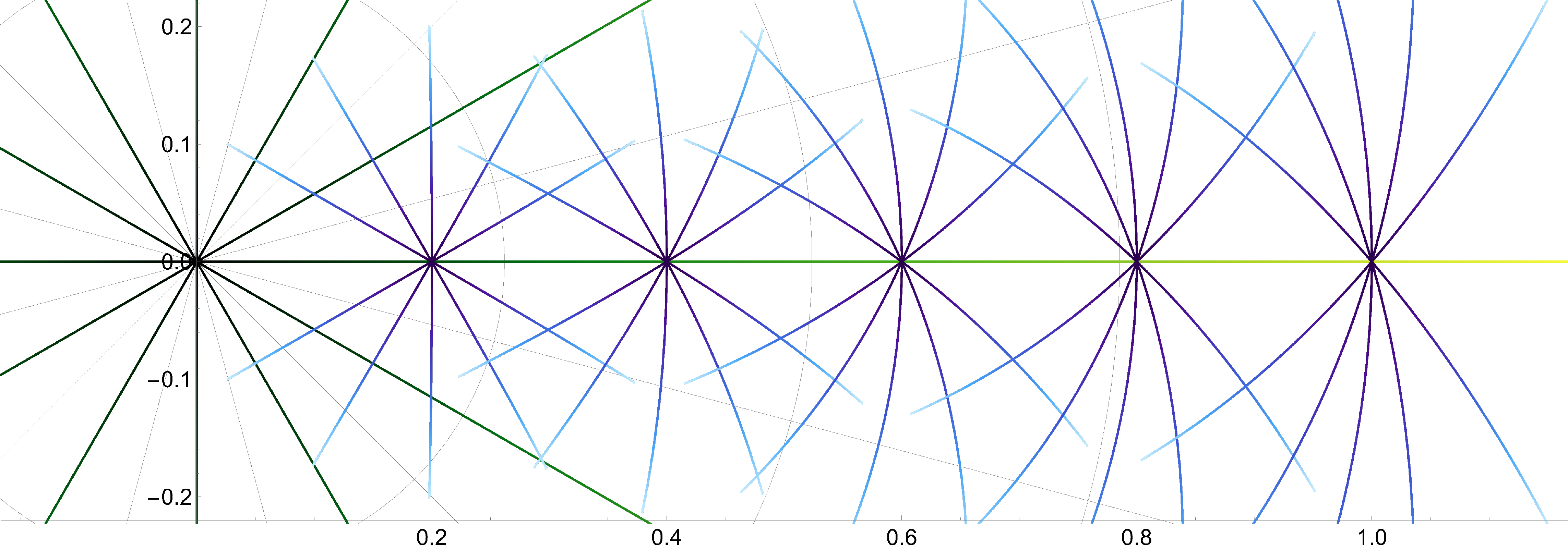}
    \end{minipage}
    \caption{We plot a sample of the geodesic flow bundle for the curvature wave at $\omega t = 0,\,\frac{\pi}{3},\,\frac{2\pi}{3}$ from top to bottom, at the points $c = 0,\,0.2,\,0.4,\,0.6,\,0.8,\,1$, $\varphi = 0$ in the directions $\beta = n\frac{\pi}{6}$, $n\in\mathbb{Z}$.}
    \label{fig: curvature wave GFB}
\end{figure}

\appendix

\section{Arcus functions}\label{sec: Arcfunc}
If we restrict sine to $\left[-\frac{\pi}{2},\frac{\pi}{2}\right]$ it becomes a bijective funcion and the inverse is well defined. We can derive the inverse to the sine operation, using the inverse function theorem:
\begin{equation}\label{eq: IFT}
    (f^{-1})'(y) = (f'(x))^{-1}, \quad y = f(x)
\end{equation}
\begin{align}
    \Rightarrow \quad (\sin^{-1})'(\sin x) = \frac{1}{\cos x} = \frac{1}{\sqrt{1-\sin^2x}}, \quad 
    x\in\left[-\frac{\pi}{2},\frac{\pi}{2}\right] \quad
    \text{and} \quad \sin^{-1}y = \int_0^y (\sin^{-1})'(z) dz
\end{align}
Sine on this domain assumes values between $-1$ and $1$, so arcsine is defined by the map:
\begin{equation}
    \begin{matrix}
    \sin^{-1}: & [-1,1] & \to & \left[-\frac{\pi}{2},\frac{\pi}{2}\right] \\ \\
    & y & \mapsto & \int_0^y \frac{1}{\sqrt{1-z^2}} dz 
    \end{matrix}
\end{equation}

The binomial series is absolutely convergent $\forall\alpha\in\mathbb{C}$ on the open unit disc $z\in B_1(0)\subset\mathbb{C}$.
\begin{equation}
    (1 + z)^\alpha = \sum_{k=0}^\infty \binom{\alpha}{k} z^k, \quad \binom{\alpha}{k} \coloneq \frac{(\alpha)_k}{k!},
\end{equation}
where we used the falling factorial $(\alpha)_k \coloneq \prod_{j=0}^{k-1}(\alpha-j)$ in the definition of the generalized binomial coefficients.\\
Since zero measures are irrelevant to integrals, we can use this power series on the entire range of arcsine.
\begin{equation}
    \sin^{-1}y = \int_0^y (1-z^2)^{-\frac{1}{2}} dz
    = \int_0^y \sum_{k=0}^\infty\binom{-\frac{1}{2}}{k}(-z^2)^k dz
\end{equation}

Writing out and manipulating the falling factorial allows us, to rewrite the binomial coefficient
\begin{align}
    \left(-\frac{1}{2}\right)_k =& \prod_{j=0}^{k-1}\left( -\frac{1}{2} - j \right)
    = (-1)^k\prod_{j=0}^{k-1}\left( \frac{1}{2} + j \right) = (-1)^k\prod_{j=0}^{k-1}\frac{2j + 1}{2}
    = \left( -\frac{1}{2} \right)^k 1\cdot3\cdot\ldots\cdot(2k-1) \notag \\
    =& \left( -\frac{1}{2} \right)^k 1\cdot\frac{2}{2\cdot1}\cdot3\cdot\frac{4}{2\cdot2}\cdot\ldots
    \cdot(2k-1)\cdot\frac{2k}{2\cdot k} = \left( -\frac{1}{2} \right)^k \frac{(2k)!}{2^kk!} \\
    \Rightarrow \quad \binom{-\frac{1}{2}}{k} =& \frac{(-1)^k(2k)!}{(2^kk!)^2}
\end{align}
and thereby see, that the alternation cancels. The absolute convergence allows us to swap the order of summation and integration.
\begin{align}
    \sin^{-1}y = \sum_{k=0}^\infty \int_0^y \frac{(2k)!}{(2^kk!)^2} z^{2k} dz = \sum_{k=0}^\infty \frac{(2k)!}{(2^kk!)^2} \frac{y^{2k+1}}{2k+1}
\end{align}

The cosine is just sine, shifted by $\frac{\pi}{2}$ and therefore we can use the same power series: $\cos^{-1}y = \frac{\pi}{2} - \sin^{-1}y$.\\
We restrict the hyperbolic cosine to $\mathbb{R}_+$ apply the inverse function theorem \eqref{eq: IFT} again, to get the integral expressions of the hyperbolic arc-sine and arc-cosine.
\begin{align}
    (\sinh^{-1})'(\sinh{x}) &= \frac{1}{\cosh{x}} = \frac{1}{\sqrt{1+\sinh^2x}}, \quad x\in\mathbb{R} \quad
    \text{and} \\
    (\cosh^{-1})'(\cosh{x}) &= \frac{1}{\sinh{x}}
    = \frac{1}{\sqrt{\cosh^2x-1}}, \quad x\in\mathbb{R}_+
\end{align}

\begin{align}
    \begin{matrix}
    \sinh^{-1}: & \mathbb{R} & \to & \mathbb{R} \\
    & y & \mapsto & \int_0^y \frac{1}{\sqrt{1+z^2}} dz
    \end{matrix} \qquad
    \begin{matrix}
    \cosh^{-1}: & [1,\infty) & \to & \mathbb{R}_+ \\
    & y & \mapsto & \int_1^y \frac{1}{\sqrt{z^2-1}} dz
    \end{matrix}
\end{align}

\begin{align}
    \sinh^{-1}y &= \int_0^y (1+z^2)^{-\frac{1}{2}} dz
    = \sum_{k=0}^\infty \binom{-\frac{1}{2}}{k} \int_0^y z^{2k} dz
    = \sum_{k=0}^\infty \frac{(-1)^k(2k)!}{(2^kk!)^2} \frac{y^{2k+1}}{2k+1}, \quad y\in(-1,1)
\end{align}

Using the substitution $z = \sqrt{\eta^2-1}$, $dz = \frac{\eta d\eta}{\sqrt{\eta^2-1}}$ we can assure ourselves, that $\cosh^{-1}y = \sin^{-1}\sqrt{y^2-1}$:
\begin{align}
    \sinh^{-1}\sqrt{y^2-1} = \int_0^{\sqrt{y^2-1}} \frac{1}{\sqrt{1+z^2}} dz = \int_1^y \frac{d\eta}{\sqrt{\eta^2-1}} d\eta = \cosh^{-1}y
\end{align}
The square root is always positive, so the hyperbolic arc-sine always maps it to a positive number and the identity holds for $y\in[1,\infty)$. This allows us to reuse the power series of the hyperbolic arc-sine for the hyperbolic arc-cosine, analogue to the non-hyperbolic case:
\begin{equation}
    \cosh^{-1}y = \sum_{k=0}^\infty \frac{(-1)^k(2k)!}{(2^kk!)^2} \frac{\sqrt{y^2-1}^{2k+1}}{2k+1} = \sqrt{y^2-1}\sum_{k=0}^\infty \frac{(-1)^k(2k)!}{(2^kk!)^2} \frac{(y^2-1)^k}{2k+1}, \quad y\in(1,\sqrt{2})
\end{equation}
We see, that the power series becomes an expansion around $1$ instead of $0$ and we have the condition $\sqrt{y^2-1}<1$, to assure absolute convergence.

% \bibliographystyle{unsrt}
% \bibliography{GeodFBRef}
\bibliographystyle{apsrev4-1}
\bibliography{GeodFBRef}